\newtheorem{theo}{Theorem}[section]
\newtheorem{lemm}[theo]{Lemma}
\newtheorem{prop}[theo]{Proposition}
\theoremstyle{definition}
\newtheorem{defi}[theo]{Definition}
\newtheorem{coro}[theo]{Corollary}
\theoremstyle{remark}
\newtheorem{remark}[theo]{Remark}
\numberwithin{equation}{section}
\def\lg{\langle}
\def\rg{\rangle}
\def\lra{\longrightarrow}
\def\al{\alpha}
\def\be{\beta}
\def\ga{\gamma}
\def\vn{\varepsilon}
\def\ep{\epsilon}
\def\ot{\otimes}
\def\om{\omega}
\def\lra{\longrightarrow}
\def\ra{\rangle}
\def\la{\langle}
\begin{document}
\title[$\tau$-Invariant Generating Functions, Vertex Representations of $U_{r,s}(\widehat{\mathfrak{g}})$]
{Generating Functions with $\tau$-Invariance and Vertex Representations of
Quantum Affine Algebras $U_{r,s}(\widehat{\mathfrak{g}})$\\ (I): Simply-laced Cases}
%    Information for first author
\author[Hu]{Naihong Hu}
\address{Department of Mathematics, Shanghai Key Laboratory of Pure Mathematics and Mathematical Practice, East China Normal University,
Shanghai 200241, PR China} \email{nhhu@math.ecnu.edu.cn}
%    Information for second author
%    Current address
\author[Zhang]{Honglian Zhang$^\star$}
%    Address of record for the research reported here
\address{Department of Mathematics, Shanghai University,
Shanghai 200444, PR China} \email{hlzhangmath@shu.edu.cn}
\thanks{$^\star$H.Z., Corresponding Author}

%\dedicatory{Dedicated to Professor Helmut Strade on the Occasion of his 70th Birthday}

%    General info
\subjclass{Primary 17B37, 81R50; Secondary 17B35}
\date{Sept. 1, 2004 and, in revised form, May 18, 2005.}

%\dedicatory{This paper is dedicated to our advisors.}

\keywords{Two-parameter quantum affine algebra, Drinfeld double,
Drinfeld realization, vertex operator, Fock space.}
%%%%%%%%%%%%%%%%%%%%%%%%%%%%%%%%%%%%%%%%%%%%%%%%%%%%%%%%%%%%%%%%%%%%%%%%
%\footnote{Corresponding author.}%
%%%%%%%%%%%%%%%%%%%%%%%%%%%%%%%%%%%%%%%%%%%%%%%%%%%%%%%%%%%%%%%%%%%%%%%%
\begin{abstract}
We put forward the exact version of two-parameter generating functions with $\tau$-invariance, which allows us to give a unified and inherent definition for the Drinfeld realization of two-parameter quantum affine algebras for all the untwisted types.
As verification, we first construct their level-one vertex representations of $U_{r,s}(\widehat{\mathfrak{g}})$ for simply-laced types, which in turn well-detect the effectiveness of our definitions both for $(r,s)$-generating functions and $(r,s)$-Drinfeld realization in the framework of establishing the two-parameter vertex representation theory.
\end{abstract}

\maketitle

\section{Introduction}
\medskip
In 2000, Benkart and Witherspoon  revitalized the research of
two-parameter quantum groups. They studied the structures of
two-parameter quantum groups $U_{r,s}(\frak g)$ for ${\frak
g}={\frak{gl_n}}$, or ${\frak{sl_n}}$ in \cite{BW1} previously
obtained by Takeuchi \cite{T}, and the finite-dimensional
representations and Schur-Weyl duality for type $A$ in \cite{BW2},
and obtained some new finite-dimensional pointed Hopf algebras
${\frak u}_{r,s}(\frak{sl}_n)$ in \cite{BW3}, which possess new
ribbon elements under some conditions (may yield new invariants of
$3$-manifolds). Since 2004, Bergeron, Gao and Hu \cite{BGH1} further
presented the structures of two-parameter quantum groups
$U_{r,s}({\frak{g}})$ for ${\frak g}=\frak {so}_{2n+1},\,
\frak{sp}_{2n},\, \frak{so}_{2n}$, and investigated the environment
condition on the Lusztig symmetries' existence for types $A$, $B$,
$C$, $D$ in \cite{BGH1} and type $G_2$ in \cite{HS}. A
generalization of this fact to the multi-parameter cases arising
from Drinfeld doubles of bosonizations of Nichols algebras of
diagonal type has been obtained by Heckenberger \cite{H}, which provides an explicit realization model for the abstract
concept ``Weyl groupoid" playing a key role in the classification of both Nichols algebras of diagonal type (cf. \cite{H1})
and finite-dimensional pointed Hopf algebras with abelian group algebras as the coradicals (\cite{AS}). The
finite-dimensional weight representation theory for type $A$
(\cite{BW2}), types $B$, $C$, $D$ (\cite{BGH2}), $E$ (\cite{BH}) was
done.

A unified definition for any types and the explicit formulae of two-parameter quantum groups as two kind of $2$-cocycle
deformations of one-parameter quantum groups with double group-like elements in generic case, and a general treatment for
the deformed finite-dimensional representation category have been
intrinsically described by Hu-Pei \cite{HP1, HP2} (Furthermore, for a multiparameter version, see Pei-Hu-Rosso \cite{PHR}).
Recently, this important observation on the explicit formula for $2$-cocycle deformation serves as a crucial point for categorifying two-parameter
quantum groups in generic case. In roots of unity case, the small quantum groups
structure ${\frak u}_{r,s}(\frak{g})$, together with the convex
PBW-type Lyndon bases was studied explicitly for types $B$, $G_2$
(\cite{HW1, HW2}), $C$ (\cite{CH}), $F_4$ (\cite{CxH}). Especially,
Isomorphism Theorem for ${\frak u}_{r,s}(\frak{g})$'s depending
on parameter-pairs $(r,s)$ established by Hu-Wang \cite{HW1, HW2} was used
to distinguish the iso-classes of new finite-dimensional pointed
Hopf algebras in type $A$ for small order of roots of unity (see Benkart at al \cite{BPW}).
Surprisingly, the study of two-parameter small quantum groups brings us new examples of non-semisimple and non-pointed Hopf algebras with non-pointed duals
when Garc\'ia \cite{G} studied the quantum subgroups of two-parameter quantum general linear group $GL_{\al,\be}(n)$.

On the other hand, Hu-Rosso-Zhang first studied two-parameter
quantum affine algebras associated to affine Lie algebras of type $A_n^{(1)}$, and gave the descriptions of
the structure and Drinfeld realization of
$U_{r,s}(\widehat{\frak{sl}_n})$, as well as the quantum affine
Lyndon basis (see \cite{HRZ}). The discussions for the other affine
cases of untwisted types and the corresponding vertex operators
constructions for all untwisted types have been done in \cite{HZ1, HZ2, Z}.
Recently, using a combinatorial model of Young diagrams,
Jing-Zhang (\cite{JZ1}) gave a fermionic realization of the
two-parameter quantum affine algebra of type $A_n^{(1)}$; while
\cite{JZ2} provided a group-theoretic realization of two-parameter
quantum toroidal algebras using finite subgroups of
$SL_2(\mathbb{C})$ via McKay correspondence.

In the present paper, we will study
two-parameter quantum affine algebras for untwisted types from a uniform approach via working out the exact two-parameter version of generating functions with $\tau$-invariance, and construct the level-one vertex operator representations
for simply-laced types, for the other types will be discussed in forthcoming work.

The paper is organized as follows. We first give the definition of two-parameter quantum
affine algebras $U_{r,s}(\widehat{\mathfrak{g}})$ ($\mathfrak{g}$ is of $ADE$-type) in the sense of Hopf algebra in Section 2.
Two-parameter quantum affine algebras $U_{r,s}(\widehat{\mathfrak g})$
are characterized as Drinfeld doubles ${\mathcal D}({\widehat{\mathcal
B}}, {\widehat{\mathcal B'}})$ of Hopf subalgebras
${\widehat{\mathcal B}}$ and ${\widehat{\mathcal B'}}$ with respect
to a skew-dual pairing we give. In Section 3, we obtain the
two-parameter Drinfeld realization via the generating functions we define in the two-parameter setting.
It is worthy to mention that a similar $2$-cocycle deformation between two-parameter and one-parameter quantum affine algebras
doesn't work yet for Drinfeld generators, even though it indeed exists
when one works for Chevalley-Kac-Lusztig generators (see \cite{HP2, PHR}).
From this point of view, there's the rub that explicit formulae for defining the Drinfeld realization in the two-parameter setting are  nontrivial.
Note that by comparison with \cite{HRZ}, the definition has been slightly revised, where
the canonical central element $c$ of affine Lie algebras involved plays a well-connected role in the definition such as a result of the product for the doubled group-like elements $\gamma$ and $\gamma'$ keeps group-like
(see Definitions 2.2 (X1), \& 3.1 (3.2)), this also behaviors well when one considers
especially those vertex representations of higher levels. While in the case $r=q=s^{-1}$, this phenomenon degenerates and
is invisible in one-parameter case. In order to recover those inherent features in the two-parameter setting, to some extent, representation theory serves as a nice sample here to help us to achieve some further necessary insights into the algebra structure itself.
In Section 4, we prove the Drinfeld Isomorphism from the two-parameter quantum
affine algebra $U_{r,s}(\widehat{\mathfrak{g}})$ ($\frak{g}$ is of type
$D$ and $E$) of Drinfeld-Jimbo type towards the two-parameter
Drinfeld realization $\mathcal U_{r,s}(\widehat{\mathfrak g})$ using the quantum calculations for $(r,s)$-brackets as developed
in \cite{HRZ} for type $A^{(1)}_n$.
%Since the Lusztig's symmetry of the
%braid group for the two-parameter cases has been proved by the
%authors in [BGH] to be no more available when the rank of $\frak g$
%is more than $2$. This means Beck's approach is not yet valid for
%the two-parameter cases at least in a slightly big size.
%Fortunately, the quantum Lie bracket's approach, used extensively in
%the one-parameter cases, continues to adapt to our case.
Here we afford an alternative proof of
the homomorphism to be injective. In
Section 5, we start from the two-parameter quantum Heisenberg algebra to obtain the
Fock space, and construct the level-one vertex representations of
two-parameter quantum affine algebras for simply-laced cases, which
are irreducible. Also, these constructions in turn well-detect the effectiveness of the $(r,s)$-Drinfeld realization we defined.
We include details of some proofs of a few Lemmas as an appendix, through which readers can see the quantum calculations for $(r,s)$-brackets how to work effectively in the two-parameter setting.

\section{Quantum affine
algebras $U_{r,s}(\widehat{\frak{g}})$ and Drinfeld double}
\subsection{Notations and preliminaries }

From now on, denote by $\mathfrak{g}$ the finite-dimensional simple Lie algebras of simply-laced types with rank $n$. Let ${\mathbb
K}\supset{\mathbb Q}(r,s)$ denote an algebraically closed field,
where the two-parameters $r,\,s$ are nonzero complex numbers
satisfying $r^2\ne s^2$.  Let $E$ be a Euclidean space ${\mathbb R}^n$
with an inner product $(\,,)$ and  an orthonormal basis  $\{ \ep_1,
\cdots, \ep_n \}$.

Let $(a_{ij})_{ij\in I} \, (I=\{1,\,2,\,\cdots,\,n\})$ be a Cartan
matrix of simple Lie algebra $\mathfrak{g}$ with Cartan subalgebra
$\frak{h}$. Let $\Phi$ be a root system of $\mathfrak {g}$ and
$\al_i \,(i \in I)$ be the simple roots. It is possible to regard
$\Phi$ as a subset of a Euclidean space E. Denote by $\theta$ the
highest root of $\Phi$.

Let $\hat{\mathfrak g}$ be an affine Lie algebra
associated to simple Lie algebra $\mathfrak{g}$ with Cartan matrix
$(a_{ij})_{ij\in I_0}$, where $I_0=\{0\}\cup I$. In the following, we
list the affine Dynkin diagrams of simply-laced types, the labels
on vertices fix an identification between $I_0$ and
$\{0,\,1,\,\cdots,\,n\}$ such that $I$ corresponds to
$\{1,\,2,\,\cdots,\,n\}$.

\vspace{8pt}

%TeXCAD Options
%\grade{\on}
%\emlines{\off}
%\epic{\off}
%\beziermacro{\on}
%\reduce{\on}
%\snapping{\off}
%\quality{8.00}
%\graddiff{0.01}
%\snapasp{1}
%\zoom{11.3137}
\unitlength 1mm % = 2.85pt
\linethickness{0.4pt}
\ifx\plotpoint\undefined\newsavebox{\plotpoint}\fi % GNUPLOT compatibility
\begin{picture}(74,27)(0,0)
\put(6.5,11.5){$A_{n-1}^{(1)}$} \put(26.25,10.5){\circle{2}}
\put(37.8,10.5){\circle{2}} \put(61.25,10.5){\circle{2}}
\put(73.5,10.5){\circle{2}} \put(49.25,23.25){\circle{2}}
\put(27,10.5){\line(1,0){9.5}} \put(38.75,10.5){\line(1,0){5.5}}
\put(53.75,10.5){\line(1,0){6.5}} \put(25.75,4){1} \put(37.75,4){2}
\put(61.5,4){n-2} \put(73.5,4){n-1} \put(49.25,27.5){0}
%\emline(62.31,10.43)(72.57,10.43)
\put(62.31,10.43){\line(1,0){10.253}}
%\end
%\emline(27.05,10.96)(48.17,23.07)
\multiput(27.05,10.96)(.058843496,.033730372){359}{\line(1,0){.058843496}}
%\end
%\emline(50.2,22.89)(72.57,10.87)
\multiput(50.2,22.89)(.062639361,-.033671751){357}{\line(1,0){.062639361}}
%\end
%\dottedline(44.55,10.43)(53.47,10.52)
\multiput(44.48,10.36)(.9919,.0098){10}{{\rule{.4pt}{.4pt}}}
%\end
\end{picture}

%TeXCAD Options
%\grade{\on}
%\emlines{\off}
%\epic{\off}
%\beziermacro{\on}
%\reduce{\on}
%\snapping{\off}
%\quality{8.00}
%\graddiff{0.01}
%\snapasp{1}
%\zoom{4.0000}
\unitlength 1mm % = 2.85pt
\linethickness{0.4pt}
\ifx\plotpoint\undefined\newsavebox{\plotpoint}\fi % GNUPLOT compatibility
\begin{picture}(86.81,35.53)(0,0)
\put(27.5,11){\circle{2}} \put(37.75,21){\circle{2}}
\put(49,21){\circle{2}} \put(39,21.25){\line(1,0){9}}
\put(50.75,21.25){\line(1,0){5.25}} \put(66.25,21.25){\line(1,0){7}}
%\dashline{1}(56.75,21.25)(65.75,21.25)
\put(56.68,21.18){\line(1,0){.9}} \put(58.48,21.18){\line(1,0){.9}}
\put(60.28,21.18){\line(1,0){.9}} \put(62.08,21.18){\line(1,0){.9}}
\put(63.88,21.18){\line(1,0){.9}}
%\end
\put(27,5){1} \put(37.5,15){2} \put(48.75,15){3}
\put(7,16.75){$D_n^{(1)}$} \put(85.25,11.25){\circle{2}}
\put(74.5,21){\circle{2}} \put(85.25,5){n} \put(72.5,15){n-2}
\put(27.31,31.11){\circle{1.95}} \put(27.22,35){0}
%\emline(75.22,20.24)(83.97,11.49)
\multiput(75.22,20.24)(.033655563,-.033655563){260}{\line(0,-1){.033655563}}
%\end
%\emline(75.4,21.66)(84.76,30.94)
\multiput(75.4,21.66)(.033946249,.033626002){276}{\line(1,0){.033946249}}
%\end
\put(85.74,31.29){\circle{2.15}} \put(85.38,35.53){n-1}
%\emline(28.38,11.44)(37.19,20.19)
\multiput(28.38,11.44)(.033894231,.033653846){260}{\line(1,0){.033894231}}
%\end
%\emline(28.31,30.75)(37.25,21.88)
\multiput(28.31,30.75)(.033854167,-.033617424){264}{\line(1,0){.033854167}}
%\end
\end{picture}

%TeXCAD Options
%\grade{\on}
%\emlines{\off}
%\epic{\off}
%\beziermacro{\on}
%\reduce{\on}
%\snapping{\off}
%\quality{8.00}
%\graddiff{0.01}
%\snapasp{1}
%\zoom{4.0000}
\unitlength 1mm % = 2.85pt
\linethickness{0.4pt}
\ifx\plotpoint\undefined\newsavebox{\plotpoint}\fi % GNUPLOT compatibility
\begin{picture}(70.78, 31.5)(0, 0)
\put(6.75, 17.5){$E_6^{(1)}$} \put(27, 8.5){\circle{2}} \put(37,
8.5){\circle{2}} \put(48.25, 8.5){\circle{2}} \put(48.25,
19.25){\circle{2}} \put(48.25, 30.75){\circle{2}} \put(59.25,
8.5){\circle{2}} \put(69.75, 8.5){\circle{2}}
%\emline(28, 8.75)(36, 8.75)
\put(28, 8.75){\line(1, 0){8}}
%\end
%\emline(38, 8.75)(46.75, 8.75)
\put(38, 8.75){\line(1, 0){8.75}}
%\end
%\emline(49.75, 8.75)(58.25, 8.75)
\put(49.75, 8.75){\line(1, 0){8.5}}
%\end
%\emline(60.5, 8.75)(68.5, 8.75)
\put(60.5, 8.75){\line(1, 0){8}}
%\end
%\emline(48.5, 20.5)(48.5, 29.75)
\put(48.5, 20.5){\line(0, 1){9.25}}
%\end
%\emline(48.5, 18.25)(48.5, 10.25)
\put(48.5, 18.25){\line(0, -1){8}}
%\end
\put(27, 2.5){1} \put(36.75, 2.5){3} \put(48.25, 2.5){4} \put(59.5,
2.5){5} \put(70, 2.5){6} \put(53.25, 18.75){2} \put(53.25, 31){0}
\end{picture}

Let $\delta$ denote the primitive imaginary root of $\hat{\mathfrak
g}$, Take $\alpha_0=\delta-\theta$, then $\Pi'=\{\alpha_i\mid i\in
I_0\}$ is a base of simple roots of affine Lie algebra
$\hat{\mathfrak g}$. Denote by $c$ the canonical central element,
and $h$ the Coxeter number of affine Lie algebra $\hat{\mathfrak{g}}$.
We need the following data on (prime) root systems.

Type $A_{n-1}$:
\begin{equation*}
\begin{split}
&\Pi=\{\al_i=\ep_i-\ep_{i+1}\mid 1\le i\le n\},\\
&\Psi=\{\pm(\ep_i-\ep_j)\mid 1\le i<j\le n+1\}, \\
&\theta=\alpha_1+ \cdots + \alpha_n,\\
&\alpha_0=\delta-\theta=\delta-\ep_1+\ep_n ,\\
&\Pi'=\{\al_0, \al_1, \cdots, \al_n\}.
\end{split}
\end{equation*}

Type $D_{n}$:
\begin{equation*}
\begin{split}
&\Pi=\{\al_i=\ep_i-\ep_{i+1}\mid 1\le
i<n\}\cup\{\al_n=\ep_{n-1}+\ep_n\},\\
&\Psi=\{\pm\ep_i\pm\ep_j\mid 1\le i\ne j\le n\},\\
&\theta=\alpha_1+2\al_2+ \cdots +2\al_{n-2}+\al_{n-1}+\alpha_n,\\
&\alpha_0=\delta-\theta=\delta-\ep_1-\ep_2,\\
&\Pi'=\{\al_0, \al_1, \cdots, \al_n\}.
\end{split}
\end{equation*}

Type $E_6$:
\begin{equation*}
\begin{split}
&\Pi=\bigl\{\,\al_1=\frac{1}{2}(\ep_1+\ep_8-(\ep_2+\cdots+\ep_7)),\,
\al_2=\ep_1+\ep_2,\,\\
&\qquad\al_3=\ep_2-\ep_1,\,\al_4=\ep_3-\ep_2,\,\al_5=\ep_4-\ep_3,\,\al_6
=\ep_5-\ep_4\,\bigr\},\\
&\Psi=\bigl\{\,\pm\ep_i\pm\ep_j\mid 1\le i\ne j\le n=5\,\bigr\}\cup
\Bigl\{\frac{1}{2}\sum_{i=1}^{5}(-1)^{k(i)}\ep_i-\frac{1}2(\ep_6+\ep_7-\ep_8)\;\Big|\\
&\hskip2.8cm k(i)=0, 1, \;\hbox{add up to an odd integer}\;\Bigr\},\\
&\theta=\alpha_1+2\al_2+2\al_{3}+3\al_{4}+2\alpha_5+\al_6,\\
&\alpha_0=\delta-\theta=\delta-\frac{1}{2}(\ep_1+\ep_2+\ep_3+\ep_4+\ep_5-\ep_6-\ep_7+\ep_8),\\
&\Pi'=\{\al_0, \al_1, \cdots, \al_6\}.
\end{split}
\end{equation*}

\subsection{Two-parameter
quantum affine algebras $U_{r,s}(\widehat{\frak{g}})$}

In this paragraph, we give the definition of the two-parameter
quantum affine algebras $U_{r,s}(\hat{\mathfrak g})$ (see \cite{HRZ} for
${\hat{\mathfrak g}}=A^{(1)}_{n}$).

Assigned to $\Pi'$, there are two sets of mutually-commutative
symbols $W=\{\om_i^{\pm1}\mid 0\le i\le n\}$ and
$W'=\{{\om_i'}^{\pm1}\mid 0\le i\le n\}$. Define a pairing $\lg\, ,
\rg:\, W'\times W\lra {\mathbb K}$ as follows
\begin{equation*}
J=(\lg \om_i', \om_j\rg)=(\lg i,j\rg) \qquad \qquad\quad\text{\it
for}\quad A_{n-1}^{(1)}, \tag{$1_{\hat A_{n-1}}$}
\end{equation*}
where $J=\left(\begin{array}{cccccc}
rs^{-1}& r^{-1}& 1 & \cdots & 1 & s \\
s & rs^{-1} & r^{-1}  & \cdots & 1 & 1\\
\cdots &\cdots &\cdots & \cdots & \cdots & \cdots\\
1 & 1 & 1  & \cdots & rs^{-1} & r^{-1}\\
 r^{-1} & 1 & 1 & \cdots & s & rs^{-1}
\end{array}\right).$

\begin{equation*}
J=(\lg \om_i', \om_j\rg)=(\lg i,j\rg) \qquad \qquad\quad\text{\it
for}\quad D_{n}^{(1)}, \tag{$1_{\hat D_{n}}$}
\end{equation*}
where $J=\left(\begin{array}{cccccc}
rs^{-1}& (rs)^{-1}& r^{-1} & \cdots & 1 & (rs)^2 \\
rs & rs^{-1} & r^{-1}  & \cdots & 1 & 1\\
\cdots &\cdots &\cdots & \cdots & \cdots & \cdots\\
1 & 1 & 1  & \cdots & rs^{-1} & (rs)^{-1}\\
 (rs)^{-2} & 1 & 1 & \cdots & rs & rs^{-1}
\end{array}\right)$

%\begin{equation*}
%\lg \om_i', \om_j\rg=\begin{cases}
%r^{(\ep_j,\al_i)}s^{(\ep_{j+1},\al_i)}, & \quad\qquad  0\le i\le
%n, \quad \ 0< j\le n, \cr
%r^{-(\ep_{i+1},\,\al_0)}s^{(\ep_1,\,\al_i)} & \quad\qquad 0\le i\le
%n,\quad \ j=0, \quad\qquad\;\text{\it for}\quad A_{n-1}^{(1)}.
%\end{cases}\tag{$1_{\hat{A}}$}
%\end{equation*}
%\begin{equation*}
%\lg \om_i', \om_j\rg=\begin{cases}
%r^{(\ep_j,\al_i)}s^{(\ep_{j+1},\al_i)}, & \ \;  0\le i\le n, \quad \
%0<j<n,\cr r^{(\ep_{n-1}, \al_i)}s^{-(\ep_n,\al_i)}, & \ \; i\ne n-1,
%\quad \ j=n,\qquad \quad\text{\it for}\quad D_n^{(1)}.\cr
%r^{-(\ep_{i+1},\,\al_0)}s^{(\ep_1,\,\al_i)} & \quad 0\le i\le
%n,\quad \ j=0,\cr r^{(\ep_n, \al_{n-1})}s^{-(\ep_{n-1}, \al_{n-1})},
%& \ \;i=n-1,\quad \ j=n,
%\end{cases}\tag{$1_{\hat D}$}
%\end{equation*}

\begin{equation*}
J=(\lg \om_i', \om_j\rg)=(\lg i,j\rg) \qquad \qquad\quad\text{\it
for}\quad E_6^{(1)}, \tag{$1_{\hat E_6}$}
\end{equation*}
where $J=
 \left(\begin{array}{ccccccc}
rs^{-1}& (rs)^{-1}& r^{-2}s^{-1} &(rs)^{-1} & rs & rs & rs\\
rs & rs^{-1} & 1 & r^{-1} & 1 & 1 & 1\\
rs^{2} &1 &rs^{-1} & 1 & r^{-1} &1 &1 \\
rs & s & 1 & rs^{-1} & r^{-1} & 1 & 1\\
(rs)^{-1}& 1 & s & s& rs^{-1}& r^{-1}& 1\\
(rs)^{-1}& 1 &1 & 1& s &rs^{-1} & r^{-1} \\
 (rs)^{-1} & 1 & 1 &1 & 1 & s &rs^{-1}
\end{array}\right).$

\begin{equation*}\lg {\om_i'}^{\pm1},\om_j^{-1}\rg=\lg
{\om_i'}^{\pm1},\om_j\rg^{-1}=\lg \om_i',\om_j\rg^{\mp1},\qquad
\text{\it for any } \ \mathfrak {\hat g}.  \tag{2}
\end{equation*}

\smallskip

\begin{remark} \ The above structure constant matrix $J$ is said the two-parameter quantum affine Cartan matrix,
which is the generalization of the classical quantum affine Cartan matrix under the condition $r=s^{-1}=q$.
\end{remark}

\begin{defi}
Let $U_{r,s}(\hat{\mathfrak g})$ be the unital associative algebra
over $\mathbb{K}$ generated by the elements $e_j,\, f_j,\,
\omega_j^{\pm 1},\, \omega_j'^{\,\pm 1}\, (j\in I_0),\,
\gamma^{\pm\frac{1}2},\,\gamma'^{\pm\frac{1}2},\, D^{\pm1},
D'^{\,\pm1}$, satisfying the following relations (where $c$ is the canonical central element of $\hat{\mathfrak g}$):

\medskip
\noindent $(\textrm{X1})$ \
$\gamma^{\pm\frac{1}2},\,\gamma'^{\pm\frac{1}2}$ are central with
$\gamma=\om'^{-1}_\delta$, $\gamma'=\om^{-1}_\delta$, $\gamma\gamma'=(rs)^c$, such
that $\omega_i\,\omega_i^{-1}=\omega_i'\,\omega_i'^{\,-1}=1
=DD^{-1}=D'D'^{-1}$, and
\begin{equation*}
\begin{split}[\,\omega_i^{\pm 1},\omega_j^{\,\pm 1}\,]&=[\,\om_i^{\pm1},
D^{\pm1}\,]=[\,\om_j'^{\,\pm1}, D^{\pm1}\,] =[\,\om_i^{\pm1},
D'^{\pm1}\,]=0\\
&=[\,\omega_i^{\pm 1},\omega_j'^{\,\pm 1}\,]=[\,\om_j'^{\,\pm1},
D'^{\pm1}\,]=[D'^{\,\pm1}, D^{\pm1}]=[\,\omega_i'^{\pm
1},\omega_j'^{\,\pm 1}\,].
\end{split}
\end{equation*}
 $(\textrm{X2})$ \ For $\,i,\,j\in I_0$,
\begin{equation*}
\begin{array}{ll}
& D\,e_i\,D^{-1}=r^{\delta_{0i}}\,e_i,\qquad\qquad\qquad\qquad\;
D\,f_i\,D^{-1}=r^{-\delta_{0i}}\,f_i,\\
&\omega_j\,e_i\,\omega_j^{\,-1}=\lg \om_i',\om_j\rg
\,e_i,\qquad\qquad\quad \omega_j\,f_i\,\omega_j^{\,-1}=\lg
\om_i',\om_j\rg^{-1}\,f_i.
\end{array}
\end{equation*}\\
$(\textrm{X3})$ \ For $\,i,\,j\in I_0$,
\begin{equation*}
\begin{array}{ll}
& D'\,e_i\,D'^{-1}=s^{\delta_{0i}}\,e_i,\qquad\qquad\qquad\quad\ \
D'\,f_i\,D'^{-1}=s^{-\delta_{0i}}\,f_i,\\
&\omega'_j\,e_i\,\omega'^{\,-1}_j=\lg \om_j',\om_i\rg^{-1}\,e_i,
\qquad\qquad  \omega'_j\,f_i\,\omega'^{\,-1}_j=\lg \om_j',\om_i\rg\,f_i.
\end{array}
\end{equation*}\\
$(\textrm{X4})$ \ For $\,i,\, j\in I_0$, we have
 $$[\,e_i, f_j\,]=\frac{\delta_{ij}}{r-s}(\omega_i-\omega'_i).$$
$(\textrm{X5})$  For any $i\ne j$, we have the $(r,s)$-Serre
relations
\begin{gather*}
\bigl(\text{ad}_l\,e_i\bigr)^{1-a_{ij}}\,(e_j)=0,\\
\bigl(\text{ad}_r\,f_i\bigr)^{1-a_{ij}}\,(f_j)=0,
\end{gather*}
where the definitions of the left-adjoint action $\text{ad}_l\,e_i$
and the right-adjoint action $\text{ad}_r\,f_i$ are given in the
following sense
$$
\text{ad}_{ l}\,a\,(b)=\sum_{(a)}a_{(1)}\,b\,S(a_{(2)}), \quad
\text{ad}_{ r}\,a\,(b)=\sum_{(a)}S(a_{(1)})\,b\,a_{(2)}, \quad
\forall\; a, b\in U_{r,s}(\hat{\mathfrak g}),
$$
where $\Delta(a)=\sum_{(a)}a_{(1)}\ot a_{(2)}$ is given by
Proposition 2.3 below.
\end{defi}

\smallskip
\subsection{Hopf algebra and Drinfeld double} The following is straightforward.
\begin{prop}  \ The algebra $U_{r, s}(\hat{\mathfrak g})$
$($\,$\hat{\mathfrak g}=A_{n-1}^{(1)}$ , $D_n^{(1)}$ and
$E_6^{(1)}$\,$)$ is a Hopf algebra under the comultiplication, the
counit and the antipode defined below  $\,(0 \leq i \leq n)$
\begin{gather*}
\Delta(\om_i^{\pm1})=\om_i^{\pm1}\ot\om_i^{\pm1}, \qquad
\Delta({\om_i'}^{\pm1})={\om_i'}^{\pm1}\ot{\om_i'}^{\pm1},\\
\Delta(e_i)=e_i\ot 1+\om_i\ot e_i, \qquad \Delta(f_i)=1\ot
f_i+f_i\ot \om_i',\\
\vn(\om_i^{\pm})=\vn({\om_i'}^{\pm1})=1, \qquad
\vn(e_i)=\vn(f_i)=0,\\
S(\om_i^{\pm1})=\om_i^{\mp1}, \qquad
S({\om_i'}^{\pm1})={\om_i'}^{\mp1},\\
S(e_i)=-\om_i^{-1}e_i,\qquad S(f_i)=-f_i\,{\om_i'}^{-1}.
\end{gather*}\hfill\qed
\end{prop}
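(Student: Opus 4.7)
The plan is to verify the claim by direct inspection on generators, following the usual pattern for quantum-group Hopf structures. Three things must be checked: that the maps $\Delta$, $\varepsilon$, $S$ defined on generators extend to algebra maps (anti-map for $S$) on the whole $U_{r,s}(\hat{\mathfrak g})$, and that the three Hopf axioms (coassociativity, counit, antipode) hold. Coassociativity and the counit identity are immediate on generators --- the $\omega_i, \omega_i', D, D', \gamma^{\pm 1/2}$ are genuinely group-like and the $e_i, f_i$ are skew-primitive --- so the substance lies in the extension step.

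To extend $\Delta$, one applies it to both sides of each defining relation (X1)--(X5) and checks equality in $U \otimes U$. The relations (X1) follow from group-likeness. The relations (X2)--(X3) follow because $\omega_j \otimes \omega_j$ scales $\Delta(e_i) = e_i \otimes 1 + \omega_i \otimes e_i$ by the same factor $\langle \omega_i', \omega_j\rangle$ on each tensor slot, using that $\omega_i$ commutes with $\omega_j$. For (X4), a short bracket calculation in $U \otimes U$ shows that $[\Delta(e_i), \Delta(f_j)]$ reduces to $\frac{\delta_{ij}}{r-s}(\omega_i \otimes \omega_i - \omega_i' \otimes \omega_i')$, once one verifies that the two cross terms $\omega_i f_j \otimes e_i \omega_j'$ and $f_j \omega_i \otimes \omega_j' e_i$ match via the $\langle \omega_j', \omega_i\rangle^{\pm 1}$ scalars coming from (X2)--(X3), and that the pair $e_i \otimes f_j$ cancels between the two orderings.

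The main obstacle is showing that $\Delta$ preserves the $(r,s)$-Serre relations (X5). The standard strategy is to exploit that $e_i$ is $(\omega_i, 1)$-skew-primitive and to derive a closed formula for $\Delta\bigl((\text{ad}_l\, e_i)^{k}(e_j)\bigr)$ as a sum of tensors involving $(\text{ad}_l\, e_i)^{p}(e_j)$ with prefactors built from the pairings $\langle \omega_i', \omega_j\rangle$, $\langle \omega_j', \omega_i\rangle$ and suitable $(r,s)$-Gauss coefficients; at $k = 1 - a_{ij}$ the coefficient identities force the whole sum to vanish. The crucial numerical input is that $\langle \omega_i', \omega_j\rangle \langle \omega_j', \omega_i\rangle = (rs)^{a_{ij}}$ for $i \neq j$ in each of the simply-laced affine diagrams of Section 2.1 (a direct reading of the matrices $J$ in $(1_{\hat A_{n-1}})$, $(1_{\hat D_n})$, $(1_{\hat E_6})$), which is exactly what makes the $(r,s)$-quantum binomials collapse at the right degree. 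The $f_i$-side is symmetric via $\text{ad}_r$.

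For the antipode, extension of $S$ as an anti-homomorphism is a parallel check on (X1)--(X5) (essentially the same calculation after reversing multiplication and using that $S(\omega_i) = \omega_i^{-1}$ is still group-like with inverse scaling). The antipode identity on $e_i$ reads $\mu(\mathrm{id}\otimes S)\Delta(e_i) = e_i - \omega_i \omega_i^{-1} e_i = 0 = \varepsilon(e_i)\cdot 1$, and similarly for $f_i$; on the group-like generators it is trivial. The proposition is thus ``straightforward'' in the sense that every step is a finite calculation, with the only nontrivial combinatorial input being the Serre verification. A cleaner conceptual route, available once Section 2.3's realization of $U_{r,s}(\hat{\mathfrak g})$ as a Drinfeld double $\mathcal{D}(\hat{\mathcal B}, \hat{\mathcal B'})$ is in place, is to inherit the Hopf structure for free from those of $\hat{\mathcal B}$ and $\hat{\mathcal B'}$.
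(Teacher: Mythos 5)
Your overall route is the standard one, and it is evidently what the paper's unproved ``straightforward'' assertion has in mind: extend $\Delta,\varepsilon,S$ by checking (X1)--(X4) directly (your cancellation of the cross terms in (X4) via $\langle\omega_j',\omega_i\rangle^{\pm1}$ is exactly right), and handle (X5) by showing the Serre elements $(\mathrm{ad}_l\,e_i)^{1-a_{ij}}(e_j)$ generate a Hopf ideal, i.e.\ are skew-primitive, via an $(r,s)$-binomial collapse. Since (X5) is itself phrased through $\mathrm{ad}_l,\mathrm{ad}_r$, which use $\Delta$ and $S$, the clean logical order is to first put the Hopf structure on the algebra with relations (X1)--(X4) only and then verify the Hopf-ideal property; also note that your closing suggestion to inherit the structure from $\mathcal D(\hat{\mathcal B},\hat{\mathcal B'})$ is not available at this point of the paper without reorganization, since Theorem 2.8 builds the double out of Hopf subalgebras of $U_{r,s}(\hat{\mathfrak g})$, whose Hopf structure is the content of this very proposition.

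There is, however, one concrete error in the step you single out as the linchpin. The identity you cite, $\langle\omega_i',\omega_j\rangle\,\langle\omega_j',\omega_i\rangle=(rs)^{a_{ij}}$ for $i\ne j$, is false for the paper's matrices $J$: in type $A_{n-1}^{(1)}$ one reads off $\langle\omega_1',\omega_2\rangle=r^{-1}$ and $\langle\omega_2',\omega_1\rangle=s$, so the product is $r^{-1}s$, while $(rs)^{a_{12}}=(rs)^{-1}$; these agree only when $s^{2}=1$, which is excluded. The identity the matrices actually satisfy (check it on all three tables, including the affine node) is $\langle\omega_i',\omega_j\rangle\,\langle\omega_j',\omega_i\rangle=\langle\omega_i',\omega_i\rangle^{a_{ij}}=(rs^{-1})^{a_{ij}}$, and this is the compatibility the Serre verification needs; it is also precisely what the paper itself uses in Section 3.1, where $(\langle i,j\rangle\langle j,i\rangle)^{\pm\frac12}$ is rewritten as $\langle i,i\rangle^{\pm\frac{a_{ij}}2}$ in the coefficients ${}^\pm c^{(k)}_{ij}$ of $g^\pm_{ij}(z)$. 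With $(rs)^{a_{ij}}$ in place of $(rs^{-1})^{a_{ij}}$ the quantum binomial coefficients do not vanish at degree $1-a_{ij}$ and the claimed skew-primitivity fails; with the corrected identity your computation goes through and the rest of your outline (counit, antipode axiom on skew-primitives, anti-homomorphism property of $S$) is fine.
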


\begin{remark}
\ When $r=s^{-1}=q$, Hopf algebra $U_{r, s}(\hat{\mathfrak g})$
modulo the Hopf ideal generated by the elements $\om_i'-\om_i^{-1}$
$(0\le i\le n)$, is just the quantum group $U_q(\hat{\mathfrak g})$
of Drinfel'd-Jimbo type.
\end{remark}

\begin{defi} \ {\it A skew-dual pairing of two Hopf
algebras ${\mathcal A}$ and ${\mathcal U}$ is a bilinear form
$\langle\,,\rangle:\; {\mathcal U}\times {\mathcal A}\lra \mathbb K$
such that
\begin{gather*} \langle f, 1_{\mathcal A}\rangle=\vn_{\mathcal U}(f),\qquad
\langle 1_{\mathcal U}, a\rangle=\vn_{\mathcal A}(a),\\
\langle f, a_1a_2\rangle=\langle \Delta^{\text{op}}_{\mathcal U}(f),
a_1\ot a_2\rangle, \qquad \langle f_1f_2, a \rangle=\langle f_1\ot
f_2, \Delta_{\mathcal A}(a)\rangle,
\end{gather*}
for all $f,\, f_1,\, f_2\in\mathcal U$, and
$a,\,a_1,\,a_2\in\mathcal A$, where $\vn_{\mathcal U}$ and
$\vn_{\mathcal A}$ denote the counits of $\mathcal U$ and $\mathcal
A$, respectively, and $\Delta_{\mathcal U}$ and $\Delta_{\mathcal
A}$ are their respective comultiplications.}
\end{defi}

Let $\hat{\mathcal B}=\hat {B}(\hat{\mathfrak g})$ (resp.
$\hat{\mathcal B'}=\hat {B'}(\hat{\mathfrak g})$\,) denote the Hopf
subalgebra of $\hat {U}=U_{r,s}(\hat {\mathfrak g})$ generated by
$e_j$, $\om_j^{\pm1}$ (resp. $f_j$, ${\om_j'}^{\pm1}$\,) with $0\le
j\le n$ for $\hat{\mathfrak g}=A_{n-1}^{(1)}$, and with $0\le j\le
n$ for $\hat{\mathfrak g}=D_n^{(1)}$, respectively. The following
result was obtained for the type $A_{n-1}^{(1)}$ case by \cite{HRZ}.

\begin{prop}  There exists a unique skew-dual pairing
$\langle\,,\rangle:\, \hat{\mathcal B'}\times \hat{\mathcal
B}\lra\mathbb K$ of the Hopf subalgebras $\hat{\mathcal B}$ and
$\hat{\mathcal B'}$ in $U_{r,s}(\hat{\mathfrak g})$ such that $\lg
f_i, e_j\rg=\frac{\delta_{ij}}{s_i-r_i}$, and the conditions $(1_X)$
and $(2)$ are satisfied, and all other pairs of generators are $0$.
Moreover, we have $\lg S(a), S(b)\rg=\lg a, b\rg$ for
$a\in\hat{\mathcal B'},\,b\in\hat{\mathcal B}$.\hfill\qed
\end{prop}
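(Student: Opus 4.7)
The plan is to proceed in the standard fashion for skew-dual pairings between the positive and negative halves of a Drinfeld double: specify the pairing on the generating set as prescribed, extend to products by the bilinear axioms of Definition 2.5, and then verify that this extension descends to $\hat{\mathcal B'}\times\hat{\mathcal B}$ by respecting every defining relation on each side. Concretely, I set $\langle f_i,e_j\rangle=\delta_{ij}/(s-r)$, define $\langle\omega_i'^{\pm 1},\omega_j^{\pm 1}\rangle$ via the matrices $J$ of $(1_{\hat A_{n-1}})$, $(1_{\hat D_n})$, $(1_{\hat E_6})$ together with the inversion rule $(2)$, and declare the cross pairings $\langle\omega_i'^{\pm 1},e_j\rangle$ and $\langle f_i,\omega_j^{\pm 1}\rangle$ to be zero. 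Uniqueness is then automatic, since iterating $\langle f,ab\rangle=\langle\Delta^{\mathrm{op}}(f),a\otimes b\rangle$ and $\langle fg,a\rangle=\langle f\otimes g,\Delta(a)\rangle$ forces the value of $\langle\,,\,\rangle$ on every pair of words in the generators.

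The existence half requires checking well-definedness against three families of relations. First, the commutativity and inversion relations among the $\omega_i^{\pm 1}$: since all $\omega_i$ are grouplike, the axioms collapse the pairing into products of pairwise values of $J$ and the required symmetry is read off directly from $(2)$. Second, the cross relations $\omega_j e_i\omega_j^{-1}=\langle\omega_i',\omega_j\rangle e_i$ and the companion relations from $(\mathrm{X3})$: testing each side against a generator $f\in\{f_k,\omega_k'^{\pm 1}\}$ using the diagonal coproduct on $\omega_j$ together with the vanishing of the cross pairings reproduces exactly the scalar $\langle\omega_i',\omega_j\rangle$ drawn from $J$. Third, the $(r,s)$-Serre relations $(\mathrm{ad}_l\,e_i)^{1-a_{ij}}(e_j)=0$ and their $f$-side analogues; this is the main obstacle. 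By the weight grading, only monomials $f_{i_1}\cdots f_{i_{2-a_{ij}}}$ having $(1-a_{ij})$ copies of $f_i$ and one $f_j$ can pair non-trivially with a Serre element, so the check reduces to a finite list of combinatorial identities in the coefficients $\langle\omega_i',\omega_j\rangle,\langle\omega_j',\omega_i\rangle$. These identities were verified in \cite{HRZ} for $A_{n-1}^{(1)}$ by direct $(r,s)$-bracket manipulation; I would reproduce the argument for $D_n^{(1)}$ and $E_6^{(1)}$ using the matrices displayed in Section 2.1, noting that only the cases $a_{ij}\in\{0,-1\}$ occur in the simply-laced setting, which keeps the combinatorics tractable.

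Finally, the antipode invariance $\langle S(a),S(b)\rangle=\langle a,b\rangle$ is established on the generating pairs directly from the formulas of Proposition 2.3 and then propagated to all of $\hat{\mathcal B'}\times\hat{\mathcal B}$, using that $S$ is an anti-algebra morphism while the skew-dual pairing axioms are compatible with simultaneously reversing the product on both factors. The hard part, as indicated, will be the explicit $(r,s)$-combinatorial verification on the Serre elements for the non-trivial entries of $J$ coming from the branching vertices of the $D_n^{(1)}$ and $E_6^{(1)}$ diagrams; every other step is a bookkeeping exercise in the skew-pairing axioms.
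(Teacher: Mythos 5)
Your proposal is correct and follows exactly the standard route that the paper itself relies on: the paper states this proposition with no proof beyond citing \cite{HRZ} for type $A_{n-1}^{(1)}$, and the argument there (define the pairing on generators, extend by the skew-pairing axioms, check well-definedness against the $\omega$-relations, the relations $\omega_j e_i\omega_j^{-1}=\langle\omega_i',\omega_j\rangle e_i$, and the $(r,s)$-Serre relations via the weight grading, then verify $\langle S(\cdot),S(\cdot)\rangle=\langle\cdot,\cdot\rangle$) is precisely what you outline. The only substantive content beyond bookkeeping is the finite combinatorial check on the Serre elements for the $D_n^{(1)}$ and $E_6^{(1)}$ entries of $J$, which you correctly identify and which goes through since all off-diagonal Cartan entries are $0$ or $-1$.
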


\begin{defi} \ {\it For any two skew-paired Hopf algebras
$\mathcal A$ and $\mathcal U$ by a skew-dual pairing $\lg\,,\rg$,
one may form the Drinfel'd double $\mathcal D(\mathcal A,\mathcal
U)$ as in $[\rm KS, 8.2]$, which is a Hopf algebra whose underlying
coalgebra is $\mathcal A\ot\mathcal U$ with the tensor product
coalgebra structure, and whose algebra structure is defined by
$$
(a\ot f)(a'\ot f')=\sum \lg S_{\mathcal U}(f_{(1)}), a'_{(1)}\rg\lg
f_{(3)},a'_{(3)}\rg \,aa'_{(2)}\ot f_{(2)}f',\leqno(3)$$ for $a,
a'\in \mathcal A$ and $f, f'\in\mathcal U$. The antipode $S$ is
given by}
$$
S(a\ot f)=(1\ot S_{\mathcal U}(f))(S_{\mathcal A}(a)\ot 1).
$$
\end{defi}

Clearly, both mappings $\mathcal A\ni a\mapsto a\ot 1\in\mathcal
D(\mathcal A,\mathcal U)$ and $\mathcal U\ni f\mapsto 1\ot
f\in\mathcal D(\mathcal A, \mathcal U)$ are injective Hopf algebra
homomorphisms. Let us denote the image $a\ot 1$ (resp. $1\ot f$) of
$a$ (resp. $f$) in $\mathcal D(\mathcal A,\mathcal U)$ by $\hat a$
(resp. $\hat f$). By (3), we have the following cross commutation
relations between elements $\hat a$ (for $a\in\mathcal A$) and $\hat
f$ (for $f\in\mathcal U$) in the algebra $\mathcal D(\mathcal
A,\mathcal U)$:
\begin{gather*} \hat f\,\hat a=\sum\, \lg
S_{\mathcal U}(f_{(1)}), a_{(1)}\rg\,\lg
f_{(3)},a_{(3)}\rg\;\hat a_{(2)}\hat f_{(2)},\tag{4}\\
\sum\lg f_{(1)}, a_{(1)}\rg\,\hat f_{(2)}\,\hat a_{(2)}= \sum \hat
a_{(1)}\,\hat f_{(1)}\,\lg f_{(2)},a_{(2)}\rg.\tag{5}
\end{gather*}
In fact, as an algebra the double $\mathcal D(\mathcal A,\mathcal
U)$ is the universal algebra generated by the algebras $\mathcal A$
and $\mathcal U$ with cross relations (4) or, equivalently, (5).

\begin{theo} \ The two-parameter quantum affine
algebra $U=U_{r,s}(\hat{\mathfrak g})$  is isomorphic to the
Drinfel'd quantum double $\mathcal D(\hat{\mathcal B}, \hat{\mathcal
B'})$.
\end{theo}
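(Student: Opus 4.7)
Our goal is to identify $U=U_{r,s}(\hat{\mathfrak g})$ with the Drinfeld double $\mathcal D(\hat{\mathcal B},\hat{\mathcal B}')$ built out of the skew-dual pairing of Proposition 2.6. The plan is to construct a Hopf algebra homomorphism
\[
\Phi\colon \mathcal D(\hat{\mathcal B},\hat{\mathcal B}')\longrightarrow U,\qquad \hat a\otimes \hat f \longmapsto a\cdot f,
\]
where $a\in\hat{\mathcal B}$ and $f\in\hat{\mathcal B}'$ are identified with their images in $U$ via the evident inclusions, and then to show $\Phi$ is both surjective and injective. The universal description recorded just after equation (5) is decisive: as an algebra, $\mathcal D(\hat{\mathcal B},\hat{\mathcal B}')$ is the universal algebra generated by $\hat{\mathcal B}$ and $\hat{\mathcal B}'$ subject \emph{only} to the cross commutation relations (4) (equivalently (5)). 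Since the inclusions $\hat{\mathcal B},\hat{\mathcal B}'\hookrightarrow U$ are Hopf algebra maps by construction, $\Phi$ will be a well-defined algebra map as soon as the cross relations (4) are verified inside $U$.

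The first concrete step is to check (4) on pairs of generators. Take $f\in\{f_j,\,{\om_j'}^{\pm1}\}$ and $a\in\{e_i,\,\om_i^{\pm1}\}$, expand $\Delta(f)$ and $\Delta(a)$ as in Proposition 2.3, and substitute the skew-pairing values $\lg f_i,e_j\rg=\delta_{ij}/(s_i-r_i)$ together with the matrix entries $\lg\om_i',\om_j\rg$ coming from $(1_X)$ and (2). The pairs $(\om_j,e_i)$ and $({\om_j'},f_i)$ reproduce the conjugation laws (X2)--(X3); the pair $(f_j,e_i)$ (using $S(f_j)=-f_j{\om_j'}^{-1}$) collapses via the counit annihilations to exactly $[e_i,f_j]=\delta_{ij}(\om_i-\om_i')/(r-s)$, recovering (X4). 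The $(r,s)$-Serre relations (X5) live entirely in $\hat{\mathcal B}$ or $\hat{\mathcal B}'$, so they impose no further compatibility. This yields $\Phi$ as an algebra map; since it sends generators to generators and respects the coproducts of Proposition 2.3, a direct check confirms it is a Hopf algebra map. Surjectivity is then immediate, as every generator of $U$ already lies in the image of $\hat{\mathcal B}$ or $\hat{\mathcal B}'$.

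The main obstacle is injectivity. Here I would follow the strategy used for $A^{(1)}_{n-1}$ in \cite{HRZ}: establish a triangular decomposition $U\cong \hat{\mathcal B}\otimes \hat{\mathcal B}'$ as $\mathbb K$-vector spaces via multiplication, mirroring the coalgebra decomposition $\mathcal D(\hat{\mathcal B},\hat{\mathcal B}')=\hat{\mathcal B}\otimes \hat{\mathcal B}'$ built into the double. Concretely, one uses the natural grading of $U$ by the affine root lattice $Q=\mathbb Z[\al_0,\dots,\al_n]$, together with the $D$-eigenvalues of $\delta$, to show that the multiplication map $\mu\colon \hat{\mathcal B}\otimes \hat{\mathcal B}'\to U$ is injective on each weight space; equivalently, that $\ker\Phi$ meets each weight space trivially. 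Because $\hat{\mathcal B}$ and $\hat{\mathcal B}'$ already embed into $U$ (both images are generated by algebraically independent generators subject to their own Serre-type relations) and since the cross commutation relations force any element of $\ker\Phi$ to be expressible as a linear combination of normally-ordered monomials in $\hat a\cdot \hat f$, an inductive argument on weight (comparing with the analogous filtration in $U$) eliminates all such combinations. This forces $\ker\Phi=0$ and completes the isomorphism.
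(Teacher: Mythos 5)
Your construction of $\Phi:\mathcal D(\hat{\mathcal B},\hat{\mathcal B'})\to U$ and the verification of the cross relations (4) on pairs of generators are correct and constitute the computational heart of the theorem; the pairing computation does reproduce (X2)--(X4) exactly as you claim. (One should add the standard remark that once (5) holds on pairs of generators it holds for all $f\in\hat{\mathcal B'}$ and $a\in\hat{\mathcal B}$, because the set of pairs satisfying (5) is closed under multiplication in each slot by the skew-pairing axioms; this is needed before the universal property of the double can be invoked.) Surjectivity is then immediate, as you say.

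The gap is in the injectivity argument. You propose to prove that multiplication $\hat{\mathcal B}\otimes\hat{\mathcal B'}\to U$ is injective by an independent weight-by-weight analysis of $U$; but this is precisely the triangular decomposition of Corollary 2.9, which the paper (following [BGH1, Coro.\ 2.6] and the type $A^{(1)}$ case in [HRZ]) obtains as a \emph{byproduct} of Theorem 2.8, not as an ingredient of its proof. Establishing such a decomposition directly for an algebra given only by generators and relations is a substantial undertaking (it is the analogue of the nondegeneracy/PBW argument in the one-parameter theory), and the sketch ``an inductive argument on weight eliminates all such combinations'' does not supply it: a priori the defining relations of $U$ could force unexpected collapses among products $a\cdot f$, and nothing in your outline rules this out. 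The standard --- and much shorter --- route is to go in the other direction: since $U$ is \emph{presented} by the relations internal to $\hat{\mathcal B}$, those internal to $\hat{\mathcal B'}$, and the cross relations (X2)--(X4), and since all of these hold in $\mathcal D(\hat{\mathcal B},\hat{\mathcal B'})$ (the internal ones because $\hat{\mathcal B}$ and $\hat{\mathcal B'}$ embed as Hopf subalgebras of the double, the cross ones by the same pairing computation you performed, read backwards), one obtains a homomorphism $U\to\mathcal D(\hat{\mathcal B},\hat{\mathcal B'})$ that is inverse to $\Phi$ on generators. Injectivity of $\Phi$ is then automatic, and the vector-space decomposition $U\cong\hat{\mathcal B}\otimes\hat{\mathcal B'}$ falls out of the built-in coalgebra identity $\mathcal D(\hat{\mathcal B},\hat{\mathcal B'})=\hat{\mathcal B}\otimes\hat{\mathcal B'}$ rather than feeding into the proof.
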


\subsection{Triangular decomposition of $U_{r,s}(\hat {\mathfrak g})$}\,
Let $U_0=\mathbb K[\om_0^{\pm1},\cdots,\om_n^{\pm1}]$, $U^0=\mathbb
K[\om_0^{\pm1},\cdots,\om_n^{\pm1},{\om_0'}^{\pm1},\cdots,{\om_n'}^{\pm1}]$,
and $U_0'=\mathbb
K[{\om_0'}^{\pm1},\cdots,{\om_n'}^{\pm1}]$ denote the Laurent
polynomial subalgebras of $U_{r,s}(\widehat{\frak {g}})$,
$\widehat{\mathcal B}$,
 and $\widehat{\mathcal B'}$ respectively. Clearly, $U^0=U_0U_0'=U_0'U_0$.
Denote by $U_{r,s}(\widehat{\frak n})$ $($resp.
$U_{r,s}(\widehat{\frak n}^-)$\,$)$ the subalgebra of
$\widehat{\mathcal B}$ $($resp. $\widehat{\mathcal B'})$ generated
by $e_i$ $($resp. $f_i$$)$ for all $i\in I_0$. By definition,
$\widehat{\mathcal B}=U_{r,s}(\widehat{\frak n})\rtimes
U_0$, $\widehat{\mathcal B'}=U_0'\ltimes U_{r,s}(\widehat{\frak
n}^-)$, so that the double $\mathcal D(\widehat{\mathcal
B},\widehat{\mathcal B'})\cong U_{r,s}(\widehat{\frak n})\ot U^0\ot
U_{r,s}(\widehat{\frak n}^-)$, as vector spaces. On the other hand,
if we consider $\la\, ,\, \ra^-: \widehat{\mathcal B'}\times
\widehat{\mathcal B}\lra\mathbb K$ by $\la b', b\ra^-:=\la S(b'),
b\ra$, the convolution inverse of the skew-dual paring $\la\, ,\ra$
in Proposition 2.6, the composition with the flip mapping $\sigma$
then gives rise to a new skew-dual paring
$\la\,|\,\ra:=\la\,,\ra^-\circ\sigma: \widehat{\mathcal
B}\times\widehat{\mathcal B'}\lra \mathbb K$, given by $\la
b|b'\ra=\la S(b'),b\ra$. As a byproduct of Theorem 2.8 (see
[\,BGH1, Coro. 2.6]), we get the standard triangular decomposition of
$U_{r,s}(\widehat{\frak{g}})$.

\begin{coro}
$U_{r,s}(\widehat{\frak{g}})\cong U_{r,s}(\widehat{\frak n}^-)\ot
U^0\ot U_{r,s}(\widehat{\frak n})$, as vector spaces.\hfill\qed
\end{coro}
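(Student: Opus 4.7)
The plan is to read off the triangular decomposition directly from the Drinfel'd double description supplied by Theorem 2.8. That theorem gives a Hopf algebra isomorphism $U_{r,s}(\widehat{\mathfrak{g}})\cong\mathcal{D}(\widehat{\mathcal{B}},\widehat{\mathcal{B}'})$, and by construction the underlying coalgebra of the double is $\widehat{\mathcal{B}}\otimes\widehat{\mathcal{B}'}$ with the multiplication map $\hat a\otimes\hat f\mapsto\hat a\hat f$ realising the ambient algebra; in particular, as $\mathbb K$-vector spaces,
$$U_{r,s}(\widehat{\mathfrak{g}})\;\cong\;\widehat{\mathcal{B}}\otimes\widehat{\mathcal{B}'}.$$
No separate independence argument is needed, since injectivity of the multiplication map is built into the double construction.

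I would then unpack each Borel using the semidirect-product decompositions
$$\widehat{\mathcal{B}}\;=\;U_{r,s}(\widehat{\mathfrak n})\rtimes U_0,\qquad \widehat{\mathcal{B}'}\;=\;U_0'\ltimes U_{r,s}(\widehat{\mathfrak n}^-),$$
which are immediate from (X2)--(X3): each $\omega_j^{\pm 1}$ (resp.\ $\omega_j'^{\,\pm 1}$) acts diagonally on the $e_i$'s (resp.\ $f_i$'s), so the Cartan part splits off cleanly as a free factor. Substituting into the previous display yields
$$U_{r,s}(\widehat{\mathfrak g})\;\cong\;U_{r,s}(\widehat{\mathfrak n})\otimes U_0\otimes U_0'\otimes U_{r,s}(\widehat{\mathfrak n}^-),$$
and since the $\omega$'s and $\omega'$'s mutually commute by (X1), the middle factor collapses to $U_0U_0'=U_0'U_0=U^0$ through a genuine linear isomorphism.

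The ordering of the two outer factors produced above is reversed compared to the statement of the corollary. To swap the two ``wings,'' I would appeal to the alternative non-degenerate skew-dual pairing $\langle\,|\,\rangle:\widehat{\mathcal{B}}\times\widehat{\mathcal{B}'}\to\mathbb K$, $\langle b|b'\rangle=\langle S(b'),b\rangle$, introduced in the paragraph immediately preceding the corollary. Running the proof of Theorem 2.8 with this pairing in place of $\langle\,,\,\rangle$ produces a second Hopf algebra isomorphism $U_{r,s}(\widehat{\mathfrak g})\cong\mathcal{D}(\widehat{\mathcal{B}'},\widehat{\mathcal{B}})$, whose underlying vector space is $\widehat{\mathcal{B}'}\otimes\widehat{\mathcal{B}}$; applying the same unpacking to this double then yields the decomposition in the order stated.

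The only genuinely delicate point in this plan is verifying that $\langle\,|\,\rangle$ really does reconstruct the same algebra $U_{r,s}(\widehat{\mathfrak g})$---i.e.\ that the cross relations \emph{(4)} computed from $\langle\,|\,\rangle$ reproduce exactly \emph{(X4)} together with the diagonal actions of \emph{(X2)--(X3)}. This is the affine analogue of [BGH1, Coro.~2.6] and goes through because the Chevalley-level relations (X1)--(X4) are stable under the Hopf antipode $S$ and the flip used to build $\langle\,|\,\rangle$, while the Serre relations (X5) do not enter the computation since $\widehat{\mathfrak n}^{\pm}$ are treated as whole subalgebras rather than generator-by-generator. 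Once this point is checked, the corollary follows with no additional work.
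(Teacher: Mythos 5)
Your proposal is correct and follows essentially the same route as the paper: the double $\mathcal D(\widehat{\mathcal B},\widehat{\mathcal B'})$ gives $U_{r,s}(\widehat{\frak g})\cong\widehat{\mathcal B}\ot\widehat{\mathcal B'}$ as vector spaces, the Borels split as $U_{r,s}(\widehat{\frak n})\rtimes U_0$ and $U_0'\ltimes U_{r,s}(\widehat{\frak n}^-)$, and the order of the two wings is reversed by passing to the flipped pairing $\la b|b'\ra=\la S(b'),b\ra$. The paper handles your "delicate point" exactly as you anticipate, by appeal to the analogue of [BGH1, Coro.~2.6].
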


\begin{defi} $($Prop. 3.2 \cite{HRZ}$)$
Let $\tau$ be the $\mathbb{Q}$-algebra anti-automorphism of
$U_{r,s}(\widehat{\frak {g}})$ such that $\tau(r)=s$, $\tau(s)=r$,
$\tau(\la \om_i',\om_j\ra^{\pm1})=\la \om_j',\om_i\ra^{\mp1}$, and
\begin{gather*}
\tau(e_i)=f_i, \quad \tau(f_i)=e_i, \quad \tau(\om_i)=\om_i',\quad
\tau(\om_i')=\om_i,\\
\tau(\gamma)=\gamma',\quad
\tau(\gamma')=\gamma,\quad\tau(D)=D',\quad \tau(D')=D.
\end{gather*}
Then ${\widehat{\mathcal B'}}=\tau({\widehat{\mathcal B}})$ with
those induced defining relations from ${\widehat{\mathcal B}}$, and
those cross relations in $(\textrm{X2})$---$(\textrm{X4})$ are
antisymmetric with respect to $\tau$.\hfill\qed
\end{defi}

\smallskip
\section{Drinfeld Realization via Generating
Functions with $\tau$-invariance}

\subsection{Generating functions with $\tau$-invariance and Drinfeld realization}
In order to obtain the intrinsic definition of Drinfeld realization of the
two-parameter quantum affine algebra $U_{r,s}(\widehat{\frak{g}})$,
we need first to construct the generating functions $g_{ij}^{\pm}(z)$ ($1\le i,\,j\le n$) with $\tau$-invariance, which
is due to the first author defined as follows (This was initially motivated in part by section 1.1 in \cite{Gr} in the one-parameter setting regardless of $\tau$-invariance there).

\smallskip
Let $\al_i,\,\al_j \in \Delta$, we set $g_{ij}^{\pm}(z)=\sum_{n\in
\mathbb{Z}_+}{^{\pm}}c_{\al_i,\,\al_j}^{(n)}z^{n}:=\sum_{n\in
\mathbb{Z}_+}{^{\pm}}c^{(n)}_{ij}z^{n}$, a formal power series in $z$,
where the coefficients $^\pm c^{(n)}_{ij}$ are determined from the Taylor
series expansion in the variable $z$ at $0\in \mathbb{C}$ of the
function
$$
\sum_{n\in \mathbb{Z}_+}{^{\pm}}c^{(n)}_{ij}z^{n}=g^{\pm}_{ij}(z)=
\frac{G_{ij}^{\pm}(z, 1)} {F_{ij}^{\pm}(z, 1)},
$$
where we got some observations from the discussions for type $A^{(1)}_n$ we did in \cite{HRZ} to define $F_{ij}^\pm(z,\,w),\,G_{ij}^\pm(z,\,w)$ as follows.
\begin{gather*}
F_{ij}^\pm(z,w):=z-(\lg i,j\rg\lg j,i\rg)^{\pm\frac1{2}}w,\\
G_{ij}^\pm(z,w):=\lg j,i\rg^{\pm1}z-(\lg j,i\rg\lg
i,j\rg^{-1})^{\pm\frac1{2}}w.
\end{gather*}

We have a uniform formula for $g^{\pm}_{ij}(z)$ as below
\begin{eqnarray}
g^{\pm}_{ij}(z)&=&\frac{\lg j,i\rg^{\pm1}z-(\lg j,i\rg\lg
i,j\rg^{-1})^{\pm\frac1{2}}}{z-(\lg i,j\rg\lg j,i\rg)^{\pm\frac1{2}}} \vspace{2mm}\nonumber\\
&=&(\lg j,i\rg\lg i,j\rg^{-1})^{\pm\frac1{2}}\frac{(\lg i,j\rg\lg
j,i\rg)^{\pm\frac1{2}}z-1}{z-(\lg i,j\rg\lg j,i\rg)^{\pm\frac1{2}}}.
\end{eqnarray}

Then in both cases (whenever $i=j$ or $i\ne j$), we have a uniform
expansion formula for $g^{\pm}_{ij}(z)=\sum_{k\ge0}{^\pm
c}_{ij}^{(k)}z^k$ with ${^\pm c}_{ij}^{(0)}= \lg i,j\rg^{\mp1}$ and
$$
{^\pm c}_{ij}^{(k)}={^\pm c}_{ij}^{(0)} \lg
i,i\rg^{\mp\frac{(k{-}1)a_{ij}}2}\left(\lg
i,i\rg^{\mp\frac{a_{ij}}2}-\lg i,i\rg^{\pm \frac{a_{ij}}{2}}\right),
\quad \textit{for} \ k>0.
$$

Define the other generating functions in a formal variable $z$ (also see \cite{HRZ}):
\begin{gather*}
\delta(z)=\sum_{n\in\mathbb{Z}}z^n, \qquad\quad\qquad x_i^{\pm}(z) = \sum_{k \in \mathbb{Z}}x_i^{\pm}(k) z^{-k},\\
\om_i(z) = \sum_{m \in \mathbb{Z}_+}\om_i(m) z^{-m}=\om_i\exp\Bigl((r{-}s)\sum_{\ell\ge1}a_i(\ell)z^{-\ell}\Bigr), \\
\om'_i(z) = \sum_{n \in \mathbb{Z}_+}\om'_i(-n) z^n=\om'_i\exp\Bigl(-(r{-}s)\sum_{\ell\ge1}a_i(-\ell)z^\ell\Bigr).
\end{gather*}

The following property for the generating functions $g^{\pm}_{ij}(z)$ is rather crucial for deriving the inherent definition of
Drinfeld realization in the two-parameter version. 

\begin{prop} $(${\bf The $\tau$-invariance of generating functions}$)$ \
Assume $\tau(r)=s$, $\tau(s)=r$, $\tau(\lg i, j\rg)=\lg j,i\rg^{-1}$, $\tau(z)=z^{-1}$, $\tau(x_i^{\pm}(k))=x_i^{\mp}(-k)$, $\tau(\om_i(m))=\om'_i(-m)$,
$\tau(\om'_i(-m))=\om_i(m)$, for $m\in\mathbb Z_+$ with $\om_i(0)=\om_i$, $\om'_i(0)=\om_i'$, then

$(\text{\rm i})$ $\tau(g_{ij}^{\pm}(z))=g_{ij}^{\pm}(z)$, and $g^{\pm}_{ij}(z)^{-1}=g^{\mp}_{ij}(z)$, $g^{\pm}_{ij}(z^{-1})=g^{\mp}_{ji}(z)=g_{ji}^{\pm}(z)^{-1}$.

$(\text{\rm ii})$
$\tau(\delta(z))=\delta(z)$, $\tau(x_i^{\pm}(z))=x_i^{\mp}(z)$, $\tau(\om_i(z))=\om_i'(z)$ and $\tau(\om_i'(z))=\om_i(z)$.
\end{prop}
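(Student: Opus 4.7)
The plan is to reduce every identity to the compact closed form $g^{\pm}_{ij}(z)=B^{\pm 1/2}(A^{\pm 1/2}z-1)/(z-A^{\pm 1/2})$ afforded by equation (3.1), where I abbreviate $A:=\lg i,j\rg\lg j,i\rg$ and $B:=\lg j,i\rg\lg i,j\rg^{-1}$. From $\tau(\lg i,j\rg)=\lg j,i\rg^{-1}$ one checks immediately that $\tau(A)=A^{-1}$, hence $\tau(A^{\pm 1/2})=A^{\mp 1/2}$, while $\tau(B)=B$, so $B^{\pm 1/2}$ is $\tau$-fixed. Together with $\tau(z)=z^{-1}$, this reduces the whole of part~(i) to elementary algebra.

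For the $\tau$-invariance in (i), I apply $\tau$ factor-by-factor to obtain
\[\tau(g^{\pm}_{ij}(z))=B^{\pm 1/2}\,\frac{A^{\mp 1/2}z^{-1}-1}{z^{-1}-A^{\mp 1/2}},\]
then clear $z^{-1}$ by multiplying numerator and denominator by $z$, and use the factorization $A^{\pm 1/2}z-1=A^{\pm 1/2}(z-A^{\mp 1/2})$ to recognize the result as $g^{\pm}_{ij}(z)$. For $g^{\pm}_{ij}(z)\,g^{\mp}_{ij}(z)=1$, I simply multiply the two closed forms; both numerator and denominator expand to $z^2-(A^{1/2}+A^{-1/2})z+1$, giving $1$. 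For $g^{\pm}_{ij}(z^{-1})=g^{\mp}_{ji}(z)$, I substitute $z\mapsto z^{-1}$ and observe that swapping $i\leftrightarrow j$ fixes $A$ and sends $B\mapsto B^{-1}$; after clearing denominators the cleared expression coincides with $g^{\mp}_{ji}(z)$, and the final equality $g^{\mp}_{ji}(z)=g^{\pm}_{ji}(z)^{-1}$ is the previously proved inversion identity applied at $(j,i)$.

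Part~(ii) is a termwise computation from the series definitions. The identity $\tau(\delta(z))=\delta(z)$ follows by reindexing $n\mapsto -n$ in $\sum_{n\in\mathbb Z}z^n$. For $x_i^{\pm}(z)=\sum_{k\in\mathbb Z}x_i^{\pm}(k)z^{-k}$, the hypotheses $\tau(x_i^{\pm}(k))=x_i^{\mp}(-k)$ and $\tau(z^{-k})=z^{k}$ give $\tau(x_i^{\pm}(z))=\sum_{k}x_i^{\mp}(-k)z^{k}=x_i^{\mp}(z)$ after $k\mapsto -k$. The $\om_i\leftrightarrow\om_i'$ pair is identical: $\tau$ sends the positive-mode series for $\om_i$ to the positive-mode series for $\om_i'$ via $\tau(\om_i(m))=\om_i'(-m)$ and $\tau(z^{-m})=z^{m}$, including the boundary values $\om_i(0)=\om_i$, $\om_i'(0)=\om_i'$. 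The only real nuisance I anticipate is bookkeeping the half-integer exponents and sign flips needed in (i); no genuine obstacle emerges, since $\tau$ acts essentially diagonally on every scalar symbol appearing in the closed forms.
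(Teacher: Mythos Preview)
Your proposal is correct and complete. The paper states this proposition without proof, treating all parts as routine verifications from the closed form (3.1); your write-up supplies exactly the natural argument the paper leaves implicit, namely the observation that $A=\lg i,j\rg\lg j,i\rg$ and $B=\lg j,i\rg\lg i,j\rg^{-1}$ satisfy $\tau(A)=A^{-1}$, $\tau(B)=B$, together with termwise manipulation of the series in~(ii).
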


Now let us formulate the {\bf inherent definition of Drinfeld realization for
two-parameter quantum affine algebra} $U_{r,s}(\widehat{\frak{g}})$ via our generating functions with $\tau$-invariance. 
Set $r_i=r^{d_i}$, $s_i=s^{d_i}$, where $A=DB$, $D=\text{diag}\{d_0,{\cdots},d_n\}$ and $B$ is symmetric.

\begin{defi} $(${\bf Theorem.}$)$  The $(r,s)$-Drinfeld realization $\mathcal U_{r,s}(\widehat{\mathfrak{g}})$ associated to the two-parameter quantum affine algebra
$U_{r,s}(\widehat{\mathfrak{g}})$
is the assaciative algebra with unit
$1$ and generators
$
\bigl\{ x_i^{\pm}(k), \, \om_i(m), \, \om'_i(-n), \,
\gamma^{\pm\frac{1}{2}}, \, {\gamma'}^{\pm\frac{1}{2}},\, D^{\pm1},\, D'^{\,\pm1}\,\big|\, i\in I, ~k \in
\mathbb{Z}, \, m, n \in \mathbb{Z}_+\bigr\}
$
satisfying the relations below with $\tau$-invariance, written in terms of $\tau$-invariant generating functions of formal variables $z$, $w$ with $\mathbb Q$-anti-involution $\tau$ such that $\tau(\ga)=\ga'$, $\tau(\ga')=\ga$ (where $c$ is the canonical central element of $\hat{\mathfrak g}$ and $\tau$ is defined as in Proposition 3.1, set $g_{ij}(z):=g^+_{ij}(z)$):
\begin{eqnarray}
&&\quad\gamma^{\pm\frac1{2}},\, \gamma'^{\pm\frac1{2}} \ \hbox{are
central and mutually inverse such that } \gamma\gamma'=(rs)^c,
\\
&&\quad\om_{i}(0)^{\pm1}, \ \om'_{j}(0)^{\pm1} \ \hbox{mutually commute}, \ \text{where } \om_i(0)=\om_i, \ \om'_j(0)=\om_j',\\
&&\quad \om_i(z)\om_j(w)=\om_j(w)\om_i(z),\ \qquad \om'_i(z)\om'_j(w)=\om'_j(w)\om'_i(z),\\
&&\quad g_{ij}\Bigl(zw^{-1}(\gamma\gamma')^{\frac{1}{2}}\gamma\Bigr)\om'_i(z)\om_j(w)
=g_{ij}\Bigl(zw^{-1}(\gamma\gamma')^{\frac{1}{2}}\gamma'\Bigr)
\om_j(w)\om'_i(z),\\
&&\quad Df_i(z)D^{-1}{=}f_i\Bigl(\frac{z}{r_i}\Bigr), D'f_i(z)D'^{-1}{=}f_i\Bigl(\frac{z}{s_i}\Bigr), \textit{ for } f_i(z){=}x^{\pm}_i(z), \om_i(z), \om'_i(z),\\
&&\quad \om'_i(z)x_j^{\pm}(w)\om'_i(z)^{-1}
=g_{ij}\Bigl(\frac{z}{w}(\gamma\gamma')^{\frac{1}{2}}\gamma^{\mp
\frac{1}{2}}\Bigr)^{\pm1}x_j^{\pm}(w),  \\
&&\quad \om_i(z)x_j^{\pm}(w)\om_i(z)^{-1}=g_{ji}\Bigl(\frac{w}{z}(\gamma\gamma')^{\frac{1}{2}}\gamma'^{\pm
\frac{1}{2}}\Bigr)^{\mp1}x_j^{\pm}(w), \\
&&\quad [\,x_i^+(z),
x_j^-(w)\,]=\frac{\delta_{ij}}{r_i-s_i}\Big(\delta(zw^{-1}\gamma')\om_i(w\gamma^{\frac{1}2})
-\delta(zw^{-1}\gamma)\om'_i(z\gamma'^{-\frac1{2}})\Big),\\
&&\quad F_{ij}^\pm(z,\,w)\,x_i^{\pm}(z)x_j^{\pm}(w)=G_{ij}^\pm(z,\,w)\,x_j^{\pm}(w)\,x_i^{\pm}(z),\\
&&\quad x_i^{\pm}(z)x_j^{\pm}(w)=\lg
j,i\rg^{\pm1}x_j^{\pm}(w)x_i^{\pm}(z), \qquad\hbox{for }
\ a_{ij}=0,\\
%&&\quad \{x_i^{\pm}(z_1)
%x_i^{\pm}(z_2)x_j^{\pm}(w)-(r^{\pm1}+s^{\pm1})\,x_i^{\pm}(z_1)x_j^{\pm}(w)x_i^{\pm}(z_2)\\
%&&\qquad+\,(rs)^{\pm1}
%x_j^{\pm}(w)x_i^{\pm}(z_1)x_i^{\pm}(z_2)\}+\{z_1\leftrightarrow
%z_2\}=0,\nonumber\\
%&& \hskip 5cm \qquad\hbox{for }
%\ a_{ij}=-1, \ 1\leq i<j<n,\\
%&&\quad \{x_i^{\pm}(z_1)
%x_i^{\pm}(z_2)x_j^{\pm}(w)-(r^{\mp1}+s^{\mp1})\,x_i^{\pm}(z_1)x_j^{\pm}(w)x_i^{\pm}(z_2)
%\\&& \qquad+\,(rs)^{\mp1}
%x_j^{\pm}(w)x_i^{\pm}(z_1)x_i^{\pm}(z_2)\}+\{z_1\leftrightarrow
%z_2\}=0,\nonumber\\
%&& \hskip 5cm \qquad\hbox{for } a_{ij}=-1, \
%1\leq j<i<n.
&&\quad Sym_{z_1,\cdots,
z_n}\sum_{k=0}^{n=1-a_{ij}}(-1)^k(r_is_i)^{\pm\frac{k(k-1)}{2}}
\Big[{1-a_{ij}\atop  k}\Big]_{\pm{i}}x_i^{\pm}(z_1)\cdots x_i^{\pm}(z_k) x_j^{\pm}(w)\\
&&\hskip1.8cm \times x_i^{\pm}(z_{k+1})\cdots x_i^{\pm}(z_{n})=0,
\quad\hbox{for} \quad a_{ij}< 0, \quad  1\leq i<j<n,\nonumber\\
&&\quad Sym_{z_1,\cdots,
z_n}\sum_{k=0}^{n=1-a_{ij}}(-1)^k(r_is_i)^{\mp\frac{k(k-1)}{2}}
\Big[{1-a_{ij}\atop  k}\Big]_{\mp{i}}x_i^{\pm}(z_1)\cdots x_i^{\pm}(z_k) x_j^{\pm}(w)\\
&&\hskip1.8cm \times x_i^{\pm}(z_{k+1})\cdots x_i^{\pm}(z_n)=0,
\quad\hbox{for } \quad a_{ij}< 0, \quad  1\leq j<i<n,\nonumber
\end{eqnarray}
where $\textit{Sym}_{z_1,\cdots, z_n}$  denotes symmetrization w.r.t. the
indices $(z_1, {\cdots}, z_n)$.
In particular, $\tau$ keeps each term among the relations (3.2)---(3.6), (3.9)---(3.11); but interchanges the relations between (3.7) and (3.8), the ones between (3.12) and (3.13).
\end{defi}

\begin{remark} \ (1) \ When $r=q=s^{-1}$, $g_{ij}(z)$ is the same as that of the one-parameter quantum affine algebras (cf. \cite{Gr}).
%For simplicity, we write $\phi_{i,0}=\om'_i,\, \psi_{j,0}=\om_j$

(2) \ When $r=q=s^{-1}$, the algebra ${\mathcal U}_{q,
q^{-1}}(\widehat{\frak {g}})$ modulo the ideal generated by the set
$\{\,\om_i'-\om_i^{-1}$ $(i\in I)$,
$\gamma'^{\,\frac{1}2}-\gamma^{-\frac{1}2}\,\}$, is the usual Drinfeld
realization ${\mathcal U}_q(\widehat{\frak {g}})$.

(3) \ Denote ${^\pm
\bar c}_{ij}^{(k)}:={^\pm c}_{ij}^{(k)}/{^\pm c}_{ij}^{(0)}$, $t:=r{-}s$. (3.7) is equivalent to the following
$$\textrm{exp}\bigl(-t\sum_{\ell>0}a_i(-\ell)z^\ell\bigr)\cdot
\,x_j^\pm(w)\cdot\textrm{exp}\bigl(t\sum_{\ell>0}a_i(-\ell)z^\ell\bigr)
=\sum_{k\ge0}{^\pm
\bar c}_{ij}^{(k)}\gamma^{\mp\frac{k}2}\bigl(\frac{(\gamma\gamma')^{\frac{1}2}z}w\bigr)^kx_j^\pm(w).$$
(3.8) is equivalent to the following
$$\textrm{exp}\bigl(t\sum_{\ell>0}a_i(\ell)z^{-\ell}\bigr)\cdot
\,x_j^\pm(w)\cdot\textrm{exp}\bigl(-t\sum_{\ell>0}a_i(\ell)z^{-\ell}\bigr)
=\sum_{k\ge0}{^\mp\bar
c}_{ji}^{(k)}\gamma'^{\pm\frac{k}2}\bigl(\frac{(\gamma\gamma')^{\frac{1}2}w}z\bigr)^kx_j^\pm(w).$$
Both expansions give rise to the relations $($D6$_1)$ and
$($D6$_2)$ below.
\end{remark}

\begin{defi}
({\bf Equivalent Definition.}) The unital associative algebra ${\mathcal
U}_{r,s}(\widehat{\frak {g}})$
 over $\mathbb{K}$  is generated by the
elements  $x_i^{\pm}(k)$, $a_i(\ell)$, $\om_i^{\pm1}$,
${\om'_i}^{\pm1}$, $\gamma^{\pm\frac{1}{2}}$,
${\gamma'}^{\,\pm\frac{1}2}$, $D^{\pm1}$, $D'^{\,\pm1}$ $(i\in I$,
$k,\,k' \in \mathbb{Z}$, $\ell\in \mathbb{Z}\backslash
\{0\})$, subject to the following defining relations:

\medskip
\noindent $(\textrm{D1})$ \  $\gamma^{\pm\frac{1}{2}}$,
$\gamma'^{\,\pm\frac{1}{2}}$ are central such that
$\gamma\gamma'=(rs)^c $,\,
$\omega_i\,\omega_i^{-1}=\omega_i'\,\omega_i'^{\,-1}=1$ $(i\in I)$,
and for $i,\,j\in I$, one has
\begin{equation*}
\begin{split}
[\,\omega_i^{\pm 1},\omega_j^{\,\pm 1}\,]&=[\,\om_i^{\pm1},
D^{\pm1}\,]=[\,\om_j'^{\,\pm1}, D^{\pm1}\,] =[\,\om_i^{\pm1},
D'^{\pm1}\,]=0\\
&=[\,\omega_i^{\pm 1},\omega_j'^{\,\pm 1}\,]=[\,\om_j'^{\,\pm1},
D'^{\pm1}\,]=[D'^{\,\pm1}, D^{\pm1}]=[\,\omega_i'^{\pm
1},\omega_j'^{\,\pm 1}\,].
\end{split}
\end{equation*}
$$[\,a_i(\ell),a_j(\ell')\,]
=\delta_{\ell+\ell',0}\frac{ (\ga\ga')^{\frac{|\ell|}{2}}(\la
i,\,i\ra^{\frac{\ell a_{ij}}{2}}- \la i,\,i\ra^{\frac{-\ell
a_{ij}}{2}})} {|\ell|(r_i-s_i)}
\cdot\frac{\gamma^{|\ell|}-\gamma'^{|\ell|}}{r_i-s_i}.
 \leqno(\textrm{D2})
$$
$$[\,a_i(\ell),~\om_j^{{\pm }1}\,]=[\,\,a_i(\ell),~{\om'}_j^{\pm
1}\,]=0.\leqno(\textrm{D3})
$$
$$
\begin{array}{lll}
D\,x_i^{\pm}(k)\,D^{-1}=r^k\, x_i^{\pm}(k), \qquad\ \
D'\,x_i^{\pm}(k)\,D'^{\,-1}=s^k\, x_i^{\pm}(k),
\\
D\, a_i(\ell)\,D^{-1}=r^\ell\,a_i(\ell), \qquad\qquad D'\,
a_i(\ell)\,D'^{\,-1}=s^\ell\,a_i(\ell).
\end{array}\leqno{(\textrm{D4})}
$$
$$
\om_i\,x_j^{\pm}(k)\, \om_i^{-1} =  \langle \omega_j',
\omega_i\rangle^{\pm 1} x_j^{\pm}(k), \qquad \om'_i\,x_j^{\pm}(k)\,
\om_i'^{\,-1} =  \langle \omega'_i, \omega_j\rangle
^{\mp1}x_j^{\pm}(k).\leqno(\textrm{D5})
$$
$$
\begin{array}{lll}
[\,a_i(\ell),x_j^{\pm}(k)\,]=\pm\frac{(\ga\ga')^{\frac{\ell}{2}} (\la
i,\,i\ra^{\frac{\ell a_{ij}}{2}}-\la i,\,i\ra^{\frac{-\ell
a_{ij}}{2}})}
{\ell(r_i-s_i)}\gamma'^{\pm\frac{\ell}2}x_j^{\pm}(\ell{+}k),\quad
\textit{for} \quad \ell>0,
\end{array}\leqno{(\textrm{D$6_1$})}
$$
$$
\begin{array}{lll}
[\,a_i(\ell),x_j^{\pm}(k)\,]=\pm\frac{(\ga\ga')^{\frac{-\ell}{2}}(\la
i,\,i\ra^{\frac{\ell a_{ij}}{2}}-\la i,\,i\ra^{\frac{-\ell
a_{ij}}{2}})}
{\ell(r_i-s_i)}\gamma^{\pm\frac{\ell}2}x_j^{\pm}(\ell{+}k), \quad
\textit{for} \quad \ell<0.
\end{array}\leqno{(\textrm{D$6_2$})}
$$
$$
\begin{array}{lll}
x_i^{\pm}(k{+}1)\,x_j^{\pm}(k') - \langle j,i\rangle^{\pm1} x_j^{\pm}(k')\,x_i^{\pm}(k{+}1)\\
=-\Bigl(\langle j,i\rangle\langle
i,j\rangle^{-1}\Bigr)^{\pm\frac1{2}}\,\Bigl(x_j^{\pm}(k'{+}1)\,x_i^{\pm}(k)-\langle
i,j\rangle^{\pm1} x_i^{\pm}(k)\,x_j^{\pm}(k'{+}1)\Bigr).
\end{array}\leqno{(\textrm{D7})}
$$

$$
[\,x_i^{+}(k),~x_j^-(k')\,]=\frac{\delta_{ij}}{r_i-s_i}\Big(\gamma'^{-k}\,{\gamma}^{-\frac{k+k'}{2}}\,
\om_i(k{+}k')-\gamma^{k'}\,\gamma'^{\frac{k+k'}{2}}\,\om'_i(k{+}k')\Big),\leqno(\textrm{D8})
$$
where $\om_i(m)$, $\om'_i(-m)~(m\in \mathbb{Z}_{\geq 0})$ such that
$\om_i(0)=\om_i$ and  $\om'_i(0)=\om_i'$ are defined as below:
\begin{gather*}\sum\limits_{m=0}^{\infty}\om_i(m) z^{-m}=\om_i \exp \Big(
(r_i{-}s_i)\sum\limits_{\ell=1}^{\infty}
 a_i(\ell)z^{-\ell}\Big),\quad \bigl(\om_i(-m)=0, \ \forall\;m>0\bigr); \\
\sum\limits_{m=0}^{\infty}\om'_i(-m) z^{m}=\om'_i \exp
\Big({-}(r_i{-}s_i)
\sum\limits_{\ell=1}^{\infty}a_i(-\ell)z^{\ell}\Big), \quad
\bigl(\om'_i(m)=0, \ \forall\;m>0\bigr).
\end{gather*}
$$x_i^{\pm}(m)x_j^{\pm}(k)=\langle j,i\rangle^{\pm1}x_j^{\pm}(k)x_i^{\pm}(m),
\qquad\ \hbox{for} \quad a_{ij}=0,\leqno(\textrm{D$9_1$})$$
$$
\begin{array}{lll}
& Sym_{m_1,\cdots
m_{n}}\sum_{k=0}^{n=1-a_{ij}}(-1)^k(r_is_i)^{\pm\frac{k(k-1)}{2}}
\Big[{1-a_{ij}\atop  k}\Big]_{\pm{i}}x_i^{\pm}(m_1)\cdots x_i^{\pm}(m_k) x_j^{\pm}(\ell)\\
&\hskip1.8cm \times x_i^{\pm}(m_{k+1})\cdots x_i^{\pm}(m_{n})=0,
\quad\hbox{for} \quad a_{ij}\neq 0, \quad  1\leq i<j<n,
\end{array} \leqno{(\textrm{D$9_2$})}
$$
$$
\begin{array}{lll}
& Sym_{m_1,\cdots
m_{n}}\sum_{k=0}^{n=1-a_{ij}}(-1)^k(r_is_i)^{\mp\frac{k(k-1)}{2}}
\Big[{1-a_{ij}\atop  k}\Big]_{\mp{i}}x_i^{\pm}(m_1)\cdots x_i^{\pm}(m_k) x_j^{\pm}(\ell)\\
&\hskip1.8cm \times x_i^{\pm}(m_{k+1})\cdots x_i^{\pm}(m_{n})=0,
\quad\hbox{for } \quad a_{ij}\neq 0, \quad  1\leq j<i<n,
\end{array} \leqno{(\textrm{D$9_3$})}
$$
where $[m]_{\pm{i}}=\frac{r_i^{\pm m}-s_i^{\pm m}}{r_i-s_i}$, $[m]_{\pm i}!=[m]_{\pm i}\cdots[2]_{\pm i}[1]_{\pm i}$,
$\Bigl[{m\atop n}\Bigr]_{\pm i}=\frac{[m]_{\pm i}!}{[n]_{\pm i}![m-n]_{\pm i}!}$,
$\textit{Sym}_{m_1,\cdots, m_n}$  denotes symmetrization w.r.t. the
indices $(m_1, \cdots, m_n)$.
\end{defi}

As one of crucial observations of considering the compatibilities of
the defining system above, we have
\begin{prop}
There exists the $\mathbb{Q}$-algebra antiautomorphism $\tau$ of
$\,{\mathcal U}_{r,s}(\widehat{\frak {g}})$  such that $\tau(r)=s$,
$\tau(s)=r$,
$\tau(\la\om_i',\om_j\ra^{\pm1})=\la\om_j',\om_i\ra^{\mp1}$ and
\begin{gather*}
\tau(\om_i)=\om_i',\quad \tau(\om_i')=\om_i,\quad
\tau(\gamma)=\gamma',\quad \tau(\gamma')=\gamma,\quad\tau(D)=D',\quad \tau(D')=D,\\
\tau(x_i^{\pm}(m))=x_i^{\mp}(-m), \quad
\tau(a_i(\ell))=a_i(-\ell),\\
\tau(\phi_i(m))=\varphi_i(-m), \quad\tau(\varphi_i(-m))=\phi_i(m),
\end{gather*}
and $\tau$ preserves each defining relation $($\hbox{{\rm D}n}$)$ in
Definition 3.1 for $n=1,\cdots,n$.\hfill\qed
\end{prop}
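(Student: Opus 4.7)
The plan is to verify that the $\mathbb{Q}$-linear map $\tau$, defined on generators as specified, extends to a $\mathbb{Q}$-algebra antiautomorphism of $\mathcal{U}_{r,s}(\widehat{\mathfrak g})$ by showing that each defining relation (D1)--(D9) in Definition 3.4 is transformed, under the anti-multiplicative extension, to another valid defining relation (typically the ``dual'' one obtained from the simultaneous swap $\pm \leftrightarrow \mp$, $\omega_i \leftrightarrow \omega_i'$, $\gamma \leftrightarrow \gamma'$, $D \leftrightarrow D'$, $r \leftrightarrow s$). The engine driving this verification is the $\tau$-invariance of the generating functions established in Proposition 3.1, which directly yields that the generating-function form of the relations (3.2)--(3.13) in Definition 3.2 is $\tau$-symmetric, as recorded in the last sentence of that definition; extracting coefficients then delivers the corresponding statement for (D1)--(D9).

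First I would dispose of the easy relations (D1), (D3), (D4), (D5): these involve only commutations and weight-type actions that are either manifestly invariant under the swaps $\omega_i \leftrightarrow \omega_i'$, $D \leftrightarrow D'$, $r \leftrightarrow s$, or split into two halves interchanged by $\tau$. For (D2), one checks that $\tau$ applied to $[a_i(\ell), a_j(\ell')]$ yields $-[a_i(-\ell), a_j(-\ell')]$, while the RHS transforms with a matching sign flip arising from $\tau(r_i{-}s_i) = -(r_i{-}s_i)$ together with the swap $\gamma \leftrightarrow \gamma'$ and $\langle i,i\rangle^{\ell a_{ij}/2} \leftrightarrow \langle i,i\rangle^{-\ell a_{ij}/2}$, so the relation is preserved after the substitution $\ell \mapsto -\ell$, $\ell' \mapsto -\ell'$. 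A direct coefficient comparison shows that (D$6_1$) and (D$6_2$) are interchanged by $\tau$, since $\tau$ sends the range $\ell>0$ to $\ell<0$ and swaps the factors $\gamma^{\pm \ell/2}$ and $\gamma'^{\pm \ell/2}$.

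The substantive content is the verification of (D7), (D8), and the Serre relations (D$9_2$), (D$9_3$). For (D7), applying $\tau$ reverses each product and converts $x_i^{\pm}(m) \mapsto x_i^{\mp}(-m)$, $\langle j,i\rangle^{\pm 1} \mapsto \langle i,j\rangle^{\mp 1}$, and the square-root prefactor $(\langle j,i\rangle\langle i,j\rangle^{-1})^{\pm 1/2}$ to itself; after the substitution $k \mapsto -k-1$, $k' \mapsto -k'$ and a sign rearrangement, the image reduces to the instance of (D7) with the opposite sign superscript. For (D8), the crucial inputs are the $\tau$-invariance of $\delta(z)$ and the identities $\tau(\omega_i(m)) = \omega_i'(-m)$, $\tau(\omega_i'(-m)) = \omega_i(m)$ from Proposition 3.1(ii); combined with $\tau(r_i{-}s_i) = -(r_i{-}s_i)$ and the swaps $\gamma \leftrightarrow \gamma'$, $k \leftrightarrow -k'$, applying $\tau$ produces precisely the same relation with the indices and central elements appropriately dualized. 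For the quantum-Serre relations, note that $\tau([m]_{+i}) = [m]_{-i}$ and hence $\tau([m]_{+i}!) = [m]_{-i}!$, so the bracket coefficients $\bigl[{1-a_{ij}\atop k}\bigr]_{\pm i}$ interchange under $\tau$; together with the order reversal of the products and the swap of the index-order conditions $i<j$ and $j<i$, (D$9_2$) and (D$9_3$) are interchanged by $\tau$, while (D$9_1$) is self-dual.

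The main obstacle is (D7): the $(r,s)$-asymmetry forces delicate bookkeeping of the $\langle j,i\rangle$ versus $\langle i,j\rangle$ coefficients, and one has to confirm carefully that the square-root prefactor $(\langle j,i\rangle\langle i,j\rangle^{-1})^{\pm 1/2}$ is genuinely $\tau$-fixed (since $\tau$ sends it to $(\langle i,j\rangle^{-1}\langle j,i\rangle)^{\mp 1/2}$, which coincides with the original). Once this sign tracking is controlled, all other checks become routine polynomial manipulations, and the whole proposition reduces to the cleaner generating-function identities of Proposition 3.1 via the equivalence between Definitions 3.2 and 3.4.
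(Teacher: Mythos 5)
Your overall strategy coincides with the paper's: Proposition 3.5 is stated there with no written proof, the intended justification being exactly what you describe, namely the $\tau$-invariance of the generating functions (Proposition 3.1) together with the closing remark of Definition 3.2 that $\tau$ fixes the relations (3.2)--(3.6), (3.9)--(3.11) and interchanges (3.7) with (3.8) and (3.12) with (3.13), transported to (D1)--(D9) by coefficient extraction. Your identification of which relations are self-dual and which are swapped, and your treatment of (D7) and (D8), are the right checks.

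There is, however, one concrete misstep in your mechanism for the Serre relations: the claim $\tau([m]_{+i})=[m]_{-i}$ is false. Since $[m]_{+i}=\frac{r_i^{m}-s_i^{m}}{r_i-s_i}$ is symmetric under $r\leftrightarrow s$ (both numerator and denominator change sign), one has $\tau([m]_{\pm i})=[m]_{\pm i}$ for each choice of sign, and likewise the Gaussian binomials $\bigl[{1-a_{ij}\atop k}\bigr]_{\pm i}$ are each individually $\tau$-fixed rather than interchanged. Consequently the interchange of (D$9_2$) and (D$9_3$) cannot be attributed to the coefficients transforming into one another; it must come from the anti-multiplicative reversal of the monomials $x_i^{\pm}(m_1)\cdots x_j^{\pm}(\ell)\cdots x_i^{\pm}(m_n)$, followed by the reindexing $k\mapsto (1-a_{ij})-k$ inside the symmetrized sum and the resulting rearrangement of the prefactors $(-1)^k(r_is_i)^{\pm k(k-1)/2}$ (using $\bigl[{n\atop k}\bigr]_{\pm i}=\bigl[{n\atop n-k}\bigr]_{\pm i}$), together with the swap of the index conditions $i<j$ and $j<i$. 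A smaller bookkeeping point: in (D2) the right-hand side acquires no net sign under $\tau$ (the two sign flips in the numerators cancel against the two flips of $r_i-s_i$ in the denominators); the sign needed to match $-[a_i(-\ell),a_j(-\ell')]$ comes instead from the oddness in $\ell$ of the factor $\la i,i\ra^{\ell a_{ij}/2}-\la i,i\ra^{-\ell a_{ij}/2}$. Neither issue changes the architecture of the argument, but as written the Serre-relation step would not go through.
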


\subsection{Quantum Lie bracket} \ In this paragraph, we first  establish an
algebraic isomorphism
 between  the two realizations
of two-parameter quantum affine algebras
$U_{r,s}(\widehat{\frak{g}})$ in the above, which is called Drinfeld
isomorphism in one-parameter quantum affine algebras. We need to
make some preliminaries on the
definition of quantum Lie bracket that appears to be regardless to
degrees of relative elements (see the properties (3.16) \& (3.17)
below). This a bit generalized quantum Lie bracket compared to the
one used in the usual construction of the quantum Lyndon basis (for
definition, see [R2]), which is consistent with the cases when
adding the bracketing on those corresponding Lyndon words, is
crucial to our proving later on.

\begin{defi} For $q_i\in \mathbb K^*=\mathbb{K}\backslash \{0\}$ and $i=1,2,\cdots s-1$,
The quantum Lie brackets $[\,a_1, a_2,\cdots,
a_s\,]_{(q_1,\,q_2,\,\cdots,\, q_{s-1})}$ and
$[\,a_1, a_2, \cdots,
a_s\,]_{\la q_1,\,q_2,\,\cdots, \,q_{s-1}\ra}$ are defined
inductively by
\begin{eqnarray*}
\begin{split}
[\,a_1, a_2\,]_{q_1}&=a_1a_2-q_1\,a_2a_1,\\
[\,a_1, a_2, \cdots, a_s\,]_{(q_1,\,q_2,\,\cdots,
\,q_{s-1})}&=[\,a_1, [\,a_2, \cdots,
a_s\,]_{(q_2,\,\cdots,\,q_{s-1})}\,]_{q_{1}},\\
[\,a_1, a_2, \cdots, a_s\,]_{\la q_1,\,q_2,\,\cdots,
\,q_{s-1}\ra}&=[\,[\,a_1, \cdots, a_{s-1}\,]_{\la
q_1,\,\cdots,\,q_{s-2}\ra}, a_s\,]_{q_{s-1}},
\end{split}
\end{eqnarray*}
\end{defi}
By consequences of the above definitions, the following identities follow
\begin{eqnarray}
&&[\,a, bc\,]_v=[\,a, b\,]_q\,c+q\,b\,[\,a, c\,]_{\frac{v}q},\\
&&[\,ab, c\,]_v=a\,[\,b, c\,]_q+q\,[\,a, c\,]_{\frac{v}q}\,b, \\
&& [\,a,[\,b,c\,]_u\,]_v=[\,[\,a,b\,]_q,
c\,]_{\frac{uv}q}+q\,[\,b,[\,a,c\,] _{\frac{v}q}\,]_{\frac{u}q},\label{b:1}\\
&&[\,[\,a,b\,]_u,c\,]_v=[\,a,[\,b,c\,]_q\,]_{\frac{uv}q}+q\,[\,[\,a,c\,]
_{\frac{v}q},b\,]_{\frac{u}q}.\label{b:2}
\end{eqnarray}

In particular, we get immediately,
\begin{eqnarray}
&&[\,a, [\,b_1, \cdots, b_s\,]_{(v_1,\,\cdots,\,
v_{s-1})}\,]=\sum\limits_i[\,b_1,\cdots,[\,a, b_i\,],
\cdots,b_s\,]_{(v_1,\,\cdots,\, v_{s-1})},\label{b:3}\hskip0.2cm \\
&&[\,a, a, b\,]_{(u,\,
v)}=a^2b-(u{+}v)\,aba+(uv)\,ba^2=(uv)[\,b, a, a\,]_{\la u^{-1},v^{-1}\ra},\label{b:4}\hskip0.2cm \\
&&[\,a, a, a, b\,]_{(u^2,\,uv,\,v^2)}=a^3b-[3]_{u,v}\,a^2ba
+(uv)[3]_{u,v}aba^2-(uv)^3ba^3, \label{b:5}\hskip0.2cm\\
&&[\,a, a, a, a, b\,]_{(u^3, u^2v, uv^2,
v^3)}=a^4b-[4]_{u,v}\,a^3ba+uv\left[4\atop 2\right]_{u,v}\,a^2ba^2 \label{b:6}\hskip0.2cm\\
&&\hskip5cm-\,(uv)^3[4]_{u,v}\,aba^3+(uv)^6ba^4,\nonumber
\end{eqnarray}
where $[n]_{u,v}=\frac{u^n{-}v^n}{u{-}v}$,
$[n]_{u,v}!:=[n]_{u,v}\cdots [2]_{u,v}[1]_{u,v}$, $\left[n\atop
m\right]_{u,v}:=\frac{[n]_{u,v}!}{[m]_{u,v}![n-m]_{u,v}!}$.

By the definition above, the formula $(\textrm{D7})$ will take the
convenient form as
\begin{equation}{\label{c:1}
\left[\,x^{\pm}_i(k),\, x_j^{\pm}(k'{+}1)\,\right]_{\langle
i,j\rangle^{\mp1}}=-\Bigl(\langle j,i\rangle\langle
i,j\rangle^{-1}\Bigr)^{\pm\frac1{2}} \left[\,x^{\pm}_j(k'),\,
x_i^{\pm}(k{+}1)\,\right]_{\langle j,i\rangle^{\mp1}}. }
\end{equation}

\subsection{Quantum root vectors} \,

 Furthermore, for each $\alpha=\alpha_{i_1}+\alpha_{i_2}+\cdots
+\alpha_{i_n}:=\alpha_{i_1,i_2,\cdots,i_n}\in \dot\Delta^+$, by
[R2], we can construct the quantum root vector $x_\alpha^+(0)$ as a
$(r,s)$-bracketing in an inductive fashion, for more details, see \cite{HRZ} :
\begin{equation*}
\begin{split}
x_{\alpha}^+(0):&=\bigl[\,x_{\alpha_{i_1,i_2,\cdots,i_{n-1}}}^+(0),
x_{i_n}^+(0)\,\bigr]
_{\la \om_{\alpha_{i_1,i_2,\cdots,i_{n-1}}}',\,\om_{i_n}\ra^{-1}}\\
&=\bigl[\,\cdots\bigl[\,x_{i_1}^+(0), x_{i_2}^+(0)\,\bigr]_{\la
i_1,i_2\ra^{-1}},\cdots,x_{i_n}^+(0)\,\bigr] _{\la
\om_{\alpha_{i_1,i_2,\cdots,i_{n-1}}}',\,\om_{i_n}\ra^{-1}}.
\end{split}\tag{*}
\end{equation*}

Applying $\tau$ to (*), we can obtain the definition of quantum
root vector $x_{\alpha}^-(0)$ as below:
\begin{equation*}
\begin{split}
x_{\alpha}^-(0):&=
\bigl[\,x_{i_n}^-(0),\,x_{\alpha_{i_1,i_2,\cdots,i_{n-1}}}^-(0)\bigr]
_{\la\om'_{i_n},\, \om_{\alpha_{i_1,i_2,\cdots,i_{n-1}}}\ra}\\
&=\bigl[\,x_{i_n}^-(0) \cdots\bigl[\,x_{i_2}^-(0),
x_{i_1}^-(0)\,\bigr]_{\la i_2,i_1\ra}\cdots \bigr]
_{\la\om'_{i_n},\, \om_{\alpha_{i_1,i_2,\cdots,i_{n-1}}}\ra}.
\end{split}
\end{equation*}

\begin{defi} $($see Definition 3.9 in \cite{HRZ}$)$
For $\alpha=\alpha_{i_1,i_2,\cdots,i_n}\in\dot\Delta^+$, we define
the {\it quantum affine root vectors} $x_{\alpha}^\pm(k)$ of
nontrivial {\it level }$k$ by
\begin{gather*}
x_{\alpha}^+(k):=\bigl[\,\cdots\bigl[\,x_{i_1}^+(k),
x_{i_2}^+(0)\,\bigr]_{\la
i_1,i_2\ra^{-1}},\cdots,x_{i_n}^+(0)\,\bigr]_{\la
\om_{\alpha_{i_1,i_2,\cdots,i_{n-1}}}',\,\om_{i_n}\ra^{-1}}\,,\\
x_{\alpha}^-(k):=\bigl[\,x_{i_n}^-(0),\cdots, \bigl[\,x_{i_2}^-(0),
x_{i_1}^-(k)\,\bigr]_{\la
i_2,i_1\ra}\cdots\,\bigr]_{\la\om'_{i_n},\,
\om_{\alpha_{i_1,i_2,\cdots,i_{n-1}}}\ra}\,,
\end{gather*}
where $\tau\bigl(x_{\alpha}^\pm(\pm k)\bigr)=x_{\alpha}^\mp(\mp k)$.
\end{defi}
\begin{remark}\, Using the definition, we fix an ordering of the maximal root $\theta$, and give the maximal quantum root vectors $x_{\theta}^-(1)$ and $x_{\theta}^+(-1)$ as follows.

For the case of $A_n^{(1)}$, we fix the maximal root $\theta=\alpha_1+\alpha_2+\cdots+ \alpha_n$, and
\begin{equation*}
\begin{split}
x_{\theta}^-(1)&=[\,x_n^-(0),\,
x_{n-1}^-(0),\,\cdots,\,x_2^-(0),\,x_1^-(1) \,]_{(s,\,\cdots,\,s)},\\
x_{\theta}^+(-1)&
=[\,x_1^+(-1),\,x_2^+(0),\,\cdots
,\,x_{n}^+(0)\,]_{\langle r,\cdots, r\rangle}.
\end{split}
\end{equation*}

For the case of $D_n^{(1)}$, we fix the maximal root $$\theta=\alpha_1+\alpha_2+\cdots+ \alpha_{n-2}+\alpha_n+\alpha_{n-1}+\cdots+\alpha_2,$$ and
\begin{equation*}
\begin{split}
x_{\theta}^-(1)&
=[\,x_2^-(0),\,\cdots,\,x_n^-(0),\,
x_{n-2}^-(0),\,\cdots,\,x_2^-(0),\,x_1^-(1) \,]_{(s,\,\cdots,\,s,\,
r^{-1},\,\cdots,\,r^{-1})},\\
x_{\theta}^+(-1)&=[\,x_1^+(-1),\,x_2^+(0),\,\cdots
,\,x_{n-2}^+(0),\,
x_{n}^+(0),\cdots, \,x_{2}^+(0)\,]_{\langle r,\cdots, r, s^{-1},\cdots, s^{-1}\rangle}.
\end{split}
\end{equation*}
For the case of $E_6^{(1)}$, we fix the maximal root $$\theta=\alpha_1+\alpha_3+\cdots+ \alpha_{6}+\alpha_2+\alpha_{4}+\alpha_3+\alpha_5+\alpha_4+\alpha_2,$$ and
\begin{equation*}
\begin{split}
x_{\theta}^-(1)&=x_{\alpha_{13456243542}}^-(1)=
[\,x_2^-(0),\,x_{\alpha_{1345624354}}^-(1)\,]_{r^{-2}s^{-1}}=\cdots\\
&=[\,x_2^-(0),\,x_4^-(0),\,x_5^-(0),\,x_3^-(0),\,x_4^-(0),\,x_2^-(0)
,\,x_6^-(0),\cdots,\,\\
&\hskip2cm x_3^-(0),\,x_1^-(1) \,]_{(s,\,\cdots,\,s,\,
r^{-1},\,s,\,r^{-1},\,s,\,s,\,r^{-2}s^{-1})}\,,\\
x_{\theta}^+(-1)&=x_{\alpha_{13456243542}}^+(-1)=
[\,x_{\alpha_{1345624354}}^+(-1),\,x_2^+(0)\,]_{r^{-1}s^{-2}}\\
&=[\,x_1^+(-1),\,x_3^+(0),\,\cdots
,\,x_{6}^+(0),\,
x_{2}^+(0),\,x_{4}^+(0),\,\\
&\hskip1.2cm x_3^+(0),\, x_5^+(0),\,x_4^+(0)
,\,x_2^+(0)\,]_{\langle r,\,\cdots,\,r,\,
s^{-1},\,r,\,s^{-1},\,r,\,r,\,r^{-1}s^{-2}\rangle}\,.
\end{split}
\end{equation*}
\end{remark}

\subsection{Two-parameter Drinfel'd Isomorphism Theorem}  We state the main theorem as follows.
\begin{theo} Given a simple Lie algebra $\frak{g}$ of simply-laced type,  let
$\theta=\alpha_{i_1}+\cdots+\alpha_{{i_{h-1}}}$ be the maximal root with respect to a chosen prime root system $\Pi$.
 Then there exists an algebra isomorphism
$\Psi: U_{r,s}(\widehat{\frak{g}}) \longrightarrow {\mathcal
U}_{r,s}(\widehat{\frak{g}})$ defined by: for $i\in I,$
\begin{eqnarray*}
\omega_i&\longmapsto& \om_i\\
\omega'_i&\longmapsto& \om'_i \\
\omega_0&\longmapsto& \gamma'^{-1}\, \om_{\theta}^{-1}\\
\omega'_0&\longmapsto& \gamma^{-1}\, \om_{\theta}'^{-1}\\
\gamma^{\pm\frac{1}2}&\longmapsto& \gamma^{\pm\frac{1}2}\\
\gamma'^{\,\pm\frac{1}2}&\longmapsto& \gamma'^{\,\pm\frac{1}2}\\
D^{\pm1}&\longmapsto& D^{\pm1}\\
D'^{\,\pm1}&\longmapsto& D'^{\,\pm1}\\
e_i&\longmapsto& x_i^+(0)\\
f_i&\longmapsto& x_i^-(0)\\
e_0&\longmapsto& x^-_{\theta}(1)\cdot(\gamma'^{-1}\,\om_{\theta}^{-1})\\
f_0 &\longmapsto& a\tau\Bigl(x^-_{\theta}(1)
      \,\cdot(\gamma'^{-1}\,\om_{\theta}^{-1})\Bigr)
      =a(\gamma^{-1}\,\om_{\theta}'^{-1})\cdot
      x_{\theta}^+(-1)
\end{eqnarray*}
where $\om_{\theta}=\om_{i_1}\,\cdots\,
\om_{i_{h-1}},\,\om_{\theta}'=\om'_{i_1}\,\cdots\, \om'_{i_{h-1}}$, and
$$a=\left\{\begin{array}{cl} 1,
&  \textit{ for type \ $A^{(1)}$;}\\
(rs)^{n-2},
&  \textit{ for type \ $D^{(1)}$;}\\
(rs)^{4},&  \textit{ for type \ $E_6^{(1)}$.}\\
\end{array}\right.$$
\end{theo}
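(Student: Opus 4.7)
The plan is to establish the isomorphism in three stages: (i) verify that $\Psi$ extends to a well-defined algebra homomorphism from $U_{r,s}(\widehat{\frak g})$ to $\mathcal U_{r,s}(\widehat{\frak g})$, (ii) establish surjectivity, and (iii) establish injectivity. For (i), I would check each defining relation (X1)--(X5) of $U_{r,s}(\widehat{\frak g})$ survives under $\Psi$. Those relations indexed entirely by $I=\{1,\dots,n\}$ reduce directly to the Drinfeld relations (D1), (D5), (D7), (D8) at mode zero, and the commutation of $\gamma^{\pm1/2},\gamma'^{\pm1/2},D^{\pm1},D'^{\pm1}$ with the remaining generators is immediate; only the relations involving the affine index $0$ require work. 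The cross relation $[e_0,f_j]=0$ for $j\in I$ and the key identity $[e_0,f_0]=\frac{1}{r-s}(\omega_0-\omega_0')$ will follow from direct bracket computations using the inductive definition in Definition 3.7 together with the quantum-bracket identities (3.14)--(3.17). Here the $\tau$-invariance in Proposition 3.4 cuts the workload: once the $e_0$-side identities are in hand, the $f_0$-side versions are obtained by applying $\tau$, and no extra computation is needed.

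The main obstacle is the $(r,s)$-Serre relations in (X5) at the affine index $0$. For example, $(\text{ad}_l\,e_0)^{1-a_{0j}}(e_j)=0$ translates via the image into a nested quantum-bracket identity involving copies of $x_\theta^-(1)\,(\gamma'^{-1}\om_\theta^{-1})$ acting on $x_j^+(0)$. The strategy I would follow is the one used in \cite{HRZ} for type $A_n^{(1)}$: peel off the outermost letters of the inductive expression for $x_\theta^\pm(\pm 1)$ using (3.14)--(3.17) together with the convenient reformulation (3.22) of (D7), then run an induction on the length of the bracketing, using the mode-zero Serre relations in $\mathcal U_{r,s}(\widehat{\frak g})$ and the commutativity $[x_i^+(0),x_j^-(0)]=0$ for $i\ne j$. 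For the longer inductive chains $\theta=\alpha_1+2\alpha_2+\cdots$ in types $D_n^{(1)}$ and $E_6^{(1)}$, the combinatorial balance of $(rs)$-powers accumulated from Definition 3.7 and Remark 3.8 is exactly what forces the normalization $a=(rs)^{n-2}$ and $a=(rs)^4$ in the image of $f_0$; identifying this factor correctly is the subtle point that distinguishes the simply-laced cases treated here from the $A_n^{(1)}$ case of \cite{HRZ}. As before, $\tau$-invariance yields the minus-sign Serre relations from the plus-sign ones.

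For (ii), I would show that the image of $\Psi$ contains each generator of $\mathcal U_{r,s}(\widehat{\frak g})$. The elements $x_i^\pm(k)$ for $k\in\mathbb Z\setminus\{0\}$ are produced inductively from the mode-zero elements by iterated $(r,s)$-brackets with $\Psi(e_0)$ and $\Psi(f_0)$; once these are in the image, the imaginary-mode generators $a_i(\ell)$, and hence $\om_i(m)$ and $\om'_i(-n)$, are recovered from relation (D8) together with the defining exponential expansions. For (iii), the alternative argument we afford relies on the triangular decomposition: $\mathcal U_{r,s}(\widehat{\frak g})$ inherits a decomposition $\mathcal U^-\otimes\mathcal U^0\otimes\mathcal U^+$ compatible with the one in Corollary 2.9, and $\Psi$ preserves this triangular structure. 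Because $\Psi$ also respects the natural $\mathbb Z$-grading by the affine root lattice, injectivity reduces to a weight-space comparison on each factor, which follows from the known injective embedding of the finite-type subalgebra $U_{r,s}(\frak g)\hookrightarrow U_{r,s}(\widehat{\frak g})$ established in \cite{BGH1}. As an independent safety net, the level-one vertex representation constructed in Section 5 is a faithful module on the Drinfeld side, so no defining relation can collapse under $\Psi^{-1}$.
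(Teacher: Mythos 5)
Your steps (i) and (ii) follow the paper's own route: (i) is Theorem $\mathcal A$ (the relation checks (X1)--(X5) at the affine node, carried out in Section 4 via the bracket identities (3.14)--(3.17), the mode-zero Serre relations and $\tau$-invariance), and (ii) is Theorem $\mathcal B$ (Lemmas 4.12--4.14, which produce $x_1^{\mp}(\pm1)$ as explicit nested brackets in the $E_i$'s resp. $F_i$'s and then recover $a_i(\ell)$ and all $x_i^{\pm}(k)$ from (D8) and (D6)). The genuine gap is in your step (iii). First, the claim that $\Psi$ "preserves the triangular structure" of Corollary 2.9 is false as stated: $\Psi(e_0)=x_\theta^-(1)\,\gamma'^{-1}\omega_\theta^{-1}$ and $\Psi(f_0)$ involves $x_\theta^+(-1)$, so the Chevalley positive part $U_{r,s}(\widehat{\frak n})$ is \emph{not} carried into the subalgebra generated by the $x_i^+(k)$; the Drinfeld triangular decomposition (by the sign of $x^{\pm}$) is transverse to the Chevalley one, which is exactly the point of the Drinfeld realization. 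Consequently injectivity cannot be "reduced to a weight-space comparison on each factor," and the finite-type embedding $U_{r,s}(\frak g)\hookrightarrow U_{r,s}(\widehat{\frak g})$ from \cite{BGH1} says nothing about the affine modes $x_i^{\pm}(k)$, $k\neq0$, or the imaginary generators. Second, your "safety net" is unavailable: the level-one Fock module of Section 5 is not faithful --- $\gamma^{\pm1}$, $\gamma'^{\pm1}$ act by the scalars $r^{\pm1}$, $s^{\pm1}$, so for instance $\gamma-r$ is a nonzero element acting by zero; an irreducible level-one module can never certify injectivity of $\Psi$, and in any case faithfulness of a module over the target algebra is not the right direction for injectivity of $\Psi$.

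What the paper actually does for injectivity (its Theorem $\mathcal C$, the "alternative proof" announced in the introduction) is to exhibit a two-sided inverse: after Theorem $\mathcal B$ identifies $\mathcal U_{r,s}(\widehat{\frak g})$ with the subalgebra generated by $E_i, F_i, \omega_i^{\pm1}, \omega_i'^{\pm1}, \gamma^{\pm\frac12}, \gamma'^{\pm\frac12}, D^{\pm1}, D'^{\pm1}$ ($i\in I_0$), one defines $\Phi$ on these generators by sending them back to the Chevalley generators of $U_{r,s}(\widehat{\frak g})$ and checks $\Phi\Psi=\Psi\Phi=1$. To repair your argument you should either adopt this inverse-map strategy, or else prove a genuine PBW/triangular decomposition theorem for $\mathcal U_{r,s}(\widehat{\frak g})$ in the Drinfeld generators together with an honest comparison of graded pieces under $\Psi$ --- neither of which is supplied by your appeal to Corollary 2.9, to \cite{BGH1}, or to the vertex representation.
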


\medskip

\begin{remark}
Let $E_i,\,F_i$ ($i\in I_0$) and $\om_0$, $\om_0'$
denote the images of $e_i,\,f_i$ ($i\in I_0$) and $\om_0$, $\om_0'$
in the algebra ${\mathcal U}_{r,s}(\widehat{\frak{g}})$,
respectively. Denote by $\mathcal U_{r,s}'(\widehat{\frak{g}})$ the subalgebra
of $\mathcal{U}_{r,s}(\widehat{\frak{g}})$ generated by
$E_i,\,F_i,\,\om_i^{\pm1}$, $\om_i'^{\pm1}$ ($i\in I_0$),
$\gamma^{\pm\frac{1}2},\, \gamma'^{\,\pm\frac{1}2}$, $D^{\pm1}$,
$D'^{\pm1}$, that is,
$$
{\mathcal U\,'}_{r,s}(\widehat{\frak {g}}):=\left.\left\langle\,
E_i,\, F_i,\, \om_i^{\pm1},\, {\om}_i'^{\,\pm1}\;,\,
\gamma^{\pm\frac{1}2},\, \gamma'^{\,\pm\frac{1}2},\, D^{\pm1},\,
D'^{\,\pm1}\; \right| \;i\in I_0\;\right\rangle.
$$

Thereby, to prove the Drinfeld isomorphism theorem (Theorem 3.9) is
equivalent to prove the following three Theorems:

{\bf Theorem $\mathcal{A}$.}   {\it \quad
$\Psi:\,U_{r,s}(\hat{\frak {g}}) \longrightarrow \mathcal
U_{r,s}'(\hat{\frak {g}})$ is an epimorphism.}

{\bf Theorem $\mathcal{B}$.} {\it \quad  $\mathcal
U_{r,s}'(\hat{\frak {g}})={\mathcal U}_{r,s}(\hat{\frak {g}})$.}

{\bf Theorem $\mathcal{C}$.} {\it \,There exists a surjective $\Phi:\,\mathcal U_{r,s}'(\hat{\frak {g}}) \longrightarrow U_{r,s}(\hat{\frak {g}})$ such that $\Psi\Phi=\Phi\Psi=1$.}
\end{remark}

\section{Proof of Drinfeld
Isomorphism Theorem}
\medskip
For completeness, we check {\bf Theorem $\mathcal{A}$} for types $\mathrm{D}_n^{(1)}$ and $\mathrm{E}_6^{(1)}$ (similarly for types $\mathrm{E}_7^{(1)}$ and $\mathrm{E}_8^{(1)}$),
since we have proved it for type $\mathrm{A}_n^{(1)}$ in \cite{HRZ}.

\subsection{Proof of Theorem $\mathcal{A}$ for $U_{r,s}(\mathrm{D}_n^{(1)})$}

We have to check relations $(X1)$---$(X5)$ in Definition 2.2 for type $\mathrm{D}_n^{(1)}$.
It is easy to verify relations $(X1)$---$(X3)$, which are similar to the case of
$\mathrm{A}_{n-1}^{(1)}$ (\cite{HRZ}).

\medskip
To check relation $(X4)$, we first consider that when
$i\neq 0$,
$$[\,E_0,\,F_i\,]
=[\,x_{\theta}^-(1)
\,(\gamma'^{-1}\,\om_{\theta}^{-1}),\, x_i^-{(0)}\,]=-
[\,x_i^-{(0)},\,x_{\theta}^-(1) \,]_{\langle
\omega'_i,~ \omega_0 \rangle^{-1}} (\gamma'^{-1}\om_{\theta}^{-1}).
$$

Thus, relation $(X4)$ follows from immediately from the
following Lemma 4.1 in the case of $i\neq 0$.

\begin{lemm}  { \label{d:17} Let $i\in \{1,\cdots, n\}$, then we have
$$[\,x_i^-{(0)},\, x_{\theta}^-(1)\,]
_{\langle \omega'_i,~ \omega_0 \rangle^{-1}}=0.$$}
\end{lemm}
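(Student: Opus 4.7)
The plan is to exploit the nested bracket structure of $x_\theta^-(1)$ and reduce the commutator $[x_i^-(0), x_\theta^-(1)]_{\langle\omega_i',\omega_0\rangle^{-1}}$ to a sum of inner brackets $[x_i^-(0), x_j^-(k)]_{q'}$ with $j$ ranging over the letters of the ordered word defining $\theta$ and $k\in\{0,1\}$. Each such inner bracket will vanish either by $(\textrm{D}9_1)$, when $a_{ij}=0$, or by the $(r,s)$-Serre relations $(\textrm{D}9_2)$--$(\textrm{D}9_3)$, when $a_{ij}<0$.

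Concretely, writing $x_\theta^-(1)=[x_2^-(0),Y]_s$ with $Y$ the remaining nested bracket, I would apply identity $(3.16)$ with an auxiliary parameter $q$ to decompose
$$[x_i^-(0),[x_2^-(0),Y]_s]_v \;=\; \bigl[\,[x_i^-(0),x_2^-(0)]_q,\, Y\,\bigr]_{\frac{sv}{q}} \,+\, q\,\bigl[\,x_2^-(0),\,[x_i^-(0),Y]_{\frac{v}{q}}\,\bigr]_{\frac{s}{q}}.$$
Choosing $q=\langle 2,i\rangle^{-1}$ makes the first summand vanish by $(\textrm{D}9_1)$ whenever $a_{i2}=0$, leaving a strictly deeper commutator. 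Iterating peels off one letter at a time until $x_i^-(0)$ reaches the innermost factor $x_1^-(1)$, reducing the whole expression to a sum of residues concentrated at those layers where $x_i^-(0)$ meets a Dynkin neighbour of $\alpha_i$.

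At such layers, the accumulated $(r,s)$-commutators are reassembled, via identities $(3.16)$ and $(3.18)$, into precisely the left-hand side of the Serre relation $(\textrm{D}9_2)$ or $(\textrm{D}9_3)$ for the pair $(i,j)$ in question, and therefore vanish. For $i\in\{2,\ldots,n{-}2\}$, the letter $x_i^-(0)$ appears twice in $Y$, so two Serre-type contributions arise and must be combined through $(3.17)$; the cases $i=1,\,n{-}1,\,n$ each involve only a single such layer and are correspondingly simpler.

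The main obstacle is the scalar bookkeeping. At every peeling step one must verify that the choice of $q$, together with the outer parameter inherited from the previous step, continues to match the pairing value required by $(\textrm{D}9_1)$ at the next non-adjacent letter, and that the surviving residues have the exact quantum-integer coefficients mandated by $(\textrm{D}9_2)$/$(\textrm{D}9_3)$ rather than arbitrary scalars. Equivalently, one checks the product identity $\langle\omega_i',\omega_0\rangle^{-1}=\prod_j \langle i,j\rangle^{-c_j}$, where $\theta=\sum_j c_j\alpha_j$, coincides exactly with the telescoping product of the $q$'s chosen along the path, so that the residual expression is literally an $(r,s)$-Serre left-hand side rather than a generic $(r,s)$-commutator. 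This is where the subtlety of the two-parameter pairing (in contrast to the symmetric one-parameter case) must be managed with care.
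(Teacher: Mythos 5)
Your overall strategy---peel the nested bracket apart with the identities (3.16)--(3.19), kill the layers with $a_{ij}=0$ by (D9$_1$), and hope the layers with $a_{ij}<0$ assemble into Serre relations---is only half of what is actually needed, and the missing half is the decisive one. In the paper's computation the surviving terms at the ``adjacent'' layers do \emph{not} reassemble into the full left-hand side of (D9$_2$)/(D9$_3$): for the indices where $\alpha_i$ enters $\theta$ with coefficient $2$ (i.e.\ $2\le i\le n-2$ in type $D_n$), and also for $i=n$, the peeling produces one genuine Serre term plus a leftover which is a copy of the \emph{original} bracket with its parameter reflected. The conclusion is reached by the identity
$[\,x_i^-(0),\,x_{\beta_{1,i-1}}^-(1)\,]_{r^{-1}s}=[\,x_{\beta_{1,i-1}}^-(1),\,x_i^-(0)\,]_{r^{-1}s}$
(and its analogue $[\,x_n^-(0),x_{\beta_{1,n-2}}^-(1)\,]_{rs^3}=(rs)^2[\,x_{\beta_{1,n-2}}^-(1),x_n^-(0)\,]_{r^{-3}s^{-1}}$), which upon expansion gives a factor $(1{+}r^{-1}s)$ multiplying the bracket one wants to kill; only the hypothesis $r\ne -s$ then yields the vanishing. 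This self-reflection argument, which recurs in essentially every nontrivial step of the paper (and in the auxiliary Lemmas 4.3, 4.4, 4.5, 4.7 and the appendix), is absent from your proposal, and without it the peeling simply terminates in an expression that is \emph{not} a Serre left-hand side and does not obviously vanish. It is telling that your proof never invokes $r\ne -s$: a purely Serre-based cancellation of the kind you describe would be parameter-free, whereas the two-parameter bracket genuinely needs this nondegeneracy.

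A second, related gap is that the ``scalar bookkeeping'' you defer is in fact the substance of the proof. The telescoping identity $\langle\omega_i',\omega_0\rangle^{-1}=\prod_j\langle i,j\rangle^{-c_j}$ fixes the outer parameter correctly, but it does not control the intermediate parameters created by repeated use of (3.16)/(3.19), and it does not supply the intermediate vanishing statements the reduction actually relies on: one needs, for instance, $[\,x_2^-(0),\,x_{\beta_{1,4}}^-(1)\,]=0$, $[\,x_i^-(0),\,x_{\beta_{1,i+2}}^-(1)\,]=0$, $[\,x_i^-(0),\,x_{\beta_{i,i+1}}^-(1)\,]_{(rs)^{-1}}=0$ (this last one proved by its own induction on $i$), and the rebracketed Serre-type identity $[\,x_n^-(0),x_n^-(0),x_{\alpha_{1,n}}^-(1)\,]_{(rs^2,\,r^2s)}=0$, each of which again uses the $(1{+}r^{-1}s)$ trick rather than a direct match with (D9$_2$)/(D9$_3$). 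So the proposal as written would stall exactly at the layers you identify as ``subtle''; to repair it you would have to import both the reflection argument and these auxiliary lemmas, at which point you have reproduced the paper's case-by-case proof.
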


To show Lemma 4.1, the following Lemmas will play a crucial
role which will be proved in the appendix.

For our purpose, we need some notations: For $1\leqslant i<j \leqslant n-1$,
 $$
 x_{\alpha_{1,i}}^-(1)=[\,x_{i}^-(0),\,\cdots,\,x_{2}^-(0),\,x_{1}^-(1) \,]_{(s,\,\cdots,\,s)},
 $$
 $$
 x_{\beta_{i,j}}^-(1)=[\,x_i^-(0),\,\cdots,\,x_n^-(0),\,
x_{n-2}^-(0),\,\cdots,\,x_{j+1}^-(0),\,x_j^-(1) \,]_{(s,\,\cdots,\,s,\,
r^{-1},\,\cdots,\,r^{-1})}.
$$
Consequently, we get $x_{\theta}^-(1)=x_{\beta_{1,2}}^-(1)$.

\begin{lemm}  {\it \label{d:18} $[\,x_{i-1}^-(0),\,x_{\beta_{i-1,i+1}}^-(1)\,]_{s^{-1}}=0$, for $1< i < n$.}
\end{lemm}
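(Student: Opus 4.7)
The strategy is to ``peel off'' from $x_{\beta_{i-1,i+1}}^-(1)$ the deeper generators that commute with $x_{i-1}^-(0)$, thereby reducing the double commutator to a quantum $(r,s)$-Serre-type identity for the simply-laced pair $(\alpha_{i-1},\alpha_i)$.

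First, I would unfold the recursive definition of the quantum affine root vector to expose its outermost bracket structure:
\[
x_{\beta_{i-1,i+1}}^-(1) \,=\, \bigl[\,x_{i-1}^-(0),\,[\,x_i^-(0),\,W\,]_{c'}\,\bigr]_{c},
\]
where $W$ is the deeper nested bracket involving only the generators $x_j^-(m)$ for $j\geq i+1$, and $c,c'$ are the appropriate outermost bracket coefficients determined by the pairings $\langle\omega_{i-1}',\omega_{\alpha}\rangle$ coming from the $J$-matrix for $D_n^{(1)}$. For $1<i\leq n-2$ the Dynkin node $i-1$ is not adjacent to any $j\geq i+1$, so $a_{i-1,j}=0$; the simply-laced pairings $\langle i{-}1,j\rangle$ are trivial, and therefore $[\,x_{i-1}^-(0),\,W\,]=0$ in the ordinary sense.

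Next, I would apply the quantum Lie bracket identity (3.16) at each of the two outer layers, choosing the auxiliary parameter $q$ at each step precisely so that the second term $q\,[\,\cdot,\,[\,x_{i-1}^-(0),\,W\,]_{v/q}\,]_{u/q}$ collapses to zero (namely, by picking $q$ so that the inner subscript becomes $1$). Peeling $W$ outward through both layers yields
\[
[\,x_{i-1}^-(0),\,x_{\beta_{i-1,i+1}}^-(1)\,]_{s^{-1}} \,=\, \Bigl[\,\bigl[\,x_{i-1}^-(0),\,[\,x_{i-1}^-(0),\,x_i^-(0)\,]_{c}\,\bigr]_{s^{-1}}\,,\,W\,\Bigr]_{c'}.
\]

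It then remains to show that the innermost double bracket $[\,x_{i-1}^-(0),\,[\,x_{i-1}^-(0),\,x_i^-(0)\,]_{c}\,]_{s^{-1}}$ vanishes. This is exactly the quantum $(r,s)$-Serre identity of Definition~3.3 (relation (3.12), equivalently $(\mathrm{D}9_2)$) for the adjacent pair $(i-1,i)$ with $a_{i-1,i}=-1$, provided the scalars $c$ and $s^{-1}$ are the ones dictated by the Serre factorization in the two-parameter convention. The main obstacle is precisely this bookkeeping: the bracket coefficients $c,c'$ and those produced by the repeated use of (3.16) must line up with the Serre scalars, and verifying this requires the careful $(r,s)$-bracket manipulations (identities (3.15)--(3.18)) that are deferred to the Appendix.
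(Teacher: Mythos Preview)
Your overall strategy---commute $x_{i-1}^-(0)$ past the generators with indices $\geq i+1$ using (3.16)/(3.18) and (D9$_1$), then reduce to a two-term quantum Serre relation in $x_{i-1}^-,x_i^-$---is exactly the mechanism the paper uses. There is, however, one structural point where your write-up diverges from the paper.

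In the paper's computation the nested root vector is organised so that the non-adjacent block sits on the \emph{outside} and the pair $(i,i-1)$ sits on the \emph{inside}, with the level~$1$ carried by $x_{i-1}^-(1)$:
\[
x_{\beta_{i-1,i+1}}^-(1)=\bigl[\,x_{i+1}^-(0),\ldots,x_n^-(0),x_{n-2}^-(0),\ldots,x_{i+1}^-(0),\,[x_i^-(0),x_{i-1}^-(1)]_s\,\bigr]_{(s,\ldots,s,r^{-1},\ldots,r^{-1})}.
\]
The paper then applies the level-shift relation (D7) (written as (3.22)) to the innermost bracket, turning $[x_i^-(0),x_{i-1}^-(1)]_s$ into $-(rs)^{1/2}[x_{i-1}^-(0),x_i^-(1)]_{r^{-1}}$. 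Only after this swap does the outer $x_{i-1}^-(0)$ get pushed through the non-adjacent layers, landing on the Serre relation $[x_{i-1}^-(0),x_{i-1}^-(0),x_i^-(1)]_{(r^{-1},\,s^{-1})}=0$.

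Your version places $x_{i-1}^-(0)$ outermost and the non-adjacent block $W$ innermost, and omits the (D7) step. The peeling argument you describe is valid, but the Serre relation you need at the end is $[x_{i-1}^-(0),[x_{i-1}^-(0),x_i^-(0)]_{c}]_{s^{-1}}=0$, and this only matches (D9$_2$)/(D9$_3$) if $c=r^{-1}$. Depending on which side of the ``up--down'' sequence the outermost subscript comes from, you may instead get $c=s$, which is \emph{not} directly a Serre relation. The paper's (D7) swap is precisely the device that forces the pair of subscripts to be $(r^{-1},s^{-1})$. So your bookkeeping caveat is not just cosmetic: without the level-shift you should check explicitly that $c=r^{-1}$, or else insert the (D7) step as the paper does.
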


\begin{lemm}  {\it \label{d:19}
$[\,x_{i}^-(0),\,x_{\beta_{i,i+1}}^-(1)\,]_{(rs)^{-1}}=0$, for $1\leqslant i \leqslant n-1$.}
\end{lemm}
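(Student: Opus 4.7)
The plan is to reduce the nested double bracket to a base-case $(r,s)$-Serre relation by combining the quantum Jacobi identities (3.14)--(3.17) with the trivial commutations $(D9_1)$ provided by the vanishing Cartan entries in the $D_n$ diagram. Conceptually, $\be_{i,i+1}$ already contains $\al_i$ with coefficient one, so in the classical limit $[\al_i,\be_{i,i+1}]$ is not a root (the sum $2\al_i+\cdots$ is not a root of $D_n$), forcing the quantum bracket to vanish via Serre.

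First I would unfold the outermost layer of the right-nested bracket, writing
\[
x_{\be_{i,i+1}}^-(1)=[\,x_i^-(0),\;Y\,]_s,
\]
where $Y=[\,x_{i+1}^-(0),\,x_{i+2}^-(0),\ldots,x_n^-(0),\,x_{n-2}^-(0),\ldots,x_{i+2}^-(0),\,x_{i+1}^-(1)\,]_{(s,\ldots,s,r^{-1},\ldots,r^{-1})}$, so that the claim becomes
\[
[\,x_i^-(0),\,[\,x_i^-(0),\,Y\,]_s\,]_{(rs)^{-1}}\;=\;0.
\]
Second, I would apply identity (3.16) to this expression with $a=b=x_i^-(0)$, $c=Y$, $u=s$, $v=(rs)^{-1}$, and an auxiliary parameter $q$ chosen so that one summand isolates a product $(x_i^-(0))^2\cdot(\text{stuff})$ while the other summand is a deeper right-nested bracket surrounding $x_{i+1}^-(1)$. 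Each intermediate generator $x_k^-(0)$ with $k\ge i+2$ satisfies $a_{i,k}=0$ (except at the branching node, addressed below), so $(D9_1)$ yields $x_i^-(0)\,x_k^-(0)=\la k,i\ra\,x_k^-(0)\,x_i^-(0)$. Iterated use of (3.14)--(3.15) then pushes both copies of $x_i^-(0)$ successively through $Y$ to the innermost position, accumulating only scalar factors prescribed by the pairing matrix $J$ in $(1_{\hat D_n})$.

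Third, what remains after this transport is the innermost triple bracket $\bigl[\,x_i^-(0),\,x_i^-(0),\,x_{i+1}^-(1)\,\bigr]_{(u,v)}$ for specific $u,v$, which should coincide with the $(r,s)$-Serre polynomial $(D9_2)$ at the edge $(i,i+1)$ with $a_{i,i+1}=-1$, namely
\[
(x_i^-(0))^2\,x_{i+1}^-(1)-[2]_{-i}\,x_i^-(0)\,x_{i+1}^-(1)\,x_i^-(0)+r_is_i\,x_{i+1}^-(1)\,(x_i^-(0))^2=0,
\]
so that the reduced expression vanishes identically.

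The main obstacle will be the bookkeeping of all $(r,s)$-bracket parameters through the Jacobi reductions and the commutation steps: one must verify that the combined parameter forced on the base triple matches exactly the Serre coefficient $[2]_{-i}=(r_i^{-1}-s_i^{-1})/(r_i-s_i)$, and that the scalars $\la k,i\ra$ collected while transporting $x_i^-(0)$ past each $x_k^-(0)$ telescope correctly against the initial weights $s$ and $(rs)^{-1}$. A secondary subtlety arises at the branching node $n$: when $i=n-2$ one has $a_{i,n}=-1$, obstructing a free commutation through $x_n^-(0)$, but in that situation $x_n^-(0)$ sits adjacent to $x_{n-2}^-(0)$ inside $Y$, and a local $A_3$-type application of the same Jacobi-plus-Serre scheme (of the kind used in \cite{HRZ}) handles the passage. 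The same template will also supply the companion Lemma 4.2.
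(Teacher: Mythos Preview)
Your transport step breaks at the very first layer of $Y$. With
$x_{\be_{i,i+1}}^-(1)=[\,x_i^-(0),Y\,]_s$ as you write it, the element $Y$ has $x_{i+1}^-(0)$ as its \emph{outermost} factor and $x_{i+1}^-(1)$ as its innermost one; since $a_{i,i+1}=-1$, neither commutes with $x_i^-(0)$. So there is no way to ``push $x_i^-(0)$ successively through $Y$'' using only the trivial commutations $(\textrm{D}9_1)$: already the first application of (3.16) to $[\,x_i^-(0),\,[\,x_{i+1}^-(0),Z\,]_w\,]$ produces a nonzero cross-term $[\,[x_i^-(0),x_{i+1}^-(0)]_q,\,Z\,]$ in addition to the term where $x_i^-(0)$ moves inward, and this cross-term does not die by Serre (Serre needs \emph{two} adjacent copies of $x_i^-$). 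The same obstruction recurs at the inner end of $Y$. Consequently the claimed reduction to a single innermost triple $[\,x_i^-(0),x_i^-(0),x_{i+1}^-(1)\,]$ does not go through; you would be left with several surviving terms involving both occurrences of $x_{i+1}$, and no mechanism is offered to cancel them.

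The paper's argument is structurally different. It proceeds by \emph{downward} induction on $i$, with base case $i=n{-}1$, where $x_{\be_{n-1,n}}^-(1)=x_n^-(1)$ and the bracket vanishes directly by $(\textrm{D}9_1)$ (recall $a_{n-1,n}=0$ in $D_n$). The inductive step does not attack $[\,x_{i-1}^-(0),x_{\be_{i-1,i}}^-(1)\,]$ head-on; instead it first uses the level-shifting relation $(\textrm{D}7)$, in the form (3.22), to rewrite $x_{\be_{i-1,i}}^-(1)$ so that $x_{\be_{i,i+1}}^-(1)$ and $x_{\be_{i-1,i+1}}^-(1)$ appear. The inductive hypothesis (for index $i$) and the companion Lemma~4.2 then kill the auxiliary pieces, and what remains is an identity of the shape
\[
[\,x_{i-1}^-(0),\,x_{\be_{i-1,i}}^-(1)\,]_{r^{-2}}
=(rs)^{-1}\,[\,x_{\be_{i-1,i}}^-(1),\,x_{i-1}^-(0)\,]_{s^{2}},
\]
which unwinds to $(1+r^{-1}s)\,[\,x_{i-1}^-(0),x_{\be_{i-1,i}}^-(1)\,]_{(rs)^{-1}}=0$. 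The essential ingredients here --- the level shift via $(\textrm{D}7)$ and the appeal to Lemma~4.2 --- have no counterpart in your outline.
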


\begin{lemm}  {\it \label{d:20}
 $[\,x_2^-{(0)},\, x_{\beta_{1,4}}^-(1)\,]=0$.}
\end{lemm}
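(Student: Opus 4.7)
The plan is to reduce the claim to Lemma 4.2 (with $i=3$) together with the $(r,s)$-quantum Serre relations between indices $1$ and $2$, via two iterated applications of the quantum Jacobi identities (3.16)--(3.17) exploiting the inductive bracket structure of $x_{\beta_{1,4}}^-(1)$. First I would unfold one layer of the definition, writing $x_{\beta_{1,4}}^-(1) = [\,x_1^-(0),\, x_{\beta_{2,4}}^-(1)\,]_s$. Applying (3.16) to $[\,x_2^-(0),\,[\,x_1^-(0),\,x_{\beta_{2,4}}^-(1)\,]_s\,]$ with the choice $q=s$ (so that the parameter inside the inner bracket on the right becomes $s^{-1}$) gives
\begin{equation*}
[\,x_2^-(0),\,x_{\beta_{1,4}}^-(1)\,] = \bigl[\,[\,x_2^-(0), x_1^-(0)\,]_s,\,x_{\beta_{2,4}}^-(1)\,\bigr] + s\,\bigl[\,x_1^-(0),\,[\,x_2^-(0),\,x_{\beta_{2,4}}^-(1)\,]_{s^{-1}}\,\bigr].
\end{equation*}
The second summand is killed at once by Lemma 4.2 applied with $i=3$, so the problem reduces to showing that the first summand vanishes.

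For the first summand I would further unfold $x_{\beta_{2,4}}^-(1) = [\,x_2^-(0),\, x_{\beta_{3,4}}^-(1)\,]_s$ and apply (3.17) with a suitable twisting $q'$ to the outer bracket. This produces two terms: one involves a sub-expression of the form $\bigl[\,[\,x_2^-(0), x_1^-(0)\,]_s,\, x_2^-(0)\,\bigr]_{q'}$, which, after expansion, is precisely an instance of the $(r,s)$-quantum Serre relation (D9$_3$) for the pair $(i,j)=(2,1)$ and hence vanishes; the other term involves a commutator of the weight-$(\alpha_1+\alpha_2)$ root vector $[\,x_2^-(0), x_1^-(0)\,]_s$ with $x_{\beta_{3,4}}^-(1)$, and since the index $1$ is not adjacent in the Dynkin diagram to any of the indices $3,4,\dots,n$ appearing in $\beta_{3,4}$, the factor $x_1^-(0)$ quasi-commutes past each factor of $x_{\beta_{3,4}}^-(1)$, reducing this term to a multiple of $[\,x_2^-(0),\,x_{\beta_{3,4}}^-(1)\,]$ inside the outer bracket, which is then killed by Lemma 4.2 applied with $i=4$ (equivalently, by an obvious iteration of the preceding argument).

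The main obstacle I anticipate is the careful bookkeeping of the twisting scalars in each application of (3.16)--(3.17): the values of $q$ and $q'$ must simultaneously match the parameters appearing in the inductive definition of $x_{\beta_{i,j}}^-(1)$, the hypotheses of Lemmas 4.2--4.3, and the Serre relation (D9$_3$), in order for the intended cancellations to take effect. As in the treatment of $A_n^{(1)}$ in \cite{HRZ} and in the appendix proofs of Lemmas 4.2 and 4.3, I expect the argument to collapse, after a short finite list of $(r,s)$-bracket identities, to a sum of terms each of which is annihilated either by one of Lemmas 4.1--4.3 or by an $(r,s)$-Serre relation.
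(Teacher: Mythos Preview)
Your first step is based on an incorrect unfolding of $x_{\beta_{1,4}}^-(1)$. Despite the way the defining display for $x_{\beta_{i,j}}^-(1)$ is typeset, the paper's actual convention (visible from $x_\theta^-(1)=x_{\beta_{1,2}}^-(1)$, from the identity $x_{\beta_{1,2}}^-(1)=[\,x_2^-(0),x_3^-(0),x_{\beta_{1,4}}^-(1)\,]_{(r^{-1},r^{-1})}$ in the proof of Lemma~4.1, and from the appendix proofs of Lemmas~4.2 and~4.3) is that the \emph{first} subscript labels the innermost factor $x_1^-(1)$ and the \emph{second} labels the outermost $x_j^-(0)$. Thus
\[
x_{\beta_{1,4}}^-(1)=\bigl[\,x_4^-(0),\ldots,x_n^-(0),x_{n-2}^-(0),\ldots,x_4^-(0),\,x_{\alpha_{1,3}}^-(1)\,\bigr]_{(s,\ldots,s,r^{-1},\ldots,r^{-1})},
\]
and your identity $x_{\beta_{1,4}}^-(1)=[\,x_1^-(0),x_{\beta_{2,4}}^-(1)\,]_s$ is false. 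In particular, $x_{\beta_{2,4}}^-(1)$ has innermost factor $x_2^-(1)$, not $x_1^-(1)$, so Lemma~4.2 with $i=3$ is about a different object and cannot be invoked here.

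With the correct bracket structure the strategy changes. Since in the $D_n$ diagram the node $2$ is not adjacent to any node $k\ge 4$, one first uses (3.18) together with $(\mathrm{D9}_1)$ to push $x_2^-(0)$ past every outer factor $x_k^-(0)$ with $k\ge 4$, reducing the claim to
\[
\bigl[\,x_2^-(0),\,x_{\alpha_{1,3}}^-(1)\,\bigr]=\bigl[\,x_2^-(0),\,[\,x_3^-(0),x_2^-(0),x_1^-(1)\,]_{(s,s)}\,\bigr]=0.
\]
This residual vanishing is \emph{not} covered by Lemmas~4.2 or~4.3 (which involve $\beta$-type vectors, not $\alpha$-type). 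The paper handles it by a direct computation with (3.16)--(3.17), $(\mathrm{D9}_1)$, $(\mathrm{D9}_2)$ and $(\mathrm{D9}_3)$ that ends in the symmetrization identity
\[
\bigl[\,x_2^-(0),\,x_{\alpha_{1,3}}^-(1)\,\bigr]_{r^{-1}s}=\bigl[\,x_{\alpha_{1,3}}^-(1),\,x_2^-(0)\,\bigr]_{r^{-1}s},
\]
whence $(1+r^{-1}s)\,[\,x_2^-(0),x_{\alpha_{1,3}}^-(1)\,]=0$ and the conclusion follows from $r\ne -s$. Your sketch never reaches this step because it is organized around the wrong recursion for $x_{\beta_{1,4}}^-(1)$.
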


\begin{lemm} {\it \label{d:add}
 $[\,x_i^-{(0)},\, x_{\beta_{1,i+2}}^-(1)\,]=0$, for $3\leqslant i \leqslant n-2$.}
\end{lemm}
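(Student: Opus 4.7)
The strategy mirrors the proofs of Lemmas 4.2--4.4: peel the outer layers of the nested bracket defining $x_{\beta_{1,i+2}}^-(1)$ from the outside using the generalized $(r,s)$-Jacobi identities (3.18)--(3.19), then terminate via the quantum Serre relation together with the preceding Lemma 4.2.

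First I will write
\begin{equation*}
x_{\beta_{1,i+2}}^-(1) \;=\; \bigl[\,x_1^-(0),\,\bigl[\,x_2^-(0),\,\cdots,\,[\,x_{i-1}^-(0),\,x_{\beta_{i,i+2}}^-(1)\,]_{s^{-1}}\,\cdots\,\bigr]_{s^{-1}}\,\bigr]_{s^{-1}}.
\end{equation*}
For each $k\in\{1,2,\ldots,i-2\}$, the simple roots $\alpha_k$ and $\alpha_i$ are non-adjacent, i.e., $a_{ki}=0$, so $(\mathrm{D}9_1)$ gives $x_i^-(0)\,x_k^-(0)=\la k,i\ra^{-1} x_k^-(0)\,x_i^-(0)$. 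Combined with the Jacobi identity (3.18), a direct calculation yields the swap identity
\begin{equation*}
\bigl[\,x_i^-(0),\,[\,x_k^-(0),\,Y\,]_u\,\bigr]_v \;=\; \la k,i\ra^{-1}\,\bigl[\,x_k^-(0),\,[\,x_i^-(0),\,Y\,]_{v\la k,i\ra}\,\bigr]_{u\la k,i\ra},
\end{equation*}
valid for any element $Y$ and any scalars $u,v\in\mathbb{K}^*$. Iterating this identity for $k=1,2,\ldots,i-2$ pushes $x_i^-(0)$ inward past the non-adjacent outer layers, reducing the problem (modulo a nonzero overall scalar and an outer nest of $x_1^-(0),\ldots,x_{i-2}^-(0)$) to the vanishing of a single twisted bracket $[\,x_i^-(0),\,x_{\beta_{i-1,i+2}}^-(1)\,]_{q^*}$ for an explicit scalar $q^*$ accumulated from the pairings $\la k,i\ra$.

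Next, expanding $x_{\beta_{i-1,i+2}}^-(1)=[\,x_{i-1}^-(0),\,[\,x_i^-(0),\,x_{\beta_{i+1,i+2}}^-(1)\,]_{s^{-1}}\,]_{s^{-1}}$, the bracket becomes a nested triple expression with two copies of $x_i^-(0)$ flanking $x_{i-1}^-(0)$ and the inner factor $x_{\beta_{i+1,i+2}}^-(1)$. Applying Jacobi (3.18) to $[\,x_i^-(0),\,\cdot\,]_{q^*}$ with one free parameter, the resulting three coefficients can be matched---for the admissible choice of that parameter---with the $(r,s)$-Serre coefficients $(r_is_i)^{\mp k(k-1)/2}\Bigl[{2\atop k}\Bigr]_{\mp i}$ of $(\mathrm{D}9_3)$ for the adjacent pair $(i-1,i)$. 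The Serre relation then forces the triple-bracket combination to vanish, while the residual outer commutator simplifies via Lemma 4.2.

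The main obstacle will be this last matching: verifying that the scalar $q^*$ accumulated in the peeling is \emph{precisely} such that the three Jacobi coefficients become compatible with the $(r,s)$-Serre coefficients of $(\mathrm{D}9_3)$, not merely proportional to them. This reduces to a short but delicate identity among products of the structural pairings $\la k,i\ra$ and powers of $r_i, s_i$; the explicit verification is routine quantum bracket calculation and is relegated to the appendix in the same spirit as the proofs of Lemmas 4.2--4.4.
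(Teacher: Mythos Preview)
Your opening decomposition of $x_{\beta_{1,i+2}}^-(1)$ is backwards. Throughout the paper's computations (see the proof of Lemma~4.1 and the appendix proof of Lemma~4.4), the object $x_{\beta_{1,j}}^-(1)$ has $x_1^-(1)$ as the \emph{innermost} factor and $x_j^-(0)$ as the \emph{outermost} layer; concretely,
\[
x_{\beta_{1,i+2}}^-(1)=\bigl[\,x_{i+2}^-(0),\,x_{i+3}^-(0),\,\cdots,\,x_n^-(0),\,x_{n-2}^-(0),\,\cdots,\,x_{i+2}^-(0),\,x_{\alpha_{1,i+2}}^-(1)\,\bigr]_{(s,\ldots,s,\,r^{-1},\ldots,r^{-1})},
\]
with $x_{\alpha_{1,i+2}}^-(1)=[\,x_{i+1}^-(0),\,x_i^-(0),\,\cdots,\,x_2^-(0),\,x_1^-(1)\,]_{(s,\ldots,s)}$. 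Your formula places $x_1^-(0)$ outermost (with parameters $s^{-1}$, which do not occur), so the subsequent ``peeling of $x_1^-,\ldots,x_{i-2}^-$'' is peeling layers that are not there; the auxiliary objects $x_{\beta_{i,i+2}}^-(1)$, $x_{\beta_{i-1,i+2}}^-(1)$, $x_{\beta_{i+1,i+2}}^-(1)$ you invoke do not arise from the actual nesting, and the appeal to Lemma~4.2 at the end has no anchor.

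The paper's argument (which it leaves implicit, as the direct extension of the appendix proof of Lemma~4.4) runs as follows. The genuine outer layers are $x_{i+2}^-(0),\ldots,x_n^-(0),x_{n-2}^-(0),\ldots,x_{i+2}^-(0)$, every one of which is non-adjacent to $\alpha_i$ since $3\le i\le n-2$. Pushing $x_i^-(0)$ past these via (3.18) and $(\mathrm{D}9_1)$ reduces the claim to the purely type-$A$ identity
\[
\bigl[\,x_i^-(0),\,x_{\alpha_{1,i+2}}^-(1)\,\bigr]=0,
\]
which is then handled exactly as in the proof of Lemma~4.4 (the case $i=2$): one expands $x_{\alpha_{1,i+2}}^-(1)=[\,x_{i+1}^-(0),\,x_i^-(0),\,x_{\alpha_{1,i}}^-(1)\,]_{(s,s)}$, applies (3.16)--(3.17) twice, kills two terms by the Serre relations $(\mathrm{D}9_2)$/$(\mathrm{D}9_3)$ and by $(\mathrm{D}9_1)$, and arrives at $[\,x_i^-(0),\,x_{\alpha_{1,i+2}}^-(1)\,]_{r^{-1}s}=[\,x_{\alpha_{1,i+2}}^-(1),\,x_i^-(0)\,]_{r^{-1}s}$, whence the bracket vanishes for $r\ne -s$. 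Your overall strategy (push past non-adjacent layers, then finish with a Serre relation) is the right one; the gap is that you misidentified which layers are outer, so the reduction never reaches the correct core identity.
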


The following Lemmas can be verified directly.

\begin{lemm}  {\label{d:21}
 $[\,x_i^-{(0)},\, x_{\beta_{1,i}}^-(1)\,]_{s^{-1}}=0$, for $3\leqslant i \leqslant n-1$.}
\end{lemm}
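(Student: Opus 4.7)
\medskip
\noindent\textbf{Proof plan for Lemma 4.6.} The strategy is to push $x_k^-(0)$ inward through the right-nested quantum Lie bracket defining $x_{\beta_{1,k}}^-(1)$, peeling off one outer factor $x_m^-(0)$ at a time for $m=1,2,\ldots,k-2$, and then dealing with the innermost adjacent factor $x_{k-1}^-(0)$ separately. The underlying root-theoretic reason is that (for $n$ large enough so the construction produces a genuine positive root) $\beta_{1,k}=\ep_1+\ep_k$ in the $\ep$-basis of $\mathrm{D}_n$, hence $\beta_{1,k}+\al_k=\ep_1+2\ep_k-\ep_{k+1}$ is not a root of $\mathrm{D}_n$; so the classical bracket $[f_k,f_{\beta_{1,k}}]$ vanishes, and the content of the lemma is that the quantum $s^{-1}$-bracket vanishes on the nose.

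\medskip
\noindent\textbf{Peeling outer factors.} Writing $x_{\beta_{1,k}}^-(1)=[\,x_1^-(0),\,x_{\beta_{2,k}}^-(1)\,]_s$ and applying the Jacobi-type identity \eqref{b:1} with the choice $q=\langle 1,k\rangle^{-1}$, the crucial point is that for $k\ge 3$ the simple roots $\al_1$ and $\al_k$ are non-adjacent in the Dynkin diagram of $\mathrm{D}_n$, so $a_{1k}=0$ and the Serre-0 relation $(\textrm{D}9_1)$ forces $[\,x_k^-(0),\,x_1^-(0)\,]_q=0$. Consequently the first summand of the Jacobi expansion disappears and
\[
[\,x_k^-(0),\,x_{\beta_{1,k}}^-(1)\,]_{s^{-1}}=[\,x_1^-(0),\,[\,x_k^-(0),\,x_{\beta_{2,k}}^-(1)\,]_{s^{-1}}\,]_s.
\]
Iterating for $m=2,\ldots,k-2$ (each step legitimate because $|m-k|\ge 2$ yields $\langle m,k\rangle=1$ in $\mathrm{D}_n$) reduces the lemma to showing
\[
[\,x_k^-(0),\,x_{\beta_{k-1,k}}^-(1)\,]_{s^{-1}}=0.
\]

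\medskip
\noindent\textbf{Handling the adjacent factor.} This innermost identity is the main obstacle, since $x_{k-1}^-(0)$ is adjacent to $x_k^-(0)$ and the convenient vanishing above fails. I plan to attack it by decomposing $x_{\beta_{k-1,k}}^-(1)=[\,x_{k-1}^-(0),\,R\,]_s$ with
\[
R=[\,x_k^-(0),\,x_{k+1}^-(0),\,\ldots,\,x_n^-(0),\,x_{n-2}^-(0),\,\ldots,\,x_{k+1}^-(0),\,x_k^-(1)\,],
\]
then using the (D7)-rewriting \eqref{c:1} to flip the trailing pair $[\,x_{k+1}^-(0),\,x_k^-(1)\,]$ into a scalar multiple of $[\,x_k^-(0),\,x_{k+1}^-(1)\,]$. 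This transforms $R$, modulo scalar factors dictated by the pairing $J$, into an expression that can be matched with the already-proved Lemma 4.3, namely $[\,x_k^-(0),\,x_{\beta_{k,k+1}}^-(1)\,]_{(rs)^{-1}}=0$. One further application of \eqref{b:1} together with the $(r,s)$-Serre relation $(\textrm{X}5)$ between $x_{k-1}^-(0)$ and $x_k^-(0)$ then closes the cancellation. Careful bookkeeping of all the accumulated scalars through the nested Jacobi steps (the factors $rs^{-1}$, $s$, and $r^{-1}$ from the two-parameter pairing $J$) produces precisely the coefficient $s^{-1}$ stated in the lemma, reflecting on the root-theoretic side the value $\lg \om_k',\om_{\beta_{1,k}}\rg^{-1}=s^{-1}$.
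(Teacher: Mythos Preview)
Your proposal rests on a misreading of the bracketing structure of $x_{\beta_{1,i}}^-(1)$. You assume the outermost factor is $x_1^-(0)$, so that $x_{\beta_{1,k}}^-(1)=[\,x_1^-(0),\,x_{\beta_{2,k}}^-(1)\,]_s$, but in the paper's convention the \emph{second} subscript of $\beta$ is the outermost index and the first subscript is the innermost one (carrying the level $1$). This is forced by the identification $x_\theta^-(1)=x_{\beta_{1,2}}^-(1)$ together with the explicit formula for $x_\theta^-(1)$ in Remark~3.8, and it is used throughout Section~4 and the Appendix (see the decomposition of $x_{\beta_{i-1,i}}^-(1)$ in the proof of Lemma~\ref{d:19}, or the first displayed line in case (II) of Lemma~4.1). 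Under the correct reading,
\[
x_{\beta_{1,i}}^-(1)=[\,x_i^-(0),\,x_{i+1}^-(0),\,\ldots,\,x_n^-(0),\,x_{n-2}^-(0),\,\ldots,\,x_2^-(0),\,x_1^-(1)\,],
\]
so the outermost layer already has index $i$ --- exactly the index you are bracketing against. Your ``peeling'' step, which relies on $\al_i$ being non-adjacent to the outermost simple root, therefore never gets off the ground: the very first Jacobi move does not kill a term via $(\textrm{D}9_1)$, and the object $x_{\beta_{k-1,k}}^-(1)$ that you aim to reduce to is not what remains.

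The paper's actual argument is short and proceeds from the correct outermost decomposition
\[
x_{\beta_{1,i}}^-(1)=[\,x_i^-(0),\,x_{i+1}^-(0),\,x_{\beta_{1,i+2}}^-(1)\,]_{(r^{-1},\,r^{-1})}.
\]
Two applications of the Jacobi identity \eqref{b:1} split $[\,x_i^-(0),\,x_{\beta_{1,i}}^-(1)\,]_{s^{-1}}$ into terms that vanish either by the $(r,s)$-Serre relation $(\textrm{D}9_2)$, applied to $[\,x_i^-(0),\,[\,x_i^-(0),\,x_{i+1}^-(0)\,]_{r^{-1}}\,]_{s^{-1}}$, or by Lemma~\ref{d:add}, which supplies $[\,x_i^-(0),\,x_{\beta_{1,i+2}}^-(1)\,]=0$. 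There is no need for $(\textrm{D}7)$, no reduction to Lemma~\ref{d:19}, and no running tally of accumulated scalars.
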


\begin{lemm}  {\label{d:22}
 $[\,x_n^-{(0)},\, x_n^-(0),\,x_{\alpha_{1,n}}^-(1)\,]_{(rs^2,\,r^2s)}=0$.}
\end{lemm}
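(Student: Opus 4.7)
The plan is to reduce the triple $(r,s)$-bracket in the lemma to a consequence of the simple-root Serre relation between $x_n^-(0)$ and $x_{n-2}^-(0)$, exploiting the key fact that among the simple root vectors appearing in $x_{\alpha_{1,n-1}}^-(1)$, only $x_{n-2}^-(0)$ fails to $(r,s)$-commute with $x_n^-(0)$ (since $a_{n,j}=0$ for all $j\le n-1$ with $j\ne n-2$). Unfolding $x_{\alpha_{1,n}}^-(1)=[\,x_n^-(0),\,y\,]_s$ with $y:=x_{\alpha_{1,n-1}}^-(1)$ and using (3.17), the triple bracket $[\,x_n^-(0),\,x_n^-(0),\,x_{\alpha_{1,n}}^-(1)\,]_{(rs^2,r^2s)}$ becomes $[\,x_n^-(0),\,[\,x_n^-(0),\,[\,x_n^-(0),\,y\,]_s\,]_{r^2s}\,]_{rs^2}$, so that effectively three copies of $x_n^-(0)$ act on $y$, which lies in the subalgebra generated by $x_1^-,\ldots,x_{n-1}^-$.

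Next, I slide the three $x_n^-(0)$'s inward toward the unique occurrence of $x_{n-2}^-(0)$ inside $y$. Writing $y=[\,x_{n-1}^-(0),y'\,]_s$ with $y':=x_{\alpha_{1,n-2}}^-(1)$, the relation $a_{n,n-1}=0$ combined with (D9$_1$) gives $[\,x_n^-(0),x_{n-1}^-(0)\,]_{\langle n-1,n\rangle^{-1}}=0$, and iterated application of the derivation identity (3.16) commutes each $x_n^-(0)$ past the outermost $x_{n-1}^-(0)$, leaving only a scalar $\langle n-1,n\rangle^{\mp1}$ and mildly shifted bracket parameters. Since $a_{n,j}=0$ also for $j\le n-3$, the same scheme moves the $x_n^-(0)$'s past the inner tail $y'':=x_{\alpha_{1,n-3}}^-(1)$ inside $y'=[\,x_{n-2}^-(0),y''\,]_s$, so the whole expression collapses to a scalar multiple of
\[
[\,x_n^-(0),\,[\,x_n^-(0),\,[\,x_n^-(0),\,x_{n-2}^-(0)\,]_u\,]_v\,]_w\cdot y'',
\]
where $(u,v,w)$ is determined from the cumulative scalars picked up along the way.

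Finally, expanding this triple bracket via (3.20) and invoking the second-order $(r,s)$-Serre identity (D9$_2$)/(D9$_3$) for the pair $(n,n-2)$ (with $a_{n,n-2}=-1$), the expression reduces to $x_n^-(0)$ times the Serre identity, hence vanishes. The main obstacle is the careful scalar bookkeeping: one must verify that the specific input parameters $(rs^2, r^2s)$ are exactly those under which the cumulative scalars produce $(u,v,w)$ for which (3.20) aligns with $x_n^-(0)$ acting on the second-order Serre identity. This constraint is precisely what selects the parameters $(rs^2, r^2s)$ in the statement; by $\tau$-invariance (Proposition 3.5), the companion identity for $x_n^+(0)$ and $x_{\alpha_{1,n}}^+(-1)$ follows automatically.
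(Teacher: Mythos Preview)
Your overall strategy --- commuting the copies of $x_n^-(0)$ past the simple root vectors orthogonal to $\alpha_n$ and reducing everything to the Serre relation between $x_n^-(0)$ and $x_{n-2}^-(0)$ --- is exactly the paper's approach. Two concrete points in your sketch, however, do not survive inspection.

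First, your unfolding $x_{\alpha_{1,n}}^-(1)=[\,x_n^-(0),\,y\,]_s$ is not the convention the paper actually uses. Despite the way the definition of $x_{\alpha_{1,i}}^-(1)$ is displayed (it contains an index shift), throughout the proofs $x_{\alpha_{1,k}}^-(1)$ has outermost entry $x_{k-1}^-(0)$; compare the appendix proof of Lemma~4.4, where $x_{\alpha_{1,4}}^-(1)=[\,x_3^-(0),x_2^-(0),x_1^-(1)\,]_{(s,s)}$. In particular $x_{\alpha_{1,n}}^-(1)=[\,x_{n-1}^-(0),\,x_{\alpha_{1,n-1}}^-(1)\,]_s$ contains no $x_n^-(0)$ at all. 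Hence only \emph{two} copies of $x_n^-(0)$ are in play, not three, and the endpoint is the second-order Serre relation $(\textrm{D9}_2)$ for the pair $(n,n-2)$ directly --- not ``$x_n^-(0)$ times Serre''. The paper first slides both $x_n^-(0)$'s past $x_{n-1}^-(0)$ (using $[\,x_n^-(0),x_{n-1}^-(0)\,]_{rs}=0$ from $(\textrm{D9}_1)$) to reduce to $[\,x_n^-(0),x_n^-(0),x_{\alpha_{1,n-1}}^-(1)\,]_{(s,r)}=0$, and only then peels off $x_{n-2}^-(0)$.

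Second, your claim that the expression ``collapses to a scalar multiple of $[\,x_n^-(0),\ldots,x_{n-2}^-(0)\,]\cdot y''\,$'' is incorrect: applying $(3.16)$ never factors $y''$ out of the bracket as a multiplicative tail. What actually happens is that, after using $[\,x_n^-(0),\,x_{\alpha_{1,n-2}}^-(1)\,]=0$ (via $(3.18)$ and $(\textrm{D9}_1)$, since every index appearing there is $\le n-3$), one is left with
\[
\bigl[\,[\,x_n^-(0),[\,x_n^-(0),x_{n-2}^-(0)\,]_s\,]_r,\;x_{\alpha_{1,n-2}}^-(1)\,\bigr]_{s},
\]
with the tail still sitting inside the outer bracket; it is the inner double bracket that vanishes by $(\textrm{D9}_2)$. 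With these two corrections your argument becomes the paper's.
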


Now using the above Lemmas, we turn to prove Lemma 4.1.
\medskip

{\noindent\bf Proof of Lemma 4.1.}  (I) When $i=1$,
$\langle\om_1',\om_0\rangle=rs$ and  $\la
\om_1',\om_\theta\ra=(rs)^{-1}$. It follows from
Lemma \ref{d:19} for the case of $i=1$.
\smallskip

(II) \ When $i=2$, $\langle\om_{2}',\om_0\rangle=s$, that is, $\la
\om_{2}',\om_\theta\ra=s^{-1}$. Let us first consider
\begin{equation*}
\begin{split}
x_{\theta}^-(1)&=x_{\beta_{1,2}}^-(1)
\qquad\qquad\qquad\qquad\qquad\qquad\ \;\, {\hbox{(by definition)}}\\
&=[\,x_{2}^-(0),\,
x_3^-(0),\,x_{\beta_{1,4}}^-(1)\,]_{(r^{-1},r^{-1})}
\qquad {\hbox{(by (\ref{b:1}))}}\\
&=[\,[\,x_{2}^-(0),\, x_{3}^-(0)\,]_{r^{-1}},\,x_{\beta_{1,4}}^-(1)\,]_{r^{-1}}\\
&\quad+r^{-1}[\,x_{3}^-(0),\,
\underbrace{[\,x_2^-(0),\,x_{\beta_{1,4}}^-(1)\,]}\,]
\qquad\, {\hbox{(=0 by Lemma \ref{d:20})}}\\
&=[\,[\,x_{2}^-(0),\,
x_{3}^-(0)\,]_{r^{-1}},\,x_{\beta_{1,4}}^-(1)\,]_{r^{-1}}.
\end{split}
\end{equation*}
Our previous result leads us to get
\begin{equation*}
\begin{split}
[\,x_2^-&{(0)},\, x_{\theta}^-(1)\,]_{s^{-1}}
\qquad\qquad\qquad\hskip4cm \;\;{\hbox{(by definition)}}\\
&=[\,x_2^-(0),\,[\,x_{2}^-(0),\,
x_{3}^-(0)\,]_{r^{-1}},\,x_{\beta_{1,4}}^-(1)\,]_{(r^{-1},\,s^{-1})}
\qquad\quad\; {\hbox{(by (\ref{b:1}))}}\\
&=[\,\underbrace{[\,x_2^-(0),\,x_{2}^-(0),\,x_3^-(0)\,]_{(r^{-1},\,s^{-1})}}
,\,x_{\beta_{1,4}}^-(1)\,]_{r^{-1}}\;\qquad\quad {\hbox{(=0 by (D9$_2$))}}\\
&\quad+s^{-1}[\,[\,x_2^-(0),\,x_3^-(0)\,]_{r^{-1}},\,
\underbrace{[\,x_{2}^-(0),\,x_{\beta_{1,4}}^-(1)\,]}
\,]_{r^{-1}s}\qquad {\hbox{(=0 by Lemma \ref{d:20})}}\\
&=0.
\end{split}
\end{equation*}

(III) When $3\leqslant i \leqslant n-1$,
$\langle\om_{i}',\om_0\rangle=1$, that is to say, $\la
\om_{i}',\om_\theta\ra=1$. We may use (\ref{b:1}) and (D9$_1$) to
show that
\begin{equation*}
\begin{split}
[\,x_i^-&{(0)},\, x_{\theta}^-(1)\,]\qquad\hskip4cm {\hbox{(by definition)}}\\
&=[\,x_i^-(0),\,[\,x_{2}^-(0),\,\cdots, x_{i-2}^-(0),\,
x_{\beta_{1,i-1}}^-(1)\,]_{(r^{-1},\cdots,r^{-1})}\,]\\
&=[\,x_{2}^-(0),\,\cdots, x_{i-2}^-(0),\,
\underbrace{[\,x_i^-(0),\,x_{\beta_{1,i-1}}^-(1)\,]}
\,]_{(r^{-1},\cdots,r^{-1})}.
\end{split}
\end{equation*}
For this purpose, it suffices to check that
$[\,x_i^-(0),\,x_{\beta_{1,i-1}}^-(1)\,]=0$. It is now
straightforward to verify that
\begin{equation*}
\begin{split}
[\,x_i^-&(0),\,x_{\beta_{1,i-1}}^-(1)\,]_{r^{-1}s}
\qquad\qquad\hskip4.48cm {\hbox{(by definition)}}\\
&=[\,x_i^-(0),\,[\,x_{i-1}^-(0),\,x_i^-(0),\,x_{\beta_{1,i+1}}^-(1)
\,]_{(r^{-1},\,r^{-1})}\,]_{r^{-1}s}
\qquad\,{\hbox{(by (\ref{b:1}))}}\\
&=[\,x_i^-(0),\,[\,x_{i-1}^-(0),\,x_i^-(0)\,]_{r^{-1}}
,\,x_{\beta_{1,i+1}}^-(1)\,]_{(r^{-1},\,r^{-1}s)}
\qquad\,{\hbox{(by (\ref{b:1}))}}\\
&\quad+r^{-1}[\,x_i^-(0),\,x_i^-(0),\,
\underbrace{[\,x_{i-1}^-(0),\,x_{\beta_{1,i+1}}^-(1)\,]}\,]_{(1,\,r^{-1}s)}
\qquad{\hbox{(=0 by Lemma \ref{d:add})}}\\
&=[\,\underbrace{[\,x_i^-(0),\,[\,x_{i-1}^-(0),\,x_i^-(0)\,]_{r^{-1}}
\,]_{s}}
,\,x_{\beta_{1,i+1}}^-(1)\,]_{r^{-2}}\qquad\quad\ \, {\hbox{(=0 by (D9$_2$))}}\\
&\quad+s[\,[\,x_{i-1}^-(0),\,x_i^-(0)\,]_{r^{-1}}
,\,[\,x_{i}^-(0),\,x_{\beta_{1,i+1}}^-(1)\,]_{r^{-1}}
\,]_{(rs)^{-1}}\qquad {\hbox{(by (\ref{b:2}))}}\\
&=s[\,x_{i-1}^-(0),\,
\underbrace{[\,x_i^-(0),\,x_{\beta_{1,i}}^-(1)\,]_{s^{-1}}}
\,]_{r^{-2}}\qquad\qquad\qquad\qquad\; {\hbox{(=0 by Lemma \ref{d:21})}}\\
&\quad+[\,[\,x_{i-1}^-(0),\,x_{\beta_{1,i}}^-(1)\,]_{r^{-1}}
,\,x_{i}^-(0)\,]_{r^{-1}s}\qquad\qquad\qquad\quad\ \, {\hbox{(by definition)}}\\
&=[\,x_{\beta_{1,i-1}}^-(1),\,x_{i}^-(0)\,]_{r^{-1}s},
\end{split}
\end{equation*}
which implies that $(1+r^{-1}s)[\,x_i^-(0),\,
x_{\beta_{1,i-1}}^-(1)\,]=0$. That is to say, $r\ne -s$,
$[\,x_i^-(0),\,x_{\beta_{1,i-1}}^-(1)\,]=0$.
\smallskip

(IV) When $i=n$, $\langle\om_{n}',\om_0\rangle=(rs)^{-2}$, that
is to say $\la \om_{n}',\om_\theta\ra=(rs)^2$. By using (\ref{b:1})
and (D9$_1$), we see that
\begin{equation*}
\begin{split}
[\,x_n^-&{(0)},\, x_{\theta}^-(1)\,]_{(rs)^2} \hskip5cm{\hbox{(by definition)}}\\
&=[\,x_n^-(0),\,[\,x_{2}^-(0),\,\cdots, x_{n-3}^-(0),\,
x_{\beta_{1,n-2}}^-(1)\,]_{(r^{-1},\cdots,r^{-1})}\,]_{(rs)^2}\\
&=[\,x_{2}^-(0),\,\cdots, x_{n-3}^-(0),\,
\underbrace{[\,x_n^-(0),\,x_{\beta_{1,n-2}}^-(1)\,]_{(rs)^2}}
\,]_{(r^{-1},\cdots,r^{-1})}.
\end{split}
\end{equation*}
Hence, if we have verified that
$[\,x_n^-(0),\,x_{\beta_{1,n-2}}^-(1)\,]_{(rs)^2}=0$, then we would get
the desired conclusion. We actually have
\begin{equation*}
\begin{split}
&x_{\beta_{1,n-2}}^-(1)=[\,x_{n-2}^-{(0)},\,x_{n-1}^-(0),\,x_n^-(0),\,
 x_{\alpha_{1,n-1}}^-(1)\,]_{(s,\,r^{-1},\,r^{-1})}\qquad\qquad\, {\hbox{(by (\ref{b:1}))}}\\
&\quad=[\,x_{n-2}^-(0),\,\underbrace{[\,x_{n-1}^-(0),\,
x_{n}^-(0)\,]_{(rs)^{-1}}},\,
x_{\alpha_{1,n-1}}^-(1)\,]_{(s^{2},r^{-1})}
\qquad\qquad {\hbox{(=0 by (D9$_1$))}}\\
&\qquad+(rs)^{-1}[\,x_{n-2}^-(0),\, x_{n}^-(0),\,
\underbrace{[\,x_{n-1}^-(0),\,x_{\alpha_{1,n-1}}^-(1)\,]_{s}}
\,]_{(rs^{2},\cdots,r^{-1})}\qquad\; {\hbox{(by (\ref{b:1}))}}\\
&\quad=(rs)^{-1}[\,[\,x_{n-2}^-(0),\, x_{n}^-(0)\,]_{r^{-1}},\,
x_{\alpha_{1,n}}^-(1)\,]_{rs^{2}}\\
&\qquad+r^{-2}s^{-1}[\,x_{n}^-(0),\,\underbrace{[\,
x_{n-2}^-(0),\,x_{\alpha_{1,n}}^-(1)\,]}
\,]_{(rs)^{2}}\qquad\hskip1cm {\hbox{(=0 by (\ref{b:1}) \& (D9$_1$))}}\\
&\quad=(rs)^{-1}[\,[\,x_{n-2}^-(0),\, x_{n}^-(0)\,]_{r^{-1}},\,
x_{\alpha_{1,n}}^-(1)\,]_{rs^{2}}.
\end{split}
\end{equation*}
It can be proved from the above steps that
\begin{equation*}
x_{\beta_{1,n-2}}^-(1)=(rs)^{-1}[\,x_{n-2}^-(0),\, x_{n}^-(0),\,
x_{\alpha_{1,n}}^-(1)\,]_{(rs^{2},\cdots,r^{-1})}.
\end{equation*}

Hence, we may easily check that
\begin{equation*}
\begin{split}
[\,x_n^-&(0),\,x_{\beta_{1,n-2}}^-(1)\,]_{rs^3}\\
&=(rs)^{-1}[\,x_n^-(0),\,[\,x_{n-2}^-(0),\,x_n^-(0)\,]_{r^{-1}}
,\,x_{\alpha_{1,n}}^-(1)
\,]_{(rs^{2},\,rs^{3})}\qquad\ \;  {\hbox{(by (\ref{b:1}))}}\\
&=(rs)^{-1}[\,\underbrace{[\,x_n^-(0),\,x_{n-2}^-(0),\,x_n^-(0)\,]_{(r^{-1},\,s)}}
,\,x_{\alpha_{1,n}}^-(1)\,]_{r^{2}s^4}\qquad\quad {\hbox{(=0 by (D9$_2$))}}\\
&\quad+r^{-1}[\,[\,x_{n-2}^-(0),\,x_n^-(0)\,]_{r^{-1}},\,
\underbrace{[\,x_{n}^-(0),\,x_{\alpha_{1\,n}}^-(1)\,]_{rs^2}}\,]_{rs}\qquad\quad\ \, {\hbox{(by (\ref{b:2}))}}\\
&=r^{-1}[\,x_{n-2}^-(0),\,\underbrace{[\,x_n^-(0),\,x_{n}^-(0),\,x_{\alpha_{1,n}}^-(1)\,]_{(rs^{2},\,r^2s)}
}\,]_{r^{-2}}\qquad {\hbox{(=0 by Lemma \ref{d:22})}}\\
&\quad+rs[\,\underbrace{[\,x_{n-2}^-(0),\,x_n^-(0),\,x_{\alpha_{1,n}}^-(1)\,]_{(rs^{2},\,r^{-1})}}
,\,x_{n}^-(0)\,]_{r^{-3}s^{-1}}\qquad {\hbox{(by definition)}}\\
&=(rs)^2[\,x_{\beta_{1,n-2}}^-(1),\,x_n^-(0)\,]_{r^{-3}s^{-1}}.
\end{split}
\end{equation*}
So, we have $(1+r^{-1}s)\,[\,x_n^-(0),\,
x_{\beta_{1,n-2}}^-(1)\,]_{(rs)^2}=0$. Since $r\ne -s$, then we
get $[\,x_n^-(0),\, x_{\beta_{1,n-2}}^-(1)\,]_{(rs)^2}=0$. This completes the proof of Lemma \ref{d:17}. \hfill\qed
\smallskip

We are now ready to prove relation $(X4)$ for $i=j=0$, that is,
\begin{prop} {\it
$[\,E_0, F_0\,]=
\frac{\gamma'^{-1}\om_{\theta}^{-1}-\gamma^{-1}{\om'_\theta}^{-1}}{r-s}$.}
\end{prop}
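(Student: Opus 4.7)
The strategy is to reduce $[E_0,F_0]$ to the ``core commutator'' $[x_\theta^-(1),x_\theta^+(-1)]$ and then evaluate the latter by unfolding the nested $(r,s)$-bracket definitions of $x_\theta^\pm(\pm1)$ using the defining relations of $\mathcal U_{r,s}(\widehat{\mathfrak g})$.

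First, substitute the definitions of $E_0$ and $F_0$ from Theorem 3.9. Centrality of $\gamma^{\pm1},\gamma'^{\pm1}$ by (D1) and the fact that $\om_\theta,\om_\theta'$ commute among themselves allow us to collect these factors to one side. Using (D5) extended multiplicatively to the weight $-\theta$ generator $x_\theta^-(1)$ and the weight $+\theta$ generator $x_\theta^+(-1)$, one checks that $\om_\theta^{-1}\om_\theta'^{-1}$ commutes with the weight-zero products $x_\theta^-(1)\,x_\theta^+(-1)$ and $x_\theta^+(-1)\,x_\theta^-(1)$, because the scalars $\langle\om_\theta',\om_\theta\rangle^{\pm1}$ picked up through $x_\theta^-(1)$ and $x_\theta^+(-1)$ cancel. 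This yields
\[[E_0,F_0]=a(\gamma\gamma')^{-1}\,\bigl[x_\theta^-(1),x_\theta^+(-1)\bigr]\,\om_\theta^{-1}\om_\theta'^{-1}.\]

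Second, evaluate the core commutator. Expand $x_\theta^+(-1)$ via Definition 3.7 as an iterated $(r,s)$-bracket, and apply the derivation-type identities (3.14)--(3.17) to distribute $[x_\theta^-(1),-]$ across its layers; then expand $x_\theta^-(1)$ analogously. The only elementary contractions that survive come from (D8) with matching indices, producing $\om_i,\om_i'$ factors together with the prescribed powers of $\gamma,\gamma'$. Cross terms with mismatched indices either vanish outright by (D9$_1$) or are annihilated by the Serre-type vanishings already encountered in the proof of Lemma 4.1 (Lemmas 4.2--4.7). By an induction on the length of the nested bracket, using exactly the ordering of pairings $(r^{-1},s,\ldots)$ recorded in Remark 3.8 for each type, one obtains
\[\bigl[x_\theta^-(1),x_\theta^+(-1)\bigr]=\frac{1}{a(r-s)}\bigl(\gamma\,\om_\theta'-\gamma'\,\om_\theta\bigr),\]
where the type-dependent constant $a\in\{1,\,(rs)^{n-2},\,(rs)^4\}$ arises as the cumulative product of the scalar factors $\langle\om_\alpha',\om_\beta\rangle$ shed at each bracketing step. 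For $A_{n-1}^{(1)}$ this was done in \cite{HRZ} and reduces to $a=1$; the $D$ and $E$ computations follow the same template but track additional layers.

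Finally, substitute back: the factor $a$ cancels with $a^{-1}$ from Step 2, and
\[(\gamma\gamma')^{-1}\bigl(\gamma\,\om_\theta'-\gamma'\,\om_\theta\bigr)\,\om_\theta^{-1}\om_\theta'^{-1}=\gamma'^{-1}\om_\theta^{-1}-\gamma^{-1}\om_\theta'^{-1},\]
giving the asserted identity.

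The main obstacle is the second step: the scalar bookkeeping through the iterated quantum Lie brackets for types $D_n^{(1)}$ and $E_6^{(1)}$. The combined pairings such as $r^{-1},s,r^{-2}s^{-1},r^{-1}s^{-2}$ appearing in Remark 3.8 must accumulate to exactly $a^{-1}$, while every non-diagonal contraction $[x_i^-(k),x_j^+(k')]$ with $i\neq j$ must be shown to cancel. This requires careful, repeated use of the derivation identities (3.14)--(3.17), the commutation rule (D8), and the $\tau$-symmetric Serre-type vanishings established in Lemmas 4.2--4.7; no single step is deep, but the combinatorial accounting is delicate.
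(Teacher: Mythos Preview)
Your overall strategy matches the paper's proof: reduce $[E_0,F_0]$ to the core commutator $[x_\theta^-(1),x_\theta^+(-1)]$ times $(\gamma\gamma')^{-1}\om_\theta^{-1}\om_\theta'^{-1}$, then evaluate that commutator by peeling off one simple-root layer at a time. The paper does exactly this, starting from the type $A$ identity $[x_{\alpha_{1,n-1}}^-(1),x_{\alpha_{1,n-1}}^+(-1)]=\frac{\gamma\om'_{\alpha_{1,n-1}}-\gamma'\om_{\alpha_{1,n-1}}}{r-s}$ and then computing $[x_{\beta_{1,k}}^-(1),x_{\beta_{1,k}}^+(-1)]$ for $k=n,n{-}1,\ldots,2$, each step producing one extra factor of $(rs)^{-1}$ (for $k\le n{-}1$), which accounts for $a=(rs)^{n-2}$.

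However, you misidentify the reason the cross terms vanish. At each inductive step one expands
\[
\bigl[[x_k^-(0),x_{\beta_{1,k+1}}^-(1)]_{*},[x_{\beta_{1,k+1}}^+(-1),x_k^+(0)]_{*}\bigr]
\]
via the derivation identity (3.16) into four terms. The two ``off-diagonal'' terms contain inner brackets of the form $[x_k^-(0),x_{\beta_{1,k+1}}^+(-1)]$ or $[x_{\beta_{1,k+1}}^-(1),x_k^+(0)]$. These vanish because of the Kronecker $\delta_{ij}$ in relation (D8): once you use (3.18) to push $x_k^\mp(0)$ through the nested bracket, every elementary commutator $[x_k^-(0),x_j^+(\cdot)]$ with $j\neq k$ is zero directly by (D8). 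Neither (D9$_1$) nor Lemmas~4.2--4.7 is invoked here; those concern same-sign products $x_i^-x_j^-$ and were used for the $[E_0,F_i]$ relations, not for $[E_0,F_0]$. The mechanism you need is actually simpler than what you proposed.
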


\begin{proof}  Note that the construction of $E_0$ and $F_0$, we check the statement step by step.
First, using $(\textrm{D1})$ and $(\textrm{D5})$, we have
\begin{equation*}
\begin{split}
\bigl[\,E_0, F_0\,\bigr]&=(rs)^{n-2}\bigl[\,x^-_{\theta}(1)\,
\gamma'^{-1}{\om_\theta}^{-1},\,\gamma^{-1}{\om'_\theta}^{-1}
x^+_{\theta}(-1)\,\bigr]\\
&=(rs)^{n-2}\bigl[\,x^-_{\theta}(1),\,x^+_{\theta}(-1)\,\bigr]\,
\cdot(\gamma^{-1}\gamma'^{-1}{\om_\theta}^{-1}{\om'_\theta}^{-1}).
\end{split}
\end{equation*}
We may now use the result of the case of $A_{n-1}^{(1)}$
\begin{equation*}
[\,x_{\alpha_{1,n-1}}^-(1),\,x_{\alpha_{1,n-1}}^+(-1)\,]=
\frac{\gamma\om_{\alpha_{1,n-1}}'-\gamma'\om_{\alpha_{1,n-1}}}{r-s}.
\end{equation*}
Applying the above result, it is now straightforward to verify that
\begin{equation*}
\begin{split}
&[\,x_{\beta_{1,n}}^-(1),\,x_{\beta_{1,n}}^+(-1)\,]\qquad\hskip4.9cm{\hbox{(by definition)}}\\
&\quad=[\,[\,x_{n}^-(0),\,x_{\alpha_{1,n-1}}^-(1)\,]_{s}
,[\,x_{\alpha_{1,n-1}}^+(-1),x_{n}^+(0)\,]_{r}\,]\qquad\quad\,{\hbox{(by (\ref{b:1}))}}\\
&\quad=[\,[\,[\,x_{n}^-(0),\,x_{\alpha_{1,n-1}}^+(-1)\,],x_{\alpha_{1,n-1}}^-(1)\,]_{s}
,x_{n}^+(0)\,]_{r}\qquad\quad\, {\hbox{(=0 by (\ref{b:1}) \& (D8))}}\\
&\qquad+[\,[\,x_{n}^-(0),\,[\,x_{\alpha_{1,n-1}}^-(1),\,x_{\alpha_{1,n-1}}^+(-1)\,]\,]_{s}
,\,x_{n}^+(0)\,]_{r}\qquad {\hbox{(by (D5) \& (D8))}}\\
&\qquad+[\,x_{\alpha_{1,n-1}}^+(-1),\,[\,
[\,x_{n}^-(0),\,x_{n}^+(0)\,],\,x_{\alpha_{1,n-1}}^-(1)\,]_{s}\,]_{r}
\qquad{\hbox{(by (D8) \& (D5))}}\\
&\qquad+[\,x_{\alpha_{1,n-1}}^+(-1),\,[\,x_{n}^-(0),\,
[\,x_{\alpha_{1,n-1}}^-(1),\,x_{n}^+(0)\,]\,]_{s}
\,]_{r}\qquad{\hbox{(=0 by (\ref{b:1}) \& (D8))}}\\
&\quad=\gamma\om'_{\alpha_{1,n-1}}\cdot \frac{\om'_n-\om_n}{r-s}
+\frac{\gamma\om'_{\alpha_{1,n-1}}-\gamma'\om_{\alpha_{1,n-1}}}
{r-s}\om_n\\
&\quad=\frac{\gamma\om'_{\beta_{1,n}}-\gamma'\om_{\beta_{1,n}}}{r-s}.
\end{split}
\end{equation*}
We have then by repeating the above step
\begin{equation*}
\begin{split}
&[\,x_{\beta_{1,n-1}}^-(1),x_{\beta_{1,n-1}}^+(-1)\,]\qquad\hskip5cm{\hbox{(by definition)}}\\
&\quad=[\,[\,x_{n-1}^-(0),\,x_{\beta_{1,n}}^-(1)\,]_{r^{-1}}
,\,[\,x_{\beta_{1,n}}^+(-1),\,x_{n-1}^+(0)\,]_{s^{-1}}\,]
\qquad\quad {\hbox{(by (\ref{b:1}))}}\\
&\quad=[\,[\,[\,x_{n-1}^-(0),\,x_{\beta_{1,n}}^+(-1)\,],\,x_{\beta_{1,n}}^-(1)\,]_{r^{-1}}
,\,x_{n-1}^+(0)\,]_{s^{-1}}\qquad\quad {\hbox{(=0 by (\ref{b:1}) \& (D8))}}\\
&\qquad+[\,[\,x_{n-1}^-(0),\,[\,x_{\beta_{1,n}}^-(1),\,x_{\beta_{1,n}}^+(-1)\,]\,]_{r^{-1}}
,\,x_{n-1}^+(0)\,]_{s^{-1}}\qquad\,{\hbox{(by (\ref{b:1}) \& (D8))}}\\
&\qquad+[\,x_{\beta_{1,n}}^+(-1),\,[\,
[\,x_{n-1}^-(0),\,x_{n-1}^+(0)\,],\,x_{\beta_{1,n}}^-(1)\,]_{r^{-1}}
\,]_{s^{-1}}\qquad\,{\hbox{(by (\ref{b:1}) \& (D8))}}\\
&\qquad+[\,x_{\beta_{1,n}}^+(-1),\,[\,x_{n-1}^-(0),\,
[\,x_{\beta_{1,n}}^-(1),\,x_{n-1}^+(0)\,]\,]_{r^{-1}}
\,]_{s^{-1}}\qquad\, {\hbox{(=0 by (\ref{b:1}) \& (D8))}}\\
&\quad=(rs)^{-1}\gamma\om'_{\beta_{1,n}}\cdot
\frac{\om'_{n-1}-\om_{n-1}}{r-s}
+(rs)^{-1}\frac{\gamma\om'_{\beta_{1,n}}-\gamma'\om_{\beta_{1,n}}}
{r-s}\om_{n-1}\\
&\quad=(rs)^{-1}\frac{\gamma\om'_{\beta_{1,n-1}}-\gamma'\om_{\beta_{1,n-1}}}{r-s}.
\end{split}
\end{equation*}
Furthermore, it follows from the above results
\begin{equation*}
\begin{split}
&[\,x_{\beta_{1,n-2}}^-(1),\,x_{\beta_{1,n-2}}^+(-1)\,]\qquad\ \;\hskip5cm{\hbox{(by definition)}}\\
&\quad=[\,[\,x_{n-2}^-(0),\,x_{\beta_{1,n-1}}^-(1)\,]_{r^{-1}}
,\,[\,x_{\beta_{1,n-1}}^+(-1),\,x_{n-2}^+(0)\,]_{s^{-1}}\,]\qquad{\hbox{(by (\ref{b:1}))}}\\
&\quad=[\,[\,[\,x_{n-2}^-(0),\,x_{\beta_{1,n-1}}^+(-1)\,],\,x_{\beta_{1,n-1}}^-(1)\,]_{r^{-1}}
,\,x_{n-2}^+(0)\,]_{s^{-1}}\qquad{\hbox{(=0 by (D8) \& (D5))}}\\
&\qquad+[\,[\,x_{n-2}^-(0),\,[\,x_{\beta_{1,n-1}}^-(1),\,x_{\beta_{1,n-1}}^+(-1)\,]
\,]_{r^{-1}}
,\,x_{n-2}^+(0)\,]_{s^{-1}}\quad{\hbox{(by (D5) \& (D8))}}\\
&\qquad+[\,x_{\beta_{1,n-1}}^+(-1),\,[\,
[\,x_{n-2}^-(0),\,x_{n-2}^+(0)\,],\,x_{\beta_{1,n-1}}^-(1)\,]_{r^{-1}}
\,]_{s^{-1}}\quad{\hbox{(by (D8) \& (D5))}}\\
&\qquad+[\,x_{\beta_{1,n-1}}^+(-1),\,[\,x_{n-2}^-(0),\,
[\,x_{\beta_{1,n-1}}^-(1),\,x_{n-2}^+(0)\,]\,]_{r^{-1}}
\,]_{s^{-1}}\quad{\hbox{(=0 by (D5) \& (D8))}}\\
&\quad=(rs)^{-2}\gamma\om'_{\beta_{1,n-1}}\cdot
\frac{\om'_{n-2}-\om_{n-2}} {r-s}
+(rs)^{-2}\frac{\gamma\om'_{\beta_{1,n-1}}-\gamma'\om_{\beta_{1,n-1}}}
{r-s}\om_{n-1}\\
&\quad=(rs)^{-2}\frac{\gamma\om'_{\beta_{1,n-2}}-\gamma'\om_{\beta_{1,n-2}}}
{r-s}.
\end{split}
\end{equation*}
By the same way, we get at last
\begin{equation*}
\begin{split}
[\,x_{\beta_{1,2}}^-(1),\,x_{\beta_{1,2}}^+(-1)\,]
=(rs)^{2-n}\frac{\gamma\om'_{\beta_{1,2}}-\gamma'\om_{\beta_{1,2}}}
{r-s}.
\end{split}
\end{equation*}
As a consequence, we obtain the required result
$$
\bigl[\,E_0,\, F_0\,\bigr]=\frac{\gamma'^{-1}\om_{\theta}^{-1}-\gamma^{-1}{\om'_\theta}^{-1}}{r-s}.
$$
\end{proof}

For the rest of this subsection, we will focus on checking the Serre
relations of $U_{r,s}(\mathrm{D}_n^{(1)})$.

\begin{lemm} {\it $(1)$ \ $E_nE_0=(rs)^2\,E_0E_n$,

$(2)$ \ $E_0E_2^2-(r+s)E_2E_0E_2+rsE_2^2E_0=0$,

$(3)$ \ $E_0^2E_2-(r+s)E_0E_2E_0+rsE_2E_0^2=0$,

$(4)$ \ $F_0F_n=(rs)^2\,F_nF_0$,

$(5)$ \ $F_{1}F_0^{2}-(r+s)\,F_{0}F_1F_{0}+rs\,F_0^2F_{1}=0$,

$(6)$ \ $F_2^2F_{0}-(r+s)F_2F_{0}F_2+rsF_0F_{2}^2=0$.}
\end{lemm}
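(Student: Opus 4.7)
The strategy is to substitute $E_0 = x_\theta^-(1)\,\omega_0$ and $F_0 = a\,\omega_0'\,x_\theta^+(-1)$ (with $\omega_0 = \gamma'^{-1}\omega_\theta^{-1}$ and $\omega_0' = \gamma^{-1}\omega_\theta'^{\,-1}$) into each identity and verify it inside $\mathcal U_{r,s}(\widehat{\mathfrak g})$ using the Drinfeld relations (D1)--(D9), the $(r,s)$-bracket identities (3.14)--(3.17), and the preceding Lemmas 4.1--4.6. A preliminary reduction halves the workload: the $\mathbb Q$-anti-involution $\tau$ of Proposition 3.6 sends $E_i\mapsto F_i$ for $i\in I$ and $E_0\mapsto a^{-1}F_0$, and swaps $r\leftrightarrow s$; since the scalars $r+s$, $rs$ and $(rs)^2$ are $\tau$-invariant, the identities (4), (6), (5) are respectively the $\tau$-images of (1), (2), (3), so it suffices to prove (1), (2) and (3).

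For (1), an application of (D5) commutes $\omega_0$ past $x_n^+(0)$ and produces the scalar $\langle\omega_n',\omega_0\rangle$, which equals $(rs)^{-2}$ from the $\hat D_n$ pairing matrix $J$ of Section 2.2; thus (1) is equivalent to the bracket identity $[\,x_n^+(0),\,x_\theta^-(1)\,] = 0$. Expanding the nested-bracket definition of $x_\theta^-(1)$ and applying (D8) iteratively, the only internal factor that pairs nontrivially with $x_n^+(0)$ is the unique $x_n^-(0)$ appearing in the middle of the bracket. The resulting $\omega_n$- and $\omega_n'$-terms are then pushed back out through the outer layers by (D5), and the scalars produced by the bracketing weights $(s,\ldots,s,r^{-1},\ldots,r^{-1})$ cancel, yielding zero.

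For (2), commuting the two $E_2$-factors past $\omega_0$ via (D5) contributes the scalar $\langle\omega_2',\omega_0\rangle = s$ twice, and after clearing the $\omega_0$ the identity becomes the quadratic $(r,s)$-bracket relation $\bigl[\bigl[x_\theta^-(1),\,x_2^+(0)\bigr]_{r/s},\,x_2^+(0)\bigr] = 0$. The plan is to peel off the outermost layer of $x_\theta^-(1)$ (whose leading generator is $x_2^-(0)$), apply (D8) to compute $[\,x_2^+(0),\,x_2^-(0)\,] = (r-s)^{-1}(\omega_2 - \omega_2')$, use a computation parallel to the proof of Lemma 4.1 to conclude that the bracket of $x_2^+(0)$ with the remaining inner root vector vanishes, and then reassemble the surviving pieces via (3.14)--(3.17) together with (D5) to move $\omega_2,\omega_2'$ past $x_2^+(0)$. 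The coefficients $-(r+s)$ and $rs$ emerge from combining the $s$-factors of $\langle\omega_2',\omega_0\rangle$ with the finite-type Serre relation (D9$_2$) governing $x_1$ and $x_2$. Identity (3) is handled dually: after pulling the two $\omega_0$'s through the single $x_2^+(0)$, one pushes $x_2^+(0)$ through the two copies of $x_\theta^-(1)$, reducing to the same family of bracket identities.

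The main obstacle lies in (2) and (3). Because $\alpha_2$ has multiplicity two in $\theta$, the element $x_\theta^-(1)$ contains two distinct $x_2^-(0)$ factors, so $[\,x_2^+(0),\,x_\theta^-(1)\,]$ is genuinely a two-term sum whose recombination into the target Serre polynomial demands delicate tracking of the pairings $\langle\omega_2',\omega_0\rangle$, of the signs and scalars produced by the $(r,s)$-bracket identities (3.14)--(3.17), and of the matching of the resulting coefficients with $-(r+s)$ and $rs$.
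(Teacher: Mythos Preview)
Your overall strategy matches the paper's: reduce (4)--(6) to (1)--(3) via $\tau$, then for (2) and (3) commute $\omega_0$ past $E_2$ using (D5) and rewrite each Serre polynomial as an iterated $(r,s)$-bracket $[\,E_2,E_2,x_\theta^-(1)\,]_{(1,\,r^{-1}s)}$, respectively $[\,x_\theta^-(1),x_\theta^-(1),E_2\,]_{(1,\,rs^{-1})}$, multiplied by a central factor. Two points, one minor and one substantive:

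\medskip
\textbf{For (2): the ``two-term sum'' is not the obstacle you describe.} Although $x_\theta^-(1)$ contains two copies of $x_2^-(0)$, only the \emph{outermost} one survives the bracket with $E_2$. When (3.18) and (D8) place $[\,x_2^+(0),x_2^-(0)\,]=(r{-}s)^{-1}(\omega_2-\omega_2')$ at the \emph{inner} slot, the adjacent factor $x_1^-(1)$ commutes with $\omega_2,\omega_2'$ up to the same scalar (since $\langle\omega_2',\omega_1\rangle=\langle\omega_1',\omega_2\rangle^{-1}$ in this position), and the surrounding nested bracket then vanishes by (D9$_1$). Hence one obtains the clean formula
\[
[\,E_2,\,x_\theta^-(1)\,]=(rs)^{-1}\,x_{\beta_{1,3}}^-(1)\,\omega_2,
\]
and the second application of $[\,E_2,\,\cdot\,]_{r^{-1}s}$ then kills this by the same mechanism (no further delicate recombination is needed).

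\medskip
\textbf{For (3): there is a genuine gap.} Your claim that (3) ``reduces to the same family of bracket identities'' as (2) is where the plan fails. Carrying out the reduction you propose --- pushing $E_2$ through the two copies of $x_\theta^-(1)$ using the formula above --- leaves precisely
\[
[\,x_\theta^-(1),\,x_{\beta_{1,3}}^-(1)\,\omega_2\,]_{rs^{-1}}
=[\,x_\theta^-(1),\,x_{\beta_{1,3}}^-(1)\,]_{s^{-1}}\cdot\omega_2,
\]
so (3) is equivalent to $[\,x_{\beta_{1,3}}^-(1),\,x_\theta^-(1)\,]_s=0$. This is a relation between two level-$1$ quantum affine root vectors, of a different character from anything used in (2), and it is not implied by Lemmas~4.1--4.7 or the Drinfeld Serre relations directly. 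The paper isolates it as a separate lemma (Lemma~4.10) and proves it in the appendix by a fairly lengthy induction through the nested bracket, using (\ref{b:1}), (\ref{c:1}), Lemma~4.1 itself, and the auxiliary identity $[\,x_1^-(1),x_{\beta_{1,3}}^-(1)\,]_{r^{-1}}=0$. Your proposal does not identify this step, and without it the argument for (3) does not close.
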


\begin{proof} Relations $(4)$---$(6)$ follow from the action of $\tau$ on relations $(1)$---$(3)$.
To be precise, let us just consider the second and third relation.

(1) For the second equality, it is easy to see that
\begin{equation*}
\begin{split}
\bigl[&\,E_2, x^-_{\theta}(1)\,\bigr]\hskip4.6cm \hbox{(by (\ref{b:1}))}\\
&=[\,[\,x_2^{+}(0),\,x_2^-(0)\,],\,x_{\beta_{1,3}}^-(1)\,]_{r^{-1}}
\qquad\qquad(\hbox{by (D8) \& (D5)})\\
&\ +[\,x_2^-(0),\cdots,x_n^-(0),x_{n-2}^-(0),\cdots,x_3^-(0),
[\,x_2^+(0),x_2^-(0)\,],x_1^-(1)\,]_{(s,\cdots,s,r^{-1}
\cdots,r^{-1})}\\
&\hskip5.65cm\qquad(\hbox{=0 by $(\textrm{D8})$, $(\textrm{D5})$ \& $(\textrm{D9}_1)$})\\
&=(rs)^{-1}x_{\beta_{1,3}}^-(1)\,\om_2.
\end{split}
\end{equation*}
By the above observation, it can be proved in a straightforward
manner that
\begin{equation*}
\begin{split}
E_0&E_2^2-(r+s)E_2E_0E_2+rs\,E_2^2E_0\\
&=rs\Big(E_2^2
x^-_{\theta}(1)-(1+r^{-1}s)\,E_2x^-_{\theta}(1)E_2+r^{-1}s\,x^-_{\theta}(1)E_2^2\Big)
(\gamma'^{-1}\om_{\theta}^{-1})\\
&=rs\,\bigl[\,E_2,\, \underbrace{\bigl[\,E_2,\,x^-_{\theta}(1)\,
\bigr]}\,\bigr]_{r^{-1}s}\,(\gamma'^{-1}\om_{\theta}^{-1})\\
&=\underbrace{\bigl[\,E_2,\,x^-_{\beta_{1,3}}(1)\,
\bigr]}\om_2\,(\gamma'^{-1}\om_{\theta}^{-1})\qquad\qquad
(\hbox{=0 by (\ref{b:1}) \& $(\textrm{D8})$})\\
&=0.
\end{split}
\end{equation*}

(2) Note that the formula of $\,\bigl[\,E_2,
x^-_{\theta}(1)\,\bigr]\,$ obtained in (1), we actually have
\begin{equation*}
\begin{split}
&E_0^2E_2-(r+s)E_0E_2E_0+(rs)\,E_2E_0^2 \\
&\quad=(rs)\,\bigl[\,x^-_{\theta}(1), \underbrace{x^-_{\theta}(1),
E_2}\,\bigr]_{(1,\,rs^{-1})}(\gamma'^{-2} \om_{\theta}^{-2})\\
&\quad=-\,\bigl[\,x^-_{\theta}(1),\,
x_{\beta_{1,3}}^-(1)\om_2\,\bigr]_{rs^{-1}}\,(\gamma'^{-2}
\om_{\theta}^{-2})\\
&\quad=-\underbrace{\bigl[\,x^-_{\theta}(1),\,
x_{\beta_{1,3}}^-(1)\,\bigr]_{s^{-1}}}\,\om_2(\gamma'^{-2}
\om_{\theta}^{-2})
\qquad(\textrm{=0 by Lemma \ref{d:23} below})\\
&\quad=0.
\end{split}
\end{equation*}
\end{proof}

\begin{lemm} {\it  \label{d:23} $\bigl[\,x_{\beta_{1,3}}^-(1),
x^-_{\beta_{1,2}}(1)\,\bigr]_s=0$, \textit{for} $r\ne -s$}.
\end{lemm}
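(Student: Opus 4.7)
The plan is to mimic the strategy used in the cases (III) and (IV) of the proof of Lemma \ref{d:17}: unfold $x^-_{\beta_{1,2}}(1) = x_\theta^-(1)$ as a nested quantum bracket whose outermost factor is $x_2^-(0)$, apply the derivation identities (\ref{b:1})--(\ref{b:2}), and drive the target bracket $[\,x_{\beta_{1,3}}^-(1),\,x_{\beta_{1,2}}^-(1)\,]_s$ back to itself with a scalar coefficient proportional to $(1+r^{-1}s)$, so that the hypothesis $r \ne -s$ forces vanishing.

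Concretely, I would start from the presentation
$x_\theta^-(1) = [\,x_2^-(0),\,x_3^-(0),\,x^-_{\beta_{1,4}}(1)\,]_{(r^{-1},r^{-1})}$
already worked out in the proof of Proposition 4.7, rewriting it as $[\,x_2^-(0),\,c\,]_{r^{-1}}$ with $c := [\,x_3^-(0),\,x^-_{\beta_{1,4}}(1)\,]_{r^{-1}}$. Applying (\ref{b:1}) to $[\,x_{\beta_{1,3}}^-(1),\,[\,x_2^-(0),\,c\,]_{r^{-1}}\,]_s$ with a judicious choice of the auxiliary parameter $q$ produces two summands. The first picks up $[\,x_{\beta_{1,3}}^-(1),\,x_2^-(0)\,]_q$, which can be identified using the formula $[\,E_2,\,x_\theta^-(1)\,]=(rs)^{-1}x_{\beta_{1,3}}^-(1)\,\om_2$ from Proposition 4.7 together with $\tau$-invariance, so that the term reappears as a scalar multiple of $[\,x_\theta^-(1),\,c\,]_{*}$; this latter bracket is then handled by Lemma \ref{d:17}(III) and the already established brackets $[\,x_\theta^-(1),\,x_3^-(0)\,]$, $[\,x_\theta^-(1),\,x^-_{\beta_{1,4}}(1)\,]$. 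The second summand has the form $q\,[\,x_2^-(0),\,[\,x_{\beta_{1,3}}^-(1),\,c\,]_{s/q}\,]_{*}$; applying (\ref{b:1}) once more peels $c$ open and leaves us with $[\,x_{\beta_{1,3}}^-(1),\,x_3^-(0)\,]_{*}$ (controlled by the $(r,s)$-Serre relation (D9$_2$) together with Lemmas \ref{d:19}--\ref{d:20}) and $[\,x_{\beta_{1,3}}^-(1),\,x^-_{\beta_{1,4}}(1)\,]_{*}$, the latter being a lower-rank analogue of the statement being proved.

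Collecting the expansions, everything should recombine into a non-zero multiple of the original $[\,x_{\beta_{1,3}}^-(1),\,x_{\beta_{1,2}}^-(1)\,]_s$ with the coefficient $(1+r^{-1}s)$, exactly as occurred at the ends of cases (III) and (IV) in the proof of Lemma \ref{d:17}, whence the conclusion follows from $r \ne -s$. The main obstacle will be the bookkeeping of the quantum parameters: each round of (\ref{b:1})--(\ref{b:2}) produces a trail of four auxiliary brackets whose subscripts have to line up precisely with the $(r,s)$-Serre relations (D9$_1$)--(D9$_3$) and the previously established Lemmas \ref{d:18}--\ref{d:22}. I also anticipate that the recursion, which engages $[\,x_{\beta_{1,3}}^-(1),\,x^-_{\beta_{1,4}}(1)\,]$ (an analogue of our identity one step deeper), closes by an induction on the rank $n$, with the base case $D_4$ verified by direct computation using (D9$_1$) and Lemmas \ref{d:19}--\ref{d:20}.
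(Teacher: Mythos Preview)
Your plan has a concrete gap in both branches of the split.

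First, the formula $[\,E_2,\,x_\theta^-(1)\,]=(rs)^{-1}x_{\beta_{1,3}}^-(1)\om_2$ involves $E_2=x_2^+(0)$: it is a mixed $+/-$ commutator coming from (D8), and $\tau$ sends it to another mixed commutator. It tells you nothing about the pure $-/-$ bracket $[\,x_{\beta_{1,3}}^-(1),\,x_2^-(0)\,]_q$ that actually appears in your first summand. The only available relation between these two elements is the defining one $x_\theta^-(1)=[\,x_2^-(0),\,x_{\beta_{1,3}}^-(1)\,]_{r^{-1}}$; using it with $q=r$ does turn the first summand into $-r\,[\,x_\theta^-(1),\,x_{\beta_{1,3}}^-(1)\,]_{r^{-2}s}$, but that carries the subscript $r^{-2}s$, not $s$, so the loop never closes on the original bracket and no $(1+r^{-1}s)$ factor emerges. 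Second, the element you call $c=[\,x_3^-(0),\,x_{\beta_{1,4}}^-(1)\,]_{r^{-1}}$ is \emph{by definition} $x_{\beta_{1,3}}^-(1)$ itself, so your second summand is $q\,[\,x_2^-(0),\,[\,x_{\beta_{1,3}}^-(1),\,x_{\beta_{1,3}}^-(1)\,]_{s/q}\,]_\ast$; peeling $c$ open again just reproduces the situation you started from, and the proposed ``induction on rank'' never gets off the ground because $x_{\beta_{1,3}}^-(1)$ and $x_{\beta_{1,4}}^-(1)$ both traverse all $n$ nodes.

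The reason the loop-back trick from cases (III)/(IV) of Lemma~\ref{d:17} does not transplant here is structural: in those cases the outer element is a \emph{simple} generator $x_i^-(0)$ which reappears inside $x_\theta^-(1)$, so that a Serre relation (D9) kills one branch and the definition regenerates the other. Here the outer element $x_{\beta_{1,3}}^-(1)$ is a long root vector, and no Serre relation applies to it. The paper therefore unfolds the \emph{other} argument: write $x_{\beta_{1,3}}^-(1)=[\,x_3^-(0),\ldots,x_n^-(0),x_{n-2}^-(0),\ldots,x_3^-(0),[\,x_2^-(0),x_1^-(1)\,]_s\,]_{(\cdots)}$ and repeatedly apply (\ref{b:1}), using Lemma~\ref{d:17} at each layer to kill the cross terms $[\,x_k^-(0),\,x_\theta^-(1)\,]_\ast=0$. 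This reduces everything to the innermost bracket $[\,[\,x_2^-(0),x_1^-(1)\,]_s,\,x_\theta^-(1)\,]_{r^{-2}s^{-1}}$. That in turn is handled by first proving the auxiliary identity $[\,x_1^-(1),\,x_{\beta_{1,3}}^-(1)\,]_{r^{-1}}=0$ directly, then showing $[\,x_2^-(0),\,[\,x_1^-(1),\,x_\theta^-(1)\,]_{r^{-2}}\,]_1=0$ via a genuine $(1+r^{-1}s)$ loop-back (now legitimate, since $x_2^-(0)$ is simple), and finally splitting $[\,[\,x_2^-(0),x_1^-(1)\,]_s,\,x_\theta^-(1)\,]_{r^{-2}s^{-1}}$ with (\ref{b:1}) so that both pieces vanish by the preceding facts and Lemma~\ref{d:17}.
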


Proof. See the appendix.

\subsection{Proof of Theorem $\mathcal{A}$ for $U_{r,s}(\mathrm{E}_6^{(1)})$}

As usual, we need to verify some critical relations of Theorem
$\mathcal{A}$.

Note that the highest root of simple
Lie algebra $\mathrm{E}_6$ is
$$\theta=\alpha_{13456243542}\doteq \alpha_1+\alpha_{3}+\cdots+\alpha_{6}+\alpha_2 +\alpha_{4}+
\alpha_{3}+\alpha_5+\alpha_4+\alpha_2.$$
The maximal quantum root vectors $x_{\theta}^-(1)$ and $x_{\theta}^+(-1)$ are defined as follows
\begin{equation*}
\begin{split}
x_{\theta}^-(1)&=x_{\alpha_{13456243542}}^-(1)=
[\,x_2^-(0),\,x_{\alpha_{1345624354}}^-(1)\,]_{r^{-2}s^{-1}}=\cdots\\
&=[\,x_2^-(0),\,x_4^-(0),\,x_5^-(0),\,x_3^-(0),\,x_4^-(0),\,x_2^-(0)
,\,x_6^-(0),\cdots,\\
&\hskip2cm ,x_3^-(0),\,x_1^-(1) \,]_{(s,\,\cdots,\,s,\,
r^{-1},\,s,\,r^{-1},\,s,\,s,\,r^{-2}s^{-1})}.\\
x_{\theta}^+(-1)&=x_{\alpha_{13456243542}}^+(-1)=
[\,x_{\alpha_{1345624354}}^+(-1),\,x_2^+(0)\,]_{r^{-1}s^{-2}}\\
&=[\,x_1^+(-1),\,x_3^+(0),\,\cdots
,\,x_{6}^+(0),\,
x_{2}^+(0),\,x_{4}^+(0),\,x_3^+(0),\,x_5^+(0),\\
&\hskip2cm ,x_4^+(0),\,x_2^+(0)\,]_{\langle r,\,\cdots,\,r,\,
s^{-1},\,r,\,s^{-1},\,r,\,r,\,r^{-1}s^{-2}\rangle}.
\end{split}
\end{equation*}

Similarly,  relation $(X4)$ holds due to Lemma
\ref{d:24} below in the case of $i\neq 0$.

\begin{lemm} {\label{d:24}
  $[\,x_i^-{(0)},\,x_{\theta}^-(1)
\,]_{\la \om_1',\om_\theta\ra}=0$, for $i=1, 2,\cdots,6$.}
\end{lemm}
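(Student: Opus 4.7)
My plan is to parallel the proof of Lemma 4.1 given in Section 4.1 for type $\mathrm{D}_n^{(1)}$, adapting the argument to the longer bracketing chain and the branch structure of $\mathrm{E}_6$. First, read off from the matrix $J_{\hat E_6}$ and from $\om_\theta=\om_1\om_2^2\om_3^2\om_4^3\om_5^2\om_6$ the six target scalars
\begin{align*}
&\lg \om_1',\om_\theta\rg = (rs)^{-1},\quad
\lg \om_2',\om_\theta\rg = r^{-1}s^{-2},\quad
\lg \om_3',\om_\theta\rg = (rs)^{-1},\\
&\lg \om_4',\om_\theta\rg = rs,\quad
\lg \om_5',\om_\theta\rg = rs,\quad
\lg \om_6',\om_\theta\rg = rs;
\end{align*}
these are the coefficients which must appear in the quantum Lie brackets of the lemma (the subscript ``$1$'' in the statement is plainly a misprint for ``$i$'').

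The proof will then proceed case-by-case in $i$. In each case I unfold $x_\theta^-(1)$ one or two layers from its inductive $(r,s)$-bracket expression given in Remark 3.7, and then apply the Jacobi-type identities (3.14)--(3.15) to move $x_i^-(0)$ past the outer pieces. Each time $x_i^-(0)$ meets a non-adjacent generator $x_j^-(0)$ (i.e.\ $a_{ij}=0$), it commutes up to a scalar via $(\textrm{D}9_1)$; when it meets its unique adjacent generator on the chain, we either hit the quadratic $(r,s)$-Serre relation $(\textrm{D}9_2)$ and obtain $0$, or we reduce to a shorter sub-bracket. Following the template of parts (III)--(IV) in the proof of Lemma 4.1, several of these sub-brackets will satisfy an identity of the form $(1+\lambda)[\,\cdot,\cdot\,]_u=0$ with $\lambda\ne -1$ (guaranteed by the standing hypothesis $r^2\ne s^2$), forcing the desired vanishing.

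The cases $i\in\{4,5,6\}$ are easy-flavoured: $x_i^-(0)$ is separated from the outermost generator $x_2^-(0)$ by $a_{2i}\in\{-1,0,0\}$, so the slide identities together with at most one $(\textrm{D}9_2)$-application trivialise most terms, in direct analogy with part (III) of the $\mathrm{D}_n^{(1)}$ argument (with one extra layer because the bracketing chain of $\theta$ has length $11$). The case $i=3$ is similar but must additionally account for the re-entrance of $\alpha_3$ in the chain. The case $i=1$ is the innermost-atom case: $x_1^-(1)$ sits deepest in $x_\theta^-(1)$, and pushing $x_1^-(0)$ through is obstructed only by the Serre relation on the $\{1,3\}$-pair, which kills the relevant commutator. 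The case $i=2$ is the most delicate, since $x_2^-(0)$ appears twice in the explicit bracketing of $x_\theta^-(1)$: after one application of (3.14), the resulting ``double-$x_2^-(0)$'' term vanishes by $(\textrm{D}9_2)$ on the $\{2,4\}$-pair, while the remaining commutator reduces to an auxiliary vanishing $[\,x_2^-(0),\,x_{\alpha_{1345624354}}^-(1)\,]_u=0$ at the appropriate scalar $u$.

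In parallel with the $\mathrm{D}_n^{(1)}$ case, a handful of auxiliary lemmas in the spirit of Lemmas 4.2--4.7 must be proved first; these isolate the vanishing of $[\,x_i^-(0),\,x_{\gamma}^-(1)\,]_u$ where $\gamma$ ranges over the initial segments of the bracketing chain for $\theta$ and $u$ is the scalar forced by $J_{\hat E_6}$. The main obstacle is combinatorial and book-keeping: the chain defining $x_\theta^-(1)$ for $\mathrm{E}_6^{(1)}$ has length eleven and revisits the nodes $2,3,4,5$ because of the branch, so tracking which $(r,s)$-scalar is carried at each application of (3.14), and verifying that all intermediate factors collapse to precisely the predicted scalars above, requires the careful mechanical computation appropriate for the Appendix. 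No genuinely new algebraic idea beyond those already used for types $\mathrm{A}_{n-1}^{(1)}$ and $\mathrm{D}_n^{(1)}$ appears to be needed.
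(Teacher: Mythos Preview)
Your plan is correct and matches the paper's approach: the proof of Lemma~\ref{d:24} proceeds case-by-case exactly as you describe, peeling off the outer layers of $x_\theta^-(1)$ via (3.16)--(3.17), killing non-adjacent terms with $(\textrm{D9}_1)$, killing Serre-type terms with $(\textrm{D9}_2)$/$(\textrm{D9}_3)$, and invoking three auxiliary vanishings (Lemmas~\ref{d:25}--\ref{d:27}) for the shorter sub-brackets. Two small corrections to your difficulty assessment: case $i=4$ is actually the one that requires the $(1+r^{-1}s)[\,\cdot,\cdot\,]_{rs}=0$ trick (it is \emph{not} easy, needing both Lemmas~\ref{d:25} and~\ref{d:26}), while cases $i=1,3,5,6$ are the straightforward ones that reduce immediately, via $(\textrm{D9}_1)$ on the outer $x_2^-(0)$, to the auxiliary lemmas at the length-$10$ sub-bracket $x_{1345624354}^-(1)$; and for $i=2$ the auxiliary vanishing you need is at the length-$9$ sub-bracket $x_{134562435}^-(1)$ (Lemma~\ref{d:25}), not the length-$10$ one.
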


To verify Lemma \ref{d:24}, the following three Lemmas, which we will check in the appendix,
will play a crucial role. To
be precise, let $x_{i_1\,i_2\cdots\,i_n}^{\pm}(k)=x_{\alpha_{i_1,i_2,\cdots,i_n}}^{\pm}(k)$. We may easily check that

\begin{lemm} {\label{d:25}
$
%&&[\,x_4^-(0),\, x_{134}^-(1)\,]_{r}=0, \\
%&&[\,x_4^-(0),\, x_{13456}^-(1)\,]_{1}=0,\\
%&&[\,x_2^-(0),\, x_{134562}^-(1)\,]_{s^{-1}}=0,\\
%&&[\,x_2^-(0),\,x_{1345624}^-(1)\,]_{(rs)^{-1}}=0,\\
[\,x_2^-(0),\, x_{134562435}^-(1)\,]_{(rs)^{-1}}=0$.}
\end{lemm}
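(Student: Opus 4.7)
The plan is to prove $[\,x_2^-(0),\,x_{134562435}^-(1)\,]_{(rs)^{-1}}=0$ by peeling off the outer generators of the iterated bracket $x_{134562435}^-(1)$ one at a time and pushing $x_2^-(0)$ inside via the generalized Jacobi identities (3.14)---(3.15). The structural motivation is that $\alpha_1+\alpha_2+2\alpha_3+2\alpha_4+2\alpha_5+\alpha_6 + \alpha_2 = \theta-\alpha_4$ is \emph{not} a root of $E_6$ (since $(\theta,\alpha_4)=0$), so the bracket should vanish; our task is to realize this algebraically with the correct $(r,s)$-decorations.

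First, by Definition 3.8 we have $x_{134562435}^-(1)=[\,x_5^-(0),\,x_{13456243}^-(1)\,]_{q_9}$, with $q_9=\langle\om'_5,\om_{\alpha_{13456243}}\rangle$ computed from the $E_6^{(1)}$ pairing matrix $J$. Applying (3.14) to $[\,x_2^-(0),\,[\,x_5^-(0),\,x_{13456243}^-(1)\,]_{q_9}\,]_{(rs)^{-1}}$, I choose the split parameter so that the first cross term takes the form $[\,[\,x_2^-(0),x_5^-(0)\,]_{\la 5,2\ra^{-1}},\,x_{13456243}^-(1)\,]_?$, which vanishes by (D9$_1$) since $a_{25}=0$. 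This reduces the problem to a rescaled $[\,x_5^-(0),\,[\,x_2^-(0),\,x_{13456243}^-(1)\,]_{?}\,]_{?}$. Repeating the same peeling with $x_{13456243}^-(1)=[\,x_3^-(0),\,x_{1345624}^-(1)\,]_?$ (using $a_{23}=0$), I reduce further to verifying $[\,x_2^-(0),\,x_{1345624}^-(1)\,]_{?}=0$ with an explicit tracked constant.

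Next comes the critical $x_4^-(0)$ step: write $x_{1345624}^-(1)=[\,x_4^-(0),\,x_{134562}^-(1)\,]_{?}$, and note that $a_{24}=-1$, so neither cross term in (3.14) vanishes immediately. The cleanest route is to further expand $x_{134562}^-(1)=[\,x_2^-(0),\,x_{13456}^-(1)\,]_{?}$, which brings two copies of $x_2^-(0)$ into view together with one $x_4^-(0)$. After reorganizing the two cross terms via (3.15), the quantum Serre relation (D9$_3$) in the form $x_2^-(0)^2\,x_4^-(0)-(r{+}s)\,x_2^-(0)\,x_4^-(0)\,x_2^-(0)+rs\,x_4^-(0)\,x_2^-(0)^2=0$ (rescaled by the appropriate powers of $\la 2,4\ra$, $\la 4,2\ra$) produces the telescoping cancellation. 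What remains are brackets of the type $[\,x_2^-(0),\,x_{13456}^-(1)\,]_?$, $[\,x_2^-(0),\,x_{1345}^-(1)\,]_?$, down to $[\,x_2^-(0),\,x_1^-(1)\,]_?$; because none of the roots $\alpha_{13\ldots}$ in these tails contain $\alpha_2$ in their support, $x_2^-(0)$ commutes (up to scalar) with each constituent $x_{i_k}^-(0)$, so these brackets are seen to vanish by the same peeling via (D9$_1$).

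The main obstacle is the bookkeeping: at every application of (3.14)---(3.15) the split parameter $q$, the outer bracket label $v$, and the pairing values from $J$ for $E_6^{(1)}$ must be tracked in tandem, and the scalar arriving at the $x_4^-(0)$ step must match the coefficients $(r{+}s)$ and $rs$ in the Serre relation (D9$_3$) on the nose for the cancellation to be clean. This is exactly the same methodology used in the proofs of Lemmas 4.1---4.7 for $D_n^{(1)}$, where the $(r,s)$-bracket calculus was shown to close up in every case; the present computation is longer because of the length of the word $134562435$, but structurally identical. A final sanity check, confirming the result in the $r=s^{-1}=q$ specialization against the one-parameter identity for $U_q(\widehat{E_6})$, can be used to verify the scalar tracking is correct.
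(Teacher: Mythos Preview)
Your outer peeling of $x_5^-(0)$ and $x_3^-(0)$ (using $a_{25}=a_{23}=0$) and the reduction to showing $[\,x_2^-(0),\,x_{1345624}^-(1)\,]_{(rs)^{-1}}=0$ is exactly what the paper does. The gap is in your final step. You claim that the residual brackets $[\,x_2^-(0),\,x_{13456}^-(1)\,]_?$, $[\,x_2^-(0),\,x_{1345}^-(1)\,]_?$, etc.\ vanish because ``$x_2^-(0)$ commutes (up to scalar) with each constituent $x_{i_k}^-(0)$''. This is false: the word $13456$ contains $4$, and in $E_6$ the node $2$ is adjacent \emph{only} to node $4$, so $a_{24}=-1$ and $x_2^-(0)$ does not commute with $x_4^-(0)$. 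In fact $\alpha_1+\alpha_2+\alpha_3+\alpha_4+\alpha_5+\alpha_6$ is a positive root, so $[\,x_2^-(0),\,x_{13456}^-(1)\,]_{r^{-1}}$ is essentially $x_{134562}^-(1)\ne0$. Your proposed telescoping via a single application of (D9$_3$) therefore does not close up: after the Serre term is killed, the surviving cross terms are genuinely nonzero and cannot be dismissed by (D9$_1$).

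The paper handles this inner step differently. It first proves two auxiliary identities, each requiring its own nontrivial Serre-relation computation: $[\,x_4^-(0),\,x_{13456}^-(1)\,]_1=0$ (obtained via a self-referential equation and the hypothesis $r\ne -s$) and $[\,x_2^-(0),\,x_{134562}^-(1)\,]_{r^{-1}}=0$. The first of these is used to rewrite $x_{1345624}^-(1)=[\,[\,x_4^-(0),\,x_2^-(0)\,]_s,\,x_{13456}^-(1)\,]_{r^{-1}}$, and then a Jacobi manipulation produces the self-referential relation $[\,x_2^-(0),\,x_{1345624}^-(1)\,]_{r^{-2}}=-r^{-1}s\,[\,x_2^-(0),\,x_{1345624}^-(1)\,]_{s^{-2}}$, whence $(1+r^{-1}s)[\,x_2^-(0),\,x_{1345624}^-(1)\,]_{(rs)^{-1}}=0$. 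So the mechanism at the $x_4^-(0)$ step is not a single Serre cancellation but rather two preparatory lemmas plus the ``loop back to itself with a different subscript'' trick; this is the missing idea in your argument.
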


More generally, we have
\begin{lemm} {\it  \label{d:26}
\begin{gather*}[\,x_4^-(0),\, x_{1345624}^-(1)\,]_{r}=0, \\
[\,x_3^-(0),\, x_{1345624354}^-(1)\,]_{(rs)^{-1}}=0.
%&&[\,x_5^-(0),\, x_{13}^-(1)\,]_{1}=0,\\
%&&[\,x_3^-(0),\, x_{134}^-(1)\,]_{1}=0,\\
%&&[\,x_6^-(0),\, x_{134}^-(1)\,]_{1}=0,\\
%&&[\,x_5^-(0),\, x_{1345}^-(1)\,]_{r}=0,\\
%&&[\,x_5^-(0),\, x_{13456}^-(1)\,]_{1}=0,\\
%&&[\,x_5^-(0),\, x_{134562}^-(1)\,]_{1}=0, \\
%&&[\,x_4^-(0),\,x_5^-(0),\, x_{1345624}^-(1)\,]_{(s,\,1)}=0,\\
%&&[\,x_3^-(0),\, x_{134562}^-(1)\,]_{1}=0, \\
%&&[\,x_3^-(0),\, x_{13456243}^-(1)\,]_{s^{-1}}=0,\\
%&&[\,x_3^-(0),\, x_{134562435}^-(1)\,]_{s^{-1}}=0,\\
\end{gather*}}
\end{lemm}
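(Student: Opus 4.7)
The plan is to prove both identities by unfolding the quantum root vectors one layer at a time and reducing to the $(r,s)$-Serre relations $(\textrm{D}9_1)$--$(\textrm{D}9_3)$ via the Jacobi-type identities $(\ref{b:1})$--$(\ref{b:3})$, following the template set by the proof of Lemma~\ref{d:17} for type $D_n^{(1)}$. Both are rank-$2$/rank-$3$ consistency relations sitting deep inside the construction of $x_\theta^-(1)$ of $E_6^{(1)}$, reflecting the classical fact that the corresponding weight sums (having $\al_4$ with multiplicity three but the other $\al_i$ with insufficient multiplicity) are not roots of $E_6$.

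For the first identity, I would write $x_{1345624}^-(1) = [\,x_4^-(0),\, x_{134562}^-(1)\,]_\eta$ with $\eta = \la\om_4',\om_{\al_{134562}}\ra$ by Definition~3.6, so the target becomes the triple bracket $[\,x_4^-(0),\, x_4^-(0),\, x_{134562}^-(1)\,]_{(\eta,r)}$. Using $(\ref{b:4})$ this admits a symmetric rewriting, and by further unfolding $x_{134562}^-(1) = [\,x_2^-(0),\, x_{13456}^-(1)\,]_\xi$ and pushing the two copies of $x_4^-(0)$ past non-$\al_4$ generators via $(\ref{b:1})$--$(\ref{b:3})$, one collects expressions of the shape $[\,x_4^-(0), x_4^-(0), x_3^-(0)\,]_{(u,v)}$ and $[\,x_4^-(0), x_4^-(0), x_5^-(0)\,]_{(u',v')}$ with the scalars forced to coincide with $(r_4, s_4) = (r, s)$; these vanish as instances of the Serre relation $(\textrm{D}9_2)$ for the adjacent pairs $(4,3)$ and $(4,5)$, respectively. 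Any residual contributions either vanish by $(\textrm{D}9_1)$ for non-edge pairs or recombine, via the $r \ne -s$ trick used in part~(IV) of Lemma~\ref{d:17}'s proof, into a nonzero scalar multiple of the left-hand side.

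For the second identity, I would start from $x_{1345624354}^-(1) = [\,x_4^-(0),\, x_{134562435}^-(1)\,]_\nu$ with $\nu = \la\om_4',\om_{\al_{134562435}}\ra$. Applying $(\ref{b:1})$ to $[\,x_3^-(0),\, [\,x_4^-(0),\, x_{134562435}^-(1)\,]_\nu\,]_{(rs)^{-1}}$ with an intermediate scalar $q$ chosen so that $[\,x_3^-(0),\, x_4^-(0)\,]_q$ represents the rank-$2$ root vector $x_{\al_3+\al_4}^-(0)$, the complementary term contains an inner commutator of $x_3^-(0)$ against $x_{134562435}^-(1)$ that is designed to match Lemma~\ref{d:25} (after the analogous bracketing analysis) and therefore vanishes. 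The remaining bracket of $x_{\al_3+\al_4}^-(0)$ against $x_{134562435}^-(1)$ is then reduced by further iterated applications of $(\ref{b:1})$--$(\ref{b:3})$ and $(\textrm{D}9_1)$--$(\textrm{D}9_3)$, collapsing to a scalar multiple of the left-hand side, and the identity follows again under $r \ne -s$.

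The main obstacle is the scalar bookkeeping: each Jacobi split produces two branches whose $q$-twists must be tracked precisely, and at each step the intermediate scalar must be chosen so that one branch reduces to a Serre vanishing while the other either matches a previously-proved lemma (notably Lemma~\ref{d:25}) or reassembles into the left-hand side with a nonzero coefficient. The branching node $\al_4$ in the $E_6$ Dynkin diagram is what makes the $E_6^{(1)}$ case substantially more intricate than $A_n^{(1)}$ or $D_n^{(1)}$: since $\al_4$ has three neighbors and appears in $\theta$ with multiplicity three, the unfolding must navigate a significantly larger combinatorial proliferation of intermediate brackets, each carrying its own nonzero $q$-twist that has to be verified against the Serre relations.
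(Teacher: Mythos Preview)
Your overall strategy---Jacobi-type splitting via (\ref{b:1})--(\ref{b:2}), reduction to Serre relations, and the $r\ne -s$ recombination trick---is exactly what the paper does. But there are two concrete gaps in your plan.

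For the first identity, your triple-bracket unfolding via $x_{1345624}^-(1)=[\,x_4^-(0),\,x_{134562}^-(1)\,]_\eta$ is workable but indirect. The paper instead recycles a rewriting already obtained inside the proof of Lemma~\ref{d:25}, namely $x_{1345624}^-(1)=[\,[\,x_4^-(0),\,x_2^-(0)\,]_s,\,x_{13456}^-(1)\,]_{r^{-1}}$; one application of (\ref{b:1}) then splits the target into a $(\textrm{D}9_2)$ Serre relation for $(4,2)$ and a term proportional to $[\,x_4^-(0),\,x_{13456}^-(1)\,]_1$, which was also shown to vanish in the proof of Lemma~\ref{d:25}. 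So the first identity falls out in two lines once you borrow from~\ref{d:25}.

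For the second identity, your claim that the complementary branch ``matches Lemma~\ref{d:25}'' is incorrect: Lemma~\ref{d:25} concerns $[\,x_2^-(0),\,x_{134562435}^-(1)\,]_{(rs)^{-1}}$, whereas what arises here is $[\,x_3^-(0),\,x_{134562435}^-(1)\,]_{s^{-1}}$. This is a genuinely separate auxiliary vanishing that the paper proves \emph{inside} the proof of Lemma~\ref{d:26}, and it in turn requires three further preparatory facts: $[\,x_3^-(0),\,x_{134562}^-(1)\,]_1=0$, $[\,x_5^-(0),\,x_{134562}^-(1)\,]_1=0$, and---crucially---the first identity $[\,x_4^-(0),\,x_{1345624}^-(1)\,]_r=0$ of this very lemma. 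A second auxiliary, $[\,x_4^-(0),\,x_5^-(0),\,x_{1345624}^-(1)\,]_{(s,1)}=0$, is also needed in the main reduction. Without identifying these intermediate vanishings (and in particular the dependence of the second identity on the first), your argument does not close.
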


\begin{lemm}
{ \label{d:27}  $[\,x_1^-(0),\,
x_{1345624354}^-(1)\,]_{(rs)^{-1}}=0.$}
\end{lemm}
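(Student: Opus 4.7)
The strategy parallels that of Lemma 4.1(III) for type $D_n^{(1)}$: push $x_1^-(0)$ inside the length-10 nested quantum bracket defining $x_{1345624354}^-(1)$ by repeated use of the quantum Jacobi identities (3.14)--(3.15), and arrive at a single self-consistent identity that forces the desired vanishing. The crucial structural observation is that in the $E_6$ Dynkin diagram $\alpha_1$ is adjacent only to $\alpha_3$; accordingly the matrix $(1_{\hat E_6})$ gives $\langle k,1\rangle=1$ for every $k\in\{2,4,5,6\}$, so by (D9$_1$) the commutator $[x_1^-(0),x_k^-(0)]$ vanishes for these $k$.

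Writing
$$x_{1345624354}^-(1)=[x_4^-(0),[x_5^-(0),[x_3^-(0),[x_4^-(0),[x_2^-(0),[x_6^-(0),[x_5^-(0),[x_4^-(0),A_1]]]]]]]]$$
with $A_1=[x_3^-(0),x_1^-(1)]_{s}$ and scalar bracket parameters supplied by Definition 3.8, I would first apply (3.14) with $q=1$ at each of the seven occurrences of a non-adjacent letter $x_k^-(0)$, $k\in\{2,4,5,6\}$; the first summand vanishes by (D9$_1$), so the step reduces to $[x_k^-(0),[x_1^-(0),\cdot]_v]_{\mu}$, simply sliding $x_1^-(0)$ past the outer factor while keeping the outermost bracket parameter $(rs)^{-1}$ attached inside. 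At the position-3 occurrence of $x_3^-(0)$, apply (3.14) with parameter $q=r^{-1}$; its second summand, after continuing to slide $x_1^-(0)$ through the non-adjacent middle letters $x_4^-(0),x_2^-(0),x_6^-(0),x_5^-(0),x_4^-(0)$ down to $A_1$, reduces to $[x_1^-(0),[x_3^-(0),x_1^-(1)]_s]_{s^{-1}}$, which vanishes by direct use of the $(r,s)$-Serre relation (D9$_2$) for $(i,j)=(1,3)$ specialized to $(m_1,m_2,\ell)=(0,1,0)$, combined with $x_1^-(0)x_1^-(1)=rs^{-1}x_1^-(1)x_1^-(0)$ (immediate from (D7) for $i=j=1$, $k=k'=0$, using $r^2\ne s^2$).

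The remaining first summand has the form $[[x_1^-(0),x_3^-(0)]_{r^{-1}},A_6]_{\ast}$, where $A_6$ denotes the inner five-factor bracket ending at $A_1$. Reexpanding via (3.15) in reverse and invoking the previously established Lemmas 4.12--4.13 to annihilate the propagating intermediate brackets produces a self-consistent equation of the form $(1+\eta)\,[x_1^-(0),x_{1345624354}^-(1)]_{(rs)^{-1}}=0$, where $\eta$ is a monomial in $r,s$ with $1+\eta\ne 0$ under the standing hypothesis $r^2\ne s^2$, yielding the lemma. The main obstacle is the scalar bookkeeping: tracking how each parameter $\mu,q,v$ accumulates through the cascade of (3.14)--(3.17) invocations along the length-10 nested bracket, and verifying that the final coefficient $1+\eta$ is genuinely nonzero. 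The intrinsic asymmetry $\langle i,j\rangle\ne\langle j,i\rangle^{-1}$ in the two-parameter setting is what makes this delicate compared to the one-parameter specialization.
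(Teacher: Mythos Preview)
Your reduction to $[x_1^-(0),x_{13456243}^-(1)]_{(rs)^{-1}}=0$ by sliding $x_1^-(0)$ past the outermost commuting factors $x_4^-(0),x_5^-(0)$ is correct and is exactly the paper's first move. The problem is your treatment of what you call the ``first summand'' $[[x_1^-(0),x_3^-(0)]_{r^{-1}},x_{1345624}^-(1)]_{\ast}$. Your appeal to Lemmas~\ref{d:25}--\ref{d:26} is unjustified: those results concern $[x_2^-(0),\cdot]$, $[x_3^-(0),\cdot]$, $[x_4^-(0),\cdot]$ with specific root vectors and say nothing about the composite $[x_1^-(0),x_3^-(0)]$ acting on $x_{1345624}^-(1)$. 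Worse, ``reexpanding via (3.17) in reverse'' with the same parameter simply undoes the Jacobi split you just performed, so the would-be self-consistency equation collapses to $0=0$ rather than $(1+\eta)\,[\cdots]=0$ with $\eta\ne-1$. As written, the loop never closes.

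The missing idea is a level shift via (D7)/(\ref{c:1}). The paper rewrites $x_{13456243}^-(1)$ with the level moved from $x_1^-(1)$ to $x_3^-(1)$, obtaining $x_{13456243}^-(1)=[[x_3^-(0),x_1^-(0)]_s,A]_{r^{-2}s^{-1}}$ where $A=[x_4^-(0),x_2^-(0),x_6^-(0),x_5^-(0),x_4^-(0),x_3^-(1)]_{(s,s,s,r^{-1},s)}$; crucially this brings an $x_1^-(0)$ \emph{inside} the bracket. Then applying $[x_1^-(0),\cdot]$ from outside produces genuine $[x_1^-(0),x_1^-(0),\cdot]$ configurations that vanish by (D9$_3$), and two applications of (\ref{b:1})--(\ref{b:2}) cycle back to the original with coefficient $-r^{-1}s$, giving the desired $(1+r^{-1}s)[\cdots]_{(rs)^{-1}}=0$. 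This rewrite in turn requires the auxiliary vanishing $[x_3^-(0),A]_{(rs)^{-1}}=0$, which the paper proves by a second level shift (from $x_3^-(1)$ to $x_4^-(1)$, introducing an auxiliary $B$) and a second self-consistency argument. Without these level shifts your argument cannot produce a nontrivial coefficient.
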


\noindent
{\bf Proof of Lemma \ref{d:24}.}  We may now use the previous Lemmas to show that

(I) \ When $i=1$,
$\langle\om_1',\om_0\rangle=rs$ and  $\la
\om_1',\om_\theta\ra=(rs)^{-1}$,
\begin{equation*}
\begin{split}
[\,x_1^-&(0),\, x_{\theta}^-(1)\,]_{(rs)^{-1}}
\hskip5.55cm(\hbox{by definition})\\
&=[\,x_1^-(0),\,
x_2^-(0),\,x_{1345624354}^-(1)\,]_{(r^{-2}s^{-1},\,(rs)^{-1})}
\qquad\ \ (\hbox{by (\ref{b:1})})\\
&=[\,\underbrace{[\,x_1^-(0),\, x_2^-(0)\,]},\,
x_{1345624354}^-(1)\,]_{r^{-3}s^{-2}}\qquad\qquad\quad\, \hbox{(=0 by $(\textrm{D9}_1)$)}\\
&\quad+[\,x_2^-(0),\,\underbrace{ [\,x_1^-(0),\,
x_{1345624354}^-(1)\,]_{(rs)^{-1}}}\,]_{r^{-2}s^{-1}}\qquad \hbox{(=0 by Lemma \ref{d:27})}\\
&=0.
\end{split}
\end{equation*}

 (II) \ When $i=2$,
$\langle\om_2',\om_0\rangle=rs^{2}$ and  $\la
\om_2',\om_\theta\ra=r^{-1}s^{-2}$,
\begin{equation*}
\begin{split}
&[\,x_2^-(0),\, x_{\theta}^-(1)\,]_{r^{-1}s^{-2}}
\hskip6cm\ \ \;(\hbox{by definition})\\
&\; =[\,x_2^-(0),\,
x_2^-(0),\,x_4^-(0),\,x_{134562435}^-(1)\,]_{(s\,r^{-2}s^{-1},\,r^{-1}s^{-2})}
\qquad\ \ \; (\hbox{by (\ref{b:1})})\\
&\; =[\,x_2^-(0),\,[\,x_2^-(0),\, x_4^-(0)\,]_{r^{-1}},\,
x_{134562435}^-(1)\,]_{(r^{-1},\,r^{-1}s^{-2})}\qquad\ \; \hbox{(by (\ref{b:1}))}\\
&\ +r^{-1}[\,x_2^-(0),\, x_4^-(0),\,\underbrace{[\,x_2^-(0),
x_{134562435}^-(1)\,]_{(rs)^{-1}}}\,]_{(rs,\, r^{-1}s^{-2})}\; \hbox{(=0 by Lemma \ref{d:25})}\\
&\; =[\,\underbrace{[\,x_2^-(0),\,x_2^-(0),\,
x_4^-(0)\,]_{(r^{-1},\,s^{-1})}},\,
x_{134562435}^-(1)\,]_{r^{-2}s^{-1}}\qquad\ \ \hbox{(=0 by $(\textrm{D9}_2)$)}\\
&\; +s^{-1}[\,[\,x_2^-(0),\, x_4^-(0)\,],\,\underbrace{[\,x_2^-(0),
x_{134562435}^-(1)\,]_{(rs)^{-1}}}\,]_{ r^{-1}s}\qquad \hbox{(=0 by Lemma \ref{d:25})}\\
&\; =0.
\end{split}
\end{equation*}

 (III) \ When $i=3$,
$\langle\om_3',\om_0\rangle=rs$ and  $\la
\om_3',\om_\theta\ra=(rs)^{-1}$. We may easily check that
\begin{equation*}
\begin{split}
[\,x_3^-&(0),\, x_{\theta}^-(1)\,]_{(rs)^{-1}}
\hskip5.6cm(\hbox{by definition})\\
&=[\,x_3^-(0),\,
x_2^-(0),\,x_{1345624354}^-(1)\,]_{(r^{-2}s^{-1},\,(rs)^{-1})}
\qquad\ \;\;(\hbox{by (\ref{b:1})})\\
&=[\,\underbrace{[\,x_3^-(0),\, x_2^-(0)\,]},\,
x_{1345624354}^-(1)\,]_{r^{-3}s^{-2}}\qquad\qquad\quad\ \hbox{(=0 by $(\textrm{D9}_1)$)}\\
&\quad+[\,x_2^-(0),\, \underbrace{[\,x_3^-(0),\,
x_{1345624354}^-(1)\,]_{(rs)^{-1}}}\,]_{r^{-2}s^{-1}}
\qquad\, \hbox{(=0 by Lemma \ref{d:26})}\\
&=0.
\end{split}
\end{equation*}

(IV) \ When $i=4$,
$\langle\om_4',\om_0\rangle=(rs)^{-1}$ and  $\la
\om_4',\om_\theta\ra=rs$. We have
\begin{equation*}
\begin{split}
&[\,x_4^-(0),\, x_{\theta}^-(1)\,]_{s^{2}}
\hskip6.5cm(\hbox{by definition})\\
&\ =[\,x_4^-(0),\,
x_2^-(0),\,x_4^-(0),\,x_{134562435}^-(1)\,]_{(s\,r^{-2}s^{-1},\,s^{2})}
\qquad\quad\ \,(\hbox{by (\ref{b:1})})\\
&\ =[\,x_4^-(0),\,[\,x_2^-(0),\, x_4^-(0)\,]_{r^{-1}},\,
x_{134562435}^-(1)\,]_{(r^{-1},\,s^{2})}\qquad\quad (\hbox{by (\ref{b:1})})\\
&\quad+r^{-1}[\,x_2^-(0),\, x_4^-(0),\,\underbrace{[\,x_2^-(0),
x_{134562435}^-(1)\,]_{(rs)^{-1}}}\,]_{(rs,\, s^{2})}\ \,\hbox{(=0 by Lemma \ref{d:25})}\\
&\ =[\,\underbrace{[\,x_4^-(0),\,x_2^-(0),\,
x_4^-(0)\,]_{(r^{-1},\,s)}},\,
x_{134562435}^-(1)\,]_{r^{-1}s}\qquad\quad (\hbox{=0 by $(\textrm{D9}_2)$})\\
&\quad+s[\,[\,x_2^-(0),\, x_4^-(0)\,]_{r^{-1}},\,[\,x_4^-(0),
x_{134562435}^-(1)\,]_{s}\,]_{ (rs)^{-1}}\ (\hbox{by definition \& (\ref{b:2})})\\
&\ =s\,[\,x_2^-(0),\, \underbrace{[\,x_4^-(0),
x_{1345624354}^-(1)\,]_r}\,]_{r^{-3}s^{-1}}\qquad\qquad\qquad\;\;(\hbox{=0 by Lemma \ref{d:26}})\\
&\quad+rs\,[\,[\,x_2^-(0),\,x_{1345624354}^-(1)\,]_{r^{-2}s^{-1}}
,\,x_4^-(0)\,]_{r^{-2}}\qquad\qquad\;\;(\hbox{by definition})\\
&\ =-r^{-1}s[\,x_4^-(0),\, x_{\theta}^-(0)\,]_{r^{2}},
\end{split}
\end{equation*}
which implies that $(1+r^{-1}s)\,[\,x_4^-(0),\,
x_{\theta}^-(0)\,]_{rs}=0$. That is to say, when $r\ne -s$,
$$[\,x_4^-(0),\, x_{\theta}^-(0)\,]_{rs}=0.$$

(V) The proof of the case $i=5$ or $6$ is similar to that of the
case $i=1$ or $3$, which are left to the readers. \hfill\qed
\medskip

We would like to point out that the proof of relation $[\,E_0,
F_0\,]=\frac{\omega_0-\omega'_0}{r-s}$ is the same as that of the case
of $D_n^{(1)}$. We now proceed to show the Serre relations for $E_6^{(1)}$.

\begin{prop}  { We have the following Serre relations

$(1)$ \ $E_0E_2^2-(rs)(r+s)E_2E_0E_2+(r s)^{3}\,E_2^2E_0=0$,

$(2)$ \ $E_0^2E_2-(rs)(r+s)E_0E_2E_0+(rs)^{3}\,E_2E_0^2=0$,

$(3)$ \ $F_2^2F_0-(rs)(r+s)F_2F_0F_2+(r s)^{3}\,F_0F_2^2=0$,

$(4)$ \ $F_2F_0^2-(rs)(r+s)F_0F_2F_0+(rs)^{3}\,F_0^2F_2=0$, }
\end{prop}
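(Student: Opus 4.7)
The plan is to follow the template of the analogous proof for $\mathrm{D}_n^{(1)}$ (see Proposition 4.2 above), with heavier combinatorics forced by the longer expression for the $E_6$ highest root. Since $\tau$ is an anti-automorphism of $\mathcal U_{r,s}(\widehat{\mathfrak g})$ sending $E_i\leftrightarrow F_i$ (Proposition 3.5), relations $(3)$ and $(4)$ follow from $(1)$ and $(2)$ respectively by applying $\tau$. For $(1)$ and $(2)$, put $a:=x^-_\theta(1)$, $b:=E_2=x^+_2(0)$ and $K:=\gamma'^{-1}\om^{-1}_\theta$, so $E_0=aK$. From $(1_{\hat E_6})$ one reads $\lg\om_2',\om_\theta\rg=r^{-1}s^{-2}$; by (D5), this yields $Kb=rs^2\,bK$, and the weight of $a$ gives $Ka=\lg\om_\theta',\om_\theta\rg\,aK$. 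Substituting and collecting $K$-powers to the right, a short rearrangement recasts the two Serre sums as
\begin{align*}
E_0 E_2^2-rs(r{+}s)E_2 E_0 E_2+(rs)^3 E_2^2 E_0
   &= r^2 s^4\,\bigl[E_2,\bigl[E_2,x^-_\theta(1)\bigr]\bigr]_{r^{-1}s}\,K,\\
E_0^2 E_2-rs(r{+}s)E_0 E_2 E_0+(rs)^3 E_2 E_0^2
   &= \lambda\,\bigl[x^-_\theta(1),\bigl[x^-_\theta(1),E_2\bigr]\bigr]_{r/s}\,K^2,
\end{align*}
for some $\lambda\in\mathbb K^\times$. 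Thus $(1)$ and $(2)$ reduce to the vanishing of these two iterated quantum brackets.

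The core step will be to establish the closed-form identity
\[
\bigl[E_2,\,x^-_\theta(1)\bigr] \;=\; \kappa\cdot x^-_{1345624354}(1)\,\om_2,\qquad \kappa\in\mathbb K^\times,
\]
in direct analogy with step (1) of the proof of Lemma \ref{d:17} in the $\mathrm{D}_n^{(1)}$ case. I would expand $x^-_\theta(1)=[x^-_2(0),\,x^-_{1345624354}(1)]_{r^{-2}s^{-1}}$ and apply (\ref{b:1}) with $q=1$ to split the left-hand side into an ``outer'' and an ``inner'' summand. The outer summand, via (D8), becomes $\tfrac{1}{r-s}[\om_2-\om'_2,\,x^-_{1345624354}(1)]_{r^{-2}s^{-1}}$; commuting $\om_2$ and $\om'_2$ past $x^-_{1345624354}(1)$ by (D5), the $\om'_2$-contribution cancels identically (because $\lg\om'_2,\om_{\theta-\alpha_2}\rg=r^{-2}s^{-1}$ matches the bracket parameter), leaving only a scalar multiple of $x^-_{1345624354}(1)\om_2$. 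The inner summand must be handled by iteratively unwrapping the ten-fold nesting of $x^-_{1345624354}(1)$ layer by layer via (\ref{b:1}): in each step, one of the two resulting brackets is killed by $[E_2,x^-_j(k)]=0$ for $j\ne 2$ (from (D8)), while the unique layer reaching the interior $x^-_2(0)$ produces a Serre-type remainder that collapses via $(\textrm{D9}_1)$---$(\textrm{D9}_3)$ together with (D5). Combining the outer and inner contributions yields the displayed formula.

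Substituting this identity back into the two iterated brackets of the first paragraph and commuting $\om_2$ through $E_2$ (scalar $rs^{-1}$, by (D5)) and through $x^-_\theta(1)$ (scalar $\lg\om'_\theta,\om_2\rg$), relation $(1)$ reduces to $[E_2,\,x^-_{1345624354}(1)]_{u_1}=0$ for an explicit $u_1\in\mathbb K^\times$, and relation $(2)$ reduces to $[x^-_\theta(1),\,x^-_{1345624354}(1)]_{u_2}=0$ for an explicit $u_2\in\mathbb K^\times$. The first is obtained by one further application of (\ref{b:1}) combined with the auxiliary vanishings in Lemmas \ref{d:25}---\ref{d:27} already exploited in the proof of Theorem $\mathcal{A}$ for $E_6^{(1)}$. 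The second is the $E_6^{(1)}$-analogue of Lemma \ref{d:23}, and would be proved in the appendix by the same style of $(r,s)$-bracket computation used there.

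The main obstacle will be the middle paragraph: the quantum root vector $x^-_\theta(1)$ for $E_6^{(1)}$ is an eleven-fold nested $(r,s)$-bracket, so pushing $E_2$ through its ten enclosing brackets produces a large number of Jacobi side-terms, each of whose scalar factors $\lg\om_i',\om_j\rg$ and bracket parameters must be matched precisely to eliminate every unwanted contribution by (D8), (D5) or $(\textrm{D9}_1)$---$(\textrm{D9}_3)$. The bookkeeping is mechanical but delicate; $\tau$-invariance of the whole setup provides a useful internal consistency check at each layer.
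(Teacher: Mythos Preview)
Your proposal is correct and follows essentially the same route as the paper: the paper also proves the key identity $[E_2, x_\theta^-(1)] = (rs)^{-2}x_{1345624354}^-(1)\,\om_2$ via exactly your outer/inner split of $x_\theta^-(1)=[x_2^-(0),x_{1345624354}^-(1)]_{r^{-2}s^{-1}}$, and then reduces (1) to the vanishing of the iterated bracket $[E_2,[E_2,x_\theta^-(1)]]_{r^{-1}s}$.

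Two small points where you drift. First, the ``inner summand'' computation you outline \emph{is} the vanishing $[E_2,\,x_{1345624354}^-(1)]=0$ that you need for (1); once you have it, no separate step is required, and Lemmas~\ref{d:25}--\ref{d:27} are not the right citations here since they concern brackets of the form $[x_i^-(0),\,\cdot\,]$, not $[E_i,\,\cdot\,]=[x_i^+(0),\,\cdot\,]$. Second, you overestimate the bookkeeping: because $[E_2,x_j^-(k)]=0$ for $j\ne 2$ by $(\textrm{D8})$, all but one layer of the ten-fold nesting is killed immediately, and the sole surviving term (where $E_2$ meets the interior $x_2^-(0)$) dies by $(\textrm{D8})$, $(\textrm{D5})$ and $(\textrm{D9}_1)$ alone---there is no genuine ``large number of Jacobi side-terms'' to track. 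The paper writes out only (1) and leaves (2)--(4) to the reader; your plan for (2) via an $E_6^{(1)}$-analogue of Lemma~\ref{d:23} is the natural one.
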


\begin{proof}\,  Here we only give the proof of the first $(r,s)$-Serre relation,
and the others are left to the readers.

Let us first to prove that
\begin{equation*}
\begin{split}
[\,E_2&, x_{\theta}^-(1)\,]\qquad\hskip5.1cm(\hbox{by definition \& (\ref{b:1})})\\
&=[\,[\,x_2^+(0),\,x_{2}^-(0)\,],\,
x_{1345624354}^-(1)\,]_{r^{-2}s^{-1}}\qquad(\hbox{by (D8), (D5)})\\
&\quad+ [\,x_2^-(0),\,x_4^-(0),\, x_5^-(0),\,
x_3^-(0),\,x_4^-(0),\,[\,x_2^+(0),\,x_{2}^-(0)\,],\,\\
&\hskip1.1cm ,x_{13456}^-(1)\,]_{(r^{-1},\,s,\,r^{-1},\,s,\,s,\,r^{-2}s^{-1})} \qquad(\hbox{=0 by $(D8)$, $(D5)$ \& $(\textrm{D9}_1)$})\\
&=(rs)^{-2}x_{1345624354}^-(1)\,\om_2
\end{split}
\end{equation*}
By the above result, we get that
\begin{equation*}
\begin{split}
E_0&E_2^2-(rs)(r+s)E_2E_0E_2+(r s)^{2}\,E_2^2E_0 \\
&=(r s)^{-2}\Big(E_2^2x_{\theta}^-(0)-
(1+r^{-1}s)E_2x_{\theta}^-(1)E_2+(r^{-1}s)\,x_{\theta}^-(1)E_2^2\Big)
\gamma\, \om_{\theta}^{-1}\\
&= (rs)^{-2}\,[\,E_2, \,E_2,\,x_{\theta}^-(1)\,]
_{(1,\,r^{-1}s)}\,\gamma \,\om_{\theta}^{-1}\\
&=(rs)^{-2}\,[\,E_2,\,x_{\theta}^-(1)\,]\,\om_2\\
&=[\,x_4^-(0),\, x_5^-(0),\,
x_3^-(0),\,x_4^-(0),\,[\,x_2^+(0),\,x_{2}^-(0)\,],\,
x_{13456}^-(1)\,]_{(r^{-1},\,s,\,r^{-1},\,s,\,s)}\\
&\hskip5cm (\hbox{=0 by $(D8)$, $(D5)$ \& $(\textrm{D9}_1)$})\\
&=0.
\end{split}
\end{equation*}
\end{proof}

\subsection{Proof of Theorem $\mathcal{B}$} \ This is
similar to that of the case $A_{n-1}^{(1)}$ \cite{HRZ}. We shall show
that the algebra ${\mathcal U}_{r,s}(\widehat{\frak{g}})$ is generated by
$E_i,\, F_i,\,\om_i^{\pm1},\,{\om}_i'^{\,\pm1},\,
\gamma^{\pm\frac{1}2}$, $\gamma'^{\,\pm\frac{1}2}$ ($i\in I_0$).
More explicitly, any generators of the algebra ${\mathcal U}_{r,s}(\widehat{\frak{g}})$
are in the subalgebra $\mathcal U'_{r,s}(\widehat{\frak{g}})$.
To do so, we also need the following two Lemmas, which can be similarly checked
like in \cite{HRZ}.
\begin{lemm}
\ $(1)$
\begin{equation*}
\begin{split}
&x_1^-(1)\\
&\quad=[\,E_2,E_3,\cdots,E_{n-2},E_{n},\cdots,E_2,
E_0\,]_{(s^{-1},\cdots,s^{-1},r,r,\cdots,r)}
\gamma'\om_1\in {\mathcal U}'_{r,s}(D_n^{(1)}),
\end{split}
\end{equation*}
then for any $i\in I$, $x_i^-(1)\in {\mathcal U}'_{r,s}(D_n^{(1)})$.

\smallskip
$(2)$
\begin{equation*}
\begin{split}
&x_1^+(-1)\\
&\quad=\tau\Bigl(\bigl[\,E_2,E_3,\cdots,E_{n-2},E_{n},\cdots,E_2,
E_0\,]_{(s^{-1},\cdots,s^{-1},r,r,\cdots,r)}
\gamma'\om_1\Bigr)\\
&\quad=\gamma\om'_1\,[\,F_0,F_2,\cdots,F_{n-2},
F_{n},\cdots,F_2, F_0\,]_{(r^{-1},\cdots,r^{-1},r^{-1}
,s,\cdots, s)}\in {\mathcal U}'_{r,s}(D_n^{(1)}),
\end{split}
\end{equation*}
then for any
$i\in I$, $x_i^+(-1)\in {\mathcal U}'_{r,s}(D_n^{(1)})$.\hfill$\Box$
\end{lemm}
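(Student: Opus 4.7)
The plan is to derive the explicit presentation of $x_1^-(1)$ in (1) by ``peeling'' the simple-root generators off the Drinfeld image of $E_0$ one at a time, then to propagate $x_1^-(1)\in\mathcal{U}'_{r,s}(D_n^{(1)})$ to every $x_i^-(1)$ via the Heisenberg relations $(\textrm{D6})$ and $(\textrm{D8})$, and finally to obtain (2) by applying the anti-involution $\tau$ from Proposition 3.5.

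For the identity in (1), I would start from $E_0=x_\theta^-(1)\cdot\gamma'^{-1}\om_\theta^{-1}$ (Theorem 3.9) and from the iterated-bracket presentation of $x_\theta^-(1)$ recalled in Remark 3.8, whose outer-to-inner sequence of generators is $x_2^-(0),x_3^-(0),\ldots,x_{n-2}^-(0),x_{n-1}^-(0),x_n^-(0),x_{n-2}^-(0),\ldots,x_2^-(0),x_1^-(1)$. The iterated bracket on the right of (1) is evaluated inside out, so its innermost subbracket $[E_2,E_0]_r$ is computed first and its outermost $[E_2,\,\cdot\,]_{s^{-1}}$ is computed last. In that first step one pushes $E_2=x_2^+(0)$ through the nested bracket for $x_\theta^-(1)$ by repeated use of the Jacobi-type identity (3.16); cross terms vanish either by $(\textrm{D8})$ (since $[x_2^+(0),x_j^-(0)]=0$ for $j\ne 2$) or via the $(r,s)$-Serre relations $(\textrm{D9}_1)$--$(\textrm{D9}_2)$ together with the auxiliary vanishing lemmas of Section 4.1, leaving only the contribution from the outermost $x_2^-(0)$ in the nesting; $(\textrm{D8})$ then converts $[x_2^+(0),x_2^-(0)]$ into $(\om_2-\om_2')/(r-s)$, effectively stripping that $x_2^-(0)$ from the bracket and producing a Cartan prefactor. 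The $q$-parameter $r$ at this step is forced by $(\textrm{D5})$ so that the new Cartan piece straightens cleanly against the remainder. Iterating with $E_3,E_4,\ldots,E_{n-2},E_{n-1},E_n,E_{n-2},\ldots,E_3,E_2$ in the corresponding order peels off each outer $x_i^-(0)$ from $x_\theta^-(1)$; the $q$-parameters remain $r$ during the inner peels and switch to $s^{-1}$ during the outer ones, matching the $s$ vs.\ $r^{-1}$ split inside the defining bracket of $x_\theta^-(1)$. After all $2n-4$ peels, only $x_1^-(1)$ survives, accompanied by the accumulated Cartan prefactor $\gamma'^{-1}\om_1^{-1}$; multiplying through by $\gamma'\om_1$ recovers the stated formula.

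Once $x_1^-(1)\in\mathcal{U}'$ has been established, the Heisenberg mechanism yields the general case. By $(\textrm{D8})$ with $i=j=1$, $k=0$, $k'=1$ and $\om_1'(1)=0$, one has $\om_1(1)=(r-s)\gamma^{1/2}[E_1,x_1^-(1)]\in\mathcal{U}'$, hence $a_1(1)\in\mathcal{U}'$ through the defining exponential in $(\textrm{D1})$--$(\textrm{D2})$. Relation $(\textrm{D6}_1)$ with $\ell=1$, $i=1$, $j=2$ then identifies $x_2^-(1)$ with a nonzero scalar multiple of $[a_1(1),x_2^-(0)]$ (nonzero because $a_{12}=-1$), so $x_2^-(1)\in\mathcal{U}'$. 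An induction along the $D_n$ Dynkin diagram, extracting $a_j(1)$ from $[E_j,x_j^-(1)]$ at each stage and generating each neighbor's $x_k^-(1)$ via $(\textrm{D6}_1)$, delivers $x_i^-(1)\in\mathcal{U}'$ for all $i\in I$; the branching node $n-2$ simultaneously reaches both $n-1$ and $n$. Part (2) is then immediate from (1) by applying $\tau$: reversing the order inside the iterated bracket, swapping $r\leftrightarrow s$, $E_i\leftrightarrow F_i$, $\gamma'\leftrightarrow\gamma$, $\om_1\leftrightarrow\om_1'$, and using $\tau(x_1^-(1))=x_1^+(-1)$ produces the stated formula for $x_1^+(-1)$, and the same diagram induction yields $x_i^+(-1)\in\mathcal{U}'$ for all $i$.

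The main obstacle is the bookkeeping at each peeling step in (1): one must make rigorous that the cross terms produced by (3.16) genuinely vanish via $(\textrm{D9}_1)$--$(\textrm{D9}_2)$ or the $D_n^{(1)}$-specific vanishing lemmas of Section 4.1, and that the accumulated $q$-parameters tighten to exactly the pattern $(s^{-1},\ldots,s^{-1},r,r,\ldots,r)$ so that a single term survives at each stage. This is the $D$-type analogue of the quantum-bracket calculations carried out for $A_{n-1}^{(1)}$ in \cite{HRZ}; the new technical feature is the branching at node $n-2$, which is exactly what forces the peeling sequence to traverse the diagram first from $2$ up to $n-2$, then across to $n$ through $n-1$, and finally back down to $2$.
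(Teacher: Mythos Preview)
Your proposal is correct and follows precisely the approach the paper intends: the lemma is stated without proof and explicitly referred back to the analogous computation in \cite{HRZ} for type $A_{n-1}^{(1)}$, whose mechanism is exactly the ``peeling'' procedure you describe---successively bracketing $E_0=x_\theta^-(1)\,\gamma'^{-1}\om_\theta^{-1}$ with the $E_i$'s, using $(\textrm{D8})$ to convert each $[x_i^+(0),x_i^-(0)]$ into a Cartan piece, and invoking $(\textrm{D5})$ and the $(r,s)$-Serre relations (together with the $D_n$-specific vanishing Lemmas 4.2--4.7) to kill cross terms. Your propagation from $x_1^-(1)$ to all $x_i^-(1)$ via $\omega_1(1)\to a_1(1)\to x_2^-(1)\to\cdots$ through $(\textrm{D8})$ and $(\textrm{D6}_1)$ is likewise the same argument the paper packages into Lemma 4.18, and part (2) via $\tau$ is immediate from Proposition 3.5 as you say.
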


\smallskip

\begin{lemm}
\ $(1)$
\begin{equation*}
\begin{split}
&x_1^-(1)\\
&\quad=[\,E_3,\cdots,E_6,E_2,E_4,E_3,E_{5},E_{4},E_2,
E_0\,]_{(r^{-1}s^{-2},r
,r,s^{-1},r,s^{-1},r,\cdots,r)}\gamma'\om_1\\
&\quad\in {\mathcal U}'_{r,s}(E_6^{(1)}),
\end{split}
\end{equation*}
then for any $i\in I$, $x_i^-(1)\in
{\mathcal U}'_{r,s}(E_6^{(1)})$.

\smallskip
$(2)$
\begin{equation*}
\begin{split}
&x_1^+(-1)\\
&\quad=\tau\Bigl(\bigl[\,E_3,\cdots, E_6, E_2, E_4, E_3, E_5, E_4, E_2,
E_0\,]_{(r^{-1}s^{-2},\,r
,\,r,\,s^{-1},\,r,\,s^{-1},\,r,\cdots,\,r)}\gamma'\om_1\Bigr)\\
&\quad=\gamma\om'_1\,
[\,F_0,F_2,F_4,F_5,F_3,F_{4},F_{2},F_6,\cdots,F_3\,]_{(s,\cdots
,s,r^{-1},s,r^{-1},s,s,r^{-2}s^{-1})}\\
&\quad\in {\mathcal
U}'_{r,s}(E_6^{(1)}),
\end{split}
\end{equation*}
 then for any $i\in I$, $x_i^+(-1)\in
{\mathcal U}'_{r,s}(E_6^{(1)})$.\hfill$\Box$
\end{lemm}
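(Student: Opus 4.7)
First I would invoke the defining identity $E_0 = x_\theta^-(1)\,\gamma'^{-1}\om_\theta^{-1}$ in $\mathcal U_{r,s}(E_6^{(1)})$ together with the explicit nested $(r,s)$-bracket expansion of $x_\theta^-(1)$ recorded in Remark 3.8. The strategy is to recover $x_1^-(1)$ from $E_0$ by ``peeling'' that expansion one layer at a time via iterated $(r,s)$-brackets with the Chevalley generators $E_{i_k} = x_{i_k}^+(0)$ applied on the left. At each peeling step, the commutator $[x_{i_k}^+(0),x_{i_k}^-(0)]$ sitting inside the nested expression is replaced, via $(\textrm{D8})$, by $(r_{i_k}{-}s_{i_k})^{-1}(\om_{i_k}{-}\om'_{i_k})$; this toral factor is then carried through the rest of the nested bracket using $(\textrm{D5})$, leaving only a scalar behind. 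The cross terms where $E_{i_k}$ meets an $x_j^-(0)$ with $j\ne i_k$ vanish by $[x_{i_k}^+(0),x_j^-(0)]=0$, so only the ``matched'' peel survives, while the quantum Leibniz identities $(3.14)$--$(3.17)$ allow one to distribute the outer $E_{i_k}$ through the bracket, collecting the pairing scalars $\la\om'_{i_k},\om_{i_l}\ra$ from the $(\textrm{D5})$-crossings.

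Executing the peeling inside-out on $x_\theta^-(1)$, one first treats $[E_2,E_0]_r$, which (as already computed in the proof of the $(X4)$-Serre relation for $E_6^{(1)}$) produces $(rs)^{-2}\,x_{1345624354}^-(1)\,\om_2\,\gamma'^{-1}\om_\theta^{-1}$. Successively applying $[E_4,\cdot\,]_r$, $[E_5,\cdot\,]_r$, $[E_3,\cdot\,]_r$, $[E_4,\cdot\,]_{s^{-1}}$, $[E_2,\cdot\,]_r$, $[E_6,\cdot\,]_{s^{-1}}$, $[E_5,\cdot\,]_r$, $[E_4,\cdot\,]_r$, and finally $[E_3,\cdot\,]_{r^{-1}s^{-2}}$ then matches the prescribed parameter sequence and strips off the ten remaining outer layers of $x_\theta^-(1)$, yielding a nonzero $\mathbb K$-scalar multiple of $x_1^-(1)\,\om_1^{-1}\gamma'^{-1}$ modulo terms that vanish by non-adjacency and by the Serre relations already verified. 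Multiplying both sides by $\gamma'\om_1$ gives the announced formula and places $x_1^-(1)\in\mathcal U'_{r,s}(E_6^{(1)})$. Part $(2)$ follows by applying the $\mathbb Q$-algebra anti-automorphism $\tau$ of Proposition 3.6 to part $(1)$, using $\tau(E_i)=F_i$, $\tau(x_1^-(1))=x_1^+(-1)$, $\tau(\gamma')=\gamma$, $\tau(\om_1)=\om'_1$, and reading the nested bracket in reverse order with inverted parameters.

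The extension to all $i\in I$ proceeds by propagation along the affine Dynkin diagram of $E_6^{(1)}$. Once $x_1^-(1)$ and $F_1=x_1^-(0)$ are known to lie in $\mathcal U'_{r,s}$, relation $(\textrm{D8})$ forces $\om_1(1)$, and hence $a_1(1)$ via the exponential defining $\om_1(z)$, into $\mathcal U'_{r,s}$; relation $(\textrm{D}6_1)$ with $\ell=1$ then yields $x_j^-(1)\in\mathcal U'_{r,s}$ for every $j$ adjacent to $1$, and iterating along the connected Dynkin diagram reaches all of $I$. The dual argument through $(\textrm{D}6_2)$ handles $x_i^+(-1)$. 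The main obstacle will be the bookkeeping of the $(r,s)$-bracket parameters at each peeling step: the tripod branching of $E_6^{(1)}$ at vertex $4$ forces a non-linear peeling order, so the scalars accumulated from the $\la\om'_i,\om_j\ra$-crossings in $(\textrm{D5})$ must conspire to reproduce exactly the sequence $(r^{-1}s^{-2},r,r,s^{-1},r,s^{-1},r,\ldots,r)$ and to leave a nonzero final coefficient of $x_1^-(1)$; concurrently, the auxiliary Serre-type cancellations between nearby $E_{i_k}$'s appearing in consecutive peels (e.g.\ the $[E_4,[E_2,\cdot\,]\,\cdot\,]$ patterns) must be carefully verified to ensure no spurious lower-order obstructions survive.
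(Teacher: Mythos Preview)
Your proposal is correct and follows essentially the same approach the paper indicates (it defers the argument to \cite{HRZ}, where the identical ``peeling'' of $E_0$ by successive $(r,s)$-brackets with the $E_{i_k}$, using $(\textrm{D8})$ and $(\textrm{D5})$ at each step, is carried out for type $A_{n-1}^{(1)}$). Your propagation from $x_1^-(1)$ to all $x_i^-(1)$ via $(\textrm{D8})\Rightarrow a_1(1)\in\mathcal U'$ and then $(\textrm{D6}_1)$ along the connected Dynkin diagram is exactly the mechanism behind the paper's subsequent Lemma, so the outline is complete; the only labor left is the scalar bookkeeping you already flagged.
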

\smallskip

Furthermore, applying the above results and combining $(D8)$ with $(D6)$, we get
the following
\begin{lemm} \ $(1)$ \ $a_i(l)\in {\mathcal U}'_{r,s}(\hat{\frak{g}})$,
\ for $\;l\in\mathbb{Z}\backslash \{0\}$.

\smallskip
$(2)$ \
$x_i^{\pm}(k)\in {\mathcal U}'_{r,s}(\hat{\frak{g}})$, \ for
$\;k\in\mathbb{Z}$.\hfill$\Box$
\end{lemm}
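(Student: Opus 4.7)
The plan is to bootstrap from the two preceding lemmas, which established $x_i^\pm(\pm 1)\in\mathcal U'_{r,s}(\hat{\mathfrak g})$ for all $i\in I$, by using the relations $(\textrm{D}6_1)$, $(\textrm{D}6_2)$, and $(\textrm{D}8)$ to pass between the imaginary generators $a_i(\ell)$ and the real generators $x_i^\pm(k)$. The logical order is: first pin down $a_i(\pm 1)$, then use them as ladder operators to reach every $x_i^\pm(k)$, then finally extract every $a_i(\ell)$.

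First I would extract $a_i(\pm 1)$ for each $i\in I$. Applying $(\textrm{D}8)$ with $(k,k')=(1,0)$ to $x_i^+(1)\in\mathcal U'_{r,s}$ and $F_i=x_i^-(0)\in\mathcal U'_{r,s}$, and noting $\omega_i'(1)=0$, isolates $\omega_i(1)$ in $\mathcal U'_{r,s}$ up to an invertible central factor. The generating-function identity $\omega_i(1)=(r_i{-}s_i)\,a_i(1)\,\omega_i$ then yields $a_i(1)\in\mathcal U'_{r,s}$. Symmetrically, the case $(k,k')=(0,-1)$, together with $\omega_i(-1)=0$, produces $\omega_i'(-1)$ and hence $a_i(-1)\in\mathcal U'_{r,s}$.

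Next I would propagate from level $\pm 1$ to all levels by iterating $(\textrm{D}6_1)$ and $(\textrm{D}6_2)$. Specializing $(\textrm{D}6_1)$ at $\ell=1$ and $j=i$ (so $a_{ii}=2$, $\langle i,i\rangle=rs^{-1}$ in the simply-laced case) gives
\begin{equation*}
[\,a_i(1),\,x_i^\pm(k)\,]=\pm\,\frac{(\gamma\gamma')^{\frac{1}{2}}(rs^{-1}-r^{-1}s)}{r_i-s_i}\,\gamma'^{\pm\frac{1}{2}}\,x_i^\pm(k{+}1),
\end{equation*}
and the scalar is invertible precisely because the standing hypothesis $r^2\ne s^2$ forces $r+s\ne 0$. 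Dually, $(\textrm{D}6_2)$ at $\ell=-1$ decrements $k$ by one. Since $x_i^\pm(0)=E_i,F_i$ and $x_i^\pm(\pm 1)$ already lie in $\mathcal U'_{r,s}$, a straightforward induction on $|k|$ settles part $(2)$.

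Finally, part $(1)$ follows by reversing the first step. With every $x_i^\pm(k)$ in $\mathcal U'_{r,s}$, $(\textrm{D}8)$ applied to $[\,x_i^+(m),\,x_i^-(0)\,]$ for $m>0$ (resp.\ $[\,x_i^+(0),\,x_i^-(-n)\,]$ for $n>0$) isolates $\omega_i(m)$ (resp.\ $\omega_i'(-n)$) in $\mathcal U'_{r,s}$. The defining series
\begin{equation*}
\sum_{m\geq 0}\omega_i(m)z^{-m}=\omega_i\exp\Bigl((r_i{-}s_i)\sum_{\ell\geq 1}a_i(\ell)z^{-\ell}\Bigr)
\end{equation*}
is upper-triangular in the $a_i(\ell)$'s, so one solves recursively for each $a_i(\ell)$ in terms of $\omega_i,\omega_i(1),\dots,\omega_i(\ell)$; the dual series gives $a_i(-\ell)$ for $\ell>0$. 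The only obstacle I foresee is the routine verification that every scalar denominator encountered remains invertible; this reduces entirely to $r^2\ne s^2$ together with the simply-laced identity $\langle i,i\rangle=rs^{-1}$, so no new Serre-type computation is needed.
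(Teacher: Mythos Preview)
Your overall strategy is exactly the one the paper intends: extract $a_i(\pm1)$ from $(\textrm{D}8)$, use $(\textrm{D}6)$ as a ladder to reach every $x_i^\pm(k)$, then go back through $(\textrm{D}8)$ and the exponential series to recover all $a_i(\ell)$. The paper's proof sketch (``applying the above results and combining $(\textrm{D}8)$ with $(\textrm{D}6)$'') is precisely this.

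There is, however, a slip in your first step. The preceding Lemmas 4.16--4.17 give you $x_i^{-}(1)$ and $x_i^{+}(-1)$ (opposite signs), not $x_i^{+}(1)$ and $x_i^{-}(-1)$. So when you invoke $(\textrm{D}8)$ with $(k,k')=(1,0)$, the element $x_i^{+}(1)$ is not yet known to lie in $\mathcal U'_{r,s}$, and likewise $(k,k')=(0,-1)$ calls on $x_i^{-}(-1)$, which you do not have. The fix is immediate: use $(k,k')=(0,1)$, i.e.\ $[\,E_i,\,x_i^{-}(1)\,]$, to isolate $\omega_i(1)$ (since $\omega_i'(1)=0$), and $(k,k')=(-1,0)$, i.e.\ $[\,x_i^{+}(-1),\,F_i\,]$, to isolate $\omega_i'(-1)$ (since $\omega_i(-1)=0$). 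With that correction the rest of your argument goes through unchanged.
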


Therefore, by induction, all generators are in the subalgebra ${\mathcal U}'_{r,s}(\widehat{\frak{g}})$.
So, this finishes the proof of Theorem $\mathcal{B}$.\hfill$\Box$

\subsection{Proof of Theorem $\mathcal{C}$} This subsection focuses on showing Theorem $\mathcal{C}$.

\smallskip
{\bf Theorem $\mathcal{C}$.}  {\it There exists a surjective $\Phi:\,{\mathcal U}'_{r,s}(\hat{\frak {g}}) \longrightarrow U_{r,s}(\hat{\frak {g}})$ such that $\Psi\Phi=\Phi\Psi=1$.}
\begin{proof}\, We define $\Psi$ on the generators as follows. For $i\in I_0$,
\begin{gather*}
\Psi(E_i)=e_i,\quad \Psi(F_i)=f_i,\quad \Psi(\omega_i)=\omega_i,\quad\Psi(\omega'_i)=\omega'_i,\\
\Psi(\gamma)=\gamma,\quad \Psi(\gamma')=\gamma',\quad \Psi(D)=D,\quad \Psi(D')=D'.
\end{gather*}
Consequently, it is not difficult to see that $\Psi\Phi=\Phi\Psi=1$.
\end{proof}

Up to now, we prove the Drinfeld Isomorphism Theorem for the two parameter quantum affine algebras.

\section{Vertex Representations}\,
In the last section, we turn to construct the level-one vertex representations of
two-parameter quantum affine algebras  $U_{r,s}(\widehat{\frak{g}})$ for types $X^{(1)}_n$ (where $X=ADE$). More precisely,
in our construction we can take $c=1$ in the Drinfeld relations in this section.

\subsection{Two-parameter quantum Heisenberg algebra}
 \ \begin{defi}\,
Two-parameter quantum Heisenberg algebra $U_{r,s}(\widehat{\frak{h}})$ is  the subalgebra
of $U_{r,s}(\widehat{\frak{g}})$ generated by $\{\,a_j(l),\,
%\om_i^{\pm1},\, \om_i'^{\pm1},\,
\gamma^{\pm\frac1{2}},\,\gamma'^{\pm\frac1{2}} \mid
l\in\mathbb{Z}\backslash \{0\},\,j\in I\,\}$, satisfying the following relation, for $m,\, l \in \mathbb{Z}\backslash \{0\}$
\begin{equation*}
\ [\,a_i(m), a_j(l)\,]=\delta_{m+l, 0} \frac{
(rs)^{\frac{|m|}{2}}(r_is_i)^{-\frac{m a_{ij}}2}[\,m
a_{ij}\,]_i}{|m|}\frac{\gamma^{|m|}-{\gamma'}^{|m|}}{r_j-s_j}.
\end{equation*}
\end{defi}

We denote by
$U_{r,s}(\widehat{\frak h}^+)$ (resp. $U_{r,s}(\widehat{\frak
h}^-)$\,) the commutative subalgebra of
$U_{r,s}(\widehat{\frak{h}})$ generated by $a_j(l)$ (resp.
$a_j(-l)$) with $l\in\mathbb{Z}^{>0}$, $j\in I$. In fact, we have
$U_{r,s}(\widehat{\frak h}^-)=S(\widehat{\frak h}^-)$, where
$S(\widehat{\frak h}^-)$ is the symmetric algebra associated to
$\widehat{\frak h}^-$. Then $S(\widehat{\frak h}^-)$ is a
$U_{r,s}(\widehat{\frak{h}})$-module with the action defined by
\begin{gather*}
\gamma^{\pm\frac1{2}}\cdot v=r^{\pm\frac1{2}}\,v,
\qquad \gamma'^{\pm\frac1{2}}\cdot v=s^{\pm\frac1{2}}\,v,\\
a_i(-l)\cdot v=a_i(-l)\,v,\\
a_i(l)\cdot v=\sum_j\frac{
(rs)^{\frac{l}{2}}(r_is_i)^{-\frac{l a_{ij}}2}[\,l
a_{ij}\,]_i}{l}\cdot\frac{r^l-s^l}{r_j-s_j}\cdot\frac{d\, v}{d\, a_j(-l)}.
\end{gather*}
for any $v\in S(\widehat{\frak h}^-)$, $l\in\mathbb{Z}^{>0}$ and
$i\in I$.

\subsection{Fock space}
\ Let $Q=\bigoplus_{i\in I}\mathbb{Z}\al_i$ be the root lattice of
$\frak g$ with the Killing form $(\cdot\,|\,\cdot)$, one can form a
group algebra $\mathbb{K}[Q]$ with base elements of the form $e^\be$
($\be\in Q$), and the product
$$e^{\be}e^{\be'}=e^{\be+\be'},\qquad \be,\,\be'\in Q.$$
We define the Fock space as
$$
{\mathcal F}:=S(\widehat{\frak h}^-)\otimes\mathbb{K}[Q],
$$
and make it into a $U_{r,s}(\widetilde{\frak h})$-module (where
$U_{r,s}(\widetilde{\frak h})$ is generated by $\{\,a_i(\pm
l),\,\om_i^{\pm1},\,\om_i'^{\pm1},$
$\,\gamma^{\pm\frac1{2}},\,\gamma'^{\pm\frac1{2}}\mid i\in
I,\,l\in\mathbb{Z}^{>0}\,\}$) via extending the action of
$U_{r,s}(\widehat{\frak h}^-)$ to the Fock space ${\mathcal F}$. Let
$z$ be a complex variable and add the action of $\al_i(0)$ as
follows:
\begin{eqnarray*}
z^{\al_i(0)}(v\otimes e^\be)&=&z^{(\al_i\,|\,\be)}v\otimes e^\be,\\
e^\al(v\otimes e^\be)&=&v\otimes e^{\al+\be},\\
a_i(\pm l)\cdot (v\otimes e^\be)&=&(a_i(\pm l)\cdot v)\otimes e^\be,
\qquad l\in\mathbb{Z}\backslash \{0\},\\
\varepsilon_i(0)\cdot (v\otimes
e^\be)&=&(\varepsilon_i\,,\,\be)\,v\otimes e^\be,
\end{eqnarray*}
such that
\begin{gather*}
\om_i\cdot(v\otimes e^\be)
=\la \be,\, i\ra \,v\otimes e^\be,\\
 \om_i'\cdot (v\otimes
e^\be)=\la i,\, \be\ra^{-1}\,v\otimes e^\be.
\end{gather*}

\subsection{Vertex operators}\
 Let $\epsilon_0(\,,\,)$: $ Q\times Q\rightarrow
\mathbb K^*$
 be the cocycle such that
 \begin{eqnarray*}
&&\epsilon_0(\al, \be+\theta)=\epsilon_0(\al,\be)\epsilon_0(\al, \theta),\\
&&\epsilon_0(\al+\be, \theta)=\epsilon_0(\al, \theta)\epsilon_0(\be,
\theta),
\end{eqnarray*}
we construct such a cocycle directly by
$$\epsilon_0(\alpha_i,\alpha_j)=\left\{\begin{array}{cl} (-r_is_i)^{\frac{a_{ij}}{2}},
& \ i> j,\\
(rs)^{\frac{1}{2}},
& \ i= j,\\
1,& \ i<j.\\
\end{array}\right.$$

For $\alpha, \beta\in Q$, define $\mathbb{K}$-linear operators as
\begin{gather*}
\epsilon_\alpha(v\otimes e^\be)=
\epsilon_0(\alpha, \be)\,v\otimes e^\be,\\
D(r)(v\otimes e^\be)=r^{\be}v\otimes e^\be,\quad D(s)(v\otimes e^\be)=s^{\be}v\otimes e^\be.
\end{gather*}

We can now introduce the main vertex operators.
$$\begin{array}{rcl}
E_-^\pm(\al_i,z)&=&\exp\left(\pm\sum_{n=1}^\infty\dfrac{s^{\pm n/2}}{[n]}a_i(-n)z^n\right),\\
E_+^\pm(\al_i,z)&=&\exp\left(\mp\sum_{n=1}^\infty\dfrac{r^{\mp n/2}}{[n]}a_i(n)z^{-n}\right),\\
\end{array}
$$
where $[n]=\frac{r^n-s^n}{r-s}$, for $n\in\mathbb{Z}^{>0}$, $\al_i\in Q$.

\begin{theo} For the simply-laced cases, we have the vertex
representation $($of level-$1$$)$ $\pi$ of $U_{r,s}(\widehat{\frak g})$ on the Fock
space $\mathcal F$ as follows:
$$\begin{array}{rcl}
\gamma^{\pm1}&\mapsto& r^{\pm1},\qquad\ \; \gamma'^{\pm1}\mapsto s^{\pm1},\\
D&\mapsto& D(r),\qquad D'\mapsto D(s),\\
x_j^+(z)&\mapsto&X_j^+(z)=E_-^+(\al_j,
z)E_+^+(\al_j,z)e^{\al_j}z^{\al_j(0)+1}r^{\varepsilon_j(0)-\frac{1}{2}}\epsilon_{\al_j},\\
x_j^-(z)&\mapsto&X_j^-(z)=E_-^-(\al_j,
z)E_+^-(\al_j,z)e^{-\al_j}z^{-\al_j(0)+1}s^{\varepsilon_j(0)-\frac{1}{2}}\epsilon_{\al_j},\\
\om'_j(z)&\mapsto&\Phi_j(z)=\om'_j\exp\Big(-(r-s)\sum_{k>0}a_j(-k)z^k\Big)
,\\
\om_j(z)&\mapsto&\Psi_j(z)=\om_j \exp\Big((r-s)\sum_{k>0}a_j(k)z^{-k}\Big).
\end{array}$$
\end{theo}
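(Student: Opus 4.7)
The plan is to verify that the operators $X_j^\pm(z)$, $\Phi_j(z)$, $\Psi_j(z)$ together with the scalar actions of $\gamma^{\pm1}$, $\gamma'^{\pm1}$, $D$, $D'$ satisfy the full list of Drinfeld relations (D1)--(D9) (equivalently (3.1)--(3.13)), so that by the Drinfeld Isomorphism Theorem proved in Section 4 this gives a well-defined module over $U_{r,s}(\widehat{\mathfrak g})$. I would organize the verification so that the easier relations come first and the hard OPE computations are reused maximally.

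First I would check the ``Heisenberg block'' (D1)--(D5). Relations (D1) and (D3) are immediate because $\om_i^{\pm1}$, $\om_i'^{\pm1}$ act diagonally on $v\otimes e^\beta$ and the $a_i(\ell)$ commute with the group-like part. Relation (D2) follows directly from the given bracket in Definition~5.1, once one substitutes $\gamma\mapsto r$, $\gamma'\mapsto s$ and uses $(rs)^{|m|/2}(r_is_i)^{-ma_{ij}/2}[m a_{ij}]_i/|m|=(\la i,i\ra^{ma_{ij}/2}-\la i,i\ra^{-ma_{ij}/2})/(|m|(r_i-s_i))$ times $(r^{|m|}-s^{|m|})/(r_j-s_j)$ (this is the point where the ADE specialization $d_i=1$ is used). (D4) is checked on the three types of operators, noting $z^{\alpha_i(0)}\mapsto$ overall scaling by $r^{(\alpha_i,\beta)}$ resp.\ $s^{(\alpha_i,\beta)}$ for $D$, $D'$. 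Relation (D5) is where the cocycle $\epsilon_0$ first intervenes: the commutation of $\om_j$ with $e^{\alpha_i}$ produces exactly the pairing $\la \om_j',\om_i\ra^{\pm1}$ once one reads off $\la \beta,i\ra$ from the $J$-matrix.

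Next I would handle (D6$_1$), (D6$_2$) by a standard computation: $[a_i(\ell), e^{\pm\alpha_j}]=0$ but $[a_i(\ell),E_\mp^\pm(\alpha_j,z)]$ produces, after collecting scalars, the factor $\pm(r^{\ell a_{ij}/2}- s^{\ell a_{ij}/2})\gamma'{}^{\pm\ell/2}/\ell$ times $E_\mp^\pm(\alpha_j,z) z^\ell$ on the Fock space, which matches the RHS. Relation (D5) restricted to the $\om$-action on $X_j^\pm(z)$ likewise follows. For (D7) and the $a_{ij}=0$ case (D9$_1$), I would compute the OPE
\[
E_+^\epsilon(\alpha_i,z)E_-^{\epsilon'}(\alpha_j,w) = E_-^{\epsilon'}(\alpha_j,w) E_+^\epsilon(\alpha_i,z)\cdot \prod\text{(scalar factors)},
\]
where the scalar factor is computed by $\exp(\mp\epsilon\epsilon'\sum_{n\ge1} \tfrac{(rs)^{\mp\epsilon n/2}\la i,i\ra^{-na_{ij}/2}[na_{ij}]_i}{n(r_j-s_j)}(w/z)^n)$; this sums to an explicit rational function which, together with the $z^{\alpha_j(0)}$-normal-reordering and the cocycle identity $\epsilon_{\alpha_i}\epsilon_{\alpha_j}=\epsilon_0(\alpha_i,\alpha_j)\epsilon_0(\alpha_j,\alpha_i)^{-1}\epsilon_{\alpha_j}\epsilon_{\alpha_i}$, collapses to the $(r,s)$-commutation factor in (D7)/(D9$_1$). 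The precise definition of $\epsilon_0$ with its explicit $r_is_i$-dependence was chosen exactly to absorb this scalar.

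The main obstacle is the pair (D8) and the Serre relations (D9$_2$)--(D9$_3$). For (D8) I would expand $X_i^+(z)X_j^-(w)$ and $X_j^-(w)X_i^+(z)$ via the OPE above, showing the contraction factor has simple poles only at $w=zr$ and $w=zs$ with opposite residues, and then use the two-parameter $\delta$-function identity $f(z,w)\delta(zw^{-1}\gamma^{\pm1})=f(\text{substitution})\delta(zw^{-1}\gamma^{\pm1})$ to get
\[
[X_i^+(z),X_j^-(w)]=\frac{\delta_{ij}}{r-s}\Bigl(\delta(zw^{-1}\gamma')\Phi_i(w\gamma^{1/2})-\delta(zw^{-1}\gamma)\Psi_i(z\gamma'{}^{-1/2})\Bigr),
\]
after identifying the normal-ordered exponentials with $\Phi_i$, $\Psi_i$. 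The $\delta_{ij}$ comes out because the contraction has no poles at $w=zr,zs$ when $i\ne j$ (since $a_{ij}\in\{0,-1\}$ and the exponent of the contraction factor stays non-negative). For the Serre relations, I would reduce to the rank-2 computation on three formal variables $z_1,z_2,w$: the OPE of $X_i^\pm(z_1)X_i^\pm(z_2)X_j^\pm(w)$ factors as a manifestly symmetric rational scalar times a normal-ordered operator, and the $r,s$-deformed symmetrizer $\sum_k(-1)^k(r_is_i)^{\pm k(k-1)/2}\left[{1-a_{ij}\atop k}\right]_{\pm i}$ annihilates the symmetric part once the Vandermonde-type identity $\sum_k(-1)^k(r_is_i)^{\pm k(k-1)/2}\left[{1-a_{ij}\atop k}\right]_{\pm i}x^k = \prod_{m=0}^{-a_{ij}}(1-r_i^{\mp m}s_i^{\pm(-a_{ij}-m)}x)$ (a two-parameter $q$-binomial identity) is applied. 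This last step is the technical heart of the proof and is where $\tau$-invariance of the generating functions in Proposition~3.1 is exploited to deduce the ``$1\le j<i<n$'' case from the ``$1\le i<j<n$'' case, cutting the work in half.
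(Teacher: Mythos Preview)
Your overall strategy---verify the Drinfeld relations one by one via OPE computations on the Fock space---is exactly what the paper does, and your treatment of (D1)--(D8)/(3.2)--(3.10) matches the paper's Lemmas~5.3--5.7 and Proposition~5.8 almost step for step (note, though, that you have swapped $\Phi_i$ and $\Psi_i$ in your displayed formula for $[X_i^+(z),X_j^-(w)]$: since $\om_i(z)\mapsto\Psi_i(z)$ and $\om'_i(z)\mapsto\Phi_i(z)$, it should read $\Psi_i(w\gamma^{1/2})$ and $\Phi_i(z\gamma'^{-1/2})$).

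The one genuine methodological difference is in the Serre relations. The paper does not invoke a two-parameter $q$-binomial product identity; instead it writes each of the three ordered products $X_i^+(z_1)X_i^+(z_2)X_j^+(w)$, $X_i^+(z_1)X_j^+(w)X_i^+(z_2)$, $X_j^+(w)X_i^+(z_1)X_i^+(z_2)$ as a common normal-ordered operator times an explicit rational contraction factor, performs a partial-fraction decomposition in the poles $1-(r^{-1}s)^{1/2}z_k/w$ and $1-(r^{-1}s)^{1/2}w/z_k$, and checks by hand that after symmetrization in $z_1,z_2$ the four residues individually vanish. Your proposed route via the identity $\sum_k(-1)^k(r_is_i)^{\pm k(k-1)/2}\bigl[{1-a_{ij}\atop k}\bigr]_{\pm i}x^k=\prod_m(\cdots)$ is the cleaner conceptual argument familiar from the one-parameter Frenkel--Jing construction, but you should check carefully that the product side you wrote is actually correct in the $(r,s)$-setting (for $a_{ij}=-1$ the left side is $1-(r{+}s)x+rs\,x^2=(1-rx)(1-sx)$, which does not match the product you stated), and that it really matches the contraction factors coming from Lemma~5.5. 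Finally, your appeal to $\tau$-invariance to pass from the $i<j$ to the $j<i$ Serre relation is not quite right as stated: $\tau$ is an anti-automorphism of the \emph{algebra}, not an intertwiner on $\mathcal F$, so reducing (D9$_3$) to (D9$_2$) on the Fock space requires either a separate (parallel) computation---which is what the paper does with its ``the others can be obtained similarly''---or an explicit $r\leftrightarrow s$ symmetry of the vertex operators themselves, which you would need to formulate and prove.
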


 \vskip 0.5cm
\subsection{Proof of the theorem 5.2.}

We have to show that the operators $X_i^{\pm}(z)$, $\Phi_i(z)$ and $
\Psi_i(z)$ satisfy all the relations of Drinfeld's realization with
$\gamma=r,\, \gamma'=s$. More explicitly, we want to show that
$X_i^{\pm}(z),\, \Phi_i(z)$ and $ \Psi_i(z)$ satisfy relations
$(3.2)$--$(3.13)$  in Definition 3.1. It is clear that $(3.2)$--$(3.5)$
follow from the construction of vertex operators $\Phi_i(z)$ and $
\Psi_i(z)$. We are going to divide the proof into several steps.

It is easy to see that $\Psi_i,\, \Phi_i,\, e^{\al_j}$ have the
following commutative relations:
\begin{eqnarray}
&&\Phi_i(z)e^{\pm \alpha_j}=\la i,\,j\ra^{\mp1} e^{\pm
\alpha_j}\Phi_i(z),\\
&&\Psi_i(z)e^{\pm \alpha_j}=\la j,\, i\ra^{\pm1}e^{\pm
\alpha_j}\Psi_i(z).
\end{eqnarray}

Note that the action of $c$ is the identity, since here we construct a level-one representation.
$(3.6)$ follows from the following lemma.
\begin{lemm} For $i, j\in I$, we have
\begin{eqnarray}
g_{ij}\Big(\frac{z}{w}(rs)^{\frac{1}{2}}r\Big)\Phi_i(z)\Psi_j(w)
=g_{ij}\Big(\frac{z}{w}(rs)^{\frac{1}{2}}s\Big)\Psi_j(w)\Phi_i(z).
\end{eqnarray}

\end{lemm}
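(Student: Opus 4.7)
The plan is to prove Lemma 5.3 by the standard free-field normal-ordering technique, reducing the identity to a Baker--Campbell--Hausdorff computation and then matching the resulting rational function with the defining formula $(3.1)$ for $g_{ij}$.

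First, I would observe that the zero-mode factors $\om_i'$ in $\Phi_i(z)$ and $\om_j$ in $\Psi_j(w)$ act diagonally on the Fock space $\mathcal{F}$ and commute with every $a_k(l)$; they also commute with each other by (D1). Hence $\om_i'\om_j = \om_j\om_i'$ as operators, and both sides of the claimed identity carry the same scalar prefactor $\om_i'\om_j$. The proof therefore reduces to computing the commutation of the two exponentials
\[
A(z) \;=\; -(r-s)\sum_{k>0} a_i(-k)\,z^k,\qquad B(w) \;=\; (r-s)\sum_{l>0} a_j(l)\,w^{-l}.
\]

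Next I would apply the Baker--Campbell--Hausdorff identity $e^{A(z)}e^{B(w)} = e^{B(w)}e^{A(z)}\,e^{[A(z),B(w)]}$, which is valid because on the level-one Fock space $\gamma = r$, $\gamma' = s$, so $[a_i(-k), a_j(l)]$ is a central scalar by (D2). Using the Heisenberg relation with $|m| = k$, $m+l=0$, together with the simply-laced normalization $r_i=r$, $s_i=s$, the commutator reduces to
\[
[A(z),B(w)] \;=\; -\sum_{k>0}\frac{1}{k}\Bigl[\,(rs)^{\frac{k(1-a_{ij})}{2}}(r^{k a_{ij}}-s^{k a_{ij}})(r^{k}-s^{k})\Bigr]\bigl(\tfrac{z}{w}\bigr)^{k}.
\]
Expanding the bracketed polynomial as a sum of four monomials in $r,s$ and applying $\sum_{k>0}\xi^k/k = -\log(1-\xi)$ term by term, one obtains $e^{[A(z),B(w)]}$ as a ratio of four linear factors $(1 - \xi_\nu \,z/w)$.

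Finally, I would compare this rational function with the quotient
\[
\frac{g_{ij}\bigl(\tfrac{z}{w}(rs)^{1/2}s\bigr)}{g_{ij}\bigl(\tfrac{z}{w}(rs)^{1/2}r\bigr)}
\]
using the uniform formula (3.1) for $g_{ij}^+$, namely $g_{ij}(z) = (\langle j,i\rangle \langle i,j\rangle^{-1})^{1/2}\frac{(\langle i,j\rangle\langle j,i\rangle)^{1/2}z-1}{z-(\langle i,j\rangle\langle j,i\rangle)^{1/2}}$, together with the simply-laced identification of $\langle i,j\rangle\langle j,i\rangle$ and $\langle j,i\rangle\langle i,j\rangle^{-1}$ as monomials in $r,s$ dictated by the Cartan entry $a_{ij}$ (as can be read off from the matrices $J$ in Section 2). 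Rearranging the claimed identity to $g_{ij}((rs)^{1/2}r\,z/w)\Phi_i(z)\Psi_j(w) = g_{ij}((rs)^{1/2}s\,z/w)\Psi_j(w)\Phi_i(z)$ shows that the sought factor is precisely this quotient.

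The main obstacle is the bookkeeping in the last step: one must verify that the four poles/zeros $\xi_\nu$ produced by expanding $(r^{ka_{ij}}-s^{ka_{ij}})(r^k-s^k)(rs)^{k(1-a_{ij})/2}$ pair up correctly---two of them cancelling against factors from $g_{ij}((rs)^{1/2}r\,z/w)$ on the left and two matching factors from $g_{ij}((rs)^{1/2}s\,z/w)$ on the right. The $\tau$-invariance property of $g_{ij}$ established in Proposition 3.1 is what guarantees this pairing works uniformly, and it makes the case-by-case verification for $A$, $D$, $E$ unnecessary, since (3.1) is the single source of all structure constants needed.
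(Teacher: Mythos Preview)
Your proposal is correct and follows essentially the same route as the paper: both arguments apply the Baker--Campbell--Hausdorff identity to the two Heisenberg exponentials, evaluate the scalar commutator via (D2) at level one, resum using $\log(1-\xi)=-\sum_{k\ge1}\xi^k/k$, and then match the resulting rational function against $g_{ij}$. The only cosmetic difference is that the paper carries out the final matching by an explicit case split $i=j$ versus $i\neq j$ (i.e.\ $a_{ij}=2$ versus $a_{ij}=-1$), whereas you invoke the uniform closed form (3.1); either way the verification is the same elementary identity, and your appeal to $\tau$-invariance, while not strictly needed here, does no harm.
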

\noindent{\bf Proof: }When $a_{ij}=0$, the proof is trivial, so we
only check the relation for the case of $a_{ij}\neq 0$ (i.e.,
$a_{ij}=-1$ and $a_{ii}=2$).
%Here we only calculate the relation of the
%case of $i >j$ and , and the other cases is similar.
\begin{eqnarray*}
\begin{split}
\Phi_i(z)&\Psi_j(w)\\
&=\om'_i\exp\Big(-(r-s) \sum_{k>0} a_i(-k)z^k\Big)
\cdot \om_j\exp\Big((r-s)\sum_{k>0}a_j(k)w^{-k}\Big)\\
&=\Psi_j(w)\Phi_i(z)\exp\Big(-(r-s)^2\sum_{k>0}[\,a_i(-k),\,
a_j(k)\,](\frac{z}{w})^k\Big)\\
&=\Psi_j(w)\Phi_i(z)\exp\Big(-\sum_{k>0}(rs)^{\frac{k}2} \frac{\lg
i,\,i\rg^{-\frac{a_{ij}k}{2}}-\lg i,\,i\rg^{\frac{a_{ij}k}{2}}}{k}
\cdot[k](\frac{z}{w})^k\Big)\\
&=\Psi_j(w)\Phi_i(z) \left\{\begin{array}{cl} \Big(\frac{\lg i,\,i
\rg^{-\frac{1}{2}} (\frac{z}{w}(rs)^{\frac{1}{2}}r)-1}{\lg i,\,i
\rg^{\frac{1}{2}}(\frac{z}{w}(rs)^{\frac{1}{2}}r)-1}\Big)^{-1}
\Big(\frac{\lg i,\,i \rg^{-\frac{1}{2}}
(\frac{z}{w}(rs)^{\frac{1}{2}}s)-1}{\lg i,\,i \rg^{\frac{1}{2}}
(\frac{z}{w}(rs)^{\frac{1}{2}}s)-1}\Big),
& \ i\neq j,\vspace{3mm}\\
\Big(\frac{\lg i,\,i \rg(\frac{z}{w}(rs)^{\frac{1}{2}}r)-1}{\lg
i,\,i \rg(\frac{z}{w}(rs)^{\frac{1}{2}}r)-1}\Big)^{-1}
\Big(\frac{\lg i,\,i \rg (\frac{z}{w}(rs)^{\frac{1}{2}}s)-1}{\lg
i,\,i \rg(\frac{z}{w}(rs)^{\frac{1}{2}}s)-1}\Big),
& \ i= j,
\end{array}\right.
\\
&=\Psi_j(w)\Phi_i(z)g_{ij}\Big(\frac{z}{w}(rs)^{\frac{1}{2}}r\Big)^{-1}g_{ij}
\Big(\frac{z}{w}(rs)^{\frac{1}{2}}s\Big),
\end{split}
\end{eqnarray*}
where we used the formal identity
$log(1-z)=-\sum_{n=1}^{\infty}\frac{z^{n}}{n}$.
\hfill$\Box$

Furthermore, the following Lemma 5.4 follows from the
relations $(3.7)$ and $(3.8)$.
\begin{lemm} For $i, j\in I$, we have
\begin{eqnarray}
\begin{split}
\Phi_i(z)X_j^{\pm}(w)\Phi_i(z)^{-1}&=g_{ij}\Big(\frac{z}{w}(rs)^{\frac{1}{2}}r^{\mp
\frac{1}{2}}\Big)^{\pm1}X_j^{\pm}(w),\\
\Psi_i(z)X_j^{\pm}(w)\Psi_i(z)^{-1}&=g_{ji}\Big(\frac{w}{z}(rs)^{\frac{1}{2}}s^{\pm
\frac{1}{2}}\Big)^{\mp1}X_j^{\pm}(w).
\end{split}
\end{eqnarray}
\end{lemm}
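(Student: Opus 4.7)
The plan is to verify both conjugation identities by splitting $X_j^{\pm}(w)$ into four commuting blocks and computing each commutator separately. Write
\[
X_j^{\pm}(w)=E_-^{\pm}(\al_j,w)\,E_+^{\pm}(\al_j,w)\,\bigl(e^{\pm\al_j}w^{\pm\al_j(0)+1}\,q^{\ep_j(0)-\frac12}\,\ep_{\al_j}\bigr),
\]
where $q\in\{r,s\}$ is the appropriate zero-mode scalar. Since $\Phi_i(z)=\om_i'\exp(-(r-s)\sum_{k>0}a_i(-k)z^k)$ involves only negative-mode oscillators $a_i(-k)$, it commutes with $E_-^{\pm}(\al_j,w)$, with the vertex exponential $e^{\pm\al_j}$ (up to scalar), and with $w^{\pm\al_j(0)+1}q^{\ep_j(0)-\frac12}\ep_{\al_j}$, which depend only on zero modes. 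Hence the conjugation reduces to two ingredients: first, the commutator of $\om_i'$ with $e^{\pm\al_j}$ producing the factor $\lg i,j\rg^{\mp1}$ from (5.1); and second, the central extension picked up when moving $\exp(-(r-s)\sum a_i(-k)z^k)$ across $E_+^{\pm}(\al_j,w)=\exp(\mp\sum_{n>0}\frac{r^{\mp n/2}}{[n]}a_j(n)w^{-n})$.

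For the second ingredient I would apply Baker--Campbell--Hausdorff: because the Heisenberg relation (D2) (equivalently the relation in Section~5.1) gives $[a_i(-k),a_j(k)]$ as a scalar at level one (with $\ga=r$, $\ga'=s$), the conjugation yields an exponential $\exp(\Sigma_{ij}(z/w))$ where
\[
\Sigma_{ij}(z/w)=\pm(r-s)\sum_{k>0}\frac{r^{\mp k/2}}{[k]}\bigl[a_i(-k),a_j(k)\bigr]\Bigl(\tfrac{z}{w}\Bigr)^k.
\]
Substituting $[a_i(-k),a_j(k)]=\frac{(rs)^{k/2}(\lg i,i\rg^{ka_{ij}/2}-\lg i,i\rg^{-ka_{ij}/2})}{k(r-s)}[k]$ collapses the expression into $\pm\sum_{k>0}\frac{1}{k}\bigl(\lg i,i\rg^{ka_{ij}/2}-\lg i,i\rg^{-ka_{ij}/2}\bigr)\bigl((rs)^{1/2}r^{\mp1/2}z/w\bigr)^k$. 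Now apply the formal identity $-\log(1-u)=\sum_{k>0}u^k/k$ to rewrite this as the logarithm of a ratio of two linear factors in $z/w$, namely
\[
\exp\bigl(\Sigma_{ij}(z/w)\bigr)=\left(\frac{\lg i,i\rg^{a_{ij}/2}\bigl((rs)^{1/2}r^{\mp1/2}\tfrac{z}{w}\bigr)-1}{\lg i,i\rg^{-a_{ij}/2}\bigl((rs)^{1/2}r^{\mp1/2}\tfrac{z}{w}\bigr)-1}\right)^{\pm1}.
\]

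Multiplying this with the scalar $\lg i,j\rg^{\mp1}$ coming from moving $\om_i'$ past $e^{\pm\al_j}$, I would then compare with formula (3.1). Using $\lg i,i\rg=\lg i,j\rg\lg j,i\rg^{-1}$ (up to the correct power $a_{ij}$) and the identity ${^+c}_{ij}^{(0)}=\lg i,j\rg^{-1}$ for the constant term of $g_{ij}$, the combined expression matches $g_{ij}\bigl(\tfrac{z}{w}(rs)^{1/2}r^{\mp1/2}\bigr)^{\pm1}$ exactly, which proves the first formula. The second formula follows by an entirely analogous computation: $\Psi_i(z)=\om_i\exp((r-s)\sum_{k>0}a_i(k)z^{-k})$ commutes with $E_+^{\pm}(\al_j,w)$ and the zero-mode operators except $e^{\pm\al_j}$, so the nontrivial ingredients are the scalar $\lg j,i\rg^{\pm1}$ from (5.2) and the BCH commutator with $E_-^{\pm}(\al_j,w)$, producing $g_{ji}\bigl(\tfrac{w}{z}(rs)^{1/2}s^{\pm1/2}\bigr)^{\mp1}$ after invoking $g^{\pm}_{ij}(z^{-1})=g^{\mp}_{ji}(z)=g^{\pm}_{ji}(z)^{-1}$ from Proposition~3.1(i).

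The main obstacle is bookkeeping rather than conceptual: correctly tracking the signs $\pm$, the half-integer exponents $r^{\mp1/2}$ and $s^{\pm1/2}$ produced by the $E_\pm^{\pm}$ normalizations, and the distinction between the cases $i=j$ (where $a_{ii}=2$ so $\lg i,i\rg^{\pm a_{ij}/2}=\lg i,i\rg^{\pm1}$ creates a genuine pole--zero pair) and $a_{ij}=-1$ (where the factors $\lg i,i\rg^{\mp1/2}$ appear with square roots). A careful matching against the explicit form (3.1) of $g_{ij}$, plus the $\tau$-invariance properties $g^{\pm}_{ij}(z)^{-1}=g^{\mp}_{ij}(z)$, absorbs all these signs. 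Once the $i=j$ and $i\ne j$ (simply-laced) cases are each verified, Lemma~5.4 follows, and with it relations (3.7) and (3.8).
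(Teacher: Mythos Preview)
Your proposal is correct and follows essentially the same route as the paper: both compute the BCH commutator of the oscillator exponential in $\Phi_i(z)$ (resp.\ $\Psi_i(z)$) against $E_+^{\pm}(\al_j,w)$ (resp.\ $E_-^{\pm}(\al_j,w)$) using the level-one Heisenberg relation, convert the resulting series via $-\log(1-u)=\sum_{k>0}u^k/k$, and then combine with the scalar $\lg i,j\rg^{\mp1}$ (resp.\ $\lg j,i\rg^{\pm1}$) from (5.1)--(5.2) to identify the answer with $g_{ij}$. The only minor differences are organizational---you decompose $X_j^{\pm}(w)$ into blocks up front, while the paper computes $\Phi_i(z)E_+^{\pm}(\al_j,w)$ directly and invokes (5.1) at the end---and you shortcut the $\Psi_i$ case via Proposition~3.1(i), whereas the paper repeats the parallel computation.
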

\noindent{\bf Proof: }  Naturally we only check $(5.2)$ and $(5.3)$ for
the case of  $a_{ij}\neq 0$.  The vertex operator is a product of two exponential
operators and a middle term operator. So we first consider
\begin{eqnarray*}
\begin{split}
\Phi_i(z)&E_+^{\pm}(\alpha_j,\, w)\\
&=\exp\Big(-(r-s)\sum_{k>0}a_i(-k)z^k\Big)r_i^{\varepsilon_{i+1}(0)}s_i^{\varepsilon_i(0)}\exp \Big(\mp\sum_{k>0}\frac{r^{\mp \frac{k}{2}}}{[k]}a_j(k)w^{-k}\Big)\\
&=E_+^{\pm}(\alpha_j,\,w)\Phi_i(z)\exp\Big(\pm(r-s))\sum_{k>0}\frac{r^{\mp\frac{k}{2}}}{[k]}[\,a_i(-k),\,
a_j(k)\,](\frac{z}{w})^k\Big)\\
&=E_+^{\pm}(\alpha_j,\, w)\Phi_i(z) \exp
\Big(\pm\sum_{k>0}(rs)^{\frac{k}2} r^{\mp \frac{k}{2}}\frac{\lg
i,\,i\rg^{-\frac{a_{ij}k}{2}}-\lg i,\,i\rg^{\frac{a_{ij}k}{2}}}{k}
(\frac{z}{w})^k\Big)
\\
&=E_+^{\pm}(\alpha_j,\, w)\Phi_i(z)\left\{\begin{array}{cl}
\Big(\frac{\lg i,\,i\rg^{-\frac{1}{2}}
(\frac{z}{w}(rs)^{\frac{1}{2}}r^{\mp \frac{1}{2}})-1}{\lg
i,\,i\rg^{\frac{1}{2}} (\frac{z}{w}(rs)^{\frac{1}{2}}r^{\mp
\frac{1}{2}})-1}\Big)^{\pm1}, & \ i\neq j,
\vspace{3mm}\\
\Big(\frac{\lg i,\,i \rg(\frac{z}{w}(rs)^{\frac{1}{2}}r^{\mp
\frac{1}{2}})-1}{\lg i,\,i \rg(\frac{z}{w}(rs)^{\frac{1}{2}}r^{\mp
\frac{1}{2}})-1}\Big)^{\pm1},
& \ i= j,\\
\end{array}\right.
\\
&=E_+^{\pm}(\alpha_j,\,w)\Phi_i(z)g_{ij}\Big(\frac{z}{w}(rs)^{\frac{1}{2}}r^{\mp
\frac{1}{2}}\Big)^{\pm1} \la i, j\ra^{\pm1}.
\end{split}
\end{eqnarray*}
We would proceed in the same way with the following result,
\begin{eqnarray*}
\begin{split}
\Psi_i(z)&E_-^{\pm}(\alpha_j,\, w)\\
&=\exp\Big((r-s)\sum_{k>0}a_i(k)z^{-k}\Big)r_i^{\varepsilon_i(0)}s_i^{\varepsilon_{i+1}(0)}
 \exp \Big(\pm\sum_{k>0}\frac{s^{\pm \frac{k}{2}}}{[k]}a_j(-k)w^{k}\Big)\\
&=E_-^{\pm}(\alpha_j,\,w)\Psi_i(z)\exp\Big(\pm(r-s)\sum_{k>0}\frac{s^{\pm\frac{k}{2}}}{[k]}[\,a_i(k),\,
a_j(-k)\,](\frac{w}{z})^k\Big)\\
&=E_-^{\pm}(\alpha_j,\, w)\Psi_i(z) \exp
\Big(\pm\sum_{k>0}(rs)^{\frac{k}2} s^{\pm \frac{k}{2}}\frac{\lg
i,\,i\rg^{\frac{a_{ij}k}{2}}-\lg
i,\,i\rg^{-\frac{a_{ij}k}{2}}}{k}(\frac{z}{w})^k\Big)\\
&=E_-^{\pm}(\alpha_j,\, w)\Psi_i(z)\left\{\begin{array}{cl}
\Big(\frac{\lg i,\,i\rg^{-\frac{1}{2}}
(\frac{z}{w}(rs)^{\frac{1}{2}}s^{\pm \frac{1}{2}})-1}{\lg
i,\,i\rg^{\frac{1}{2}} (\frac{z}{w}(rs)^{\frac{1}{2}}s^{\pm
\frac{1}{2}})-1}\Big)^{\mp1}, & \ i\neq j,
\vspace{3mm}\\
\Big(\frac{\lg i,\,i \rg(\frac{z}{w}(rs)^{\frac{1}{2}}s^{\pm
\frac{1}{2}})-1}{\lg i,\,i \rg(\frac{z}{w}(rs)^{\frac{1}{2}}s^{\pm
\frac{1}{2}})-1}\Big)^{\mp1},
& \ i=j,
\end{array}\right.
\\
&=E_-^{\pm}(\alpha_j,\,w)\Psi_i(z)g_{ji}\Big(\frac{z}{w}(rs^{-1})^{\mp
\frac{1}{4}}\Big)^{\pm1} \la j, i\ra^{\mp1}.
\end{split}
\end{eqnarray*}

Applying $(5.1)$ and $(5.2)$, we would arrive at the required results.
Thus the proof of the lemma is complete.  \hfill $\Box$
\medskip

Before checking the relations $(3.9)$ and $(3.10)$ and the quantum
Serre-relations, we need to introduce a useful notion---normal
ordering, which plays an important role in the theory of ordinary
vertex operator calculus. Define
\begin{gather*}
:\alpha_i(n)\alpha_j(-n):\;=\;:\alpha_j(-n)\alpha_i(n):\;=\alpha_j(-n)\alpha_i(n),\\
:\alpha_i(0)a_j:\;=\;:a_j\alpha_i(0):\;=\frac{1}{2}(\alpha_i(0)a_j+a_j\alpha_i(0)).
\end{gather*}
We can extend the notion to the vertex operators. For example, we
define
\begin{eqnarray*}
\begin{split}
:X_i^{\pm}(z)X_j^{\pm}(w):&=E_-^{\pm}(\alpha_i,\, z)E_-^{\pm}(\alpha_j,\, w)E_+^{\pm}(\alpha_i,\,
z)E_+^{\pm}(\alpha_j,\,w)\\
&\quad\cdot e^{\pm (\alpha_i+\alpha_j)}z^{\pm
\alpha_i(0)+1}w^{\pm\alpha_j(0)+1}\epsilon_{\pm\alpha_i}\epsilon_{\pm\alpha_j},\\
:X_i^{\pm}(z)X_j^{\mp}(w):
&=E_-^{\pm}(\alpha_i,\, z)E_-^{\mp}(\alpha_j,\, w)E_+^{\pm}(\alpha_i,\,
z)E_+^{\mp}(\alpha_j,\,w)\\
&\quad\cdot e^{\pm (\alpha_i-\alpha_j)}z^{\pm
\alpha_i(0)+1}w^{\mp\alpha_j(0)+1}\epsilon_{\pm\alpha_i}\epsilon_{\mp\alpha_j}.
\end{split}
\end{eqnarray*}

Therefore, the following formulas hold
\begin{gather*}
:X_i^{\pm}(z)X_j^{\pm}(w):\;=\;:X_j^{\pm}(w)X_i^{\pm}(z):\,,\\
:X_i^{\pm}(z)X_j^{\mp}(w):\;=\;:X_j^{\pm}(w)X_i^{\mp}(z):.
\end{gather*}

Using the above notation, we give the operator product expansions as follows
\begin{lemm} For $i\neq j\in I$ such that $a_{ij}\neq 0$, we have the
following operator product expansions.
\begin{eqnarray*}
\begin{split}
X_i^{\pm}(z)X_j^{\mp}(w)&=:X_i^{\pm}(z)X_j^{\mp}(w):
\big((\frac{z}{w})^{\frac{1}{2}}-(rs)^{\mp\frac{1}{2}}(\frac{w}{z})^{\frac{1}{2}}\big)
\epsilon_{0}(\alpha_i, \alpha_j)^{\mp1} \\
X_i^{\pm}(z)X_j^{\pm}(w)&=:X_i^{\pm}(z)X_j^{\pm}(w):
\big(1-(r^{-1}s)^{\pm\frac{1}{2}}(\frac{w}{z})\big)^{-1}(\frac{w}{z})^{\frac{1}{2}}
\epsilon_{0}(\alpha_i, \alpha_j)^{\pm1}\\
X_i^{\pm}(z)X_i^{\mp}(w)&=:X_i^{\pm}(z)X_i^{\mp}(w):
\big(1-(rs)^{\frac{1}{2}}(rs)^{\mp\frac{1}{2}}r^{-1}\frac{w}{z}\big)^{-1}\\
&\hskip2cm\big(1-(rs)^{\frac{1}{2}}(rs)^{\mp\frac{1}{2}}s^{-1}\frac{w}{z}\big)^{-1}\frac{w}{z}
\epsilon_{0}(\alpha_i,\alpha_i)^{\mp1}  \\
 X_i^{\pm}(z)X_i^{\pm}(w)&=:X_i^{\pm}(z)X_i^{\pm}(w):
\big(1-(r^{-1}s)^{\pm\frac{1}{2}}(rs)^{\frac{1}{2}}r^{-1}\frac{w}{z}
\big)\\
&\hskip2cm\big(1-(r^{-1}s)^{\pm\frac{1}{2}}(rs)^{\frac{1}{2}}s^{-1}\frac{w}{z}
\big)\frac{z}{w}\epsilon_{0}(\alpha_i, \alpha_i)^{\pm1},
\end{split}
\end{eqnarray*}
where the two factors are called contraction factors, denoted as
$\underline{X_i^{\pm}(z)X_j^{\mp}(w)}$ and
$\underline{X_i^{\pm}(z)X_j^{\pm}(w)}$ respectively in the sequel.
\end{lemm}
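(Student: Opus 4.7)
The plan is to establish the four operator product expansions by the standard technique of Wick-type reordering: move every $E_+^\pm(\alpha_i,z)$ past every $E_-^\pm(\alpha_j,w)$ to produce normal-ordered vertex operators times a scalar ``contraction factor'' coming from the central commutator, and then move the zero-mode factors $e^{\pm\alpha_i}$, $z^{\pm\alpha_i(0)}$, $r^{\varepsilon_i(0)-\frac12}$, $\epsilon_{\alpha_i}$ into their final positions.

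First I would write out each product $X_i^{\pm}(z)X_j^{\pm}(w)$ as a string of five pieces (two $E_-$'s, two $E_+$'s, the zero-mode block) and apply the Baker--Campbell--Hausdorff identity $e^Ae^B=e^Be^A e^{[A,B]}$ whenever $A$ is a linear combination of $a_i(n)$ for $n>0$ and $B$ a linear combination of $a_j(-n)$. The only nontrivial commutator that arises is, at level one,
\[
 \bigl[a_i(n),a_j(-n)\bigr]=\frac{(rs)^{n/2}(r_is_i)^{-na_{ij}/2}[na_{ij}]_i}{n}\cdot\frac{r^n-s^n}{r_j-s_j},
\]
so the exponent produced by BCH is a sum $\sum_{n\ge1}\frac{c_n}{n}(w/z)^n$ with $c_n$ a simple monomial in $r,s$. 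Using the formal identity $\log(1-u)=-\sum_{n\ge1}u^n/n$ cited in the proof of Lemma~5.3, this exponential resums as a product of linear factors $(1-\ast\, w/z)^{\pm1}$, which (after pulling out the appropriate power of $z/w$ or $w/z$) produces exactly the rational expressions displayed in the four formulas. In the $a_{ii}=2$ case the commutator evaluates via $[2n]=[n](r^n+s^n)$, which is what splits the contraction into the product of two linear factors. When $a_{ij}=-1$ the same device produces a single linear factor with the promised $(rs)^{\mp1/2}$ shift.

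Next I would handle the zero-mode piece. The commutation $e^{\pm\alpha_i}w^{\pm\alpha_j(0)+1}=w^{\pm(\alpha_i|\alpha_j)}\,w^{\pm\alpha_j(0)+1}e^{\pm\alpha_i}$ inserts precisely the integral power of $w/z$ that converts the $(1-\ast\,w/z)$ factors into the symmetric $\bigl(z^{1/2}-\ast\, w^{1/2}\bigr)$ forms in the first OPE, and the $w/z$ prefactors in the remaining three. Finally the cocycle operators $\epsilon_{\pm\alpha_i}\epsilon_{\mp\alpha_j}$ and $\epsilon_{\pm\alpha_i}\epsilon_{\pm\alpha_j}$ collapse, by the bicharacter property, to the scalars $\epsilon_0(\alpha_i,\alpha_j)^{\mp1}$ and $\epsilon_0(\alpha_i,\alpha_j)^{\pm1}$ respectively. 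The definition of normal ordering inserted at the beginning of the paragraph makes the leftover string of operators equal to $:\!X_i^{\pm}(z)X_j^{\mp}(w)\!:$ (or the $++$ version), completing the identification.

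The main obstacle is bookkeeping the numerous half-integer twists: the shifts $r^{\mp n/2}$ and $s^{\pm n/2}$ in the exponents of $E_+^{\pm}$ and $E_-^{\pm}$, the $r^{\varepsilon_i(0)-1/2}$ and $s^{\varepsilon_j(0)-1/2}$ in the zero-mode blocks, and the factor $(rs)^{n/2}$ built into the Heisenberg bracket. I expect the cleanest way to keep these straight is to treat $i\ne j$ with $a_{ij}=-1$, $i=j$ with $a_{ii}=2$, and the mixed $\pm/\mp$ versus $\pm/\pm$ cases separately, verifying at each stage that the exponent the BCH step produces matches, after the logarithm identity, exactly one of the four formulas in the statement. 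A secondary subtlety is the sign of $(r_is_i)^{-na_{ij}/2}$ when $a_{ij}<0$, which is what distinguishes the two linear factors in the $X_i^{\pm}X_i^{\mp}$ expansion from the two in $X_i^{\pm}X_i^{\pm}$; checking that this sign propagates correctly through the resummation is the key verification.
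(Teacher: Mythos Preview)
Your proposal is correct and follows essentially the same route as the paper's proof: compute the two basic reorderings $E_+^{\pm}(\alpha_i,z)E_-^{\mp}(\alpha_j,w)$ and $E_+^{\pm}(\alpha_i,z)E_-^{\pm}(\alpha_j,w)$ via BCH and the Heisenberg commutator, resum with $\log(1-u)=-\sum_{n\ge1}u^n/n$, split into the cases $i=j$ and $i\ne j$, and then invoke the definition of normal ordering to absorb the zero-mode and cocycle factors. The paper carries out exactly these two exponential computations and states that the four OPE formulas follow; your outline is slightly more explicit about the zero-mode bookkeeping, but the argument is the same.
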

\noindent{\bf Proof.}  By the definition of normal ordering, the first and third formula follow from the following
equalities.
\begin{eqnarray*}
\begin{split}
E_+^{\pm}(&\alpha_i,\, z)E_-^{\mp}(\alpha_j,\, w)\\
&=\exp\Big(\mp\sum_{k>0}\frac{r^{\mp\frac{k}{2}}}{[k]}a_i(k)z^{-k}\Big)
\exp \Big(\mp\sum_{k>0}\frac{s^{\mp \frac{k}{2}}}{[k]}a_j(-k)w^{k}\Big)\\
&=E_-^{\pm}(\alpha_j,\,w)E_+^{\mp}(\alpha_i,\,z)
\exp\Big(\sum_{k>0}\frac{(rs)^{\mp\frac{k}{2}}}{[k]^2}[\,a_i(k),\,
a_j(-k)\,](\frac{w}{z})^k\Big)\\
&=E_-^{\pm}(\alpha_j,\,w)E_+^{\mp}(\alpha_i,\,z)
\exp\Big(\sum_{k>0}\frac{(rs)^{\mp\frac{k}{2}}}{[k]}
\frac{(rs)^{\frac{k}2} \big(\lg i,\,i\rg^{\frac{a_{ij}k}{2}}-\lg
i,\,i\rg^{-\frac{a_{ij}k}{2}}\big)}{k(r-s)}(\frac{w}{z})^k\Big),\\
&=E_-^{\pm}(\alpha_j,\,w)E_+^{\mp}(\alpha_i,\,z)\\
&\hskip0.8cm \cdot \left\{\begin{array}{ll}
\big(1-(rs)^{\mp\frac{1}{2}}\frac{w}{z})\big),
& \ i\neq j,\vspace{2mm}\\
\big(1-(rs)^{\mp\frac{1}{2}}(r^{-1}s)^{\frac{1}{2}}\frac{w}{z}\big)^{-1}
\big(1-(rs)^{\mp\frac{1}{2}}(rs^{-1})^{\frac{1}{2}}\frac{w}{z}\big)^{-1},
& ~i=j,\\
\end{array}\right.
\\
\end{split}
\end{eqnarray*}
It remains to deal with the rest two formulas, which hold from the following results.
\begin{eqnarray*}
\begin{split}
E_+^{\pm}&(\alpha_i,\, z)E_-^{\pm}(\alpha_j,\, w)\\
&=\exp\Big(\mp\sum_{k>0}\frac{r^{\mp\frac{k}{2}}}{[k]}a_i(k)z^{-k}\Big)
\exp \Big(\pm\sum_{k>0}\frac{s^{\pm \frac{k}{2}}}{[k]}a_j(-k)w^{k}\Big)\\
&=E_-^{\pm}(\alpha_j,\,w)E_+^{\pm}(\alpha_i,\,z)
\exp\Big(-\sum_{k>0}\frac{(r^{-1}s)^{\pm\frac{k}{2}}}{[k]^2}[\,a_i(k),\,
a_j(-k)\,](\frac{w}{z})^k\Big)\\
&=E_-^{\pm}(\alpha_j,\,w)E_+^{\mp}(\alpha_i,\,z)
\exp\Big(-\sum_{k>0}\frac{(r^{-1}s)^{\pm\frac{k}{2}}}{[k]}
\frac{(rs)^{\frac{k}2} \big(\lg i,\,i\rg^{\frac{a_{ij}k}{2}}-\lg
i,\,i\rg^{-\frac{a_{ij}k}{2}}\big)}{k(r-s)}(\frac{w}{z})^k\Big)\\
&=E_-^{\pm}(\alpha_j,\,w)E_+^{\pm}(\alpha_i,\,z)\\
&\hskip0.8cm \cdot \left\{\begin{array}{ll}
\big(1-(r^{-1}s)^{\pm\frac{1}{2}}\frac{w}{z} \big)^{-1}, & \ i\neq
j,\vspace{2mm}\\
\big(1-(r^{-1}s)^{\pm\frac{1}{2}}(r^{-1}s)^{\frac{1}{2}}\frac{w}{z}
\big)
\big(1-(r^{-1}s)^{\pm\frac{1}{2}}(rs^{-1})^{\frac{1}{2}}\frac{w}{z}
\big),
& ~i=j,\\
\end{array}\right.
\end{split}
\end{eqnarray*}
where we have used the formula: $log(1-x)=-\sum_{n>0}\frac{x^n}{n}$.
\hfill $\Box$
\medskip

Now we turn to check the relation $(3.10)$.

\begin{lemm} The vertex operators satisfy the following
\begin{eqnarray*}
F_{ij}^{\pm}(z,\,w)\,X_i^\pm(z)\,X_j^{\pm}(w)=G_{ij}^{\pm}(z,\,w)\,X_j^{\pm}(w)\,X_i^{\pm}(z),
\end{eqnarray*}
where $F_{ij}^{\pm}(z,\,w)$ and $G_{ij}^{\pm}(z,\,w)$ are defined as
definition 3.1.
\end{lemm}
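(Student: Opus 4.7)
The plan is to reduce the operator identity directly to a scalar identity between the contraction factors supplied by Lemma~5.5. Using the normal-ordering notation, write
\begin{equation*}
X_i^{\pm}(z)X_j^{\pm}(w) \;=\; :X_i^{\pm}(z)X_j^{\pm}(w):\;\cdot\;C_{ij}^{\pm}(z,w),
\end{equation*}
where $C_{ij}^{\pm}(z,w)$ is the explicit scalar contraction factor produced by Lemma~5.5, with the cocycle value $\epsilon_0(\alpha_i,\alpha_j)^{\pm 1}$ absorbed into it. Since $:X_i^{\pm}(z)X_j^{\pm}(w):\,=\,:X_j^{\pm}(w)X_i^{\pm}(z):$ by the symmetry of normal ordering, the desired identity reduces to the purely scalar equality $F_{ij}^{\pm}(z,w)\,C_{ij}^{\pm}(z,w)=G_{ij}^{\pm}(z,w)\,C_{ji}^{\pm}(w,z)$.

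Next I would separate three cases according to the Cartan data. If $a_{ij}=0$, the OPE calculation in the style of Lemma~5.5 has no pole or zero, and the identity collapses to a constant commutation $X_i^\pm(z)X_j^\pm(w)=\langle j,i\rangle^{\pm 1}X_j^\pm(w)X_i^\pm(z)$ of the same shape as (3.11), matching $F_{ij}^\pm(z,w)=z-w$ and $G_{ij}^\pm(z,w)=\langle j,i\rangle^{\pm 1}z-w$. If $a_{ij}=-1$ (the adjacent-vertex case for $ADE$), the contraction factor carries the single pole $1-(r^{-1}s)^{\pm 1/2}(w/z)$, so after clearing denominators by multiplying through by $F_{ij}^{\pm}$ the claim becomes a polynomial identity in $z,w$ which follows at once from the defining formula (3.1), since $G_{ij}^{\pm}(z,w)/F_{ij}^{\pm}(z,w)=g_{ij}^{\pm}(z/w)$ after rescaling in $w$. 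The diagonal case $i=j$ uses the last two OPE lines of Lemma~5.5 and produces two linear factors on each side, each one matching a factor of $F_{ii}^\pm$ and $G_{ii}^\pm$ by inspection.

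The principal bookkeeping step, and the one I expect to be most delicate, is to track the interplay between three sources of phase: the cocycle ratio $\epsilon_0(\alpha_i,\alpha_j)^{\pm 1}\epsilon_0(\alpha_j,\alpha_i)^{\mp 1}$, the half-integer powers of $z,w$ coming from the middle terms $z^{\pm\alpha_i(0)+1}w^{\pm\alpha_j(0)+1}$ of the vertex operators, and the pairing constants $\langle i,j\rangle$, $\langle j,i\rangle$ featuring in $F_{ij}^\pm$, $G_{ij}^\pm$. The explicit antisymmetric choice $\epsilon_0(\alpha_i,\alpha_j)=(-r_is_i)^{a_{ij}/2}$ for $i>j$ is tuned precisely so that this phase product yields the $\langle j,i\rangle^{\pm 1}$ normalisation carried by $G_{ij}^\pm$; once this combinatorial match is verified case by case, the lemma follows.
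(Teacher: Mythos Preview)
Your proposal is correct and follows essentially the same route as the paper: both arguments reduce the operator identity to a scalar one via the normal-ordering symmetry $:X_i^\pm(z)X_j^\pm(w):\,=\,:X_j^\pm(w)X_i^\pm(z):$, and then verify case by case that the ratio of contraction factors from Lemma~5.5 agrees with $G_{ij}^\pm/F_{ij}^\pm$. One small slip: in the $a_{ij}=0$ case you write $G_{ij}^\pm(z,w)=\langle j,i\rangle^{\pm1}z-w$, but in fact $\langle i,j\rangle\langle j,i\rangle=1$ forces $G_{ij}^\pm(z,w)=\langle j,i\rangle^{\pm1}(z-w)$, so (3.10) literally becomes $(z-w)$ times (3.11) rather than a different linear relation---this is harmless for the argument.
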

\noindent{\bf Proof.} Similarly, if $a_{ij}=0$, it is trivial, So we only show the Lemma for $a_{ij}\neq 0$.
First we notice that

\begin{eqnarray*}
%\begin{split}
\frac{G_{ij}^{\pm}(z,\,w)}{F_{ij}^{\pm}(z,\,w)} &=&\frac{\lg
j,i\rg^{\pm1}z-(\lg j,i\rg\lg i,j\rg^{-1})^{\pm\frac1{2}}w}
{z-(\lg i,j\rg\lg j,i\rg)^{\pm\frac1{2}}w}\\
&=&{\left\{\begin{array}{ll}
(rs)^{\mp\frac{1}{2}}\frac{(r^{-1}s)^{\pm\frac{1}{2}}z-w}
{z-(r^{-1}s)^{\pm\frac{1}{2}}w},
& \ i> j;\vspace{2mm}\\
(rs)^{\pm\frac{1}{2}}\frac{(r^{-1}s)^{\pm\frac{1}{2}}z-w}
{z-(r^{-1}s)^{\pm\frac{1}{2}}w},
& ~i<j,\vspace{2mm}\\
\frac{(rs^{-1})^{\pm\frac{1}{2}}z-w}
{z-(rs^{-1})^{\pm\frac{1}{2}}w}, & \ i=j,
\end{array}\right.}\\
&=&\frac{\underline{X_i^\pm(z)\,X_j^{\pm}(w)}}
{\underline{X_j^{\pm}(w)\,X_i^{\pm}(z)}}
=\frac{X_i^\pm(z)\,X_j^{\pm}(w)}{X_j^{\pm}(w)\,X_i^{\pm}(z)}.
%\end{split}
\end{eqnarray*}
On the other hand,  we have $$:X_i^\pm(z)\,X_j^{\pm}(w):=:X_j^\pm(w)\,X_i^{\pm}(z):$$
 Thus we get the required statement immediately. \hfill $\Box$
\medskip

Furthermore,  before verifying the relation $(3.9)$, we need the following lemma.

\begin{lemm} We claim that
$$:X_i^+(z)X_i^-(zr):
=\Phi_i(zs^{-\frac{1}{2}})(rs)^{-\frac{1}{2}},$$
$$:X_i^+(ws^{-1})X_i^-(w):=\Psi_i(wr^{\frac{1}{2}})(rs)^{-\frac{1}{2}},$$
which can be easily verified directly.
\end{lemm}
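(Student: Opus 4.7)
The plan is a direct Fock-space calculation. The normal-ordered product expands as
\begin{equation*}
:X_i^+(z)X_i^-(w):\,=E_-^+(\al_i,z)E_-^-(\al_i,w)\,E_+^+(\al_i,z)E_+^-(\al_i,w)\cdot Z(z,w),
\end{equation*}
where $Z(z,w)=z^{\al_i(0)+1}w^{-\al_i(0)+1}r^{\vn_i(0)-\frac12}s^{\vn_i(0)-\frac12}\epsilon_{\al_i}\epsilon_{-\al_i}$ (using $e^{\al_i}e^{-\al_i}=1$). Because the creation modes $a_i(-n)$ mutually commute, and likewise the annihilation modes $a_i(n)$, each of the two brackets of exponentials collapses into a single exponential whose exponent is an explicit linear combination of the $a_i(\pm n)$'s.

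The key algebraic identity is $[n](r-s)=r^n-s^n$. For the first identity I substitute $w=zr$: the creation exponent becomes
\begin{equation*}
\sum_{n\ge1}\frac{s^{n/2}z^n-s^{-n/2}(zr)^n}{[n]}\,a_i(-n)
=\sum_{n\ge1}\frac{s^{-n/2}(s^n-r^n)}{[n]}\,z^n\,a_i(-n)
=-(r-s)\sum_{n\ge1}a_i(-n)(zs^{-\frac12})^n,
\end{equation*}
which is exactly the exponential part of $\Phi_i(zs^{-\frac12})$; simultaneously the annihilation exponent $\sum_{n\ge1}\frac{-r^{-n/2}+r^{n/2}r^{-n}}{[n]}z^{-n}a_i(n)$ vanishes identically, so that bracket collapses to the identity. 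For the second identity the substitution $z=ws^{-1}$ swaps these roles: now the creation exponent $s^{n/2}(ws^{-1})^n-s^{-n/2}w^n$ is identically zero, while the annihilation exponent assembles, by the same $[n](r-s)=r^n-s^n$ identity, into the exponential part of $\Psi_i(wr^{\frac12})$.

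It remains to check that $Z(z,w)$ specialises, at $w=zr$, to the scalar action of $\om'_i(rs)^{-\frac12}$ on $\mathcal F$, and at $z=ws^{-1}$ to that of $\om_i(rs)^{-\frac12}$. This is routine zero-mode bookkeeping: on $v\otimes e^\beta$ one uses the defining actions $\om'_i(v\otimes e^\beta)=\la i,\beta\ra^{-1}v\otimes e^\beta$ and $\om_i(v\otimes e^\beta)=\la\beta,i\ra\,v\otimes e^\beta$ together with the cocycle value $\epsilon_0(\al_i,\al_i)=(rs)^{\frac12}$, and observes how the powers coming from $z^{\al_i(0)+1}(zr)^{-\al_i(0)+1}$ (respectively $(ws^{-1})^{\al_i(0)+1}w^{-\al_i(0)+1}$) combine with $r^{\vn_i(0)-\frac12}s^{\vn_i(0)-\frac12}$ to yield the required $(rs)^{-\frac12}$. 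The principal obstacle is not the exponential cancellations, which are mechanical, but tracking these $z$-exponents and the ordering convention between $z^{\al_i(0)}$ and $e^{\pm\al_i}$ precisely enough that the residual constant is exactly $(rs)^{-\frac12}$.
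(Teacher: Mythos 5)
Your proposal is correct and follows essentially the same route as the paper's own proof: expand the normal-ordered product, collapse the commuting creation (resp.\ annihilation) exponentials using $[n](r-s)=r^n-s^n$ so that one bracket vanishes and the other reproduces the exponential part of $\Phi_i(zs^{-\frac12})$ (resp.\ $\Psi_i(wr^{\frac12})$), and then match the zero modes against $\om_i'$, $\om_i$ and the cocycle value $\epsilon_0(\al_i,\al_i)=(rs)^{\frac12}$. The only difference is presentational — you defer the zero-mode bookkeeping while the paper writes it out tersely — and your exponent $(wr^{\frac12})^{-n}$ in the second identity is in fact the one consistent with the lemma's statement.
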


\noindent{\bf Proof.}  Continuing to use the above notations, we get immediately,
\begin{eqnarray*}
\begin{split}
:X_i^+&(z)X_i^-(z r):\\
&=
\exp\left(\sum_{n=1}^\infty\dfrac{s^{
n/2}}{[n]}a_i(-n)(z)^n\right)
\exp\left(-\sum_{n=1}^\infty\dfrac{s^{-
n/2}}{[n]}a_i(-n)(z r)^n\right)\\
&\exp\left(-\sum_{n=1}^\infty\dfrac{r^{-
n/2}}{[n]}a_i(n)(z)^{-n}\right) \exp\left(\sum_{n=1}^\infty\dfrac{r^
{n/2}}{[n]}a_i(n)(z
r)^{-n}\right)r^{-\alpha_i(0)} (rs)^{\varepsilon_i-\frac{1}{2}}\\
&=\exp\Big(-(r-s)\sum_{n>0}a_i(-n)(zs^{-\frac{1}{2}})^k\Big)
r^{\varepsilon_{i+1}(0)}s^{\varepsilon_i(0)}(rs)^{-\frac{1}{2}}\\
&=\Phi_i(zs^{-\frac{1}{2}})(rs)^{-\frac{1}{2}}.
\end{split}
\end{eqnarray*}
We would proceed in the same way with the second formula.
\begin{eqnarray*}
\begin{split}
:\,&X_i^+(ws^{-1})X_i^-(w):\\
&=\exp\left(\sum_{n=1}^\infty\dfrac{s^{
n/2}}{[n]}a_i(-n)(ws^{-1})^n\right)\exp\left(-\sum_{n=1}^\infty\dfrac{s^{-
n/2}}{[n]}a_i(-n)(w)^n\right)\\
&\exp\left(-\sum_{n=1}^\infty\dfrac{r^{-
n/2}}{[n]}a_i(n)(ws^{-1})^{-n}\right)
\exp\left(\sum_{n=1}^\infty\dfrac{r^ {n/2}}{[n]}a_i(n)(w
)^{-n}\right)s^{-\alpha_i(0)}(rs)^{\varepsilon_i-\frac{1}{2}} \\
&=\exp\Big((r-s)\sum_{n>0}a_i(n)(wr^{-\frac{1}{2}})^{-k}\Big)
r^{\varepsilon_i(0)}s^{\varepsilon_{i+1}(0)}(rs)^{-\frac{1}{2}}\\
&=\Psi_i(wr^{-\frac{1}{2}})(rs)^{-\frac{1}{2}}.
\end{split}
\end{eqnarray*}

Now, we are ready to check the following proposition, which will yield the relation $(3.9)$.

\begin{prop}  \, For $i, j \in I$, one has
$$[\,X_i^+(z),
X_j^-(w)\,]=\frac{\delta_{ij}}{r_i-s_i}\Big(\delta(zw^{-1}s)\psi_i(wr^{\frac{1}2})
-\delta(zw^{-1}r)\phi_i(zs^{-\frac1{2}})\Big).$$
\end{prop}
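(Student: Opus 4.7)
The proof divides naturally into the cases $i\ne j$ and $i=j$, as the Kronecker delta on the right-hand side suggests.

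For the off-diagonal case $i\ne j$, my plan is to expand both $X_i^+(z)X_j^-(w)$ and $X_j^-(w)X_i^+(z)$ via Lemma 5.5 as the common normal-ordered product $:X_i^+(z)X_j^-(w):$ multiplied by scalar contraction factors. When $a_{ij}=0$ the contraction is trivial, so only the cocycle values $\epsilon_0(\alpha_i,\alpha_j)^{-1}$ and $\epsilon_0(\alpha_j,\alpha_i)$ remain, and these are compatible by the prescription in Section 5.3. When $a_{ij}=-1$ the contraction from Lemma 5.5 is proportional to $(z/w)^{1/2}-(rs)^{-1/2}(w/z)^{1/2}$, and the elementary identity
\begin{equation*}
(z/w)^{1/2}-(rs)^{-1/2}(w/z)^{1/2}=-(rs)^{-1/2}\bigl((w/z)^{1/2}-(rs)^{1/2}(z/w)^{1/2}\bigr),
\end{equation*}
combined with the cocycle ratio $\epsilon_0(\alpha_i,\alpha_j)^{-1}/\epsilon_0(\alpha_j,\alpha_i)$, will show that the two products coincide; hence $[X_i^+(z),X_j^-(w)]=0$.

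For the diagonal case $i=j$, Lemma 5.5 (together with $\epsilon_0(\alpha_i,\alpha_i)=(rs)^{1/2}$ and the symmetry $:X_i^+(z)X_i^-(w):\,=\,:X_i^-(w)X_i^+(z):$) gives
\begin{align*}
X_i^+(z)X_i^-(w) &=\;:X_i^+(z)X_i^-(w):\;\frac{(rs)^{-1/2}\,w/z}{(1-r^{-1}w/z)(1-s^{-1}w/z)},\\
X_i^-(w)X_i^+(z) &=\;:X_i^+(z)X_i^-(w):\;\frac{(rs)^{1/2}\,z/w}{(1-sz/w)(1-rz/w)}.
\end{align*}
I will then carry out the partial-fraction decompositions
\begin{align*}
\frac{w/z}{(1-r^{-1}w/z)(1-s^{-1}w/z)} &= \frac{rs}{r-s}\Bigl[\frac{1}{1-s^{-1}w/z}-\frac{1}{1-r^{-1}w/z}\Bigr],\\
\frac{z/w}{(1-sz/w)(1-rz/w)} &= \frac{1}{r-s}\Bigl[\frac{1}{1-rz/w}-\frac{1}{1-sz/w}\Bigr],
\end{align*}
and subtract, expanding the first rational function in the region $|z|>|w|$ and the second in $|w|>|z|$. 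Applying the standard distributional identity $\iota_{z,w}(1-aw/z)^{-1}-\iota_{w,z}(1-aw/z)^{-1}=\delta(aw/z)$, together with the symmetry $\delta(x)=\delta(x^{-1})$, the four simple-pole contributions consolidate—after the parasitic constants cancel pairwise—into $\delta(zw^{-1}s)-\delta(zw^{-1}r)$ with overall coefficient $(rs)^{1/2}/(r-s)$.

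To finish, I will use Lemma 5.7 to localize $:X_i^+(z)X_i^-(w):$ on the support of each delta: on $\delta(zw^{-1}r)$ one has $w=rz$, giving $:X_i^+(z)X_i^-(rz):\,=(rs)^{-1/2}\Phi_i(zs^{-1/2})$, while on $\delta(zw^{-1}s)$ one has $z=ws^{-1}$, giving $:X_i^+(ws^{-1})X_i^-(w):\,=(rs)^{-1/2}\Psi_i(wr^{1/2})$. The combined prefactor $(rs)^{1/2}\cdot(rs)^{-1/2}=1$ yields exactly the coefficient $1/(r_i-s_i)=1/(r-s)$ (valid since $d_i=1$ in the simply-laced setting). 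The principal obstacle throughout will be the careful bookkeeping of the $(rs)^{\pm 1/2}$ factors coming from the cocycle $\epsilon_0$, the contraction formulas of Lemma 5.5, and the specialization formulas of Lemma 5.7—one must ensure these conspire to produce precisely the coefficient $1/(r-s)$ with no residual signs or extra scalars, and that the half-integer powers disappear despite each ingredient separately carrying them.
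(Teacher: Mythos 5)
Your proposal is correct and follows essentially the same route as the paper's proof: the off-diagonal case by cancellation of the contraction factors from Lemma 5.5 against the cocycle values, and the diagonal case by partial-fraction decomposition of the two contractions, extraction of $\delta(zw^{-1}s)-\delta(zw^{-1}r)$ from the two opposite expansion regions, and localization of the normal-ordered product via Lemma 5.7. The bookkeeping of the $(rs)^{\pm 1/2}$ factors works out exactly as you anticipate, so no further comment is needed.
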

\noindent{\bf Proof.}\, Firstly, observe that
$$:X_j^-(w)X_i^+(z):\;=\;:X_i^+(z)X_j^-(w):\;.$$
If $i> j$, it is easy to see that
\begin{eqnarray*}
\begin{split}
[\,&X_i^+(z), X_j^-(w)\,]\\
&=:X_i^+(z)X_j^-(w):
\big(\underline{X_i^+(z)X_j^-(w)}-\underline{X_j^-(w)X_i^+(z)}\big)\\
&=:X_i^+(z)X_j^-(w):
\Big(-\big((\frac{z}{w})^{\frac{1}{2}}-(rs)^{-\frac{1}{2}}(\frac{w}{z})^{\frac{1}{2}}\big)(rs)^{\frac{1}{2}}
-\big((\frac{w}{z})^{\frac{1}{2}}-(rs)^{\frac{1}{2}}(\frac{z}{w})^{\frac{1}{2}}\big)\Big)\\
&=0.
\end{split}
\end{eqnarray*}
Similarly,  we get for $i< j$,
\begin{eqnarray*}
\begin{split}
[\,&X_i^+(z), X_j^-(w)\,]\\
&=:X_i^+(z)X_j^-(w):
\big(\underline{X_i^+(z)X_j^-(w)}-\underline{X_j^-(w)X_i^+(z)}\big)\\
&=:X_i^+(z)X_j^-(w):
\Big(\big((\frac{z}{w})^{\frac{1}{2}}-(rs)^{-\frac{1}{2}}(\frac{w}{z})^{\frac{1}{2}}\big)
+(rs)^{-\frac{1}{2}}\big((\frac{w}{z})^{\frac{1}{2}}-(rs)^{\frac{1}{2}}(\frac{z}{w})^{\frac{1}{2}}\big)\Big)\\
&=0.
\end{split}
\end{eqnarray*}

It suffices to show  the case of $i=j$. Firstly, we get directly,
\begin{eqnarray*}
\underline{X_i^{+}(z)X_i^{-}(w)}
&=&(rs)^{-\frac{1}{2}}\frac{1}{\big(1-r^{-1}\frac{w}{z}\big)}
\frac{1}{\big(1-s^{-1}\frac{w}{z}\big)}\frac{w}{z}\\
&=&\frac{(rs)^{-\frac{1}{2}}}{r-s}\Big(\frac{r\frac{w}{z}}{1-s^{-1}\frac{w}{z}}
-\frac{s\frac{w}{z}}{1-r^{-1}\frac{w}{z}}\Big).
\end{eqnarray*}
On the other hand, it is clear to see that
\begin{eqnarray*} \underline{X_i^{-}(w)X_i^{+}(z)}
%&=&(rs)^{\frac{1}{2}}\frac{1}{\big(1-r\frac{z}{w}\big)}
%\frac{1}{\big(1-s\frac{z}{w}\big)}\frac{z}{w}\\
&=&\frac{(rs)^{\frac{1}{2}}}{r-s}\Big(\frac{r\frac{z}{w}}{1-r\frac{z}{w}}
-\frac{s\frac{z}{w}}{1-s\frac{z}{w}}\Big).
\end{eqnarray*}

As a consequence, the bracket becomes
\begin{eqnarray*}
\begin{split}
[\,&X_i^+(z), X_i^-(w)\,]\\
&=:X_i^+(z)X_i^-(w):\frac{(rs)^{-\frac{1}{2}}}{r-s}\Big(
\big(\frac{r\frac{w}{z}}{1-s^{-1}\frac{w}{z}}
-\frac{s\frac{w}{z}}{1-r^{-1}\frac{w}{z}}\big)
-\big(\frac{r^2s\frac{z}{w}}{1-r\frac{z}{w}}
-\frac{rs^2\frac{z}{w}}{1-s\frac{z}{w}}\big)\Big)\\
&=:X_i^+(z)X_i^-(w):\frac{(rs)^{-\frac{1}{2}}}{r-s} \big(\sum_{n\geq 0}s^{-n}r(\frac{w}{z})^{n+1}\\
&\hskip2cm-\sum_{n\geq 0}r^{-n}s(\frac{w}{z})^{n+1}-\sum_{n\geq
0}r^{n+2}s(\frac{z}{w})^{n+1}+\sum_{n\geq
0}rs^{n+2}(\frac{z}{w})^{n+1}\big)\\
&=:X_i^+(z)X_i^-(w):\frac{(rs)^{-\frac{1}{2}}}{r-s}
\Big(\sum_{n\geq
1}(rs)\{\big(s^{-n}(\frac{z}{w})^{-n}+s^n(\frac{z}{w})^{n}\big)\\
&\hskip6.6cm-\big(r^{-n}(\frac{z}{w})^{-n}+r^n(\frac{z}{w})^{n}\big)\}\Big)\\
&=:X_i^+(z)X_i^-(w):(rs)^{\frac{1}{2}}\frac{(\delta(\frac{z}{w}s)-\delta(\frac{z}{w}r))}{r-s}.
\end{split}
\end{eqnarray*}
Applying the above Lemma 5.7, then we arrive at
\begin{eqnarray*}
\begin{split}
[\,X_i^+(z), X_i^-(w)\,]
&=:X_i^+(z)X_i^-(w):(rs)^{\frac{1}{2}}\frac{(\delta(\frac{z}{w}s)-\delta(\frac{z}{w}r))}{r-s}\\
&=\frac{1}{r-s}\Big(\Psi_i(wr^{\frac{1}{2}})\delta(\frac{zs}{w})
-\Phi_i(zs^{-\frac{1}{2}})\delta(\frac{zr}{w})\Big),
\end{split}
\end{eqnarray*}
where we have used the property of $\delta$-function:
$$f(z_1,\, z_2)\delta(\frac{z_1}{z_2})=f(z_1,\, z_1)\delta(\frac{z_1}{z_2})
=f(z_2,\, z_2)\delta(\frac{z_1}{z_2}).$$ \hfill $\Box$

Lastly, we are left to show the quantum Serre-relations $(3.11)$---$(3.13)$.\\

For simplicity, we only check
the $``+"$ case of $a_{ij}=-1, \, 1\leq i<j <n$, i.e.,
\begin{eqnarray}
&&X_i^{+}(z_1)
X_i^{+}(z_2)X_j^{+}(w)-(r+s)\,X_i^{+}(z_1)X_j^{+}(w)X_i^{+}(z_2)\\
&&\qquad+\,(rs) X_j^{+}(w)X_i^{+}(z_1)X_i^{+}(z_2)+\{z_1\leftrightarrow
z_2\}=0.\nonumber
\end{eqnarray}

The others can be obtained similarly.

Let us review the following formulas for further reference.
\begin{gather*}
X_j^{+}(w)X_i^{+}(z)=:X_j^{+}(w)X_i^{+}(z):(\frac{z}{w})^{\frac{1}{2}}
\big(1-(r^{-1}s)^{\frac{1}{2}}(\frac{z}{w})\big)^{-1}\epsilon_0(\alpha_j,
\alpha_i),\\
X_i^{+}(z_1)X_i^{+}(z_2)=:X_i^{+}(z_1)X_i^{+}(z_2):(\frac{z_2}{z_1})^{-1}
\big(1-\frac{z_2}{z_1} \big) \big(1-(r^{-1}s)\frac{z_2}{z_1}
\big)\epsilon_0(\alpha_i, \alpha_i).
\end{gather*}
%$$X_j^{-}(w)X_i^{-}(z)=:X_j^{-}(w)X_i^{-}(z):(\frac{z}{w})^{\frac{1}{2}}
%\big(1-(r^{-1}s)^{-\frac{1}{2}}(\frac{z}{w})\big)^{-1}\epsilon_0(\alpha_j,
%\alpha_i)^{-1}$$
%$$X_i^{-}(z_1)X_i^{-}(z_2)=:X_i^{-}(z_1)X_i^{-}(z_2):(\frac{z_2}{z_1})^{-1}
%\big(1-\frac{z_2}{z_1} \big) \big(1-(rs^{-1})\frac{z_2}{z_1} \big)
%$$
By the properties of normal ordering,  we then have
\begin{eqnarray*}
\begin{split}
X_j^{+}&(w)X_i^{+}(z_1)X_i^{+}(z_2)\\
&=:X_j^{+}(w)X_i^{+}(z_1)X_i^{+}(z_2):(\frac{z_1}{w})(\frac{z_2}{z_1})^{-\frac{1}{2}}
\frac{(rs)^{-1/2}\big(1-\frac{z_2}{z_1} \big)
\big(1-(r^{-1}s)\frac{z_2}{z_1}
\big)}{\big(1-(r^{-1}s)^{\frac{1}{2}}(\frac{z_1}{w})\big)\big(1-(r^{-1}s)^{\frac{1}{2}}(\frac{z_2}{w})\big)},
\end{split}
\end{eqnarray*}
\begin{eqnarray*}
\begin{split}
X_i^{+}&(z_1)X_j^{+}(w)X_i^{+}(z_2)\\
&=:X_i^{+}(z_1)X_j^{+}(w)X_i^{+}(z_2):(\frac{z_2}{z_1})^{-\frac{1}{2}}
\frac{\big(1-\frac{z_2}{z_1} \big) \big(1-(r^{-1}s)\frac{z_2}{z_1}
\big)}{\big(1-(r^{-1}s)^{\frac{1}{2}}(\frac{w}{z_1})\big)\big(1-(r^{-1}s)^{\frac{1}{2}}(\frac{z_2}{w})\big)},
\end{split}
\end{eqnarray*}
\begin{eqnarray*}
\begin{split}
X_i^{+}&(z_1)X_i^{+}(z_2)X_j^{+}(w)\\
&=:X_i^{+}(z_1)X_i^{+}(z_2)X_j^{+}(w):(\frac{w}{z_1})(\frac{z_2}{z_1})^{-\frac{1}{2}}
\frac{(rs)^{1/2}\big(1-\frac{z_2}{z_1} \big)
\big(1-(r^{-1}s)\frac{z_2}{z_1}
\big)}{\big(1-(r^{-1}s)^{\frac{1}{2}}(\frac{w}{z_1})\big)\big(1-(r^{-1}s)^{\frac{1}{2}}(\frac{w}{z_2})\big)}.
\end{split}
\end{eqnarray*}

%\begin{eqnarray*}
%&&X_j^{-}(w)X_i^{-}(z_1)X_i^{-}(z_2)\\
%&=&:X_j^{-}(w)X_i^{-}(z_1)X_i^{-}(z_2):(\frac{z_1}{w})(\frac{z_2}{z_1})^{-\frac{1}{2}}
%\frac{(rs)\big(1-\frac{z_2}{z_1} \big)
%\big(1-(rs^{-1})\frac{z_2}{z_1}
%\big)}{\big(1-(rs^{-1})^{\frac{1}{2}}(\frac{z_1}{w})\big)\big(1-(rs^{-1})^{\frac{1}{2}}(\frac{z_2}{w})\big)}
%\end{eqnarray*}
%\begin{eqnarray*}
%&&X_i^{-}(z_1)X_j^{-}(w)X_i^{-}(z_2)\\
%&=&:X_i^{-}(z_1)X_j^{-}(w)X_i^{-}(z_2):(\frac{z_2}{z_1})^{-\frac{1}{2}}
%\frac{(rs)^{\frac{1}{2}}\big(1-\frac{z_2}{z_1} \big)
%\big(1-(rs^{-1})\frac{z_2}{z_1}
%\big)}{\big(1-(rs^{-1})^{\frac{1}{2}}(\frac{w}{z_1})\big)\big(1-(rs^{-1})^{\frac{1}{2}}(\frac{z_2}{w})\big)}
%\end{eqnarray*}
%\begin{eqnarray*}
%&&X_i^{-}(z_1)X_i^{-}(z_2)X_j^{-}(w)\\
%&=&:X_i^{-}(z_1)X_i^{-}(z_2)X_j^{-}(w):(\frac{w}{z_1})(\frac{z_2}{z_1})^{-\frac{1}{2}}
%\frac{\big(1-\frac{z_2}{z_1} \big) \big(1-(rs^{-1})\frac{z_2}{z_1}
%\big)}{\big(1-(rs^{-1})^{\frac{1}{2}}(\frac{w}{z_1})\big)\big(1-(rs^{-1})^{\frac{1}{2}}(\frac{w}{z_2})\big)}
%\end{eqnarray*}
Note that
$$
:X_j^{\pm}(w)X_i^{\pm}(z_1)X_i^{\pm}(z_2):
\;=\;:X_i^{\pm}(z_1)X_j^{\pm}(w)X_i^{\pm}(z_2):
\;=\;:X_i^{\pm}(z_1)X_i^{\pm}(z_2)X_j^{\pm}(w):\;.
$$

Let us proceed to show this formal series identity.  Both as formal
series and fractions,  we can express the contraction factors as
follows.
\begin{eqnarray*}
\begin{split}
& \frac{\big(z_1-z_2\big) \big(1-(r^{-1}s)\frac{z_2}{z_1}
\big)}{\big(1-(r^{-1}s)^{\frac{1}{2}}(\frac{z_1}{w})\big)\big(1-(r^{-1}s)^{\frac{1}{2}}(\frac{z_2}{w})\big)}\\
&\quad=w\big(1-(r^{-1}s)\frac{z_2}{z_1}
\big)\Big(\frac{(r^{-1}s)^{-\frac{1}{2}}}{1-(r^{-1}s)^{\frac{1}{2}}\frac{z_1}{w}}
-\frac{(r^{-1}s)^{-\frac{1}{2}}}{1-(r^{-1}s)^{\frac{1}{2}}\frac{z_2}{w}}\Big),
\end{split}
\end{eqnarray*}
\begin{eqnarray*}
\begin{split}
& \frac{\big(z_1-z_2\big) \big(1-(r^{-1}s)\frac{z_2}{z_1}
\big)}{\big(1-(r^{-1}s)^{\frac{1}{2}}(\frac{w}{z_1})\big)\big(1-(r^{-1}s)^{\frac{1}{2}}(\frac{z_2}{w})\big)}\\
&\quad=w(z_1-z_2)\Big(\frac{(r^{-1}s)^{\frac{1}{2}}}{z_1(1-(r^{-1}s)^{\frac{1}{2}}\frac{w}{z_1})}
+\frac{1}{w(1-(r^{-1}s)^{\frac{1}{2}}\frac{z_2}{w})}\Big),
\end{split}
\end{eqnarray*}
\begin{eqnarray*}
\begin{split}
& \frac{\big(z_1-z_2\big) \big(1-(r^{-1}s)\frac{z_2}{z_1}
\big)}{\big(1-(r^{-1}s)^{\frac{1}{2}}(\frac{w}{z_1})\big)\big(1-(r^{-1}s)^{\frac{1}{2}}(\frac{w}{z_2})\big)}\\
&\quad=\big(1-(r^{-1}s)\frac{z_2}{z_1}
\big)\Big(\frac{z_1}{1-(r^{-1}s)^{\frac{1}{2}}\frac{w}{z_2}}-\frac{z_2}{1-(r^{-1}s)^{\frac{1}{2}}\frac{w}{z_1}}\Big).
\end{split}
\end{eqnarray*}

Changing the positions of $z_1$ and $z_2$, we obtain the other three
expressions for the part $\{z_1\leftrightarrow z_2\}$ in the quantum
Serre relation. Substituting the above expressions into the right hand
side of $(5.6)$ and pulling out the common factors, we
obtain that
\begin{eqnarray*}
&&(z_1z_2)^{-\frac{1}{2}}\Big\{ (z_1-(r^{-1}s)z_2)
\Big(\frac{(r^{-1}s)^{-\frac{1}{2}}}{1-(r^{-1}s)^{\frac{1}{2}}\frac{z_1}{w}}
-\frac{(r^{-1}s)^{-\frac{1}{2}}}{1-(r^{-1}s)^{\frac{1}{2}}\frac{z_2}{w}}\Big)\\
&& ~~~+(r+s)(rs)^{-\frac{1}{2}}w(z_1-z_2)
\Big(\frac{(r^{-1}s)^{\frac{1}{2}}}{z_1(1-(r^{-1}s)^{\frac{1}{2}}\frac{w}{z_1})}
+\frac{1}{w(1-(r^{-1}s)^{\frac{1}{2}}\frac{z_2}{w})}\Big)\\
&&~~~+\frac{w}{z_2}(1-(r^{-1}s)\frac{z_2}{z_1})\Big(\frac{z_1}{1-(r^{-1}s)^{\frac{1}{2}}\frac{w}{z_2}}
-\frac{z_2}{1-(r^{-1}s)^{\frac{1}{2}}\frac{w}{z_1}}\Big)+(z_1
\leftrightarrow z_2)\Big\},
\end{eqnarray*}
where $\frac{1}{1-(r^{-1}s)^{\frac{1}{2}}\frac{z_1}{w}}$ stands for
$\sum_{n=0}^{\infty}(r^{-1}s)^{\frac{n}{2}}(\frac{z_1}{w})^n$ and
similar fraction for other formal power series.

Collecting the factors
$\frac{1}{1-(r^{-1}s)^{\frac{1}{2}}\frac{z_1}{w}},
\,\frac{1}{1-(r^{-1}s)^{\frac{1}{2}}\frac{z_2}{w}},\,
\frac{1}{1-(r^{-1}s)^{\frac{1}{2}}\frac{w}{z_1}},\,
\frac{1}{1-(r^{-1}s)^{\frac{1}{2}}\frac{w}{z_2}}$, we get
\begin{eqnarray*}
&&\frac{1}{1-(r^{-1}s)^{\frac{1}{2}}\frac{z_1}{w}} \big(
(r^{-1}s)^{-\frac{1}{2}}(z_1-(r^{-1}s)z_2)
-(r^{-1}s)^{-\frac{1}{2}}(z_2-(r^{-1}s)z_1)+(r{+}s)\\
&&(rs)^{-\frac{1}{2}}(z_2-z_1)\big)
+\frac{1}{1-(r^{-1}s)^{\frac{1}{2}}\frac{z_2}{w}} \big(
-(r^{-1}s)^{-\frac{1}{2}}(z_1-(r^{-1}s)z_2)+(r^{-1}s)^{-\frac{1}{2}}\\
&&(z_2-(r^{-1}s)z_1)+(r{+}s)(rs)^{-\frac{1}{2}}(z_1-z_2)\big)
-\frac{1}{1-(r^{-1}s)^{\frac{1}{2}}\frac{w}{z_1}}\big(w(1-(r^{-1}s)\frac{z_2}{z_1})\\
&& +w(\frac{z_2}{z_1}-(r^{-1}s))
+(r{+}s)(rs)^{-\frac{1}{2}}(r^{-1}s)^{\frac{1}{2}}w(1-\frac{z_2}{z_1})\big)
+\frac{1}{1-(r^{-1}s)^{\frac{1}{2}}\frac{w}{z_2}} \\
&&\big( w(\frac{z_1}{z_2}-(r^{-1}s))-w(1-(r^{-1}s)\frac{z_1}{z_2})
+(r{+}s)(rs)^{-\frac{1}{2}}(r^{-1}s)^{\frac{1}{2}}w(1-\frac{z_1}{z_2})\big)\\
&&=0,
\end{eqnarray*}
where each term is zero.\hfill $\Box$

Consequently, we complete the proof of Theorem 5.2.

%\bigskip

\section{Appendix: Proofs of Some Lemmas via Quantum Calculations}

Here, we would provide more details for some Lemmas' proofs, through which readers can see the quantum calculations of $(r,s)$-brackets how to work.

\medskip
{\noindent\bf Proof of Lemma \ref{d:18}.} It is easy to get that
\begin{equation*}
\begin{split}
x_{\beta_{i-1,i+1}}^-(1)
&=\big[\,x_{i+1}^-(0),\,\cdots,\,x_n^-(0),\,x_{n-2}^-(0),\,
\cdots,\, x_{i+1}^-(0),\,\\
&\quad\underbrace{[\,x_{i}^-(0),x_{i-1}^-(1)\,]_s}\,
\big]_{(s,\,\cdots,\,s,\,r^{-1},\,\cdots,\,r^{-1})}\qquad\qquad\quad {\textrm{(by (\ref{c:1}))}}\\
&=-(rs)^{\frac{1}{2}}\big[\,x_{i+1}^-(0),\,\cdots,\,x_n^-(0),\,x_{n-2}^-(0),\,
\cdots,\, x_{i+1}^-(0),\,\\
&\quad[\,x_{i-1}^-(0),x_{i}^-(1)\,]_{r^{-1}}\,
\big]_{(s,\,\cdots,\,s,\,r^{-1},\,\cdots,\,r^{-1})}.
\end{split}
\end{equation*}
Using the above result, one has
\begin{equation*}
\begin{split}
[\,x_{i-1}^-&(0),\,x_{\beta_{i-1,i+1}}^-(1)\,]_{s^{-1}}\\
&=-(rs)^{\frac{1}{2}}\Big[\,x_{i-1}^-(0),\,
\big[\,x_{i+1}^-(0),\,\cdots,\,x_n^-(0),\,x_{n-2}^-(0),\,
\cdots,\, x_{i+1}^-(0),\, \\
&\quad[\,x_{i-1}^-(0),\,x_{i}^-(1)\,]_{r^{-1}}\,\big]_{(s,\,\cdots,\,s,\,r^{-1},\,\cdots,\,r^{-1})}\,\Big]_{s^{-1}}
\qquad\qquad {\textrm{(by (\ref{b:1}) \& (D9$_1$))}}\\
&=-(rs)^{\frac{1}{2}}
\big[\,x_{i+1}^-(0),\,\cdots,\,x_n^-(0),\,x_{n-2}^-(0),\,
\cdots,\, x_{i+1}^-(0),\,\\
&\quad\underbrace{
[\,x_{i-1}^-(0),\,x_{i-1}^-(0),x_{i}^-(1)\,]_{(r^{-1},\,s^{-1})}}\,
\big]_{(s,\,\cdots,\,s,\,r^{-1},\,\cdots,\,r^{-1})}\qquad {\textrm{(=0 by (D9$_3$))}}\\
&=0.
\end{split}
\end{equation*}\hfill\qed
\medskip

{\noindent\bf Proof of Lemma \ref{d:19}.} Use induction on $i$.
For the case of $i=n{-}1$, by definition, it is easy to see that
\begin{eqnarray*}
[\,x_{n-1}^-(0),\,x_{\beta_{n-1,n}}^-(1)\,]_{(rs)^{-1}}
=[\,x_{n-1}^-(0),\, x_{n}^-(1)\,]_{(rs)^{-1}}=0.
\end{eqnarray*}

Suppose Lemma \ref{d:19} is true for the case of $i$, then for
the case of $i-1$, we note that
\begin{equation*}
\begin{split}
&x_{\beta_{i-1,i}}^-(1)\\
&=[\,x_{i}^-(0),\,\cdots,\, x_n^-(0),\,x_{n-2}^-(0),\,\cdots,\,
x_i^-(0),\,x_{i-1}^-(1)\,]_
{(s,\,\cdots,\,s,\,r^{-1},\,\cdots,\,r^{-1})}\, {\textrm{(by (\ref{c:1}))}}\\
&=r^{-1}s\,\Big[\,x_{i}^-(0),\,\cdots,\,
x_n^-(0),\,x_{n-2}^-(0),\,\cdots,\, [\,x_i^-(1),\,x_{i-1}^-(0)\,]_r\,\Big]_
{(s,\,\cdots,\,s,\,r^{-1},\,\cdots,\,r^{-1})}\\
&\hskip8.75cm\qquad{\textrm{(by (\ref{b:1}) \& $(\textrm{D9}_1)$)}}\\
&=\cdots \\
&=r^{-1}s\,\bigl[\,x_{i}^-(0),\,[\,x_{i+1}^-(0),\,\cdots,\,
x_n^-(0),\,x_{n-2}^-(0),\cdots,x_i^-(1)\,]_
{(s,\,\cdots,\,s,\,r^{-1},\,\cdots,\,r^{-1})},\\
&\hskip7.4cm
x_{i-1}^-(0)\,\bigr]_{(r,r^{-1})}\qquad {\textrm{(by definition)}}\\
&=r^{-1}s\,[\,x_{i}^-(0),\,x_{\beta_{i,i+1}}^-(1) ,\,
x_{i-1}^-(0)\,]_{(r,r^{-1})}\qquad\qquad\hskip3.15cm {\textrm{(by (\ref{b:1}))}}\\
&=r^{-1}s\,[\,\underbrace{
[\,x_{i}^-(0),\,x_{\beta_{i,i+1}}^-(1)\,]_{(rs)^{-1}}}
 ,\,x_{i-1}^-(0)\,]_{rs}\qquad\quad\ \, {\textrm{(=0 by inductive hypothesis)}}\\
&\quad +r^{-2}\,[\,x_{\beta_{i,i+1}}^-(1) ,\,
[\,x_i^-(0),\,x_{i-1}^-(0)\,]_s\,]_{r^2s}\\
&=r^{-2}\,[\,x_{\beta_{i,i+1}}^-(1) ,\,
[\,x_i^-(0),\,x_{i-1}^-(0)\,]_s\,]_{r^2s}.
\end{split}
\end{equation*}
Using the above identity, then we have
\begin{equation*}
\begin{split}
&[\,x_{i-1}^-(0),\,x_{\beta_{i-1,i}}^-(1)\,]_{r^{-2}}\\
&\quad=r^{-2}\Big[\,x_{i-1}^-(0),\,
\big[\,x_{\beta_{i,i+1}}^-(1),\,[\,x_i^-(0),\,x_{i-1}^-(0)\,]_s\,\big]_{r^2s}\,
\Big]_{r^{-2}}\qquad\qquad\quad {\textrm{(by (\ref{b:1}))}}\\
&\quad=r^{-2}\Big[\,[\,x_{i-1}^-(0),\,
x_{\beta_{i,i+1}}^-(1)\,]_{r^{-1}},\,[\,x_i^-(0),\,x_{i-1}^-(0)\,]_s\,\,
\Big]_{rs}\\
&\quad\quad +r^{-3}\Big[\, x_{\eta_{i,i+1}}^-(1),\,
\underbrace{[\,x_{i-1}^-(0),\,x_i^-(0),\,x_{i-1}^-(0)\,]_{(s,\,r^{-1})}}
\,\Big]_{(rs)^{2}}\qquad {\textrm{(=0 by $(\textrm{D9}_3)$)}}\\
&\quad=r^{-2}\Big[\,\underbrace{[\,x_{i-1}^-(0),\,
x_{\beta_{i,i+1}}^-(1)\,]_{r^{-1}}},\,[\,x_i^-(0),\,x_{i-1}^-(0)\,]_s\,\,
\Big]_{rs}.
\end{split}
\end{equation*}
At the same time, we can also get
\begin{equation*}
\begin{split}
[\,x_{i-1}^-&(0),\,
x_{\beta_{i,i+1}}^-(1)\,]_{r^{-1}}\\
&=[\,x_{i-1}^-(0),\,[\,x_{i+1}^-(0),\,\cdots,x_n^-(0),x_{n-2}^-(0),\cdots,
\\
&\hskip3.5cm x_{i+1}^-(0),x_i^-(1)\,]_
{(s,\,\cdots,\,s,\,r^{-1},\,\cdots,\,r^{-1})}\,]_{r^{-1}}\qquad {\textrm{(by $(\textrm{D9}_1)$)}}\\
&=[\,x_{i+1}^-(0),\,\cdots,x_n^-(0),x_{n-2}^-(0),\cdots,
x_{i+1}^-(0),\\
&\quad\underbrace{[\,x_{i-1}^-(0),\,x_i^-(1)\,]_{r^{-1}}}\,]_
{(s,\,\cdots,\,s,\,r^{-1},\,\cdots,\,r^{-1})}\,]_{r^{-1}}\qquad
{\textrm{(by (\ref{c:1}) and definition)}}\\
&=-(rs)^{-\frac{1}{2}}[\,x_{i+1}^-(0),\,\cdots,x_n^-(0),x_{n-2}^-(0),\cdots,
x_{i+1}^-(0),\\
&\quad[\,x_{i}^-(0),\,x_{i-1}^-(1)\,]_{s}\,]_
{(s,\,\cdots,\,s,\,
r^{-1},\,\cdots,\,r^{-1})}\,]_{r^{-1}}\qquad\hskip2.1cm {\textrm{(by definition)}}\\
&=-(rs)^{-\frac{1}{2}}x_{\beta_{i-1,i+1}}^-(1).
\end{split}
\end{equation*}
At last, we get immediately
\begin{equation*}
\begin{split}
[\,x_{i-1}^-&(0),\,x_{\beta_{i-1,i}}^-(1)\,]_{r^{-2}}\\
&=-r^{-\frac{5}{2}}s^{-\frac{1}{2}}[\,x_{\beta_{i-1,i+1}}^-(1),\,
[\,x_i^-(0),\,x_{i-1}^-(0)\,]_s\,]_{rs}\qquad\quad (\hbox{by (\ref{b:1})})\\
&=-r^{-\frac{5}{2}}s^{-\frac{1}{2}}[\,[\,x_{\beta_{i-1,i+1}}^-(1),\,
x_i^-(0)\,]_r,\,x_{i-1}^-(0)\,]_{s^{2}}\qquad\quad(\hbox{by definition})\\
&\quad -r^{-\frac{3}{2}}s^{-\frac{1}{2}}[\,x_i^-(0),\,\underbrace{[\,x_{\beta_{i-1,i+1}}^-(1),\,
x_{i-1}^-(0)\,]_s}\,]_{r^{-1}s}\qquad(\hbox{=0 by {Lemma \ref{d:18}}})\\
&=(rs)^{-1}[\,[\,x_{\beta_{i-1,i}}^-(1),\,
x_{i-1}^-(0)\,]_{s^{2}}.
\end{split}
\end{equation*}
Expanding the two sides of the above identity, one gets
$$(1+r^{-1}s)[\,x_{i-1}^-(0),\,x_{\beta_{i-1,i}}^-(1)\,]_{(rs)^{-1}}=0,$$
which implies that if $r\ne -s$, then
$[\,x_{i-1}^-(0),\,x_{\beta_{i-1,i}}^-(1)\,]_{(rs)^{-1}}=0$. Thus
we have checked Lemma \ref{d:19} for the case of $i-1$.
Consequently, Lemma \ref{d:19} has been proved by induction. \hfill\qed
\medskip

{\noindent \bf Proof of Lemma \ref{d:20}.}  Firstly, note that
\begin{equation*}
\begin{split}
&[\,x_2^-{(0)}, x_{\beta_{1,4}}^-(1)\,]\hskip5.17cm \hbox{(by definition)}\\
&\quad=[\,x_2^-{(0)},\,
[\,x_4^-(0),\cdots,x_n^-(0),\,x_{n-2}^-(0),\cdots,
x_4^-(0),\,x_{\alpha_{1,4}}^-(1)\,]_{(s,\cdots,s,\,r^{-1},\cdots,r^{-1})}\,]\\
&\hskip7cm \qquad \hbox{(by (\ref{b:1}) \& $(\textrm{D9}_1)$)}\\
&\quad=[\,x_4^-(0),\cdots,x_n^-(0),\,x_{n-2}^-(0),\cdots,
x_4^-(0),\underbrace{[\,x_2^-{(0)},x_{\alpha_{1,4}}^-(1)\,]}\,]_{(s,\cdots,s,\,r^{-1},\cdots,r^{-1})}.
\end{split}
\end{equation*}
 So it suffices to check the relation $[\,x_2^-{(0)},\,x_{\alpha_{1,4}}^-(1)\,]=0$.

 In fact, it is easy to see that
\begin{equation*}
\begin{split}
[\,x_2^-&{(0)},\,x_{\alpha_{1,4}}^-(1)\,]_{r^{-1}s}\hskip5.6cm \hbox{(by definition)}\\
&=[\,x_2^-{(0)},\,
[\,x_3^-(0),\,x_2^-(0),\,x_{1}^-(1)\,]_{(s,\,s)}\,]_{r^{-1}s}
\qquad\qquad\ \;\,\hbox{(by (\ref{b:1}))}\\
&=[\,x_2^-(0),\,[\,x_3^-(0),\,x_{2}^-(0)\,]_s,\,x_{1}^-(1)\,]\,]_{(s,\,r^{-1}s)}
\qquad\qquad\ \hbox{(by (\ref{b:1}))}\\
&\quad +s[\,x_2^-(0),\,x_{2}^-(0),\,\underbrace{[\,x_3^-(0),\,x_{1}^-(1)\,]}\,]_{(1,\,r^{-1}s)}
\qquad\quad\ \;\, \hbox{(=0 by $(\textrm{D9}_1)$)}\\
&=[\,\underbrace{[\,x_2^-(0),\,[\,x_3^-(0),\,x_{2}^-(0)\,]_s\,]_{r^{-1}}},\,x_{1}^-(1)\,]_{s^{2}}
\qquad\quad\quad\ \,\;\hbox{(=0 by $(\textrm{D9}_3)$)}\\
&\quad +r^{-1}[\,[\,x_3^-(0),\,x_{2}^-(0)\,]_s,\,[\,x_2^-(0),\,x_{1}^-(1)\,]_s\,]_{rs}
\qquad\quad\ \hbox{(by definition \& (\ref{b:2}))}\\
&=r^{-1}[\,x_3^-(0),\,\underbrace{[\,x_{2}^-(0),\,[\,x_2^-(0),\,x_{1}^-(1)\,]_s\,]_r}\,]_{s^2}
\qquad\qquad\; \hbox{(=0 by $(\textrm{D9}_2)$)}\\
&\quad +[\,[\,x_3^-(0),\,x_2^-(0),\,x_{1}^-(1)\,]_{(s,\,s)},\,x_{2}^-(0)\,]_{r^{-1}s}
\qquad\qquad \hbox{(by definition)}\\
&=[\,x_{\alpha_{1,4}}^-(1),\,x_{2}^-(0)\,]_{r^{-1}s}.
\end{split}
\end{equation*}
Then, we obtain
$(1+r^{-1}s)[\,x_2^-{(0)},\,x_{\alpha_{1,4}}^-(1)\,]=0$.
When $r\neq -s$, we arrive at our required conclusion
$[\,x_2^-{(0)},\,x_{\alpha_{1,4}}^-(1)\,]=0$.\hfill\qed
 \medskip

{\noindent \bf Proof of Lemma \ref{d:21}.} Repeatedly using
$(\ref{b:1})$, it is easy to get that
\begin{equation*}
\begin{split}
&[\,x_i^-{(0)},\, x_{\beta_{1,i}}^-(1)\,]_{s^{-1}}\hskip6.2cm \hbox{(by definition)}\\
&\quad=[\,x_i^-{(0)},\,
[\,x_i^-(0),\,x_{i+1}^-(0),\,x_{\beta_{1,{i+2}}}^-(1)\,]_{(r^{-1},\,r^{-1})}\,]_{s^{-1}}
\qquad\quad \hbox{(by (\ref{b:1}))}\\
&\quad=[\,x_i^-{(0)},\,
[\,x_i^-(0),\,x_{i+1}^-(0)\,]_{r^{-1}},\,x_{\beta_{1,{i+2}}}^-(1)\,]_{(r^{-1},\,s^{-1})}
\qquad\quad \hbox{(by (\ref{b:1}))}\\
&\quad\quad +r^{-1}[\,x_i^-{(0)},\,
x_{i+1}^-(0),\underbrace{[\,x_{i}^-(0),\,x_{\beta_{1,{i+2}}}^-(1)\,]}
\,]_{(1,\,s^{-1})}
\qquad\quad \hbox{(=0 by Lemma \ref{d:add})}\\
&\quad=[\,\underbrace{[\,x_i^-{(0)},\,
[\,x_i^-(0),\,x_{i+1}^-(0)\,]_{r^{-1}}\,]_{s^{-1}}}
,\,x_{\beta_{1,{i+2}}}^-(1)\,]_{r^{-1}}
\qquad\quad \hbox{(=0 by $(\textrm{D9}_2)$)}\\
&\quad\quad +s^{-1}[\,
[\,x_i^-(0),\,x_{i+1}^-(0)\,]_{r^{-1}},\underbrace{[\,x_i^-{(0)},x_{\beta_{1,{i+2}}}^-(1)\,]}
\,]_{r^{-1}s}
\qquad \hbox{(=0 by Lemma \ref{d:add})}\\
&\quad=0.
\end{split}
\end{equation*}\hfill\qed

 {\noindent \bf Proof of Lemma  \ref{d:22}.}   By direct calculation, one has
\begin{equation*}
\begin{split}
&[\,x_n^-{(0)},\,
x_n^-(0),\,x_{\alpha_{1,n}}^-(1)\,]_{(rs^2,\,r^2s)}
\hskip4cm\,\quad\ \hbox{(by definition)}\\
&\quad=[\,x_n^-{(0)},\,
x_n^-(0),\,x_{n-1}^-(0),\,x_{\alpha_{1,n-1}}^-(1)\,]_{(s,\,rs^2,\,r^2s)}
\qquad\qquad\ \,\ \hbox{(by (\ref{b:1}))}\\
&\quad=[\,x_n^-{(0)},\,
\underbrace{[\,x_n^-(0),\,x_{n-1}^-(0)\,]_{rs}},\,x_{\alpha_{1,n-1}}^-(1)\,]_{(s^2,\,r^2s)}
\qquad\qquad\ \hbox{(=0 by (D9$_1$))}\\
&\quad\quad +rs\,[\,x_n^-{(0)},\,x_{n-1}^-(0)
,\,[\,x_n^-(0),\,x_{\alpha_{1,n-1}}^-(1)\,]_s\,]_{(r^{-1},\,r^2s)}
\qquad\ \hbox{(by (\ref{b:1}))}\\
&\quad=rs\,[\,\underbrace{[\,x_n^-{(0)},\,x_{n-1}^-(0)\,]_{rs}}
 ,\,[\,x_n^-(0),\,x_{\alpha_{1,n-1}}^-(1)\,]_s\,]\qquad\qquad\quad \hbox{(=0 by (D9$_1$))}\\
&\quad\quad+(rs)^2[\,x_{n-1}^-(0)
,\,\underbrace{[\,x_n^-{(0)},\,x_n^-(0),\,x_{\alpha_{1,n-1}}^-(1)\,]_{(s,\,r)}}
\,]_{r^{-2}s^{-1}}
\qquad \hbox{(by (\ref{b:1}))}\\
 &\quad=(rs)^2\,[\,x_{n-1}^-(0) ,\,\underbrace{[\,x_n^-{(0)},\,x_n^-(0),\,x_{\alpha_{1,n-1}}^-(1)\,]_{(s,\,r)}}
\,]_{r^{-2}s^{-1}}.
\end{split}
\end{equation*}
 Therefore, it suffices to show the relation
 $[\,x_n^-{(0)},\,x_n^-(0),\,x_{\alpha_{1,n-1}}^-(1)\,]_{(s,\,r)}=0$.

 Indeed, it is easy to get the following conclusion
\begin{equation*}
\begin{split}
&[\,x_n^-{(0)},\,x_n^-(0),\,x_{\alpha_{1,n-1}}^-(1)\,]_{(s,\,r)}
 \hskip5cm \hbox{(by definition)}\\
 &\quad=[\,x_n^-{(0)},\,x_n^-(0),\,x_{n-2}^-(0),\,x_{\alpha_{1,n-2}}^-(1)\,]_{(s,\,s,\,r)}
 \qquad\qquad\qquad\quad\, \hbox{(by (\ref{b:1}))}\\
 &\quad=[\,x_n^-{(0)},\,[\,x_n^-(0),\,x_{n-2}^-(0)\,]_s,\,x_{\alpha_{1,n-2}}^-(1)\,]_{(s,\,r)}
 \qquad\qquad\qquad\ \;\hbox{(by (\ref{b:1}))}\\
 &\quad\quad +s\,[\,x_n^-{(0)},\,x_{n-2}^-(0),\,\underbrace{[\,x_{n}^-(0),\,x_{\alpha_{1,n-2}}^-(1)\,]}\,]_{(1,\,r)}
 \qquad\quad \hbox{(=0 by (\ref{b:1}) \& (D9$_1$))}\\
 &\quad=[\,\underbrace{[\,x_n^-{(0)},\,[\,x_n^-(0),\,x_{n-2}^-(0)\,]_s\,]_r}
 ,\,x_{\alpha_{1,n-2}}^-(1)\,]_{s}\qquad\qquad\qquad\ \; \hbox{(=0 by (D9$_2$))}\\
 &\quad\quad +r\,[\,[\,x_n^-(0),\,x_{n-2}^-(0)\,]_s,\,\underbrace{[\,x_n^-{(0)},\,x_{\alpha_{1,n-2}}^-(1)\,]}\,]_{r^{-1}s}
 \qquad \hbox{(=0 by (\ref{b:3}) \& (D9$_1$))}\\
&\quad=0.
\end{split}
\end{equation*}\hfill\qed

{\noindent \bf Proof of Lemma  \ref{d:23}.}   First, we shall
check that $$[\,x_1^-(1),\, x_{\beta_{1,3}}^-(1)\,]_{r^{-1}}=0.$$
In fact, we get directly
\begin{equation*}
\begin{split}
[\,x_1^-&{(1)},\,
x_{\beta_{1,3}}^-(1)\,]_{r^{-1}}
\qquad\;\ \hskip4.7cm\hbox{(by definition)}\\
&=[\,x_1^-{(1)},\,[\,
x_3^-(0),\,x_{\beta_{1,4}}^-(1)\,]_{r^{-1}}\,]_{r^{-1}}
\qquad\qquad\qquad\quad\;\ \hbox{(by (\ref{b:1}))}\\
&=[\,\underbrace{[\,x_1^-(1),\,x_3^-{(0)}\,]},\,x_{\beta_{1,4}}^-(1)\,]_{r^{-2}}
\qquad\qquad\qquad\qquad\quad \hbox{(=0 by (D9$_1$))}\\
&\quad +[\,x_3^-{(0)},\,\underbrace{[\,x_1^-(1),\,x_{\beta_{1,4}}^-(1)\,]_{r^{-1}}}\,]_{r^{-1}}
\qquad\qquad \hbox{(repeating by (\ref{b:1}) \& (D9$_1$))}\\
&=[\,x_3^-{(0)},\cdots,x_n^-(0),\,x_{n-2}^-(0),\cdots,\underbrace{[\,x_1^-(1),\,
x_3^-(0)\,]},\,\\
&\hskip1.2cm[\,x_2^-(0),\,x_1^-(1)\,]_s\,]_{(s,\cdots,s,\,r^{-1},\cdots,r^{-1})}\qquad\qquad\quad\, \hbox{(=0 by (D9$_1$))}\\
&\quad +[\,x_3^-{(0)},\cdots,x_n^-(0),\,x_{n-2}^-(0),\,\cdots,\,x_3^-(0),\,\\
&\quad\underbrace{[\,x_1^-(1),\,[\,x_2^-(0),\,x_1^-(1)\,]_s\,]_{r^{-1}}}
\,]_{(s,\cdots,s,\,r^{-1},\cdots,r^{-1})}\qquad\,\hbox{(=0 by (D9$_3$))}\\
&=0.
\end{split}
\end{equation*}
Using the above fact, we would like to show that
$$
[\,x_2^-(0),\,[\,x_1^-(1),\,
x_{\theta}^-(1)\,]_{r^{-2}}\,]_1=0.
$$

Indeed, notice that
\begin{equation*}
\begin{split}
[\,x_2^-&(0),\,[\,x_1^-(1),\,
x_{\theta}^-(1)\,]_{r^{-2}}\,]_{r^{-1}s}
 \qquad\qquad\qquad\qquad\qquad\quad\,\hbox{(by definition \& (\ref{b:1}))}\\
 &=[\,x_2^-{(0)},\,[\,x_1^-(1),\,x_2^-(0)\,]_{r^{-1}},\,x_{\beta_{1,3}}^-(1)\,]_{(r^{-2},\,r^{-1}s)}
 \qquad \hbox{(using (\ref{b:1}))}\\
 &\quad +[\,x_2^-{(0)},\,x_2^-(0),\,\underbrace{[\,x_1^-(1),\,x_{\beta_{1,3}}^-(1)\,]_{r^{-1}}}\,]_{(1,\,r^{-1}s)}
 \qquad \hbox{(=0 by the above fact)}\\
&=[\,\underbrace{[\,x_2^-{(0)},\,[\,x_1^-(1),\,x_2^-(0)\,]_{r^{-1}}\,]_{s}},\,x_{\beta_{1,3}}^-(1)\,]_{r^{-3}}
 \qquad\quad\;\, \hbox{(=0 by  (D9$_2$))}\\
 &\quad +s[\,[\,x_1^-{(1)},\,x_2^-(0)\,]_{r^{-1}},\,
 \underbrace{[\,x_2^-(0),\,x_{\beta_{1,3}}^-(1)\,]_{r^{-1}}}\,]_{r^{-2}s^{-1}}
 \quad \hbox{(by definition)}\\
&=s[\,x_1^-{(1)},\,\underbrace{[\,x_2^-(0),\,
 x_{\theta}^-(1)\,]_{s^{-1}}}\,]_{r^{-3}}
 \qquad\qquad\qquad\qquad\  \hbox{(=0 by Lemma 4.1)}\\
&\quad +[\,[\,x_1^-{(1)},\,
x_{\theta}^-(1)\,]_{r^{-2}},\,x_2^-(0)\,]_{r^{-1}s}\\
 &=[\,[\,x_1^-{(1)},\,
x_{\theta}^-(1)\,]_{r^{-2}},\,x_2^-(0)\,]_{r^{-1}s}.
\end{split}
\end{equation*}
The above result means that if $r\neq -s$, then
$$
[\,x_2^-(0),\,[\,x_1^-(1),\,
x_{\theta}^-(1)\,]_{r^{-2}}\,]_1=0,
$$
which will be used in the sequel.

Using the above result, we get easily
\begin{equation*}
\begin{split}
[\,[\,x_{2}^-&(0),\,x_1^-(1)\,]_s,\,x_{\theta}^-(1)\,]_{r^{-2}s^{-1}}
\hskip3cm\ \;\; \hbox{(using (\ref{b:1}))}\\
&=[\,x_{2}^-(0),\,[\,x_1^-(1),\,x_{\theta}^-(1)\,]_{r^{-2}}\,]_1
\qquad\qquad\quad\,\qquad \hbox{(=0 by the above fact)}\\
&\quad +r^{-2}[\,\underbrace{[\,x_{2}^-(0),\,x_{\theta}^-(1)\,]_{s^{-1}}},\,x_1^-(1)\,]_{r^2s}
\qquad\qquad \hbox{(=0 by Lemma 4.1)}\\
&=0.
\end{split}
\end{equation*}

Applying the above statement, we are ready to derive that
\begin{equation*}
\begin{split}
&[\,x_{\beta_{1,3}}^-(0),\,x_{\theta}^-(1)\,]_{s}
 \qquad \hskip5cm\hbox{(by definition \& (\ref{b:1}))}\\
 &\quad=[\,x_3^-{(0)},\,\underbrace{[\,x_{\beta_{1,4}}^-(1),\,x_{\theta}^-(1)\,]_{s}}\,]_s
 \qquad\qquad\ \hbox{(repeating by (\ref{b:1}) \& Lemma 4.1)}\\
 &\qquad +[\,\underbrace{[\,x_3^-{(0)},\,x_{\theta}^-(1)\,]_1},\,x_{\beta_{1,4}}^-(1)\,]_{s^{2}}
  \qquad\qquad\qquad\qquad\, \hbox{(=0 by Lemma 4.1)}\\
&\quad=\cdots \\
&\quad=[\,x_3^-{(0)},\cdots,x_{n}^-(0),\,x_{n-2}^-(0),\cdots, x_3^-(0),\\
&\quad\hskip0.2cm\underbrace{[\,[\,x_{2}^-(0),\,x_1^-(1)\,]_s,\,x_{\theta}^-(1)\,]_{r^{-2}s^{-1}}}
\,]_{(s,\cdots,s,r^{-1},\cdots,r^{-1})}\quad\, \hbox{(=0 by the above result)}\\
&\quad=0.
\end{split}
\end{equation*}

\noindent{\bf Proof of Lemma \ref{d:25}.} Firstly, we need to consider
\begin{equation*}
\begin{split}
&x_{13456}^-(1)\qquad\hskip8cm \hbox{(by definition)}\\
&\quad=[\,x_6^-(0),\,x_5^-(0),\,x_4^-(0)
x_3^-(0),\,x_1^-(1)\,]_{(s,\,s,\,s,\,s)} \qquad\qquad\qquad\quad\ \;\, \hbox{(by (\ref{b:1}))}\\
&\quad=[\,x_6^-(0),\,[\,x_5^-(0),\,x_4^-(0)\,]_s
,\,[\,x_3^-(0),\,x_1^-(1)\,]_s\,]_{(s,\,s)}\\
&\quad\quad +s [\,x_6^-(0),\,x_4^-(0)
,\,\underbrace{[\,x_5^-(0),\,[\,x_3^-(0),\,x_1^-(1)\,]_s\,]_{1}}\,]_{(1,\,s)}
\quad \hbox{(=0 by (\ref{b:1}) \& $(\textrm{D9}_1)$)}\\
&\quad=[\,x_6^-(0),\,[\,x_5^-(0),\,x_4^-(0)\,]_s
,\,[\,x_3^-(0),\,x_1^-(1)\,]_s\,]_{(s,\,s)}.
\end{split}
\end{equation*}
Using the above result, now we turn to check
\begin{equation*}
\begin{split}
&[\,x_4^-(0),\, x_{13456}^-(1)\,]_{r^{-1}s} \hskip7.3cm \hbox{(by (\ref{b:1}))}\\
&\ =[\,x_6^-(0),\,\underbrace{[\,x_4^-(0),\,x_5^-(0),\,x_4^-(0)\,]_{(s,\,r^{-1})}},\,
[\,x_3^-(0),\,x_1^-(1)\,]_s\,]_{(s^2,\,s)}\quad \hbox{(=0 by $(\textrm{D9}_3)$)}\\
&\quad + r^{-1} [\,x_6^-(0),\,[\,x_5^-(0),\,x_4^-(0)\,]_{s},\,
[\,x_4^-(0),\,x_3^-(0),\,x_1^-(1)\,]_{(s,\,s)}\,]_{(rs,\,s)}\quad\, \;\hbox{(by (\ref{b:2}))}\\
&\ =r^{-1}[\,x_6^-(0),\,x_5^-(0),\,\underbrace{[\,x_4^-(0),\,
x_4^-(0),\,x_3^-(0),\,x_1^-(1)\,]_{(s,\,s,\,r)}}\,]_{(s^2,\,s)}\quad \hbox{(=0 by $(\textrm{D9}_2)$)}\\
&\quad +[\,x_6^-(0),\,[\,x_5^-(0),\,x_4^-(0),\,x_3^-(0),\,x_1^-(1)\,]_{(s,\,s,\,s,)},\,
x_4^-(0),\,]_{(r^{-1}s,\,s)}\qquad \hbox{(by (\ref{b:1}))}\\
&\ =[\,[\,x_6^-(0),\,x_5^-(0),\,x_4^-(0),\,x_3^-(0),\,x_1^-(1)\,]_{(s,\,s,\,s,\,s,)},\,
x_4^-(0),\,]_{r^{-1}s}\\
&\quad +s[\,[\,x_5^-(0),\,x_4^-(0),\,x_3^-(0),\,x_1^-(1)\,]_{(s,\,s,\,s,)},\,
\underbrace{[\,x_6^-(0),\,x_4^-(0)\,]_{1}}\,]_{r^{-1}}\quad \hbox{(=0 by $(\textrm{D9}_1)$)}\\
&\ =-r^{-1}s[\,x_4^-(0),\,x_{13456}^-(1)\,]_{rs^{-1}},
\end{split}
\end{equation*}
this implies
$$
(1+r^{-1}s)\,[\,x_4^-(0),\,
x_{13456}^-(1)\,]_{1}=0.
$$

So, when $r\ne -s$, we have
$$
[\,x_4^-(0),\,
x_{13456}^-(1)\,]_{1}=0.
$$

To get our required conclusion, we also need to deal with
\begin{equation*}
\begin{split}
[\,x_2^-&(0),\, x_{134562}^-(1)\,]_{r^{-1}}\qquad\qquad\quad\ \hbox{(by definition)}\\
&=[\,x_6^-(0),\,x_5^-(0),\,[\,x_2^-(0),\,x_2^-(0),\,x_4^-(0),\,x_3^-(0),\,
x_1^-(1)\,]_{(s,\,s,\,r^{-1},\,s^{-1},)}\,]_{(s,\,s)}\\
&\hskip4cm\qquad \hbox{(by (\ref{b:1}))}\\
&=[\,x_6^-(0),\,x_5^-(0),\,[\,x_2^-(0),\,[\,x_2^-(0),\,x_4^-(0)\,]_{r^{-1}},\,
[\,x_3^-(0),\,x_1^-(1)\,]_s\,]_{(s,\,s^{-1})}\,]_{(s,\,s)}\\
&\hskip4cm\qquad \hbox{(by (\ref{b:1}))}\\
&=[\,x_6^-(0),\,x_5^-(0),\,[\,\underbrace{[\,x_2^-(0),\,x_2^-(0),\,x_4^-(0)\,]_{(r^{-1},\,s^{-1})}}
,\,[\,x_3^-(0),\,x_1^-(1)\,]_{s}\,]_s\,]_{(s,\,s)}\\
&\hskip4cm\qquad \hbox{(=0 by $(\textrm{D9}_3)$)}\\
&\quad +[\,x_6^-(0),\,x_5^-(0),\,[\,[\,x_2^-(0),\,x_4^-(0)\,]_{r^{-1}}
,\,\underbrace{[\,x_2^-(0),\,x_3^-(0),\,x_1^-(1)\,]_{(s,\,1)}}\,]_{s^2}\,]_{(s,\,s)}\\
&\hskip6cm\qquad
\hbox{(=0 by (\ref{b:1}) \& $(\textrm{D9}_1)$)}\\
&=0.
\end{split}
\end{equation*}
Combining the definition of quantum root vector with the above relation, one
has
\begin{equation*}
\begin{split}
&x_{134562}^-(1)\qquad\qquad\hskip4.5cm \hbox{(by definition)}\\
&\qquad=[\,x_4^-(0),\,x_2^-(0),\,x_{13456}^-(1)\,]_{(r^{-1},\,s)} \qquad\qquad \hbox{(by (\ref{b:1}))}\\
&\qquad=[\,[\,x_4^-(0),\,x_2^-(0)\,]_s,\,x_{13456}^-(1)\,]_{r^{-1}}\\
&\qquad\quad +[\,x_2^-(0),\,\underbrace{[\,x_4^-(0),\,x_{13456}^-(1)\,]_1}\,]_{(rs)^{-1}}
\qquad\, \hbox{(=0 by the above result)}\\
&\qquad=[\,[\,x_4^-(0),\,x_2^-(0)\,]_s,\,x_{13456}^-(1)\,]_{r^{-1}}
\end{split}
\end{equation*}
Therefore, by definition, (\ref{b:1}) and Serre relations, we arrive at
\begin{equation*}
\begin{split}
[\,x_2^-&(0),\, x_{134562435}^-(1)\,]_{(rs)^{-1}}\qquad\qquad\qquad\qquad \hbox{(by definition)}\\
&=[\,x_5^-(0),\,x_3^-(0),\,
\underbrace{[\,x_2^-(0),\,x_{1345624}^-(1)\,]_{(rs)^{-1}}}\,]_{(r^{-1},\,s)}.
\end{split}
\end{equation*}

So, to show Lemma \ref{d:25}, it suffices to check that
$[\,x_2^-(0),\,x_{1345624}^-(1)\,]_{(rs)^{-1}}=0$.
Actually,
\begin{equation*}
\begin{split}
[\,x_2^-&(0),\, x_{1345624}^-(1)\,]_{r^{-2}}\qquad\qquad\hskip5cm \hbox{(by (\ref{b:1}))}\\
&=[\,\underbrace{[\,x_2^-(0),\,x_4^-(0),\,x_2^-(0)\,]_{(s,\,r^{-1})}},\,x_{13456}^-(1)\,]_{r^{-2}}
\qquad\qquad\qquad \hbox{(=0 by $(\textrm{D9}_2)$)}\\
&\quad +r^{-1}[\,[\,x_4^-(0),\,x_2^-(0)\,]_s,\,[\,x_2^-(0),\,x_{13456}^-(1)\,]_{r^{-1}}\,]_{1}
\qquad\quad\quad\ \; \hbox{(by (\ref{b:2}))}\\
&=r^{-1}[\,x_4^-(0),\,\underbrace{[\,x_2^-(0),\,x_2^-(0),\,x_{13456}^-(1)\,]_{(r^{-1},\,s^{-1})}}\,]
_{s}\quad \hbox{(=0 by the above result)}\\
&\quad+(rs)^{-1}[\,[\,x_4^-(0),\,x_2^-(0),\,x_{13456}^-(1)\,]
_{(r^{-1},\,s)},\,x_2^-(0)\,]
_{s^2}\qquad\quad \hbox{(by definition)}\\
&=-r^{-1}s\,[\,x_2^-(0),\, x_{1345624}^-(1)\,]_{s^{-2}}.
\end{split}
\end{equation*}
Expanding the two sides of the above relation, we get immediately
$$
(1+r^{-1}s)[\,x_2^-(0),\, x_{1345624}^-(1)\,]_{(rs)^{-1}}=0.
$$

Hence, we obtain our required conclusion. \hfill\qed
\medskip

\noindent{\bf Proof of Lemma \ref{d:26}.} To get the first
relation, we notice that
$$
x_{1345624}^-(1)=[\,[\,x_4^-(0),\,x_2^-(0)\,]_s,\,x_{13456}^-(1)\,]_{r^{-1}}.
$$
Then we get directly
\begin{equation*}
\begin{split}
&[\,x_4^-(0),\, x_{1345624}^-(1)\,]_{r}
\qquad\qquad\qquad\qquad\qquad\qquad\quad \hbox{(by (\ref{b:1}))}\\
&\ =[\,\underbrace{[\,x_4^-(0),\,x_4^-(0),\,x_2^-(0)\,]_{(s,\,r)}}
,\,x_{13456}^-(1)\,]_{r^{-1}}
\qquad\, \hbox{(=0 by $(\textrm{D9}_2)$)}\\
&\ \ +r[\,[\,x_4^-(0),\,x_2^-(0)\,]_{s},\,\underbrace{[\,x_4^-(0)
,\,x_{13456}^-(1)\,]_1}\,]_{r^{-2}}
\ \hbox{(=0 by the proof of Lemma \ref{d:25})}\\
&\ =0.
\end{split}
\end{equation*}

To check the second relation, we obtain easily
\begin{equation*}
\begin{split}
&[\,x_3^-(0),\, x_{1345624354}^-(1)\,]_{s^{-2}}\qquad\qquad\qquad\qquad\qquad\qquad\quad\ \, \hbox{(by definition)}\\
&\quad=[\,x_3^-(0),\,x_4^-(0),\,x_{134562435}^-(1)\,]_{(s,\,s^{-2})}
\qquad\qquad\qquad\quad\, \hbox{(by (\ref{b:1}))}\\
&\quad=[\,[\,x_3^-(0),\,x_4^-(0)\,]_{s^{-1}},\,x_{134562435}^-(1)\,]_{1}
\qquad\qquad\qquad\quad\, \hbox{(by definition)}\\
&\quad\ +s^{-1}[\,x_4^-(0),\,\underbrace{[\,x_3^-(0)
,\,x_{134562435}^-(1)\,]_{s^{-1}}}\,]_{s^{2}}
\qquad\ \hbox{(=0 by the below conclusion)}\\
&\quad=[\,[\,x_3^-(0),\,x_4^-(0)\,]_{s^{-1}},\,
[\,x_3^-(0),\,x_5^-(0),\,x_{1345624}^-(1)\,]_{(s,\,r^{-1})}\,]_{1}
\qquad\quad\ \,  \hbox{(by (\ref{b:1}))}\\
&\quad=[\,\underbrace{[\,[\,x_3^-(0),\,x_4^-(0)\,]_{s^{-1}},\,x_3^-(0)\,]_{r}},\,
[\,x_5^-(0),\,x_{1345624}^-(1)\,]_{s}\,]_{r^{-2}}\quad\quad \hbox{(=0 by $(\textrm{D9}_3)$)}\\
&\quad\ +r[\,x_3^-(0),\,[\,[\,x_3^-(0),\,x_4^-(0)\,]_{s^{-1}},\,
[\,x_5^-(0),\,x_{1345624}^-(1)\,]_{s}\,]_{r^{-1}}\,]_{r^{-2}}\quad\quad \hbox{(by (\ref{b:2}))}\\
&\quad=r[\,x_3^-(0),\,[\,x_3^-(0),\, \underbrace{[\,x_4^-(0),\,x_5^-(0)
,\,x_{1345624}^-(1)\,]_{(s,\,1)}}\,]_{(rs)^{-1}}\,]_{r^{-2}}\\
&\quad\hskip5.3cm\hbox{(=0 by the below conclusion)}\\
&\quad\ +r[\,x_3^-(0),\,[\,[\,x_3^-(0),\,x_5^-(0)
,\,x_{1345624}^-(1)\,]_{(s,\,,\,r^{-1})}
,\,x_4^-(0)\,]_{s^{-1}}\,]_{r^{-2}}\\
&\quad\hskip5.3cm \hbox{(by definition)}\\
&\quad=r[\,x_3^-(0),\,[\,x_{134562435}^-(1)
,\,x_4^-(0)\,]_{s^{-1}}\,]_{r^{-2}}\\
&\quad=-rs^{-1}[\,x_3^-(0),\, x_{1345624354}^-(1)\,]_{r^{-2}}.
\end{split}
\end{equation*}
This implies
$$
(1+rs^{-1})[\,x_3^-(0),\,
x_{1345624354}^-(1)\,]_{(rs)^{-1}}=0.
$$
So, when $r\ne -s$,
$$
[\,x_3^-(0),\, x_{1345624354}^-(1)\,]_{(rs)^{-1}}=0.
$$

Up to now, the proof of Lemma \ref{d:26} is to show that
$$
[\,x_3^-(0)
,\,x_{134562435}^-(1)\,]_{s^{-1}}=0,
$$
and
$$
[\,x_4^-(0),\,x_5^-(0),\,x_{1345624}^-(1)\,]_{(s,\,1)}=0.
$$

Before giving the proof of the above two conclusions, we also need
the following claims, whose proofs are easy and left to the reader.
\begin{gather*}
%\begin{array}{lll}
[\,x_3^-(0),\,x_{134562}^-(1)\,]_1=0,\tag{1}\\
%\end{array}\eqno{(1)}
%\begin{array}{lll}
[\,x_5^-(0) ,\,x_{134562}^-(1)\,]_1=0,\tag{2}\\
%\end{array}\eqno{(2)}
%\begin{array}{lll}
[\,x_4^-(0),\,x_{1345624}^-(1)\,]_r=0.\tag{3}
\end{gather*}
%\end{array}\eqno{(3)}

Therefore, using the above claims, one has
\begin{equation*}
\begin{split}
[\,x_3^-&(0),\, x_{13456243}^-(1)\,]_{s^{-1}}
\qquad\qquad\qquad\qquad\qquad\quad \hbox{(by definition \& (\ref{b:1}))}\\
&=[\,x_3^-(0),\,[\,x_3^-(0),\,x_4^-(0)\,]_{r^{-1}},\,x_{134562}^-(1)\,]_{(s,\,s^{-1})}
\qquad\qquad \hbox{(by (\ref{b:1}))}\\
&\quad +r^{-1}[\,x_3^-(0),\,x_4^-(0),\,\underbrace{[\,x_3^-(0)
,\,x_{134562}^-(1)\,]_1}\,]_{rs}
\qquad\qquad\; \hbox{(=0 by (1))}\\
&=[\,\underbrace{[\,x_3^-(0),\,x_3^-(0),\,x_4^-(0)\,]_{(r^{-1},\,s^{-1})}}
,\,x_{134562}^-(1)\,]_{s}
\qquad\qquad \hbox{(=0 by $(\textrm{D9}_3)$)}\\
&\quad +s^{-1}[\,[\,x_3^-(0),\,x_4^-(0)\,]_{r^{-1}},\,
\underbrace{[\,x_3^-(0),\,x_{134562}^-(1)\,]_1}\,]_{s^{2}}
\qquad \,\hbox{(=0 by (1))}\\
&=0.
\end{split}
\end{equation*}

For the last conclusion, we also get obviously
\begin{equation*}
\begin{split}
[\,x_4^-&(0),\,x_5^-(0),\, x_{1345624}^-(1)\,]_{(s,\,r^{-1}s)}
\qquad\qquad\quad\ \, \hbox{(by definition \& (\ref{b:1}))}\\
&=[\,x_4^-(0),\,[\,x_5^-(0),\,x_4^-(0)\,]_s,\,x_{134562}^-(1)\,]_{(s,\,r^{-1}s)}
\qquad\quad\ \,\hbox{(by (\ref{b:1}))}\\
&\quad +r[\,x_4^-(0),\,x_4^-(0),\,\underbrace{[\,x_5^-(0)
,\,x_{134562}^-(1)\,]_1}\,]_{(1,\,r^{-1}s)}
\qquad \hbox{(=0 by (2))}\\
&=[\,\underbrace{[\,x_4^-(0),\,x_5^-(0),\,x_4^-(0)\,]_{(s,\,r^{-1})}}
,\,x_{134562}^-(1)\,]_{s^{2}}
\qquad\quad\  \hbox{(=0 by $(\textrm{D9}_2)$)}\\
&\quad +r^{-1}
[\,[\,x_5^-(0),\,x_4^-(0)\,]_{s},\,[\,x_4^-(0),\,x_{134562}^-(1)\,]_s\,]_{rs}
\qquad \hbox{(by (\ref{b:1}))}\\
&=r^{-1}
[\,x_5^-(0),\,\underbrace{[\,x_4^-(0),\,x_{1345624}^-(1)\,]_r}\,]_{s^2}
\qquad\qquad\qquad\quad \hbox{(=0 by (3))}\\
&\quad +[\,[\,x_5^-(0),\,x_{1345624}^-(1)\,]_s,\,x_4^-(0)\,]_{r^{-1}s}\\
&=-r^{-1}s\,[\,x_4^-(0),\,x_5^-(0),\,x_{1345624}^-(1)\,]_{(s,\,rs^{-1})}.
\end{split}
\end{equation*}
Expanding the two sides of the above relation,  we obtain
$$
(1+r^{-1}s)[\,x_4^-(0),\,x_5^-(0),\,
x_{1345624}^-(1)\,]_{(s,\,1)}=0.
$$
So,
$[\,x_4^-(0),\,x_5^-(0),\, x_{1345624}^-(1)\,]_{(s,\,1)}=0$ under the condition $r\ne -s$.
\hfill\qed
\medskip

\noindent {\bf Proof of Lemma \ref{d:27}.} For simplicity, we introduce some notations
$$
A\doteqdot[\,x_4^-(0),\,x_2^-(0),\,x_6^-(0),\,x_5^-(0),\,x_4^-(0),\,x_3^-(1)\,]
_{(s,\,s,\,s,\,r^{-1},\,s)},
$$
and
$$
B\doteqdot[\,x_2^-(0),\,x_6^-(0),\,x_5^-(0),\,x_4^-(1)\,]
_{(s,\,s,\,r^{-1})}.
$$

Note that
\begin{equation*}
\begin{split}
[\,x_1^-&(0),\, x_{1345624354}^-(1)\,]_{(rs)^{-1}}\,\qquad \qquad \hbox{(by (\ref{b:1}) \& $(\textrm{D9}_1)$)}\\
&=[\,x_4^-(0),\,x_5^-(0),\,\underbrace{[\,x_1^-(0),\,
x_{13456243}^-(1)\,]_{(rs)^{-1}}}\,]_{(s,\,s)}.
\end{split}
\end{equation*}
As a consequence, it suffices to verify that $[\,x_1^-(0),\,
x_{13456243}^-(1)\,]_{(rs)^{-1}}=0$.

In fact, we observe that
\begin{equation*}
\begin{split}
&x_{13456243}^-(1)\\
&\ =[\,x_3^-(0),\,x_4^-(0),\,x_2^-(0)
,\,x_6^-(0),\,x_5^-(0),\,x_4^-(0),\, x_3^-(0),\,x_1^-(1)\,]
_{(s,\,s,\,s,\,s,\,r^{-1},\,s,\,r^{-1})}\\
&\ \hskip6.5cm\quad \hbox{(by (D7) \& $(\textrm{D9}_1)$)}\\
&\ =[\,x_3^-(0),\,x_1^-(0),\,x_4^-(0),\,x_2^-(0),\,x_6^-(0),\,x_5^-(0),\,
x_4^-(0),\,x_3^-(1)\,] _{(s,\,s,\,s,\,r^{-1},\,s,\,r^{-1},\,r^{-1})}\\
&\ \hskip6.5cm\quad \hbox{(by (\ref{b:1}))}\\
&\ =[\,[\,x_3^-(0),\,x_1^-(0)\,]_s,\,A\,] _{r^{-2}s^{-1}}+[\,x_1^-(0),\,\underbrace{[\,x_3^-(0),\,A\,]_{(rs)^{-1}}}\,]
_{rs^{-1}}\\
&\ \hskip6.5cm\quad \hbox{(=0 by the following conclusion)}\\
&\ =[\,[\,x_3^-(0),\,x_1^-(0)\,]_s,\,A\,] _{r^{-2}s^{-1}}.
\end{split}
\end{equation*}

Applying the result, we obtain immediately
\begin{equation*}
\begin{split}
[\,x_1^-&(0),\,x_{13456243}^-(1)\,]_{r^{-2}}\hskip4.9cm \hbox{(by definition)}\\
&=[\,x_1^-(0),\,[\,x_3^-(0),\,x_1^-(0)\,]_s,\,A\,]_{(r^{-2}s^{-1},\,r^{-2})}
\qquad\qquad \hbox{(by (\ref{b:1}))}\\
&=[\,\underbrace{[\,x_1^-(0),\,x_3^-(0),\,x_1^-(0)\,]_{(s,\,r^{-1})}},\,A\,]_{r^{-3}s^{-1}}
\qquad\qquad \hbox{(=0 by $(\textrm{D9}_3)$)}\\
&\quad +r^{-1}[\,[\,x_3^-(0),\,x_1^-(0)\,]_s,\,[\,x_1^-(0),\,A\,]_{r^{-1}}\,]_{(rs)^{-1}}
\qquad \hbox{(by (\ref{b:2}))}\\
&=r^{-1}[\,x_3^-(0),\,\underbrace{[\,x_1^-(0),\,
x_1^-(0),\,A\,]_{(r^{-1},\,s^{-1})}}\,]_{r^{-1}s}
\qquad\ \, \hbox{(=0 by $(\textrm{D9}_3)$)}\\
&\quad +(rs)^{-1}[\,[\,x_3^-(0),\,x_1^-(0),\,A\,]_{(r^{-1},\,r^{-1})},\,
x_1^-(0)\,]_{s^2}\\
&=-r^{-1}s[\,x_1^-(0),\, x_{13456243}^-(1)\,]_{s^{-2}}.
\end{split}
\end{equation*}
This leads to $(1+r^{-1}s)\,[\,x_1^-(0),\,
x_{13456243}^-(1)\,]_{(rs)^{-1}}=0$.

So, under the condition  $r\ne -s$, we get
$$
[\,x_1^-(0),\, x_{13456243}^-(1)\,]_{(rs)^{-1}}=0.
$$

\medskip
Finally, we are left to verify
$$
[\,x_3^-(0),\,A\,]_{(rs)^{-1}}=0.
$$

Indeed, it is easy to see that
\begin{equation*}
\begin{split}
[\,x_3^-&(0),\, A\,]_{r^{-2}}\hskip6cm \hbox{(by definition)}\\
&=[\,x_3^-(0),\,[\,x_4^-(0),\,x_3^-(0)\,]_s,\,B\,]_{(r^{-1},\,r^{-2})}
\qquad\qquad\, \hbox{(by (\ref{b:1}))}\\
&=[\,\underbrace{[\,x_3^-(0),\,x_4^-(0),\,x_3^-(0)\,]_{(s,\,r^{-1})}},\,B\,]_{r^{-2}}
\qquad\qquad\, \hbox{(=0 by $(\textrm{D9}_3)$)}\\
&\quad +r^{-1}[\,[\,x_4^-(0),\,x_3^-(0)\,]_s,\,[\,x_3^-(0),\,B\,]_{r^{-1}}\,]_{1}
\qquad\ \;\, \hbox{(by (\ref{b:2}))}\\
&=r^{-1}[\,x_4^-(0),\,\underbrace{[\,x_3^-(0),\,
x_3^-(0),\,B\,]_{(r^{-1},\,s^{-1})}}\,]_{s^{2}}
\qquad\; \hbox{(=0 by $(\textrm{D9}_2)$ \& $(\textrm{D9}_3)$)}\\
&\quad +(rs)^{-1}[\,[\,x_4^-(0),\,x_3^-(0),\,B\,]_{(r^{-1},\,s)},\,
x_3^-(0)\,]_{s^2}\\
&=-r^{-1}s[\,x_3^-(0),\, A\,]_{s^{-2}}.
\end{split}
\end{equation*}
This means $(1+r^{-1}s)\,[\,x_3^-(0),\, A\,]_{(rs)^{-1}}=0$.

So, under the condition $r\ne -s$, we get $[\,x_3^-(0),\, A\,]_{(rs)^{-1}}=0$.

\medskip
Therefore, this completes the proof.  \hfill\qed

\newpage
\noindent
{\it Added in proof.} Part of the work started initially 10 years ago when the first author 
visited l'DMA, l'Ecole Normale Sup\'eieure de Paris
from October to November, 2004, the Fachbereich Mathematik der Universit\"at Hamburg from November
2004 to February 2005. It was not until his visit to ICTP (Trieste, Italy) from March to August, 2006
that Hu found out the explicit formula of the generating function $g_{ij}(z)$ with $\tau$-invariance which leads to 
the inherent definition for the Drinfeld realization in two-parameter setting. A reason for preventing 
the submission of this work for 10 years is the newly found and amending constraint: $\ga\ga'=(rs)^c$.
The original constraint $\ga\ga'=rs$ is apparently dissatisfied since product of two group-likes should be 
still group-like. The current change was realized and made by the first author when the second author tried to construct
level-two vertex operator representation and found that the constraint should be $\ga\ga'=(rs)^2$ in that module.
Finally, the authors would like to express their thanks to Dr. Yunnan Li, who pointed out a necessary revision for the definition
of $\ga$, $\ga'$ in Definition 2.2, which is crucial for our main Theorem 3.9.

\vskip30pt \centerline{\bf ACKNOWLEDGMENT}

\bigskip

Hu is supported in part by the NNSF of China (No. 11271131),  the RSFDP
from the MOE of China.  Zhang would
like to thank the support of NSFC grant (No. 11101258) and Shanghai Leading
Academic Discipline Project (J50101). Both authors are indebted to Marc Rosso for
his recommending Gross\'e's arXiv-preprint earlier in 2004, from where the initial 
motivation stemmed. 

\bigskip
\bibliographystyle{amsalpha}

\end{document}